\pgfplotsset{compat=newest}
\tikzset{
    marrow/.style={decoration={markings,mark=at position 0.5 with {\arrow{#1}}}, postaction=decorate}
}
\DeclareMathAlphabet{\mymathbb}{U}{BOONDOX-ds}{m}{n}
\DeclarePairedDelimiter\floor{\lfloor}{\rfloor}
\patchcmd{\paragraph}{\itshape}{\bfseries\boldmath}{}{}
\newcounter{claimcount}
\newenvironment{claim}{\refstepcounter{claimcount}\par\medskip
   \noindent\textit{Claim \arabic{claimcount}:}}{\vspace{1pt}}
\newcommand{\bbmzero}{\mymathbb{0}}
\newenvironment{myproof}[1]{%
\par\addvspace{12pt plus3pt minus3pt}\global\logotrue%
\noindent{\bf Proof of #1.\hskip.5em}\ignorespaces
}{%
	\par\iflogo\vskip-\lastskip
	\vskip-\baselineskip\prbox\par 
	\addvspace{12pt plus3pt minus3pt}\fi}
\newcommand{\bfzero}{\boldsymbol{0}}
\newcommand{\bfone}{\boldsymbol{1}}
\newcommand{\bftwo}{\boldsymbol{2}}
\newcommand{\bfi}{\boldsymbol{i}}
\newcommand{\bfj}{\boldsymbol{j}}
\newcommand{\bfn}{\boldsymbol{n}}
\newcommand{\bfplus}{\boldsymbol{+}}
\newcommand{\Z}{\mathbb{Z}}
\newcommand{\R}{\mathbb{R}}
\newcommand{\N}{\mathbb{N}}
\newcommand{\Q}{\mathbb{Q}}
\newcommand{\C}{\mathbb{C}}
\newcommand{\F}{\Bbbk}
\newcommand{\bbG}{\mathbb{G}}
\newcommand{\bbH}{\mathbb{H}}
\newcommand{\bbS}{\mathbb{S}}
\newcommand{\bbT}{\mathbb{T}}
\newcommand{\bbU}{\mathbb{U}}
\newcommand{\bbV}{\mathbb{V}}
\newcommand{\bbW}{\mathbb{W}}
\newcommand{\bbX}{\mathbb{X}}
\newcommand{\bbY}{\mathbb{Y}}
\newcommand{\caC}{\mathcal{C}}
\newcommand{\caL}{\mathcal{L}}
\newcommand{\caV}{\mathcal{V}}
\newcommand{\card}{\operatorname{card}}
\newcommand{\im}{\operatorname{Im}}
\newcommand{\dis}{\operatorname{dis}}
\newcommand{\codis}{\operatorname{codis}}
\newcommand{\ppi}{\operatorname{P\Pi}}
\newcommand{\pH}{\operatorname{PH}}
\newcommand{\birth}{\operatorname{birth}}
\newcommand{\death}{\operatorname{death}}
\newcommand{\size}{\operatorname{size}}
\newcommand{\Span}{\operatorname{Span}}
\newcommand{\pt}{\operatorname{pt}}
\newcommand{\set}{\operatorname{\mathbf{Set}}}
\newcommand{\vs}{\operatorname{\mathbf{Vec}}}
\newcommand{\grp}{\operatorname{\mathbf{Grp}}}
\newcommand{\ab}{\operatorname{\mathbf{Ab}}}
\newcommand{\topo}{\operatorname{\mathbf{Top}}}
\newcommand{\Pset}{\operatorname{\mathbf{PSet}}}
\newcommand{\Pgrp}{\operatorname{\mathbf{PGrp}}}
\newcommand{\Pab}{\operatorname{\mathbf{PAb}}}
\newcommand{\Pvec}{\operatorname{\mathbf{PVec}}}
\newcommand{\Ptop}{\operatorname{\mathbf{PTop}}}
\newcommand{\PcaC}{\operatorname{\mathbf{P}\caC}}
\newcommand{\diam}{\operatorname{diam}}
\newcommand{\hyp}{\operatorname{hyp}}
\newcommand{\ecc}{\operatorname{ecc}}
\newcommand{\rad}{\operatorname{rad}}
\newcommand{\Id}{\operatorname{Id}}
\newcommand{\VR}{\operatorname{VR}}
\newcommand{\Part}{\operatorname{Part}}
\newcommand{\SubPart}{\operatorname{SubPart}}
\newcommand{\dgm}{\operatorname{dgm}}
\newcommand{\rmH}{\mathrm{H}}
\newcommand{\rmK}{\mathrm{K}}
\newcommand{\rmq}{\mathrm{q}}
\newcommand{\rms}{\mathrm{s}}
\newcommand{\dgh}{d_\mathrm{GH}}
\newcommand{\dhi}{d_\mathrm{HI}}
\newcommand{\di}{d_\mathrm{I}}
\newcommand{\dhaus}{d_\mathrm{H}}
\newcommand{\db}{d_\mathrm{B}}
\definecolor{darkblue}{rgb}{0.0, 0.0, 0.8}
\definecolor{darkred}{rgb}{0.8, 0.0, 0.0}
\definecolor{darkgreen}{rgb}{0.0, 0.8, 0.0}
\begin{document}

\markboth{Authors' Names}{Persistent Homotopy Groups of Metric Spaces}

%%%%%%%%%%%%%%%%%%% Publisher's Area please ignore %%%%%%%%%%%%%%%%%%%%%%
%\catchline{}{}{}{}{}
%%%%%%%%%%%%%%%%%%%%%%%%%%%%%%%%%%%%%%%%%%%%%%%%%%%%%%%%%%%%%%%%%%%%%%%%%

\title{PERSISTENT HOMOTOPY GROUPS OF METRIC SPACES%\footnote{For the title, try not to use more than 3 lines. Typeset the title in 10 pt Times roman, uppercase and boldface.}
}

\author{FACUNDO M\'EMOLI
%\footnote{Typeset names in 8 pt roman, uppercase. Use the footnote to indicate the present or permanent address of the author.}
}

\address{Department of Mathematics and Department of Computer Science and Engineering,\\ 
The Ohio State University, %281 West Lane Ave, 
Columbus, Ohio 43210,
United States\\
memoli@math.osu.edu}

\author{LING ZHOU}

\address{Department of Mathematics, The Ohio State University, 
\\ %281 West Lane Ave, MA 332,
Columbus, Ohio 43210,
United States\\
zhou.2568@osu.edu}

\maketitle

%%%%%%%%%%%For JAT
% \begin{history}
% \received{(Day Month Year)}
% \revised{(Day Month Year)}
% %\accepted{(Day Month Year)}
% \end{history}

\begin{abstract}
We study notions of persistent homotopy groups of compact metric spaces together with their stability properties in the Gromov-Hausdorff sense. We pay particular attention to the case of fundamental groups, for which we obtain a more precise description. 
Under fairly mild assumptions on the spaces, we proved that the classical fundamental group has an underlying tree-like structure (i.e. a dendrogram) and an associated ultra-metric.

% Abstract is required and should summarize, in less
% than 300 words, the context, content and conclusions of the
% paper. It should not contain any references or displayed
% equations.  Textwidth of the abstract should be 4.5 inches.
\end{abstract}

\keywords{Persistence; homotopy groups; quantitative homotopy theory; fundamental groups; Gromov-Hausdorff distance; discrete homotopy; rational homotopy; stability.}

\ccode{AMS Subject Classification: 53C23, 51F99, 55N35}

\tableofcontents

\section{Introduction}

In \cite{gromov1999quantitative}, Gromov motivated the study of quantitative homotopy theory through an anecdote: while attending a conference on cosmology he got different responses from two topologists about whether the universe is simply-connected or not. This discussion inspired Gromov to think that, instead of asking a `yes' or `no' question, it can be more interesting to ask HOW simply-connected a space is. In Figure \ref{fig:sphere with handle}, the space $X$ is given by a $2$-sphere with a tiny handle attached to it, which is clearly not simply-connected. However, noticing how small the handle is compared with the whole space $X$, one may think that $X$ is `almost' simply-connected in a reasonable sense. In this paper, we study a way of quantifying simply-connectedness of a space, by introducing a tree-like structure (i.e. a dendrogram) underlying the fundamental group of the space. For instance, the second figure in Figure \ref{fig:sphere with handle} is the dendrogram over $\pi_1(X)$ that we construct in this paper.

\begin{figure}[ht!]
    \centering
    \includegraphics[scale=.075]{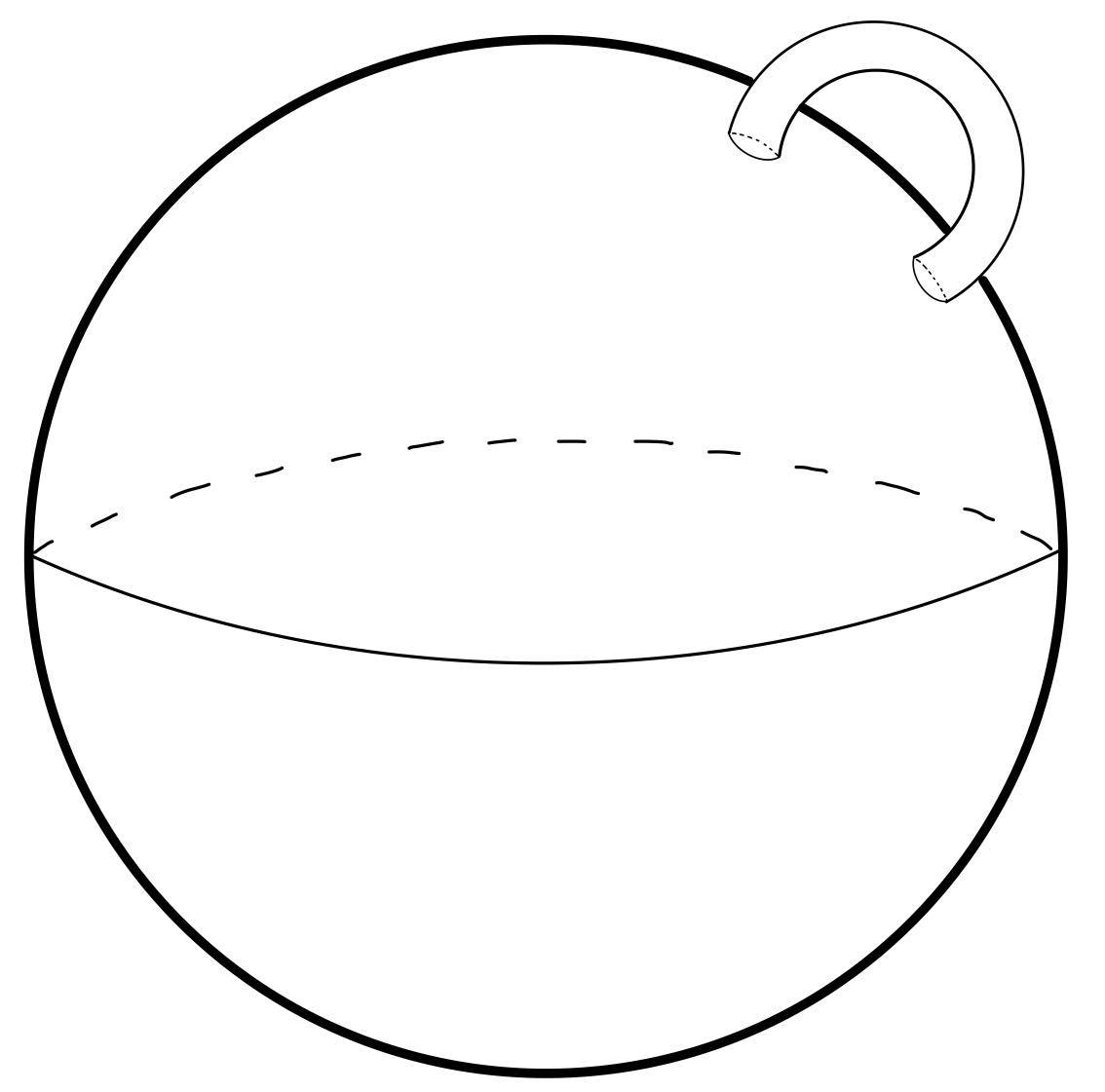}
    \hspace{2em}
    	\begin{tikzpicture}
    \begin{axis} [ 
    height=5cm,
    axis y line=left, 
    axis x line=none,
    xlabel=$\epsilon$,
    ylabel=$\pi_1(X)$,
    ytick={-1,0,1},
    yticklabels={},
    xtick={0,1,1.5},
    xmin=0, xmax=3,
    ymin=-1.8, ymax=1.8,]
    \addplot +[mark=none] coordinates {(.3,0) (.3,0.07)};
    \addplot +[mark=none,color=blue, ultra thick, opacity=0.4] coordinates {(.5,1.7) (.5,-1.7)};
    \addplot +[mark=none,color=blue,ultra thick, opacity=0.4] coordinates {(.3,-0.37) (.3,0.37)};
    \addplot +[mark=none,color=blue,ultra thick, opacity=0.4] coordinates {(.3,0.63) (.3,1.37)};
    \addplot +[mark=none,color=blue,ultra thick, opacity=0.4] coordinates {(.3,-0.63) (.3,-1.37)};
    \addplot[domain=0:.5,color=blue,ultra thick,opacity=0.4]{1};
    \addplot[domain=0:.5,color=blue,ultra thick,opacity=0.4]{-1};
    \addplot[domain=0:.3,color=blue,thick,opacity=0.4]{0.37};
    \addplot[domain=0:.3,color=blue,thick,opacity=0.4]{0.3};
    \addplot[domain=0:.3,color=blue,thick,opacity=0.4]{0.25};
    \addplot[domain=0:.3,color=blue,thick,opacity=0.4]{0.2};    \addplot[domain=0:.3,color=blue,thick,opacity=0.4]{-0.37};
    \addplot[domain=0:.3,color=blue,thick,opacity=0.4]{-0.3};
    \addplot[domain=0:.3,color=blue,thick,opacity=0.4]{-0.25};
    \addplot[domain=0:.3,color=blue,thick,opacity=0.4]{-0.2};
    \addplot[domain=0:.3,color=blue,thick,opacity=0.4]{1.37};
    \addplot[domain=0:.3,color=blue,thick,opacity=0.4]{1.3};
    \addplot[domain=0:.3,color=blue,thick,opacity=0.4]{1.25};
    \addplot[domain=0:.3,color=blue,thick,opacity=0.4]{1.2};    \addplot[domain=0:.3,color=blue,thick,opacity=0.4]{0.63};
    \addplot[domain=0:.3,color=blue,thick,opacity=0.4]{0.7};
    \addplot[domain=0:.3,color=blue,thick,opacity=0.4]{0.75};
    \addplot[domain=0:.3,color=blue,thick,opacity=0.4]{0.8};
    \addplot[domain=0:.3,color=blue,thick,opacity=0.4]{-1.37};
    \addplot[domain=0:.3,color=blue,thick,opacity=0.4]{-1.3};
    \addplot[domain=0:.3,color=blue,thick,opacity=0.4]{-1.25};
    \addplot[domain=0:.3,color=blue,thick,opacity=0.4]{-1.2};    \addplot[domain=0:.3,color=blue,thick,opacity=0.4]{-0.63};
    \addplot[domain=0:.3,color=blue,thick,opacity=0.4]{-0.7};
    \addplot[domain=0:.3,color=blue,thick,opacity=0.4]{-0.75};
    \addplot[domain=0:.3,color=blue,thick,opacity=0.4]{-0.8};
    \addplot[domain=0.15:0.183,color=blue,dotted,thick,opacity=0.4]{0.1};
    \addplot[domain=0.15:0.183,color=blue,dotted,thick,opacity=0.4]{-0.1};
    \addplot[domain=0.15:0.183,color=blue,dotted,thick,opacity=0.4]{0.9};
    \addplot[domain=0.15:0.183,color=blue,dotted,thick,opacity=0.4]{-0.9};
    \addplot[domain=0.15:0.183,color=blue,dotted,thick,opacity=0.4]{-1.1};
    \addplot[domain=0.15:0.183,color=blue,dotted,thick,opacity=0.4]{1.1};
    \addplot[domain=0.23:0.263,color=blue,dotted, ultra thick,opacity=0.4]{1.6};
    \addplot[domain=0.23:0.263,color=blue,dotted,ultra thick,opacity=0.4]{-1.6};
    \addplot[color=blue,ultra thick,opacity=0.4]{0};
    \end{axis} 
    \end{tikzpicture}
    \caption{Left: the space $X$ given by a $2$-sphere with a tiny handle attached to it. Right: dendrogram over the fundamental group $\pi_1(X)$, which is how we quantify the simply-connectedness of spaces in this paper.}
    \label{fig:sphere with handle}
\end{figure}

\subsection{Related work}
We overview related work.

\paragraph*{Quantitative topology and persistence.}

The idea of \emph{topological persistence} has independently appeared in different settings. In a nutshell, via some functorial constructions one assigns to a topological space $X$: (1) for every $\epsilon>0$ some space $S_\epsilon(X)$ and (2) for every $\epsilon'\geq \epsilon$ an equivalence relation $R_{\epsilon'}(X)$ on $S_\epsilon(X).$ For example, when $X$ is a metric space, then $S_\epsilon(X)$ could be chosen to be the collection of $k$-cycles of its Vietoris-Rips complex $\mathrm{VR}_\epsilon(X)$ and $R_{\epsilon'}(X)$ the equivalence relation arising from two cycles $(c,c')\in R_{\epsilon'}(X)$ whenever there exists a $(k+1)$-chain in $R_{\epsilon'}(X)$ such that its boundary is the sum $c+c'$.

Ideas of this type can already be found in the work of Borsuk in the 1940s. For instance, in \textsection 3 of \cite{borsuk1955some} Borsuk studies the transfer of certain scale-dependent topological properties of compacta under a suitable metric.

Frosini and collaborators \cite{frosini1990distance,frosini1992measuring} and Robins \cite{robins1999towards} identified ideas related to concepts nowadays known under the term \emph{persistence} in the context of applications to imaging and physics, respectively. From an algorithmic perspective, the study of topological persistence was initiated by Edelsburnner et al. in \cite{edelsbrunner2000topological}. See the overviews \cite{edelsbrunner2008persistent,carlsson2009topology,ghrist2008barcodes,weinberger2011persistent}, for more information.

Scale-dependent invariants that blend topology with geometry have also been considered by Gromov \cite{gromov1999quantitative,gromov1999metric}.
For example, in the late 1990s Gromov suggested the study of homotopy invariants in a scale-dependent manner. One question posed by Gromov was: given a $\lambda$-Lipschitz and contractible map $f:X\rightarrow Y$ between metric spaces $X$ and $Y$, determine the function $\lambda\mapsto\Lambda(\lambda)$ such that there exists a null-homotopy of $f$ given by $\Lambda(\lambda)$-Lipschitz maps. This question can be interpreted as a homotopical version of the homological construction described above.

\paragraph*{(Discrete) homotopy groups and persistence.}
In the late 1990s Frosini and Mulazzani \cite{frosini1999size} considered a construction that combines ideas related to persistence with homotopy groups. Their construction dealt with pairs $(M,\varphi)$ where $M$ is a closed manifold and $\varphi = (\varphi_1,\ldots,\varphi_k):M\rightarrow \mathbb{R}^k$ is a continuous function. In this setting, for $\zeta,\eta \in \mathbb{R}^k$ with $\zeta_i\leq \eta_i$ for all $i$ they considered a certain $(\varphi(\zeta),\eta)$-parametrized version of the fundamental group arising from considering equivalence classes of pointed loops in sublevel sets $M_\zeta:=\{x\in M|\varphi_i(x)\leq \zeta_i,\,\forall i\}$ under the equivalence relation: for $\alpha$ and $\beta$ pointed loops in $M_\zeta,$ $\alpha\sim_\eta \beta$ iff there is a (pointed) homotopy whose tracks remain in $M_\eta$. These groups are then used to obtain lower bounds for the so-called natural pseudo distance between two manifolds. A more comprehensive look at several related ideas is carried out by Frosini in \cite{frosini1999metric}.

In \cite{Letscher:2012:PHK:2090236.2090270} in his study of knots, Letscher considered the scenario in which one is given a filtration $\{X_\epsilon\}_{\epsilon>0}$ of a path connected topological space $X$. For each $\epsilon,\delta>0$, he considered the group $\pi_1^{(\delta)}(X_\epsilon)$ consisting of equivalence classes of (based) loops contained in $X_\epsilon$ under the equivalence relation stating that two loops in $X_\epsilon$ are equivalent iff there is a homotopy between them which is fully contained in $X_{\epsilon+\delta}$. 

In \cite{barcelo2014discrete} Barcelo et al. introduced the notion of \emph{discrete} homotopy groups associated to metric spaces at each given scale $\epsilon$, via the study of $1$-Lipschitz maps from $n$-cubes into the metric space in question. Barcelo et al. explicitly studied the behavior of their invariants under scale coarsening.

In his dissertation \cite{wilkins2011discrete}, Wilkins studies a certain notion of the discrete fundamental group, 
which differs from the one introduced in \cite{barcelo2014discrete} in that the respective notions of homotopies between discrete loops are different. In \cite{conant2012discrete} Conant et al. also study the notion of (homotopy) critical spectrum, which is the set of parameter values at which the discrete fundamental groups change up to isomorphism.

The notion studied by Wilkins originated in work by Berestovskii and Plaut \cite{berestovskii2001covering} in the context of topological groups and subsequently extended in \cite{berestovskii2007uniform} to the setting of uniform spaces. In \cite{berestovskii2007uniform}, the authors also consider discrete fundamental groups as an inverse system and study its inverse limit. 

In \textsection 8.1 of \cite{blumberg2017universality} Blumberg and Lesnick define persistent homotopy groups of $\R$-spaces (in a way somewhat different from Letscher's) and formulate certain persistent Whitehead conjectures. In \cite{batan2019persistent} Batan, Pamuk, and Varli study a certain persistent version of Van-Kampen's theorem, excision, and Hurewicz theorem for Letscher's definition of persistent homotopy groups of $\R$-spaces.

In \cite{Jardine2019DataAH} Jardine utilizes poset theoretical ideas to study Vietoris-Rips complexes (and some variants) associated with metric spaces and provides explicit conditions for the poset morphism induced by an inclusion of metric spaces to be quantifiably close to a homotopy equivalence. Jardine also considers persistent fundamental groupoids and higher-dimensional persistent homotopy in the lecture notes \cite{Jardine2020Persistent}. 

In \cite{virk20201}, Virk studies a notion of critical points of the persistent fundamental group and the degree one persistent homology group of Vietoris-Rips or \v{C}ech filtration of compact geodesic spaces. In particular, he shows that a critical point $c$ associated with the Vietoris-Rips filtration corresponds to an isometrically embedded circle of length $3c$.

In \cite{rieser2021vcech} Rieser studies a certain version of discrete homotopy groups of \v{C}ech closure spaces. A \v{C}ech closure space is a set equipped with a \v{C}ech closure operator, which is a concept similar to that of a topological closure operator except that it is not required to be idempotent. Given a finite metric space $X$, a way of obtaining \v{C}ech closure operators is the following: for each $r\geq 0$ let $c_r$ be a map between subsets of $X$ such that $c_r(A)=\{x\in X:d_X(x,A)\leq r\}$ for each $A\subset X$. The author shows that each $c_r$ is a \v{C}ech closure operator and the identity map $(X,c_q)\to (X,c_r)$ induces maps $\iota_n^{q,r}:\pi_n(X,c_q)\to \pi_n(X,c_r)$ for every $q\leq r$ and $n\geq 0$. Then, he defines the persistent homotopy groups of $X$ to be the collection  $\{\im(\iota_n^{q,r})\}_{r\geq q \geq 0}$. Along this line, in \cite{bubenik2021homotopy} Bubenik and Milićević develop a uniform framework using Čech closure spaces to encompass the discrete homotopy of metric spaces, the homotopy of topological spaces, and the homotopy of (directed) clique complexes.

\subsection{Overview of our results}

We consider three different definitions of the persistent fundamental group. Persistent fundamental groups are examples of persistent groups, which are functors from a poset category (e.g., $(\R,\leq)$) to the category $\grp$ of groups. 
One of the three constructions, denoted by $\ppi_1^{\rmK,\bullet}$ arises from isometrically embedding a given metric space $X$, via the so-called Kuratowski embedding, into $L^\infty(X)$ and then applying the $\pi_1$ functor to successive \emph{thickenings} $X^\epsilon$ of $X$ inside $L^\infty(X)$, cf. Definition \ref{def:ppi^K}.\footnote{See \cite{adamaszek2018metric} and \cite{adams2021persistent} for optimal transport notions of thickening that have similar properties.} This definition is similar in spirit to the construction used by Gromov for defining the so-called \emph{filling radius invariant} \cite{gromov1983filling}. Another definition, denoted $\ppi_1^{\VR,\bullet}$, stems from simply considering the geometric realization of nested Vietoris-Rips simplicial complexes for different choices of the scale parameter and then applying the $\pi_1$ functor, cf. Definition \ref{def:ppi^VR}. The third, more combinatorial, definition which we consider is one coming from the work of Plaut and Wilkins \cite{plaut2013discrete}, see Definition \ref{def:persistent f. group}. %(see also Barcelo et al. \cite{barcelo2014discrete}). 
This construction, denoted simply $\ppi_1^\bullet$, turns out to be isomorphic to $\ppi_1^{\VR,\bullet}$ and to $\ppi_1^{\rmK,\bullet}$ up to a scaling
(see Theorem \ref{thm:iso-ppi_1's}).

These three definitions can all be constructed in two versions: the open version and the closed version, which will be specified with the superscripts $<$ and $\leq $, respectively. Throughout this paper, we mainly work on the closed version and will omit the superscripts unless necessary. Now we observe that these three definitions give rise to isomorphic persistent fundamental groups.

\begin{restatable}[Isomorphisms of persistent fundamental groups]{theorem}{isoofPPI}\label{thm:iso-ppi_1's} Given a pointed compact metric space $(X,x_0)$, we have
\begin{equation} \label{eq:iso of p.f.g}
    \ppi_1^{\rmK,\bullet}(X,x_0) \cong\ppi_1^{\VR,2\bullet}(X,x_0)\cong  \ppi_1^{2\bullet}(X,x_0),
\end{equation}
where the rightmost isomorphism is true in either the closed or the open version, and the leftmost isomorphism is true for the open version and the closed version when $X$ is finite. In either version, $\ppi_1^{\rmK,\bullet}(X,x_0) $ and $\ppi_1^{\VR,2\bullet}(X,x_0)$ are at zero homotopy-interleaving distance.
\end{restatable}

For a given compact metric space $X$ and a base point $x_0\in X$, the persistent fundamental group is defined as 
$$\ppi_1(X,x_0) = \left\{\pi_1^\epsilon(X,x_0)\stackrel{\Phi_{\epsilon,\epsilon'}}{\longrightarrow}\pi_1^{\epsilon'}(X,x_0)\right\}_{\epsilon'\geq \epsilon\geq 0},$$
where $\pi_1^0(X,x_0):=\pi_1(X,x_0)$ and $\pi_1^\epsilon(X,x_0)$ is the discrete fundamental group at scale $\epsilon$ for $\epsilon>0$ (see Definition \ref{def:discrete f. group}).
Under fairly mild assumptions on $X$, the structure maps $\Phi_{\epsilon,\epsilon'}$ %of persistent fundamental groups of a geodesic space $X$ 
are surjective and for sufficiently small $\epsilon$ the discrete fundamental group $\pi_1^{\epsilon}(X)$ is isomorphic to $\pi_1(X)$ via $\Phi_\epsilon:=\Phi_{0,\epsilon}$. When combined, these two factors suggest that as the scale parameter increases homotopy classes in $X$ can only become equal (no new classes can appear), thus suggesting the existence of a tree-like structure (i.e., a dendrogram) underlying $\ppi_1(X)$, by viewing each
$\pi_1^{\epsilon}(X)\cong \pi_1(X)/\ker(\Phi_\epsilon)$ 
as a partition of $\pi_1(X)$. In summary, we prove:

\begin{restatable}[Dendrogram over $\pi_1(X)$]{theorem}{dendrogram}\label{thm:dendrogram} Let a compact geodesic space $X$ be uniformly locally path connected (u.l.p.c.) and semi-locally simply connected (s.l.s.c.). Then, the following map defines a dendrogram $\theta_{\pi_1(X)}$ over $\pi_1(X)$:
\begin{equation*}
\theta_{\pi_1(X)}(\epsilon):=\begin{cases} \pi_1^{\epsilon}(X), & \mbox{if $\epsilon>0$}\\
\pi_1(X), & \mbox{if $\epsilon=0$,}
\end{cases}
\end{equation*}
where we view $\pi_1(X)$ as a set and each $\pi_1^{\epsilon}(X)$ as a partition of $\pi_1(X)$.
\end{restatable}
\begin{figure}[ht!]
\centering
	\begin{tikzpicture}
    \begin{axis} [ 
    height=5.5cm,
    axis y line=left, 
   axis x line=none,
    xlabel=$\epsilon$,
    ylabel=$\pi_1(\mathbb{S}^1)$,%homotopy classes of loops
    ytick={-3,-2,...,3},
    yticklabels={-3,-2,-1,0,1,2,3},
    xtick={0,1},
    xticklabels={0,$\tfrac{2\pi}{3}$,},
    xmin=0, xmax=2,
    ymin=-3.7, ymax=3.7,]
    \addplot +[mark=none,color=blue,ultra thick, opacity=0.4] coordinates {(1,3.5) (1,-3.5)};
    \addplot +[mark=none,color=blue,densely dotted, ultra thick, opacity=0.4] coordinates {(0.5,3.3) (0.5,3.7)};
    \addplot +[mark=none,color=blue,densely dotted, ultra thick, opacity=0.4] coordinates {(0.5,-3.3) (0.5,-3.7)};
    \addplot[domain=0:1,color=blue,ultra thick,opacity=0.4]{1};
    \addplot[domain=0:1,color=blue,ultra thick,opacity=0.4]{-1};
    \addplot[domain=0:1,color=blue,ultra thick,opacity=0.4]{2};
    \addplot[domain=0:1,color=blue,ultra thick,opacity=0.4]{-2};
    \addplot[domain=0:1,color=blue,ultra thick,opacity=0.4]{3};
    \addplot[domain=0:1,color=blue,ultra thick,opacity=0.4]{-3};
    \addplot[color=blue,ultra thick,opacity=0.4]{0};
    \end{axis} 
    \node[below right=0.4pt of {(2.5,1.9)}, outer sep=1.5pt,fill=white] {$\tfrac{2\pi}{3}$};
    \end{tikzpicture}
    \caption{The dendrogram associated to $\ppi_1(\bbS^1)$. The $y$-axis represents elements of $\pi_1(\mathbb{S}^1)\cong \Z$, i.e., homotopy classes of continuous loops in $\mathbb{S}^1$ (each corresponds to an integer).} \label{fig:dendrogram-pi1}
\end{figure}
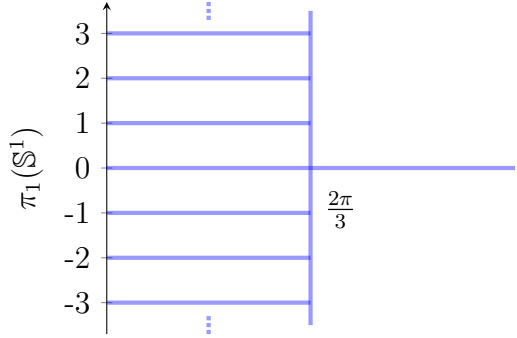 

For instance, when $X$ is $\bbS^1$, the geodesic unit circle, associated to $\ppi_1(\bbS^1)$ we have a \emph{dendrogram} over $\pi_1(\bbS^1)\cong \Z$ shown in Figure \ref{fig:dendrogram-pi1}.

Dendrograms over a given set can be bijectively associated with an ultra-metric on the set. In this sense, one can view the map $X\mapsto \theta_{\pi_1(X)}$ described in the theorem above as a map from Gromov-Hausdorff space into itself. We prove that this endomorphism is $2$-Lipschitz.

\begin{restatable} [$\dgh$-stability of $\theta_{\pi_1(\bullet)}$]{theorem}{GHstabofpi}\label{thm:stab-pi_1} 
Given compact geodesic s.l.s.c. spaces $X$ and $Y$, we have
$$\dgh\left( \left( \pi_1(X),\mu_{\theta_{\pi_1(X)}}\right),\left( \pi_1(Y),\mu_{\theta_{\pi_1(Y)}}\right)\right)\leq 2\cdot \dgh(X,Y).$$
\end{restatable}

In Proposition \ref{prop:cr=im}, we prove that the \emph{critical spectrum} $\mathrm{Cr}(X)$ of a compact geodesic s.l.s.c. space $X$ \cite{wilkins2011discrete,conant2012discrete} coincides with the image of the ultra-metric $\mu_{\theta_{\pi_1(X)}}$ on $\pi_1(X)$. In other words, the dendrogram $\mu_{\theta_{\pi_1(X)}}$ captures the critical spectrum as the parameter values where it changes. 
Combining this result with Theorem \ref{thm:stab-pi_1}, we obtain the stability of $X\mapsto \mathrm{Cr}(X)$: i.e. that the Hausdorff distance between critical spectra of two metric spaces is upper bounded by the Gromov-Hausdorff distance between the two spaces; see Proposition \ref{prop:stab-cr}.

\medskip

Of the three definitions of persistent fundamental groups given previously, $\ppi_1^{\rmK}$ and $\ppi_1^{\VR}$ can be generalized to high-dimensional by replacing $\pi_1$ with the $n$-th homotopy group $\pi_n$ in their definitions. As for the case of $n=1$ described in Theorem \ref{thm:iso-ppi_1's}, we still have that $\ppi_n^{\rmK,\bullet}\cong \ppi_n^{\VR,2\bullet}$, as well as the isomorphism of persistent homology groups $\pH_n^{\rmK,\bullet}\cong \pH_n^{\VR,2\bullet}$ (cf. Corollary \ref{cor:iso-ppi_n}). We furthermore have that, under the interleaving distance (cf. Definition \ref{def:interleaving}), the map from compact metric spaces into persistent groups $X\mapsto \ppi_n^\rmK(X)$ is $1$-Lipschitz.

\begin{restatable}
[$\di$-stability of $\ppi_n^{\rmK}(\bullet)$ and $\ppi_n^{\VR}(\bullet)$]{theorem}{stabofppiKn}
\label{thm:stab-ppi_n} 
Let $(X,x_0)$ and $(Y,y_0)$ be pointed compact metric spaces. Then, for each $n\in\Z_{\geq 1}$,
$$\di\left(\ppi_n^{\rmK}(X,x_0),\ppi_n^{\rmK}(Y,y_0)\right)\leq  \dgh^{\pt}((X,x_0),(Y,y_0)).$$
If $X$ and $Y$ are connected, then
$\di\left(\ppi_n^{\rmK}(X),\ppi_n^{\rmK}(Y)\right)\leq  \dgh(X,Y).$
Via the isomorphism $\ppi_n^{\rmK,\bullet}\cong \ppi_n^{\VR,2\bullet}$, analogous inequalities also hold for $\ppi_n^{\VR}(\bullet)$, up to a factor $\tfrac{1}{2}$.
\end{restatable}

We relate persistent fundamental groups to the persistent homology groups via a suitable version of the Hurewicz Theorem.

\begin{restatable} [Persistent Hurewicz theorem]{theorem}{persistentHurewicz}
\label{thm:hurewicz-persistent} Let $X$ be a connected metric space. Then, there exists a natural transformation %(denoted by `$\Rightarrow$') 
$\ppi_1^{\rmK}(X)\xRightarrow{\rho} \pH_1^{\rmK}(X),$
such that for each $\epsilon>0$, $\rho_{\epsilon}$ is surjective and $\ker(\rho_{\epsilon})$ is the commutator group of $\ppi_1^{\rmK,\epsilon}(X).$
\end{restatable}

Like the static case where homotopy groups are stronger invariants than homology, persistent homotopy groups are more informative than persistent homology. For the case of dimension $1$, see Theorem \ref{thm:stab-di-ppi_1} and Example \ref{ex:torus-S1S1S2}, where in the example we compare the torus $\bbS^1\times\bbS^1$ with the wedge sum $\bbS^1\vee \bbS^1\vee\bbS^2$. It is well-known that these two spaces have the same homology groups in all dimensions, but their fundamental groups are different. In the persistent setting, we see that the persistent fundamental group provides better estimates of the Gromov-Hausdorff distance between these two spaces than persistent homology over all dimensions. For higher dimensions, we consider the pair $\bbS^m\times\bbS^m$ and $\bbS^m\vee \bbS^m\vee\bbS^{2m}$ in Example \ref{ex:rational_homotopy}, and see that the persistent ration homotopy group in dimension $3m-1$ tells these two spaces apart, while persistent homology fails to.

\subsection{Organization of the paper}

In \S \ref{sec:notation-key} we provide a notation key to facilitate reading the paper.

In \S\ref{sec:background} we review elements from category theory, metric geometry, and topology. In particular, \S\ref{subsec:persistent theory} provides the necessary background about persistent homology, the interleaving distance, and related concepts.

In \S\ref{sec:p-set}, we review $0$-dimensional persistent homotopy, which 
due to its natural `tree structure', serves as a precursor to our construction of dendrograms for the persistent fundamental group. In \S\ref{subsec:persistent fundamental group}, we review discrete fundamental groups from %the work of Plaut and Wilkins 
%and Barcelo et al. 
\cite{plaut2013discrete,barcelo2014discrete}, and use it to define a notion of persistent fundamental group. 
In \S\ref{subsec:persistent n-homotopy}, we study the relationship between different definitions of persistent fundamental groups and discuss higher-dimensional persistent homotopy groups. 
In \S\ref{sec:rational homotopy groups}, to circumvent the difficulty in computing the high-dimensional homotopy groups of spheres that appear in the calculation of $\ppi_n^{\rmK}(\bbS^1)$, we study persistent rational homotopy groups as a weaker but computable\footnote{Here the word `computable' is used in a general way. For example, higher-dimensional persistent homotopy groups of $\bbS^1$ are not completely known, but its rationalization can be obtained in all dimensions, see page \pageref{cor:ppi_n rational}. Thus, we say the latter one is computable.} alternative. In \S \ref{sec:properties}, we consider persistent homotopy groups under point-wise products and wedge sums and prove a persistent version of the Hurewicz theorem, cf. Theorem \ref{thm:hurewicz-persistent}.

In \S \ref{subsec:discretization}, we show that the limit of discrete fundamental groups is isomorphic to the classical fundamental group for tame enough spaces (cf. Theorem \ref{thm:discretization}). Moreover, in \S \ref{sec:treegram-pseudo-metric} we construct a stable (under the Gromov-Hausdorff distance of metric spaces) pseudo-metric $\mu_X^{(1)}$ on the set of $\caL(X,x_0)$ of discrete loops in a metric space $X$, and identify 
a \emph{treegram} structure over $\caL(X,x_0)$.
In \S \ref{subsec:dendrogram}, we identify some conditions on $X$ such that the treegram structure over $\caL(X,x_0)$ induces a dendrogram over the fundamental group $\pi_1(X)$ and prove Theorem \ref{thm:dendrogram}. In addition, we show that the ultra-metric on $\pi_1(X)$ associated with the dendrogram $\theta_{\pi_1(X)}$ is stable when the underlying spaces are homotopy equivalent, cf. Theorem \ref{thm:stab-l^infty-mu}, together with some geometric applications. 

In \S \ref{sec:stability-ppi_n} we address the question of stability of persistent homotopy groups and establish Theorems \ref{thm:stab-pi_1} and \ref{thm:stab-ppi_n}. We exhibit pairs of filtrations confounded by persistent homology but distinguished by their persistent (rational) homotopy groups.
In \textsection \ref{sec:finiteness}, we establish a notion of finiteness theorem for fundamental groups of locally geometrically connected spaces. 
In \S \ref{sec:stability-second-proof}, an alternative proof, more constructive and independent of the notion of Vietoris-Rips or Kuratowski filtrations, of the case $n=1$ in Theorem \ref{thm:stab-ppi_n} is given.

This paper focuses primarily on geodesic spaces that satisfy some mild conditions. Readers interested in the case of persistent fundamental groups of finite metric spaces will find the appendix of the preprint of this paper to be helpful, see \cite{memoli2019persistent}. 

\section{Notation} 

\label{sec:notation-key}
{\small  \begin{spacing}{1.5} 
 \begin{tabbing} 

 \= Simbolo \= Separador \= Significado \+ \kill 
 {\bf Symbol} \> \> {\bf Meaning} \\ \\
 
  $\F$ \> \> A field. \\
  
$\caC$\>\> A category. \\

$\card(A)$\>\> Cardinality of a set $A$. \\

$\db$\>\> Bottleneck distance between multisets of points in $\Bar{\R}^2$, page \pageref{para:db}.\\

$\dhaus^X$ \> \> Hausdorff distance between subsets of the metric space $X$, page \pageref{para:dh}.\\

$d_{\mathrm{GH}}^{(\pt)}$ \> \> (Pointed) Gromov-Hausdorff distance, %between pseudo-metric spaces, 
 page \pageref{para:dgh} (\pageref{para:pt dgh}).\\

$\di$\>\> Interleaving distance between functors, Definition \ref{def:interleaving}.\\
 
 $\dhi$\>\> Homotopy interleaving distance between functors, Definition \ref{def:HI}.\\

$\PcaC^{(\R_+,\leq)}$\>\> The category of functors from $(\R_+,\leq)$ to $\caC$, page \pageref{para:category PC}.\\

$\simeq$\>\> Weak homotopy equivalence, %between topological spaces or $\R$-spaces, 
page \pageref{para:weak h.e.}.\\

$\cong$\>\> %Isometry of metric spaces, or 
Homotopy equivalence of spaces; isomorphism in $\PcaC^{(\R_+,\leq)}$ or $\grp$. \\

$\VR_{\bullet}(X)$\>\> Vietoris–Rips filtration of a metric space $X$, page \pageref{para:VR}.\\

%$\pH_k^{\VR,\epsilon }(X)$\>\> $k$-th (simplicial) homology group of $\VR_{\epsilon }(X)$ with coefficients in $\Z$, page \pageref{para:PH}.\\

$\pH_k^{\VR}(X)$\>\> The $k$-th persistent homology of $X$, page \pageref{para:PH}.\\

%$\Part(A)$\>\> The set of partitions of a set $A$, page \pageref{para:dendrogram of A}.\\

%$(A,\theta_{A})$\>\> Dendrogram over a set $A$, page \pageref{para:dendrogram of A}.\\

$\mu_{\theta_{A}}$\>\> ultra-metric induced by a dendrogram $\theta_A$ on a set $A$, page \pageref{para:metric by dendrogram}.\\

%$(A,\theta_{A}^{\rms})$\>\> Treegram over a set $A$, Definition \ref{def:treegram}.\\

$\mu_{\theta_{A}}^{\rms}$\>\> Pseudo-ultra-metric induced by a treegram $\theta_A^{\rms}$ on a set $A$, page \pageref{def:treegram}.\\

%$\sim_0^{\epsilon}$\>\> The equivalence relation on a metric space given on page \pageref{para:pi_0}.\\

%$\pi_0^{\epsilon}(X)$\>\> Metric space $X$ quotient by $\sim_0^{\epsilon}$, page \pageref{para:pi_0}.\\

$\ppi_0(X)$\>\> The persistent set induced by a metric space $X$ on page \pageref{para:ppi_0}.\\

%$\mu_{X}^{(0)}$\>\> Metric constructed in Proposition \ref{prop:mu^0} on a metric space $X$.\\

%$\size(\gamma)$\>\> Size of a discrete loop $\gamma$, page \pageref{para:size}.\\

$[n]$\>\> The set $\{0,\cdots,n-1\}$ for a non-negative set $n$, page \pageref{para:n set}.\\

%$\sim_1^{\epsilon}$\>\> The equivalence relation $\epsilon$-homotopy, Definition \ref{def:epsilon-homotopy}.\\

%$(X,x_0)$\>\> A pointed metric space.\\

$\caL(X,x_0)$\>\> The set of discrete loops on $(X,x_0)$, Proposition \ref{prop:equivalence}.\\

$\caL^\epsilon(X,x_0)$\>\> The set of $\epsilon$-loops on $(X,x_0)$, page \pageref{para:epsilon loop}.\\

%$\alpha\ast \alpha'$\>\> Concatenation of two discrete chains $\alpha$ and $\alpha'$, page \pageref{para:concatenation}.\\

$\pi_1^\epsilon(X,x_0)$\>\> Discrete fundamental group, also as $\ppi_1^{\epsilon}(X,x_0)$, Definition \ref{def:discrete f. group}.\\

%$\pi_1^\mathrm{E}(K,v_0)$\>\> Edge-path group of a simplicial complex $K$, page \pageref{para:geo-realization}.\\

%$|K|$\>\> Geometric realization of a simplicial complex $K$, page \pageref{para:geo-realization}.\\

%$\mathrm{Cr}(X)$\>\> The set of critical values of a metric space $X$, page \pageref{para:critical value}.\\

%$\mu_{X}^{(1)}$\>\> A metric constructed in Proposition \ref{prop:metric mu^1} on a metric space $X$.\\

$\ppi_1(X,x_0)$\>\> Persistent fundamental group of $(X,x_0)$, Definition \ref{def:persistent f. group}.\\

%$\ppi_1^{\epsilon}(X,x_0)$\>\> $\pi_1^\epsilon(X,x_0)$, page \pageref{para:ppi_1^epsilon}.\\

%$A[I]$\>\> Interval (generalized) persistence module, page \pageref{para:interval g.p.m.}.\\

$X^\epsilon$\>\> The $\epsilon$-thickening of a compact metric space $X$, Definition \ref{def:epsilon thickening}.\\

$\ppi_n^\rmK(X,x_0)$\>\> The $n$-th persistent $\rmK$-homotopy group, Definition \ref{def:ppi^K}.\\

$\pH_n^\rmK(X)$\>\> The $n$-th persistent $\rmK$-homology group, Remark \ref{rmk:persistent-K-homology}.\\

$\ppi_n^{\VR}(X,x_0)$\>\> The $n$-th persistent $\VR$-homotopy group, Definition \ref{def:ppi^VR}.\\

%$\ppi_1^{\mathrm{E}}(X,x_0)$\>\> The persistent edge-path group of Vietoris-Rips complexes, page \pageref{para:ppi_1^E}.
%$\sim$\>\> An equivalence relation on $\caL(X,x_0)$ given on page \pageref{para:relation sim}.\\
%$L(X,x_0)$\>\> The quotient of $\caL(X,x_0)$ by $\sim$, page \pageref{para:L space}.\\
%$L^\epsilon(X,x_0)$\>\> The quotient of the set of discrete loops with birth time $\epsilon$ by $\sim$, page \pageref{para:L epsilon space}.
 \end{tabbing} 
 \end{spacing}
}

\section{Background} \label{sec:background}

In this section, we recall some basic notions from geometry, topology, persistent theory, dendrograms, and so on, which will be used in later sections.

\subsection{Metric spaces and general topology} Given a set $X$, an (extended) \emph{pseudo-metric} $d_X$ on $X$ is a function $d:X\times X\to [0,+\infty]$ such that for any $x,y,z\in X$, the following holds: %\facundo{Here we are suing d, below we are using $d_X$. Use $d_X$. Check everywhere}
	\begin{itemize}
		\item $d_X(x,x)=0$;
		\item $d_X(x,y)=d_X(y,x)$;
		\item $d_X(x,z)\leq d_X(x,y)+d_X(y,z)$.
	\end{itemize}
In this paper, the term pseudo-metric will be abused so that when the first condition is not satisfied, we still call it a pseudo-metric. A \emph{metric} $d_X$ on $X$ is a pseudo-metric such that $d_X(x,y)=0$ if and only if $x=y$. A \emph{metric space} is a pair $(X,d)$ where $X$ is a set and $d_X$ is a metric on $X$. A \emph{(pseudo) ultra-metric} $d_X$ on $X$ is a (pseudo) metric satisfying the strong version of the triangle inequality:
	$$d_X(x,z)\leq \max\left\{d_X(x,y),d_X(y,z)\right\},\forall x,y,z\in X.$$
	
\begin{definition}[Length and geodesics]
In a metric space $(X,d_X)$, the \emph{length} of a curve $\gamma:[0,1]\to X$ is defined as $L(\gamma):=\sup \sum_{i=1}^kd_X(\gamma(t_{i-1}),\gamma(t_i))$, where the supremum is taken over all finite sequences $0=t_0\leq t_1\leq \dots\leq t_k=1$.

A curve $\gamma:[0,1]\to X$ is said to be \emph{geodesic} if $L(\gamma([t,t']))=d_X(\gamma(t),\gamma(t'))$ for all $t,t'\in [0,1]$. The space $(X,d_X)$ is said to be \emph{geodesic} if any two points in $X$ is connected by a geodesic.
\end{definition}

\begin{definition}[$\delta$-hyperbolic spaces] 
\label{def:hyp}
A metric space $(X,d_X)$ is said to be \emph{$\delta$-hyperbolic} for some $\delta\geq 0$, if for any $x_1,x_2,x_3,x_4\in X,$ 
$$d_X(x_1,x_3)+d_X(x_2,x_4)\leq \max\{d_X(x_1,x_2)+d_X(x_3,x_4),d_X(x_3,x_2)+d_X(x_1,x_4)\}+\delta.$$

The largest $\delta$ such that $X$ is $\delta$-hyperbolic is called the \emph{hyperbolicity of $X$}, denoted by $\hyp(X)$.
A $0$-hyperbolic space is also called a \emph{tree-like metric space}, in which case the above formula (with $\delta=0$) is called the \emph{four-point condition}.
\end{definition}	

Let $(X,d_X)$ be a metric space. For $A\subset X$, we define 
\begin{itemize}
    \item the \emph{separation} of $A$ to be $\operatorname{sep}(A):=\inf_{x\neq y}d_X(x,y);$ \label{para:sep}
    \item the \emph{diameter} of $A$ to be $\diam(A):=\sup\{d_X(x,y):x,y\in A\}.$
    %\item the \emph{radius} of $A$ to be \label{para:rad(X)} $\rad(A):=\inf\{\sup\{d_X(x,y):y\in A\}:x\in A\}.$
\end{itemize}

For a compact metric space $(X,d_X)$, we denote by $B(x,r)$ the open ball of radius $r$ centered at $x\in X$,\label{para:open-ball} and denote by $A^r$ the \emph{$r$-neighborhood} of a set $A$ in $X$, i.e., 
$A^r:=\bigcup_{x\in A} B(x,r).$

As a topological space with the metric topology, $X$ is said to be \emph{locally path-connected (l.p.c.)} if, for each $\epsilon>0$ and $x\in X$, there exists an open path-connected neighborhood $V\subset B(x,\epsilon)$ of $x$.
The space $X$ is said to be \emph{uniformly locally path connected} (u.l.p.c.) if for each $\epsilon>0$ there exists $\delta>0$ such that for every $x\in X$ any two points in $B(x,\delta)$ are connected by a path $\gamma$ with the image completely contained in $B(x,\epsilon)$. 
The space $X$ is \emph{uniformly semi-locally simply connected} (u.s.l.s.c.) if there exists $\epsilon>0$ small enough so that for every $x\in X$, a loop with image contained in $B(x,\epsilon)$ is null-homotopic in $X$ (but is not necessarily null-homotopic in $B(x,\epsilon)$). We also recall the following notion from \cite{peter1990finiteness}, which describes a condition stronger than being u.s.l.s.c.
\begin{definition}[Locally geometrically $1$-connected spaces] 
\label{def:lgc}
Let $R\geq 0$ and $\rho:[0,R]\to [0,\infty)$ be a function (not necessarily continuous) such that $\rho(\epsilon)\geq \epsilon$ for all $\epsilon$ and $\rho(\epsilon)\to 0$ as $\epsilon\to 0$. The space $X$ is said to be \emph{locally geometrically $1$-connected of size $\rho$}, if for every $x\in X$ and $r\in [0,R]$ the ball $B(x,r)$ is both $0$-connected and $1$-connected inside $B(x,\rho(r))$. In short, we write $X$ is $\operatorname{LGC_1}(\rho,R).$
\end{definition}

\subsection{Gromov-Hausdorff distance} 

We recall the definition of the Gromov-Hausdorff distance from \textsection 7.3.2 of \cite{burago2001course}. Let $Z_1$ and $Z_2$ be subspaces of a metric space $(X,d)$. The \emph{Hausdorff distance} between $Z_1$ and $Z_2$ is \label{para:dh}
\[d_\mathrm{H}^X(Z_1,Z_2):=\inf\left\{r>0: Z_1\subset Z_2^r\text{ and }  Z_2\subset Z_1^r\right \}.\]
\begin{definition}\label{para:dgh}
The \emph{Gromov-Hausdorff distance} between metric spaces $(X,d_X)$ and $(Y,d_Y)$ is the infimum of $r>0$ for which there exist a metric spaces $Z$ and two distance preserving maps $\psi_X:X\to Z$ and $\psi_Y:Y\to Z$ such that $\dhaus^Z(\psi_X(X),\psi_Y(Y))<r$, i.e., 
\begin{equation}\label{eq:dgh}
    \dgh(X,Y):=\inf_{Z,\psi_X,\psi_Y}\dhaus^Z(\psi_X(X),\psi_Y(Y)).
\end{equation}
\end{definition}

For metric spaces $(X,d_X)$ and $(Y,d_Y)$, the \emph{distortion} of a map $\varphi:X\rightarrow Y$ is 
$$\dis(\varphi):=\sup_{x,x'\in X}|d_X(x,x')-d_Y(\varphi(x),\varphi(x'))|.$$
For maps $\varphi:X\rightarrow Y$ and $\psi:Y\rightarrow X$, their \emph{co-distortion} is defined to be $$\codis(\varphi,\psi):=\sup_{x\in X,y\in Y}|d_X(x,\psi(y))-d_Y(\varphi(x),y)|.$$
It follows from Theorem 2.1 of \cite{kalton1999distances} that 
\begin{equation}\label{eq:dgh-maps}
    \dgh(X,Y)=\inf_{\substack{\varphi:X\rightarrow Y\\ \psi :Y\rightarrow X}}\tfrac{1}{2}\max\{\dis(\varphi),\dis(\psi),\codis(\varphi,\psi)\}.
\end{equation}

A \emph{tripod} between two sets $X$ and $Y$ is a pair of surjections from another set $Z$ to $X$ and $Y$, respectively. We will express this by the diagram \label{para:tripod} $$R:X\xtwoheadleftarrow{\phi_X}Z\xtwoheadrightarrow{\phi_Y}Y.$$ 
For $x\in X$ and $y\in Y$, by $(x,y)\in R$ we mean there exists $z\in Z$ such that $\phi_X(z)=x$ and $\phi_Y(z)=y$. When $(X,d_X)$ and $(Y,d_Y)$ are pseudo-metric spaces, the \emph{distortion} of a tripod $R$ between $X$ and $Y$ is defined to be:	\label{para:dis of tripod}
$$\dis(R):=\sup_{z,z'\in Z}\left|  d_X(\phi_X(z),\phi_X(z'))-d_Y(\phi_Y(z),\phi_Y(z'))\right| .$$
A \emph{correspondence} is a subset $R'$ of $X\times Y$ such that for any $x\in X$ there exists at least one $y\in Y$ such that $(x,y)\in R'$ and for any $y\in Y$ there exists at least one $x\in X$ such that $(x,y)\in R'$. Notice that a tripod induces a correspondence and vice versa.
Let $\mathfrak{R}(X,Y)$ denote the collection of all tripods between $X$ and $Y$. 

\begin{theorem}[{Theorem 7.3.25, \cite{burago2001course}}] Given bounded metric spaces $X$ and $Y$, 
\begin{equation}\label{eq:dgh-tripod}
    \dgh(X,Y)=\tfrac{1}{2}\inf_{R\in \mathfrak{R}(X,Y)}\dis(R).
\end{equation}
\end{theorem}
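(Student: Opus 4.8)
The plan is to prove the two inequalities $\dgh(X,Y)\le \tfrac12\inf_{R}\dis(R)$ and $\tfrac12\inf_{R}\dis(R)\le \dgh(X,Y)$ separately, using throughout the observation that a tripod $R:X\xtwoheadleftarrow{\phi_X}Z\xtwoheadrightarrow{\phi_Y}Y$ encodes exactly a \emph{correspondence} $\{(x,y):(x,y)\in R\}\subset X\times Y$ whose two projections are surjective, and that $\dis(R)$ equals $\sup\{|d_X(x,x')-d_Y(y,y')|:(x,y),(x',y')\in R\}$ (as $z,z'$ range over $Z$, the pairs $(\phi_X(z),\phi_Y(z))$ sweep out all of the correspondence). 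Conversely any such correspondence arises from a tripod by taking $Z$ to be the correspondence itself equipped with its two coordinate projections, so the infimum over tripods coincides with the infimum over correspondences.

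For the lower bound $\tfrac12\inf_R\dis(R)\le \dgh(X,Y)$, I would start from any $r>\dgh(X,Y)$ and invoke the definition to obtain a metric space $W$ together with distance-preserving maps $\psi_X:X\to W$ and $\psi_Y:Y\to W$ with $\dhaus^W(\psi_X(X),\psi_Y(Y))<r$. I would then define a correspondence by declaring $(x,y)\in R$ whenever $d_W(\psi_X(x),\psi_Y(y))<r$. The Hausdorff bound guarantees that every $x$ is matched to some $y$ and every $y$ to some $x$, so $R$ is a genuine tripod. For $(x,y),(x',y')\in R$, the triangle inequality in $W$ together with the fact that $\psi_X,\psi_Y$ preserve distances yields $|d_X(x,x')-d_Y(y,y')| = |d_W(\psi_X(x),\psi_X(x'))-d_W(\psi_Y(y),\psi_Y(y'))|\le d_W(\psi_X(x),\psi_Y(y))+d_W(\psi_X(x'),\psi_Y(y'))<2r$, whence $\dis(R)<2r$. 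Letting $r\downarrow\dgh(X,Y)$ closes this direction.

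For the upper bound $\dgh(X,Y)\le \tfrac12\dis(R)$, fix a tripod $R$, set $r:=\tfrac12\dis(R)$, and build an admissible metric on the disjoint union $X\sqcup Y$. The natural candidate keeps $d_X$ and $d_Y$ on the two pieces and sets, for $x\in X$ and $y\in Y$, $d(x,y):=r+\inf\{d_X(x,a)+d_Y(b,y):(a,b)\in R\}$. Since the $X$-projection of $R$ is surjective, each $x$ admits some $(x,b)\in R$, giving $d(x,b)\le r$; symmetrically each $y$ lies within $r$ of $X$. Hence the two isometrically embedded copies satisfy $\dhaus^{X\sqcup Y}(X,Y)\le r$, so $\dgh(X,Y)\le \dhaus^{X\sqcup Y}(X,Y)\le r=\tfrac12\dis(R)$, and taking the infimum over all tripods $R$ gives the claim.

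The main obstacle, and the only place where the distortion bound is genuinely needed, is verifying that $d$ is an honest pseudo-metric on $X\sqcup Y$ restricting to $d_X$ and $d_Y$ — in particular, that routing a path between two points of $X$ through the $Y$-copy cannot produce a shortcut. After selecting near-optimal witnesses $(a,b),(a',b')\in R$ in the relevant infima, the cross-term triangle inequalities reduce to the single estimate $|d_X(a,a')-d_Y(b,b')|\le 2r=\dis(R)$, which is exactly the definition of $\dis(R)$; the within-$X$ and within-$Y$ inequalities are immediate and the remaining mixed cases follow by the same witness-chasing. Once admissibility is established, the inclusions $X\hookrightarrow X\sqcup Y\hookleftarrow Y$ are distance-preserving by construction, and the two directions together give the equality.
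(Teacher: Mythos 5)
The paper does not prove this statement; it is imported verbatim as Theorem 7.3.25 of \cite{burago2001course}, so there is no internal proof to compare against. Your argument is correct and is essentially the standard one from that reference: identifying tripods with correspondences, extracting a correspondence from a near-optimal common embedding to get the lower bound, and gluing $X\sqcup Y$ along a tripod with gap $r=\tfrac12\dis(R)$ (checking the cross triangle inequalities via the distortion bound, and passing to the metric quotient in the degenerate case $\dis(R)=0$) to get the upper bound.
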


% \begin{remark}
% When talking about compact metric spaces, Eq. (\ref{eq:dgh}), Eq. (\ref{eq:dgh-maps}) and Eq. (\ref{eq:dgh-tripod}) are equivalent. 
% \end{remark}

\begin{remark} In this theorem, the formula on the right-hand side applies to pseudo-metric spaces (see \cite{chowdhury2017distances,chowdhury2018metric}) as a generalization of the Gromov-Hausdorff distance. For this reason, we still use the symbol $\dgh$ to denote this generalized distance. 
\end{remark}

\begin{proposition}[p. 255, \cite{burago2001course}]\label{prop:property-GH} For bounded metric spaces $(X,d_X)$ and $(Y,d_Y)$,
 $$\tfrac{1}{2}|\diam(X)-\diam(Y)|\leq \dgh(X,Y)\leq \tfrac{1}{2}\max\{\diam(X),\diam(Y)\}.$$ 
 In particular, if $Y$ is the one-point metric space $*$, then $\dgh(X,\ast)= \tfrac{1}{2}\diam(X).$
\end{proposition}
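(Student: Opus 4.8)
The plan is to work entirely through the tripod reformulation of the Gromov--Hausdorff distance, namely $\dgh(X,Y)=\tfrac12\inf_{R\in\frR(X,Y)}\dis(R)$ (Theorem 7.3.25 quoted above), and to prove the two inequalities separately before combining them to handle the one-point space $\ast$. Throughout, a tripod is written $R:X\xtwoheadleftarrow{\phi_X}Z\xtwoheadrightarrow{\phi_Y}Y$, and I will repeatedly use that $\phi_X$ and $\phi_Y$ are surjective, so that ranging $z,z'$ over $Z$ realizes every pair of points of $X$ (resp. of $Y$).

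For the lower bound $\tfrac12|\diam(X)-\diam(Y)|\le\dgh(X,Y)$, I would fix an arbitrary tripod $R$. Surjectivity of $\phi_X$ gives $\diam(X)=\sup_{z,z'\in Z}d_X(\phi_X(z),\phi_X(z'))$, and likewise $\diam(Y)=\sup_{z,z'\in Z}d_Y(\phi_Y(z),\phi_Y(z'))$. I then invoke the elementary inequality $|\sup_i a_i-\sup_i b_i|\le\sup_i|a_i-b_i|$, applied to $a_{z,z'}=d_X(\phi_X(z),\phi_X(z'))$ and $b_{z,z'}=d_Y(\phi_Y(z),\phi_Y(z'))$, to conclude $|\diam(X)-\diam(Y)|\le\dis(R)$. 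Taking the infimum over $R\in\frR(X,Y)$ and dividing by $2$ yields the bound.

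For the upper bound $\dgh(X,Y)\le\tfrac12\max\{\diam(X),\diam(Y)\}$ it suffices to exhibit a single tripod of controlled distortion. I would take the product tripod $Z=X\times Y$ equipped with the two coordinate projections $\phi_X,\phi_Y$, which are surjective whenever both spaces are nonempty. For $z=(x,y)$ and $z'=(x',y')$ the integrand is $|d_X(x,x')-d_Y(y,y')|$; since $d_X(x,x')\in[0,\diam(X)]$ and $d_Y(y,y')\in[0,\diam(Y)]$ are both nonnegative, this difference is at most $\max\{\diam(X),\diam(Y)\}$. Hence $\dis(R)\le\max\{\diam(X),\diam(Y)\}$, and $\dgh(X,Y)\le\tfrac12\dis(R)$ gives the claim.

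Finally, for the ``in particular'' assertion I set $Y=\ast$, so $\diam(Y)=0$: the lower bound reads $\tfrac12\diam(X)\le\dgh(X,\ast)$ and the upper bound reads $\dgh(X,\ast)\le\tfrac12\max\{\diam(X),0\}=\tfrac12\diam(X)$, pinching both sides to equality. The whole argument is elementary; the only points requiring care are the supremum inequality underpinning the lower bound and the minor bookkeeping that $X$ and $Y$ be nonempty so that the product tripod has surjective legs. This is the main---though quite modest---obstacle.
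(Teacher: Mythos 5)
Your proof is correct. The paper itself gives no proof of this proposition --- it is quoted from Burago--Burago--Ivanov (p.~255) --- so there is nothing to compare line by line; but your argument is a clean derivation entirely within the tripod framework the paper has already set up (Theorem 7.3.25 as quoted), and both halves check out. The lower bound via surjectivity of the two legs and the inequality $|\sup_i a_i-\sup_i b_i|\le\sup_i|a_i-b_i|$ is exactly right, and the product tripod $X\xtwoheadleftarrow{}X\times Y\xtwoheadrightarrow{}Y$ gives $\dis(R)\le\max\{\diam(X),\diam(Y)\}$ since both distances lie in $[0,\diam(X)]$ and $[0,\diam(Y)]$ respectively. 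For reference, the classical proof of the upper bound in \cite{burago2001course} instead builds an explicit metric on the disjoint union $X\sqcup Y$ with constant cross-distance $\tfrac{1}{2}\max\{\diam(X),\diam(Y)\}$ and bounds the Hausdorff distance; your product tripod encodes the same estimate in correspondence language and is arguably more economical given the paper's conventions. The nonemptiness caveat you flag is the standing convention for metric spaces and poses no real obstacle.
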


A \emph{pointed metric space} $(X,x_0,d_X)$ is a metric space $(X,d_X)$ together with a distinguished basepoint $x_0\in X$. For the sake of simplicity, we will often omit the distance function $d_X$ and denote a pointed metric space by $(X,x_0)$. Given basepoints $x_0\in X$ and $y_0\in Y$, a \emph{pointed tripod} is a tripod $R:X\xtwoheadleftarrow{\phi_X}Z\xtwoheadrightarrow{\phi_Y}Y$ such that $\phi_X^{-1}(x_0)\cap\phi_Y^{-1}(y_0)$ is non-empty. Let $\mathfrak{R}^{\pt}((X,x_0),(Y,y_0))$ denote the collection of pointed tripods between $(X,x_0)$ and $(Y,y_0)$. The \emph{pointed Gromov-Hausdorff distance} between $X$ and $Y$ is defined to be  \label{para:pt dgh}
	$$\dgh^{\pt}((X,x_0),(Y,y_0)):=\tfrac{1}{2}\inf_{R\in \mathfrak{R}^{\pt}((X,x_0),(Y,y_0))}\dis(R).$$ 
It is clear that
\begin{equation*}\label{eq:dgh-dgh^pt}
    \dgh(X,Y)=\inf_{x_0\in X,\,y_0\in Y}\dgh^{\pt}((X,x_0),(Y,y_0)).
\end{equation*}

\subsection{Persistence theory}\label{subsec:persistent theory} 

We recall the definitions of persistence modules and the interleaving distance, as well as the stability theorem for the interleaving distance and the bottleneck distance, from \cite{oudot2015persistence,blumberg2017universality}.  

Let $(\R_+,\leq)$ be a poset category, i.e., a category whose objects are real numbers and the set of morphisms from an object $a$ to an object $b$ consists of a single morphism if $a\leq b$ and is otherwise empty.
Let $\delta\geq0$. We define $S_{\delta}:(\R_+,\leq)\to (\R_+,\leq)$ to be the functor given by $S_{\delta}(t)=t+\delta$ and define $\eta_{\delta}:\Id_{(\R_+,\leq)}\Rightarrow S_{\delta}$ to be the natural transformation given by $\eta_{\delta}(t):t\leq t+\delta$. Note that $S_{\delta}\circ S_{\delta'}=S_{\delta+\delta'}$ and $\eta_{\delta}\circ\eta_{\delta'}=\eta_{\delta+\delta'}.$

Let $\caC$ be any arbitrary category. The collections of functors $$\bbV:(\R_+,\leq)\to \caC,$$ and natural transformations between functors forms a category denoted by $\PcaC^{(\R_+,\leq)}$\label{para:category PC} (see \textsection 2 of \cite{Bubenik2014}). 
Let $\bbV$ and $\bbW$ be objects in $\PcaC^{(\R_+,\leq)}$.
\begin{itemize}
    \item A natural transformation $f:\bbV\Rightarrow\bbW,$
also called a \emph{homomorphism} from $\bbV$ to $\bbW$, is a family of morphisms in $\caC$: $\{f_t:V_t\to W_t\}_{t\in \R_+ }$ such that for any $t\leq t'$ the following diagram commutes:
	\begin{center}
		\begin{tikzcd} 
			V_t \ar[d, "f_t" left] 
			\ar[r,"v_{t,t'}"]
			& 
			V_{t'}			
			\ar[d,"f_{t'}"]
			\\
			W_{t}
			\ar[r,"w_{t,t'}" below]
			& 
			W_{t'}.
		\end{tikzcd}
	\end{center} 
    \label{para:Hom(V,W)}  
    If $f_t$ is an isomorphism in $\caC$ for each $t\in \R_+ $, then $f=\{f_t\}$ is called a \emph{(natural) isomorphism} between $\bbV$ and $\bbW$, in which case we write $\bbV\cong\bbW.$
    \item  A \emph{homomorphism of degree $\delta$} from $\bbV$ to $\bbW$ is a natural transformation 
\label{para:Hom^delta(V,W)} 
$$f:\bbV\Rightarrow\bbW\circ S_{\delta}.$$ 
\end{itemize}

For an object $A\in \caC$ and an interval $I\subset \R_+ $, the \emph{interval (generalized) persistence module} $A[I]$ is defined as follows: $A[I]=\bbmzero$ if $I$ is empty and otherwise,   \label{para:interval g.p.m.}
\begin{equation*}
\left( A[I]\right)(t)=\begin{cases}A,&\mbox{if $t\in I$,}\\
		0,&\mbox{otherwise,}
		\end{cases} 
\end{equation*} 
where $(A[I])(s\leq t)=\Id_{A}$ when $s,t\in I$, and $(A[I])(s\leq t)=0$ otherwise.

In this paper will consider the following choices of $\caC$. Let $\F$ be a field. We denote the category of sets by $\set$, the category of vector spaces over $\F$ by $\vs$, and the category of groups by $\grp$. Also, let $\topo$ be the category of compactly generated weakly Hausdorff topological spaces \cite{blumberg2017universality}, and let $\topo^*$ be the category of pointed compactly generated weakly Hausdorff topological spaces. 
\begin{itemize}
    \item $\caC=\set$ yields $\Pset^{(\R_+,\leq)}$, whose objects are called \emph{persistent sets};
    \item $\caC=\vs$ yields $\Pvec^{(\R_+,\leq)}$, whose objects are called \emph{persistence modules};
    \item $\caC=\grp$ yields $\Pgrp^{(\R_+,\leq)}$, whose objects are called \emph{persistent groups};
    \item $\caC=\topo$ yields $\Ptop^{(\R_+,\leq)}$, whose objects are called \emph{$\R_+ $-spaces}.
\end{itemize}

\begin{definition}[Definition 3.1, \cite{Bubenik2014}]\label{def:delta-interleaving} A $\delta$-interleaving of $\bbV$ and $\bbW$ consists of natural transformations $f:\bbV\Rightarrow \bbW\circ S_{\delta}$ and $g:\bbW\Rightarrow \bbV\circ S_{\delta}$ %(i.e., $f\in \Hom^{\delta}(\bbV,\bbW)$ and $g\in \Hom^{\delta}(\bbW,\bbV)$) 
such that the following diagrams commute for all $t\in \R_+ $: 
\begin{equation*}
    (g S_{\delta}) f=\bbV \eta_{2\delta} \text{ and } (f S_{\delta}) g=\bbW \eta_{2\delta}. 
\end{equation*}
If there exists a $\delta$-interleaving $(\bbV,\bbW,f,g)$, we say that $\bbV$ and $\bbW$ are \emph{$\delta$-interleaved}. 
\end{definition}

In other words, a $\delta$-interleaving $(\bbV,\bbW,f,g)$ consists of families of morphisms $\{f_t:V_t\to W_{t+\delta}\}_{t\in \R_+ }$ and $\{g_t:W_t\to V_{t+\delta}\}_{t\in \R_+ }$ such that the following diagrams commute for all $t\leq t'$:
	\begin{center}
		\begin{tikzcd}[column sep={6em,between origins}]
			V_t \ar[dr, "f_t" below left] 	
			\ar[r,"v_{t,t'}"]
			& 
			V_{t'}			
			\ar[dr,"f_{t'}" ]&
			\\
			&W_{t+\delta}
			\ar[r,"w_{t+\delta,t'+\delta}" below]
			& 
			W_{t'+\delta}
		\end{tikzcd}
\hspace{0.6cm}
	\begin{tikzcd}[column sep={6em,between origins}]
			& V_{t+\delta}
			\ar[r,"v_{t+\delta,t'+\delta}"]
			& 
			V_{t'+\delta}
			\\
			W_t
			\ar[ur, "g_t"] 
			\ar[r,"w_{t,t'}" below]
			& 
			W_{t'}
			\ar[ur,"g_{t'}" below right]&
		\end{tikzcd}
	\end{center} 
and
\begin{center}
		\begin{tikzcd}
			V_t
			\ar[dr, "f_t" below left ] 
			\ar[rr,"v_{t,t+2\delta}"]%
			&& 
			V_{t+2\delta}		
			\\
			& W_{t+\delta}
			\ar[ur,"g_{t+\delta}" below right ]
			& 
		\end{tikzcd}
\hspace{0.6cm}
		\begin{tikzcd}
			& V_{t+\delta}
			\ar[dr,"f_{t+\delta}"]
			& 
			\\
			W_t
			\ar[ur, "g_t"] 
			\ar[rr,"w_{t,t+2\delta}" below]
			&& 
			W_{t+2\delta}.	
		\end{tikzcd}
	\end{center}

\begin{definition}\label{def:interleaving}
Let $\bbV$ and $\bbW$ be objects of $\PcaC^{(\R_+,\leq)}$. The \emph{interleaving distance} between $\bbV$ and $\bbW$ is 
		$$\di(\bbV,\bbW):=\inf\{\delta\geq0: \bbV \text{ and }\bbW\text{ are }\delta\text{-interleaved}\}.$$
Here we follow the convention that $\inf \emptyset=+\infty.$ A quick fact is that $\di$ descends to a distance on isomorphism classes of objects in $\PcaC^{(\R_+,\leq)}$. 
\end{definition}

\paragraph*{Persistence modules and persistence diagrams.} Let $\caC=\vs$. Recall from \cite{oudot2015persistence} that a persistence module $\bbV$ is \emph{$\rmq$-tame} if $\mathrm{rank}(v_{t,t'}:V_t\to V_{t'})<\infty$ whenever $t<t'$. If a persistence module $\bbV$ can be decomposed as a direct sum of interval modules (e.g. when $\bbV$ is $\rmq$-tame), say $\bbV\cong \bigoplus_{l\in L}\F(p_l^*,q_l^*)$ where $*$ indicates whether the interval is half-open or not (see \cite{CSGO16}), then its \emph{(undecorated) persistence diagram} is the multiset $$\dgm(\bbV):=\{ (p_l,q_l):l\in L\} -\Delta,$$ where $\Delta:=\{(r,r):r\in \R\}$ is the diagonal in the real plane.

The \emph{bottleneck distance} between persistence diagrams, and more generally between multisets $A$ and $B$ of upper-diagonal points in $\Bar{\R}^2$, where $\Bar{\R}$ are the extended real numbers $\R\cup\{\pm\infty\}$, is defined as follows: \label{para:db} 
    $$\db(A,B):=\inf\left\{ \sup_{a\in A}\|a-\phi(a)\|_{\infty}:\phi: A\cup \Delta^{\infty}\to B\cup \Delta^{\infty}\text{ a bijection }\right\}.$$
Here $\|(p,q)-(p',q')\|_{\infty}:=\max\{|p-p'|,|q-q'|\}$ for each $p,q,p',q'\in \R$ and $\Delta^{\infty}$ is the multiset consisting of each point on the diagonal $\{(r,r):r\in \bar{\R}\}$ in the extended real plane, taken with infinite multiplicity (see \cite{chowdhury2018functorial}).
\begin{theorem}[Theorem 3.4 of \cite{lesnick2015theory}]%[{Theorem 5.14, \cite{CSGO16}}] 
\label{thm:di-db}
Let $\bbV$ and $\bbW$ be $\rmq$-tame persistence modules. Then,
		$$\di(\bbV,\bbW)=\db(\dgm(\bbV),\dgm(\bbW)),$$
where $\di(\bbV,\bbW)$ is defined in Definition \ref{def:interleaving} with $\caC=\vs$.
\end{theorem}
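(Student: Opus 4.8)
The plan is to establish the two inequalities
$$\di(\bbV,\bbW)\leq \db(\dgm(\bbV),\dgm(\bbW)) \quad\text{and}\quad \db(\dgm(\bbV),\dgm(\bbW))\leq \di(\bbV,\bbW)$$
separately. The first, which I will call the \emph{soft} direction, manufactures an interleaving out of a bottleneck matching; the second, \emph{algebraic stability}, is the substantive half and carries the content of the theorem.

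For the soft direction I would start from a matching $\phi$ between $\dgm(\bbV)$ and $\dgm(\bbW)$ with $\sup_a\|a-\phi(a)\|_\infty$ arbitrarily close to $\db(\dgm(\bbV),\dgm(\bbW))=:\delta$, and first treat the interval-decomposable case. Writing $\bbV\cong\bigoplus_l \F[I_l]$ and $\bbW\cong\bigoplus_m \F[J_m]$, the matching pairs intervals whose corresponding endpoints differ by at most $\delta$, while unmatched intervals have length at most $2\delta$. The core computation is then entirely about interval modules: two interval modules whose endpoints differ by at most $\delta$ carry a canonical $\delta$-interleaving assembled from their structure maps, and any interval module of length at most $2\delta$ is $\delta$-interleaved with $\bbmzero$. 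Taking the direct sum of these summand-wise interleavings yields a global $\delta$-interleaving of $\bbV$ and $\bbW$, since a direct sum of interleaving pairs is again an interleaving pair. The only subtlety is that a general $\rmq$-tame module need not be interval-decomposable; I would dispose of this by approximating $\bbV$ and $\bbW$ by interval-decomposable modules having the same diagrams off the diagonal and passing to the infimum over $\delta$.

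For algebraic stability I would use the persistence-measure formalism of \cite{CSGO16}, which is precisely what makes $\rmq$-tameness the natural hypothesis. To each $\rmq$-tame module one attaches a locally finite measure on rectangles $R=[a,b]\times[c,d]$ (with $b\leq c$) defined by an alternating sum of ranks $\rk(v_{s,t})$ of its structure maps; $\rmq$-tameness guarantees this measure is finite on every such rectangle and that $\dgm(\bbV)$ is its well-defined support. The geometric input is a \emph{box inequality}: a $\delta$-interleaving $(f,g)$ forces, for every rectangle $R$, the measure of $\bbV$ on $R$ to be bounded by the measure of $\bbW$ on a $\delta$-thickened rectangle $R^\delta$, and symmetrically with the roles reversed. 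From this family of inequalities I would extract a $\delta$-bottleneck matching by a marriage/Hall-type argument adapted to the $\ell^\infty$ metric and to the diagonal $\Delta$, matching points of $\dgm(\bbV)$ to points of $\dgm(\bbW)$ within $\ell^\infty$-distance $\delta$ and sending all unmatched points to $\Delta$.

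The main obstacle is this last step: converting the box inequalities into an honest bottleneck matching, uniformly for $\rmq$-tame modules that need not be pointwise finite-dimensional or interval-decomposable. The Hall-type argument must verify the relevant bipartite matching condition on every finite subfamily of diagram points while simultaneously respecting the box constraints and the option of matching to $\Delta$, and must then survive a limiting argument as the finite subfamilies exhaust the diagrams. (In the pointwise finite-dimensional case one can shortcut the entire hard direction via the induced-matching theorem, reading a canonical matching off the images of the interleaving morphisms; but it is the measure approach that delivers the stated $\rmq$-tame generality.)
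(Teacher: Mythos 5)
The paper does not prove this statement: it is imported verbatim from \cite{CSGO16} (where it is Theorem 5.14, the isometry theorem) and used as a black box, so there is no in-paper argument to compare yours against. Judged on its own terms, your outline is a faithful sketch of the proof in that reference rather than a new route: the converse (``soft'') inequality via summand-wise $\delta$-interleavings of matched interval modules plus $\delta$-interleavings of short intervals with $\bbmzero$, and the algebraic-stability inequality via the rectangle-measure formalism, the box inequality for $\delta$-interleaved modules, and a Hall/K\"onig-type criterion for the existence of a $\delta$-matching checked on finite subfamilies and passed to the limit. You are also right, against the paper's own parenthetical on page~\pageref{para:db}ff., that a $\rmq$-tame module need not be interval-decomposable; the diagram in that generality is defined through the measure, which is exactly why the measure formalism is the correct vehicle for the hard direction.

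One step of the soft direction is stated too strongly. You propose to ``approximate $\bbV$ and $\bbW$ by interval-decomposable modules having the same diagrams off the diagonal,'' but no such companion is available in general. The standard device is the $\epsilon$-smoothing $t\mapsto \im\lbracket V_{t-\epsilon}\to V_{t+\epsilon}\rbracket$, which is pointwise finite-dimensional (hence decomposable, by Crawley-Boevey) and $\epsilon$-interleaved with $\bbV$, but whose diagram is \emph{not} the same: each point moves by $(+\epsilon,-\epsilon)$ toward the diagonal. The limiting argument still closes, because these diagrams converge to $\dgm(\bbV)$ in bottleneck distance and the triangle inequality for $\di$ absorbs the $\epsilon$; it does not close for the reason you give. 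The rest checks out: an unmatched point at $\ell^\infty$-distance at most $\delta$ from $\Delta^\infty$ corresponds to an interval of length at most $2\delta$, whose $2\delta$-shift map vanishes, so it is indeed $\delta$-interleaved with $\bbmzero$; and a direct sum of interleaving pairs is an interleaving pair. With that one correction your proposal is a sound reconstruction of the cited proof.
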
 
	
\paragraph*{Vietoris-Rips complexes.} By first constructing a simplicial filtration out of a metric space and then applying the simplicial homology functor, one obtains a persistence module that encodes computable invariants of the original space. One such example is the Vietoris-Rips filtration. For a compact metric space $(X,d_X)$ and $\epsilon\geq 0$, the \emph{Vietoris–Rips complex} $\VR_{\epsilon }(X)$ is the simplicial complex with vertex set $X$, where 
\begin{center}
    a finite subset $\sigma\subset X$ is a simplex of $\VR_{\epsilon }(X)$ iff $\diam(\sigma)\leq\epsilon$.
\end{center} 
The collection $\{\VR_{\epsilon }(X)\}_{\epsilon\geq 0}$ together with the natural simplicial inclusions forms a simplicial filtration of the power set of $X$, denoted by $\VR_{\bullet}(X)$.\label{para:VR} 

Applying the $k$-th homology $\rmH_k(\cdot;G)$ with coefficients in an Abelian group $G$ to the filtration $\VR_{\bullet}(X)$, we obtain a persistent object $\pH_k^{\VR}(X;G)$ given by
$$\pH_k^{\VR,\epsilon }(X;G):=\rmH_k(\VR_{\epsilon }(X);G),$$
together with the maps induced by natural simplicial inclusions. 
The collection of these modules for $k$ ranging over all dimensions is called the \emph{persistent homology of $X$}. This paper considers $G=\Z$ or a field $\F$. We will omit the coefficient group when $G=\Z$. 
When $G=\F$, $\pH_k^{\VR}(X;\F)$ is a persistence module, and it is shown in \cite{CSO14} that if $X$ is compact then $\pH_k^{\VR}(X;\F)$ is $\rmq$-tame for all $k\in \Z_{\geq 0}$. The persistence diagram corresponding to $\pH_k^{\VR}(X;\F)$ is denoted by $\dgm_k(X)$ for each $k\in \Z_{\geq 0}$.\label{para:PH} 
\begin{theorem}[{Stability Theorem $\db$, \cite{CCSGMO09,CSO14}}]\label{thm:stab-bot} Let $(X,d_X)$ and $(Y,d_Y)$ be compact metric spaces. Then, for any $k\in \Z_{\geq 0},$
\[\di (\pH_k^{\VR}(X;\F),\pH_k^{\VR}(Y;\F)) = \db(\dgm_k(X),\dgm_k(Y))\leq 2\cdot \dgh(X,Y).\]
\end{theorem}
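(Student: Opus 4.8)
The plan is to prove the two assertions separately: the equality $\di=\db$ is an instance of the Isometry Theorem, while the inequality $\db\leq 2\dgh$ is the geometric core of the statement. For the equality, note that the excerpt already records (via \cite{CSO14}) that $\pH_k^{\VR}(X;\F)$ and $\pH_k^{\VR}(Y;\F)$ are $\rmq$-tame whenever $X,Y\in\caM$, so Theorem 5.14 of \cite{CSGO16} applies verbatim and yields $\di(\pH_k^{\VR}(X;\F),\pH_k^{\VR}(Y;\F))=\db(\dgm_k(X),\dgm_k(Y))$. Hence it suffices to bound $\di$ by $2\dgh(X,Y)$.

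For the inequality I would pass to the tripod description of the Gromov--Hausdorff distance, Theorem 7.3.25 of \cite{burago2001course}, namely $\dgh(X,Y)=\tfrac12\inf_{R}\dis(R)$. Fix a tripod $R:X\xtwoheadleftarrow{\phi_X}Z\xtwoheadrightarrow{\phi_Y}Y$ and set $\eta:=\dis(R)$; the goal is to produce an $\eta$-interleaving of $\pH_k^{\VR}(X;\F)$ and $\pH_k^{\VR}(Y;\F)$, for then $\di\leq\eta$ and taking the infimum over all $R$ gives $\di\leq\inf_R\dis(R)=2\dgh(X,Y)$. Using that $\phi_X,\phi_Y$ are surjective, choose vertex maps $u:X\to Y$ and $w:Y\to X$ with $(x,u(x))\in R$ for all $x\in X$ and $(w(y),y)\in R$ for all $y\in Y$. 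The distortion bound $|d_X-d_Y|\leq\eta$ across $R$-related pairs shows that $u$ sends every simplex of $\VR_\epsilon(X)$ (a finite set of diameter $\leq\epsilon$) to a set of diameter $\leq\epsilon+\eta$ in $Y$, so $u$ is simplicial as a map $\VR_\epsilon(X)\to\VR_{\epsilon+\eta}(Y)$ for every $\epsilon$, and symmetrically for $w$. Applying $\rmH_k(-;\F)$ yields degree-$\eta$ homomorphisms $f=u_*$ and $g=w_*$; naturality with respect to the inclusion-induced structure maps is automatic because $u,w$ are single vertex maps commuting with all simplicial inclusions.

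The main work is verifying the two interleaving identities of Definition \ref{def:delta-interleaving}. Here I would show that the composite $w\circ u:\VR_\epsilon(X)\to\VR_{\epsilon+2\eta}(X)$ is \emph{contiguous} to the canonical inclusion. Concretely, for a simplex $\sigma=\{x_0,\dots,x_k\}$ of $\VR_\epsilon(X)$, repeated application of the distortion bound gives $d_X(x_i,w(u(x_j)))\leq\epsilon+2\eta$ and $d_X(w(u(x_i)),w(u(x_j)))\leq\epsilon+2\eta$, so the union $\sigma\cup(w\circ u)(\sigma)$ has diameter $\leq\epsilon+2\eta$ and is therefore a single simplex of $\VR_{\epsilon+2\eta}(X)$. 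Thus the inclusion and $w\circ u$ send each simplex into a common simplex, i.e.\ they are contiguous, hence induce the same map on $\rmH_k(-;\F)$, which is exactly the structure map of $\pH_k^{\VR}(X;\F)$ from scale $\epsilon$ to $\epsilon+2\eta$; this is the identity $(gS_\eta)f=\bbV\eta_{2\eta}$. The symmetric estimate for $u\circ w$ on $\VR_\bullet(Y)$ supplies the other identity, completing the $\eta$-interleaving.

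I expect the contiguity step to be the crux: one must package the three separate diameter estimates into the single statement that $\sigma$ and its image lie in a common simplex, and then invoke the standard fact that contiguous simplicial maps are chain-homotopic and hence agree on homology. Everything else (the simplicial property of $u$ and $w$, naturality, and passing to the infimum over tripods) is bookkeeping. Finally, I would remark that the factor $2$ is intrinsic and traces precisely to the factor $\tfrac12$ relating $\dgh$ to tripod distortion in Theorem 7.3.25 of \cite{burago2001course}.
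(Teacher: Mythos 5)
The paper does not actually prove this statement: it is quoted verbatim as a background result with a citation to \cite{CCSGMO09,CSO14}, so there is no internal proof to compare against. Your argument is correct and is essentially the standard proof from those references: $\rmq$-tameness (from \cite{CSO14}, for $X,Y\in\caM$) plus the isometry theorem gives the equality $\di=\db$, and the tripod/contiguity argument gives the inequality. Your three diameter estimates for $\sigma\cup(w\circ u)(\sigma)$ are each justified by applying the distortion bound to an appropriate pair of $R$-related points, so the contiguity step — which you correctly identify as the crux — goes through. The only stylistic difference from \cite{CSO14} is that there the authors work with correspondences and ``$\varepsilon$-simplicial'' multivalued maps rather than choosing single-valued sections $u,w$ of the tripod; the two formulations are equivalent, because any two sections of a correspondence of distortion $\eta$ are contiguous as simplicial maps $\VR_{\epsilon}(X)\to\VR_{\epsilon+\eta}(Y)$, so the induced map on homology is independent of the choice (a point worth recording explicitly, since otherwise $u_*$ appears to depend on the section).
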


\subsection{Dendrograms and treegrams} For this section, we refer to \cite{carlsson2010characterization,smith2016hierarchical} for further details. Let $A$ be a set, and let $\Part(A)$ be the set of all partitions of $A$. 
\begin{definition}\label{def:dendrogram} 
A \emph{dendrogram} over $A$ is a pair $(A,\theta_{A})$, where $\theta_A:\R_{\geq 0}\rightarrow \Part(A)$ satisfies:\label{para:dendrogram of A} 
	\begin{itemize}
		\item [(1)] If $t\leq s$, then $\theta_A(t)$ refines $\theta_A(s)$.
		\item [(2)] For all $r$ there exists $\delta>0$ s.t. $\theta_A(r)=\theta_A(t)$ for all $t\in [r,r+\delta]$.
		\item [(3)] There exists $t_0$ such that $\theta_A(t)$ is the single block partition for all $t\geq t_0$.
		\item [(4)] $\theta_A(0)$ is the partition into singletons.
	\end{itemize}
If all conditions except for (4) are satisfied, we call $(A,\theta_{A})$ a \emph{generalized dendrogram}.
\end{definition}
The following function on $A\times A$ defines an ultra-metric on $A$: for any $x,x'\in A$, \label{para:metric by dendrogram} 
	\[\mu_{\theta_A}(x,x'):=\min\{\delta:x\text{ and }x'\text{ belong to the same block of }\theta_A(\delta)\}.\]

Let $(A,\theta_A)$ and $(B,\theta_B)$ be two dendrograms. Given $\delta\geq 0$, we say that the set maps $\phi:A\to B$ and $\psi:B\to A$ provide a \emph{$\delta$-interleaving} between $\theta_A$ and $\theta_B$ iff for all $x\in A, y\in B$ and $t\geq 0$,
\begin{center}
    $\phi(\langle x\rangle_t^A)\subset \langle\phi(x)\rangle_{t+\delta}^B $ and     $\psi(\langle y\rangle_t^B)\subset \langle\psi(y)\rangle_{t+\delta}^A $,
\end{center}
where $\langle x\rangle_t^A$ represents the subset of $A$ containing $x$ in the partition $\theta_A(t)$. If such a $\delta$-interleaving exists, we say that $\theta_A$ and $\theta_B$ are $\delta$-interleaved. The \emph{interleaving distance} between dendrograms $\theta_A$ and $\theta_B$ is defined to be
	$$\di(\theta_A,\theta_B):=\inf \left\{\delta> 0:A\text{ and }B \text{ are }\delta\text{-interleaved}\right\}.$$ 
	
\begin{remark}Each dendrogram $\theta_A$ can be viewed as an element of $\Pset$, following from the definition of $\theta_A$ and the fact that $\Part(A)$ is a subcategory of $\set$. Using Definition \ref{def:interleaving}, we can construct the interleaving distance, denoted by $\di^{\set}$, between dendrograms $(A,\theta_A)$ and $(B,\theta_B)$. It turns out that $\di(\theta_A,\theta_B)=\di^{\set}(\theta_A,\theta_B)$. Indeed, each $\delta$-interleaving $(\phi:A\to B,\psi:B\to A)$ clearly induces a $\Pset$-$\delta$-interleaving. Conversely, given a $\Pset$-$\delta$-interleaving $(\Phi:\Part(A)\to \Part(B),\Psi:\Part(B)\to (A))$, we can define map $\phi:A\to B$ with $a\mapsto b\in \Phi(0)(\{a\})$, and similarly define $\psi:B\to A$. Although $\phi$ and $\psi$ are not uniquely defined, it is not hard to see that the resulting $(\phi,\psi)$ is always a $\delta$-interleaving regardless of the choices.
\end{remark}

The functor $\caV_{\F}\circ \theta_A:(\R_+,\leq)\rightarrow \vs$, with $\caV_{\F}\circ \theta_A(t)=\caV_{\F}(\theta_A(t))$ and $\caV_{\F}\circ \theta_A(t\leq s)=\caV_{\F}(\theta_A(t))\rightarrow \caV_{\F}(\theta_A(s))$ induced by refinement of partitions, is then a persistence module. 

\begin{proposition}[\textsection3.5, \cite{carlsson2010characterization}]\label{prop:stab-dendrogram} Let $(A,\theta_A)$ and $(B,\theta_B)$ be two dendrograms. Then,
$$\tfrac{1}{2}\cdot\di (\caV_{\F}\circ\theta_A,\caV_{\F}\circ\theta_B)\leq  \dgh((A,\mu_{\theta_{A}}),(B,\mu_{\theta_B}))\leq \di (\theta_A,\theta_B) .$$ 
\end{proposition}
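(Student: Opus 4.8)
The plan is to establish the two inequalities separately, since they have rather different flavors. The right-hand inequality $\dgh((A,\mu_{\theta_A}),(B,\mu_{\theta_B})) \leq \di(\theta_A,\theta_B)$ I would prove by converting a dendrogram interleaving into a correspondence of small distortion between the two induced ultrametric spaces. The left-hand inequality $\tfrac12 \di(\caV_\F\circ\theta_A, \caV_\F\circ\theta_B) \leq \dgh(\cdots)$ I would prove by recognizing $\caV_\F\circ\theta_X$ as the $0$-dimensional Vietoris--Rips persistent homology of $(X,\mu_{\theta_X})$ and quoting the stability theorem already available to us.

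For the left inequality, the first step is the natural identification
\[ \caV_\F\circ\theta_X \;\cong\; \pH_0^{\VR}(X,\mu_{\theta_X};\F) \qquad (X=A,B), \]
as objects of $\Pvec^{(\R_{\geq 0},\leq)}$. The content is a small lemma about ultrametrics: in $(X,\mu_{\theta_X})$ the connected components of $\VR_t(X,\mu_{\theta_X})$ coincide with the blocks of $\theta_X(t)$. Indeed, a chain $x=x_0,\dots,x_k=x'$ with $\mu_{\theta_X}(x_{i},x_{i+1})\leq t$ forces $\mu_{\theta_X}(x,x')\leq t$ by the strong triangle inequality, so chain-connectedness at scale $t$ is the same as lying in one block $\langle x\rangle_t^X$; the refinement maps of $\theta_X$ then match the maps on $H_0$ induced by the inclusions $\VR_t\hookrightarrow\VR_s$. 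Granting this isomorphism, Theorem \ref{thm:stab-bot} applied with $k=0$ gives $\di(\caV_\F\circ\theta_A,\caV_\F\circ\theta_B)=\di(\pH_0^{\VR}(A),\pH_0^{\VR}(B))\leq 2\,\dgh((A,\mu_{\theta_A}),(B,\mu_{\theta_B}))$, which is exactly the left inequality.

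For the right inequality, fix $\delta>\di(\theta_A,\theta_B)$ and a corresponding interleaving given by maps $\phi\colon A\to B$ and $\psi\colon B\to A$. Unwinding the two containment conditions yields the Lipschitz-type estimates $\mu_{\theta_B}(\phi(x),\phi(x'))\leq \mu_{\theta_A}(x,x')+\delta$ and $\mu_{\theta_A}(\psi(y),\psi(y'))\leq \mu_{\theta_B}(y,y')+\delta$; using the identification $\di(\theta_A,\theta_B)=\di^{\set}(\theta_A,\theta_B)$ from the Remark, I would additionally invoke the round-trip bounds $\mu_{\theta_A}(x,\psi\phi(x))\leq 2\delta$ and $\mu_{\theta_B}(y,\phi\psi(y))\leq 2\delta$. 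I then take the correspondence $R=\{(x,\phi(x)):x\in A\}\cup\{(\psi(y),y):y\in B\}$ and estimate $\dis(R)$ by splitting into same-type and mixed pairs. Each case reduces, via the strong triangle inequality together with the four displayed estimates, to a bound of the form $2\delta$; hence $\dgh\leq\tfrac12\dis(R)\leq\delta$, and letting $\delta\downarrow\di(\theta_A,\theta_B)$ finishes the proof.

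The main obstacle is the distortion estimate for the right inequality. The subtlety, easy to overlook, is that the two containment conditions by themselves provide only one-sided control of distances and are genuinely too weak: for instance, comparing the one-point space with a two-point space of diameter $M$, one checks that both containments hold with $\delta=0$ while $\dgh=M/2$. The round-trip conditions packaged into $\di^{\set}$ are precisely what rule this out, and it is the strong (ultrametric) triangle inequality that lets the mixed-pair estimates close at $2\delta$ rather than degrade to $3\delta$; keeping careful track of which of the four estimates is invoked in each case is the only delicate bookkeeping.
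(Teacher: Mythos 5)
Your treatment of the right-hand inequality uses the same correspondence $R=\{(x,\phi(x)):x\in A\}\cup\{(\psi(y),y):y\in B\}$ as the paper's proof, but you are more careful: the paper simply asserts that this tripod has distortion at most $2\delta$, whereas you correctly observe that the two containment conditions alone give no distortion bound at all (your one-point versus two-point example is exactly the right counterexample) and that the round-trip estimates $\mu_{\theta_A}(x,\psi\phi(x))\leq 2\delta$ and $\mu_{\theta_B}(y,\phi\psi(y))\leq 2\delta$ are what close the argument. The one caveat is where you propose to get those estimates from: they do \emph{not} follow from the printed definition of a $\delta$-interleaving of dendrograms, and the Remark's assertion that every containment-interleaving induces a $\Pset$-$\delta$-interleaving is refuted by your own example (a one-point and a two-point dendrogram are $0$-interleaved in the containment sense but their persistent sets are not isomorphic). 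So you cannot ``additionally invoke'' the round-trip bounds via that Remark; you must take them as part of the definition of the interleaving (equivalently, work with $\di^{\set}$ throughout, which is evidently the intended reading). Granting that, your case analysis is sound, and the strong triangle inequality does keep both the same-type and the mixed-pair estimates at $2\delta$.

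For the left-hand inequality you take a genuinely different route. The paper extracts maps $\phi,\psi$ from a correspondence of distortion at most $2\dgh$ and checks directly that they induce a $\delta$-interleaving of the free persistence modules, the composition identity again coming from the fact that both maps are carried by a single correspondence (so that $\mu_{\theta_A}(x,\psi\phi(x))\leq\delta$). You instead identify $\caV_{\F}\circ\theta_X$ with $\pH_0^{\VR}\lbracket X,\mu_{\theta_X};\F\rbracket$ --- which is the Fact in \S\ref{sec:p-set} specialized to the ultrametric space $(X,\mu_{\theta_X})$, where the strong triangle inequality makes the chain-equivalence classes at scale $t$ coincide with the blocks of $\theta_X(t)$ --- and then quote Theorem~\ref{thm:stab-bot}. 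This is correct and shorter, with the small proviso that Theorem~\ref{thm:stab-bot} is stated for spaces in $\caM$, i.e.\ compact ones, while $(A,\mu_{\theta_A})$ is only guaranteed to be bounded; you should either restrict to that case or note that the half of the theorem you need, $\di\leq 2\cdot\dgh$, is proved via correspondences and requires only boundedness.
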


\begin{proof} 
The leftmost inequality follows from an argument in \cite{carlsson2010characterization}. Assume $\dgh((X,\mu_{\theta_A}),(Y,\mu_{\theta_B}))\leq \delta/2$ for some $\delta\geq 0$. Then, there are maps $\phi:A\to B$ and $\psi:B\to A$ such that
	\begin{itemize}
		\item if $x$ and $x'$ are in the same block of $\theta_A(t)$, then $\phi(x)$ and $\phi(x')$ belong to the same block of $\theta_B(t+\delta)$;
		\item if $y$ and $y'$ are in the same block of $\theta_B(t)$, then $\psi(y)$ and $\psi(y')$ belong to the same block of $\theta_A(t+\delta)$.
	\end{itemize} 
In other words, $\phi$ and $\psi$ induce homomorphisms of degree $\delta$ on persistence modules $\caV_{\F}\circ \theta_A$ and $\caV_{\F}\circ \theta_B$, denoted by $\phi_{\delta}$ and $\psi_{\delta}$, respectively. Given $t>0$, if $x$ and $x'$ fall into the same block of $\theta_A(t)$, then $\psi\circ\phi(x),\psi\circ\phi(x')$ belong to the same block of $\theta_A(t+2\delta)$. Thus, $\psi_{\delta}\circ\phi_{\delta}=1_{\caV_{\F}\circ \theta_A}^{2\delta}$. Similarly, we have $\phi_{\delta}\circ\psi_{\delta}=1_{\caV_{\F}\circ \theta_B}^{2\delta}$. Therefore, $\caV_{\F}\circ \theta_A$ and $\caV_{\F}\circ \theta_B$ are $\delta$-interleaved. Letting $\delta \searrow 2\dgh((A,\mu_{\theta_A}),(B,\mu_{\theta_B}))$, the leftmost inequality follows.

Suppose $(\phi,\psi)$ is a $\delta$-interleaving between $\theta_A$ and $\theta_B$. Clearly, $$R:A\xtwoheadleftarrow{(\Id_A,\phi)}A\sqcup B\xtwoheadrightarrow{(\psi,\Id_B)}B$$ forms a tripod between $A$ and $B$, with distortion no larger than $2\delta$. Therefore, the rightmost inequality is true.
\end{proof}

Given two sets $A$ and $B$, let $P_A:=\{A_1,\cdots,A_k\}$ and $P_B:=\{B_1,\cdots,B_l\}$ be partitions of $A$ and $B$, respectively. We define the product of $P_A$ and $P_B$ to be
\[P_A\times P_B:=\{(A_i,B_j):1\leq i\leq k, 1\leq j\leq l\},\]
which is clearly a partition of $A\times B$. In fact, the map $(P_A,P_B)\mapsto P_A\times P_B$ gives an embedding $\Part(A)\times\Part(B)\hookrightarrow \Part(A\times B)$. For two dendrograms $(A,\theta_A)$ and $(B,\theta_B)$, we define their product to be $(A\times B,\theta_A\times \theta_B)$, where \[\theta_A\times \theta_B:\R_{\geq 0}\to \Part(A\times B)\text{  with  }t\mapsto \theta_A(t)\times \theta_B(t).\]
It follows directly from Definition \ref{def:dendrogram} that $(A\times B,\theta_A\times \theta_B)$ forms a dendrogram. \label{para:prod of dendrogram}

A \emph{subpartition} of a set $A$ is a partition of one of its subsets $A'\subset A$. Let $\SubPart(A)$ be the set of subpartitions of $A$. \label{para:subpartition}
In \cite{smith2016hierarchical}, the authors introduce a generalization of dendrograms, by allowing subpartitions of the underlying set. This generalization again has a tree-like structure and is called a treegram: 

\begin{definition}\label{def:treegram}
A \emph{treegram} of $A$ is a pair $(A,\theta_A^\rms)$, where $\theta_A^\rms:\R_+\rightarrow \SubPart(A)$ satisfies: 
	\begin{itemize}
	    \item [(1')] If $t\leq s$, then $\theta_A^\rms(t)$ and $\theta_A^\rms(s)$ are partitions of $A_t$ and $A_s$, respectively, where $A_t\subset A_s\subset A$ and $\theta_A^\rms(t)$ refines $\theta_A^\rms(s)|_{A_t}:=\left\{B\cap A_t:B\in \theta_A^\rms(s)\right\}$. 
		\item [(2)] For all $r$ there exists $\delta>0$ s.t. $\theta_A^\rms(r)=\theta_A^\rms(t)$ for all $t\in [r,r+\delta]$.
		\item [(3)] There exists $t_0$ such that $\theta_A^\rms(t)$ is the single block partition for all $t\geq t_0$.
	\end{itemize} 
\end{definition}
	
Similarly, we consider the following function on $A\times A$: for any $x$ and $x'$ in $A$,
\label{para:treegram}
\[\mu_{\theta_A}^\rms(x,x'):=\min\{\delta\geq 0:x,x'\text{ belong to the same block of }\theta_A(\delta)\},\]
which turns out to be a pseudo-ultra-metric via a proof similar to Proposition \ref{prop:stab-dendrogram}:

\begin{proposition}\label{prop:treegram} Let $(A,\theta_A^\rms)$ and $(B,\theta_B^\rms)$ be two treegrams. Then
	$$\tfrac{1}{2}\cdot\di (\caV_{\F}\circ\theta_A^\rms,\caV_{\F}\circ\theta_B^\rms)\leq \dgh((A,\mu^\rms_{\theta_{A}}),(B,\mu^\rms_{\theta_B})).$$ 
\end{proposition}

%%%%%%%%%%%%%%%%%%%%%%%%%%%%%

\section{Persistent Homotopy Groups} \label{sec:persistent homotopy}

In \textsection \ref{sec:p-set}, we study the $0$-th persistent homotopy, which are persistent sets, and see that there is a dendrogram structure associated with it. In later sections, we study the $n$-th persistent homotopy arising from applying the $n$-th homotopy group functor to a filtration for $n\geq 1$. For $n=1$, we give an alternative construction of the persistent fundamental group using discrete fundamental groups.

\subsection{Persistent sets}
\label{sec:p-set} 

\label{para:pi_0} Given a compact metric space $(X,d_X)$ and $\epsilon\geq0$, let $\pi^{\epsilon}_0(X)=X/\sim_0^{\epsilon}$ where $x\sim_0^{\epsilon} x'$ if there exists $\{x_0,\cdots,x_n\}\subset X$ such that $x_0=x,x_n=x'$ and $d_X(x_i,x_{i+1})\leq \epsilon$. For simplicity, we will write the sequence $\{x_0,\cdots,x_n\}$ as $x_0 \cdots x_n$. For $\epsilon'\geq\epsilon\geq0$, there is a natural map from $\pi^{\epsilon}_0(X)$ to $\pi^{\epsilon'}_0(X)$ via $[x]_{\epsilon}\mapsto [x]_{\epsilon'}$. The collection $\{\ppi^{\epsilon}_0(X):=\pi^{\epsilon}_0(X)\}_{\epsilon\geq0}$ together with the natural maps forms a persistent set, i.e., an object in $\Pset^{(\R_{\geq0},\leq)}$, denoted by $\ppi_0(X)$.\label{para:ppi_0}

\begin{remark} Let $\mathcal{V}_\F$ be the functor from $\set$ to $\vs$ given by $\caV_{\F}:A\mapsto\Span_{\F}(A),$ the linear space spanned by $A$ over $\F$. Then
		$\caV_{\F} (\ppi_0(X))\cong \pH_0(X;\F).$ Indeed, fix an $\epsilon\geq0$ and see that the boundary operator
$$\partial_1:C_1(\VR_{\epsilon }(X);\F)=\caV_\F(\{(x,x')\in X\times X: d_X(x,x')\leq \epsilon\})\to C_0(\VR_{\epsilon }(X);\F)=\caV_\F(X)$$
is given by $(x,x')\mapsto x-x'.$ Thus, $\im(\partial_1)=\caV_{\F}\{x'-x: d_X(x,x')\leq \epsilon\}$ and \begin{eqnarray*}\pH_0^{\epsilon}(X;\F)&=&\rmH_0(\VR_{\epsilon }(X);\F)\\
			&=&\caV_{\F}(X)/\caV_{\F}\{x'-x: d_X(x,x')\leq \epsilon\}\\
			&\cong&\caV_{\F}(X/\sim_0^{\epsilon})=\caV_{\F} (\ppi_0^{\epsilon}(X)).
		\end{eqnarray*}
Since the above isomorphisms form a natural isomorphism between $\caV_{\F} (\ppi_0(X))$ and $\pH_0(X;\F)$, $\caV_{\F} (\ppi_0(X))\cong \pH_0(X;\F).$ 
\end{remark}
	
\begin{proposition}
\label{prop:mu^0} 
The following defines an ultra-metric on $X$: for any $x, x'\in X$,
		\begin{eqnarray*}\mu_X^{(0)}(x,x')&:=&\inf\{\epsilon: x\sim_0^{\epsilon} x'\}\\
			&=&\inf\{\epsilon:\exists \{x_0,\cdots,x_n\}\subset X\text{ s.t. }x_0=x,x_n=x',d_X(x_i,x_{i+1})\leq \epsilon\}.
		\end{eqnarray*}
\end{proposition}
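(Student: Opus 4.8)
The plan is to verify directly, from the definition of $\mu_X^{(0)}$, each axiom of a (pseudo-)ultrametric, the key structural observation being that for fixed $x,x'$ the relation $\sim_0^{\epsilon}$ is \emph{monotone} in $\epsilon$: if $x\sim_0^{\epsilon_1}x'$ and $\epsilon_1\leq \epsilon$, then the very same chain $x_0\cdots x_n$ witnesses $x\sim_0^{\epsilon}x'$, since $d_X(x_i,x_{i+1})\leq \epsilon_1\leq\epsilon$. Hence $E(x,x'):=\{\epsilon:x\sim_0^{\epsilon}x'\}$ is an up-set in $[0,\infty)$ whose infimum is $\mu_X^{(0)}(x,x')$, so that $(\mu_X^{(0)}(x,x'),\infty)\subset E(x,x')$. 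This single fact lets me pass freely between ``$\epsilon$ slightly larger than the infimum'' and ``an actual witnessing chain at scale $\epsilon$'', which is where the non-attainment of the infimum would otherwise cause friction.

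First I would dispose of the easy axioms. For $\mu_X^{(0)}(x,x)=0$, the length-zero chain $\{x\}$ witnesses $x\sim_0^{\epsilon}x$ for every $\epsilon\geq 0$, so $E(x,x)=[0,\infty)$ and the infimum is $0$. For symmetry, if $x_0\cdots x_n$ witnesses $x\sim_0^{\epsilon}x'$, then the reversed chain $x_n\cdots x_0$ witnesses $x'\sim_0^{\epsilon}x$; thus $E(x,x')=E(x',x)$ and the two infima agree.

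The main step is the strong triangle inequality $\mu_X^{(0)}(x,z)\leq \max\{\mu_X^{(0)}(x,y),\mu_X^{(0)}(y,z)\}$. Set $m:=\max\{\mu_X^{(0)}(x,y),\mu_X^{(0)}(y,z)\}$ and fix any $\epsilon>m$. By the up-set observation applied to both pairs, $\epsilon\in E(x,y)\cap E(y,z)$, so I obtain a chain $x=a_0\cdots a_p=y$ and a chain $y=b_0\cdots b_q=z$, each with consecutive distances $\leq \epsilon$. Concatenating them at the common endpoint $y=a_p=b_0$ produces the chain $a_0\cdots a_p b_1\cdots b_q$ from $x$ to $z$, all of whose consecutive distances are $\leq\epsilon$; hence $x\sim_0^{\epsilon}z$ and $\mu_X^{(0)}(x,z)\leq \epsilon$. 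Letting $\epsilon\searrow m$ yields the claim, and the ordinary triangle inequality follows at once since $\max\{a,b\}\leq a+b$.

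I expect the only genuinely delicate point—and the main obstacle to the word ``ultrametric'' rather than merely ``pseudo-ultrametric''—to be the identity of indiscernibles. The direction $x=x'\Rightarrow\mu_X^{(0)}(x,x')=0$ is already covered, but the converse is not automatic: if $x\neq x'$, any witnessing chain contains at least one pair of distinct consecutive points, so $\mu_X^{(0)}(x,x')\geq \inf\{d_X(a,b):a\neq b\in X\}$, and this lower bound is strictly positive precisely when $X$ has no distinct points at arbitrarily small distance (for instance when $X$ is finite). In that regime $\mu_X^{(0)}$ separates points and is a bona fide ultrametric, whereas for a general compact $X$ the quantity above can vanish, so the heart of the argument remains the concatenation step for the strong triangle inequality, with the monotonicity of $\sim_0^{\epsilon}$ handling all the bookkeeping around the infima.
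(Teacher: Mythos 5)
Your proof is correct and follows essentially the same route as the paper's: both establish reflexivity and symmetry directly and prove the strong triangle inequality by concatenating witnessing chains at a scale slightly above the infimum and then letting that scale decrease to the infimum. Your closing remark about the identity of indiscernibles is a fair observation — the paper's own proof also only establishes the pseudo-ultrametric axioms and does not address separation of points, which indeed can fail (e.g.\ for a connected space $\mu_X^{(0)}$ vanishes identically).
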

	
\begin{proof} Clearly, $\mu_X^{(0)}(x,x)=0$ and $\mu_X^{(0)}(x,x')=\mu_X^{(0)}(x',x)$ for all $x$ and $x'\in X$. It remains to prove the strong triangle inequality. Given arbitrary $x,y,z\in X$, suppose $\epsilon_1>\mu_X^{(0)}(x,y)$ and $\epsilon_2>\mu_X^{(0)}(y,z)$. Let $x_0^1\cdots x^1_{n_1}$ and $ x_0^2\cdots x^2_{n_2}$ be the sequences to realize $x\sim_{\epsilon_1}y$ and $y\sim_{\epsilon_2}z$, respectively. Then, $x_0^1\cdots x^1_{n_1} x^2_{n_2}\cdots x_0^2$ is such that $x_0^1=x,x_0^2=z$ and the distance between adjacent points in that sequence is no larger than $\max\{\epsilon_1,\epsilon_2\}$. By the minimality, $\mu_X^{(0)}(x,z)\leq \max\{\epsilon_1,\epsilon_2\}.$ letting $\epsilon_1 \searrow \mu_X^{(0)}(x,y)$ and $\epsilon_2 \searrow \mu_X^{(0)}(y,z)$, we obtain the strong triangle inequality.
\end{proof}
	
A stability theorem for $\mu^{(0)}_{\bullet}$ can be found in \cite{carlsson2010characterization}. We include it here together with a proof for pedagogical reasons: our proof of Theorem \ref{thm:stab-mu1} will exhibit a similar pattern. 
	
\begin{theorem}[Stability theorem for $\mu^{(0)}_{\bullet}$]\label{thm:stab-pi0} Given compact metric spaces $(X,d_X)$ and $(Y,d_Y)$, 
		$$\dgh\left(\left( X,\mu_X^{(0)}\right),\left( Y,\mu_Y^{(0)}\right)\right)\leq \dgh(X,Y).$$
\end{theorem}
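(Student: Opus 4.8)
The plan is to exploit the tripod characterization of the Gromov--Hausdorff distance (the formula $\dgh(X,Y)=\tfrac12\inf_{R\in\frR(X,Y)}\dis(R)$) and to show that replacing the original metrics by the ultrametrics $\mu_X^{(0)},\mu_Y^{(0)}$ can only decrease the distortion of any tripod. Since $(X,\mu_X^{(0)})$ and $(Y,\mu_Y^{(0)})$ have exactly the same underlying sets as $X$ and $Y$, the collection $\frR(X,Y)$ of tripods serves simultaneously for both pairs; it therefore suffices to prove that for every tripod $R:X\xtwoheadleftarrow{\phi_X}Z\xtwoheadrightarrow{\phi_Y}Y$ one has $\dis_\mu(R)\leq\dis_d(R)$, where $\dis_\mu$ and $\dis_d$ denote distortion measured with respect to $(\mu_X^{(0)},\mu_Y^{(0)})$ and $(d_X,d_Y)$, respectively. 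Taking the infimum over $R$ and using the common factor $\tfrac12$ then yields the desired inequality.

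The heart of the argument is a chain-lifting estimate, mirroring the strong-triangle-inequality computation in Proposition \ref{prop:mu^0}. Fix $R$, write $\eta=\dis(R)$, and take $z,z'\in Z$ with images $x=\phi_X(z)$, $x'=\phi_X(z')$ and $y=\phi_Y(z)$, $y'=\phi_Y(z')$. Given any $\epsilon>\mu_X^{(0)}(x,x')$, choose a sequence $x=x_0\cdots x_n=x'$ realizing $x\sim_0^{\epsilon}x'$. For each interior index $i$ pick some $z_i\in Z$ with $\phi_X(z_i)=x_i$ (using surjectivity of $\phi_X$), anchoring the endpoints by $z_0=z$ and $z_n=z'$, and set $y_i=\phi_Y(z_i)$, so that $y_0=y$ and $y_n=y'$. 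The bound on distortion gives $d_Y(y_i,y_{i+1})\leq d_X(x_i,x_{i+1})+\eta\leq\epsilon+\eta$ for every $i$, so the sequence $y_0\cdots y_n$ witnesses $y\sim_0^{\epsilon+\eta}y'$, whence $\mu_Y^{(0)}(y,y')\leq\epsilon+\eta$. Letting $\epsilon\searrow\mu_X^{(0)}(x,x')$ yields $\mu_Y^{(0)}(y,y')\leq\mu_X^{(0)}(x,x')+\eta$, and the symmetric argument (lifting chains from $Y$ to $X$) gives the reverse inequality. Hence $|\mu_X^{(0)}(x,x')-\mu_Y^{(0)}(y,y')|\leq\eta$, and taking the supremum over $z,z'\in Z$ proves $\dis_\mu(R)\leq\dis_d(R)$.

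I expect the only delicate point to be the bookkeeping in the lift: one must anchor the endpoints of the lifted $Y$-chain to the chosen preimages $z,z'$ (so the endpoints are exactly $y$ and $y'$) while picking interior preimages arbitrarily, and one must introduce the $\epsilon$-slack because $\mu_X^{(0)}$ is defined as an infimum that need not be attained. Everything else is routine, and since this is precisely the pattern the paper reuses for $\mu^{(1)}$ in Theorem \ref{thm:stab-mu1}, I would phrase the lifting step so that it transfers with minimal change.
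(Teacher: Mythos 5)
Your proposal is correct and follows essentially the same route as the paper's proof: lift an $\epsilon$-chain in $X$ through the tripod to obtain an $(\epsilon+\dis(R))$-chain in $Y$, deduce $|\mu_X^{(0)}(x,x')-\mu_Y^{(0)}(y,y')|\leq\dis(R)$, and conclude via the tripod formula for $\dgh$. Your explicit anchoring of the endpoint preimages is a point the paper glosses over slightly, but the argument is the same.
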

	
\begin{proof} Let $R:X\xtwoheadleftarrow{\phi_X}Z\xtwoheadrightarrow{\phi_Y}Y$ be an arbitrary tripod between $X$ and $Y$. Given $(x,y)$ and $(x',y')$ in $R$, suppose $\epsilon>\mu_X^{(0)}(x,x')$ and $x_0 x_1\cdots x_n$ is a sequence to realize $\epsilon$. For each $0\leq i\leq n-1,$ there exists some $z_i\in Z$ such that $\phi_X(z_i)=x_i$. Let $y_i=\phi_Y(z_i)$. For $0\leq i\leq n-1,$ 
		\begin{eqnarray*}
			d_Y(y_i,y_{i+1})&\leq & d_X(x_i,x_{i+1})+|d_Y(y_i,y_{i+1})- d_X(x_i,x_{i+1})|\\
			&\leq &\epsilon+\dis(R),
		\end{eqnarray*} where $\dis(R)$ denotes the distortion induced by $d_X$ and $d_Y$. Thus, $\{y=y_0,y_1,\cdots,y_{n-1},y_n=y'\}$ is an $ (\epsilon+\dis(R))$-homotopy between $y$ and $y'$. As $\epsilon \searrow \mu_X^{(0)}(x,x')$, it follows that $\mu_Y^{(0)}(y,y')\leq \mu_X^{(0)}(x,x')+\dis(R).$ Similarly, we can prove $\mu_X^{(0)}(x,x')\leq \mu_Y^{(0)}(y,y')+\dis(R).$ Therefore,
		\begin{eqnarray*}
			\dgh((X,\mu_X^{(0)}),(Y,\mu_Y^{(0)}))&=&\tfrac{1}{2}\inf_{R\in\mathfrak{R}(X,Y)} \max_{(x,y),(x',y')\in R}|\mu_X^{(0)}(x,x')-\mu_Y^{(0)}(y,y')|\\
			&\leq &\tfrac{1}{2}\inf_{R\in\mathfrak{R}(X,Y)} \dis(R)\\
			&= & \dgh(X,Y).
		\end{eqnarray*}
\end{proof}

The ultra-metric $\mu^{(0)}_{X}$ induces a dendrogram structure over $X$, which characterizes how bars in the $0$-th barcode of $X$ merge to each other and thus provides additional information of $X$ than the $0$-th barcode. For example, below is a case when the ultra-metric $\mu^{(0)}_{\bullet}$ yields a better approximation to the Gromov-Hausdorff distance of two metric spaces compared to the $0$-th barcode. This phenomenon motivates us to seek a similar dendrogram structure in the case of persistent fundamental groups.
	
\begin{example}\label{ex:twopointspace} Let $X_{\epsilon}:=(\{0,1\},d_{\epsilon})$ be a metric space consisting of two points, where $d_{\epsilon}(0,1)=1+\epsilon$. Since $\mu^{(0)}_{X_{\epsilon}}=d_{\epsilon}$, we have
\[\dgh\left(\left( X_{\epsilon},\mu^{(0)}_{X_{\epsilon}}\right),\left( X_{0},\mu^{(0)}_{X_{0}}\right)\right)=\dgh(X_{\epsilon},X_{0})=\tfrac{\epsilon}{2},\]
for all $\epsilon\geq0$. Hence, using $\mu^{(0)}_{\bullet}$, we are able to recover the Gromov-Hausdorff distance of the pair $(X_{\epsilon},X_{0})$ for any $\epsilon$. However, using the $0$-th barcode, we recover $\dgh(X_{\epsilon},X_{0})$ only when $\epsilon\leq 1$. This is because $\dgm_0(X_{\epsilon})=\{(0,1+\epsilon),(0,+\infty)\}$, and thus by Theorem \ref{thm:stab-bot},
\[\tfrac{1}{2}\db(\dgm_0(X_{\epsilon}),\dgm_0(X_{0}))= \min\left\{\tfrac{\epsilon}{2},\tfrac{1+\epsilon}{4}\right\}\leq \tfrac{\epsilon}{2},\]
where the inequality is strict when $\epsilon>1$.
\end{example}

\subsection{Persistent fundamental groups}
\label{subsec:persistent fundamental group}

In this section, we first review the definition and properties of discrete fundamental groups from \cite{wilkins2011discrete}, and utilize it to construct the first persistent homotopy group $\ppi_1(\bullet)$, also called the persistent fundamental group.
	
\subsubsection{Discrete fundamental groups} Let us fix a parameter value $\epsilon>0$, and adopt the construction of discrete fundamental groups from \cite{wilkins2011discrete}. An \emph{$\epsilon$-chain} in $(X,d_X)$ is a finite sequence of points $\gamma=x_0 x_1\cdots x_n$ such that $d_X(x_i,x_{i+1})\leq \epsilon$ for $i=0,\cdots,n-1.$ When $x_0=x_n$, we say that $\gamma$ is an \emph{$\epsilon$-loop (based at $x_0$)}. The integer $n$ is called the \emph{size}\label{para:size} of $\gamma$, and written as $\size(\gamma)$. The \emph{reversal} of $\gamma$ is the $\epsilon$-chain $\gamma^{-1}:=x_nx_{n-1}\cdots x_0$. The terms \emph{discrete chains} (resp. \emph{discrete loops}) will refer to all $\epsilon$-chains (resp. $\epsilon$-loops) for any $\epsilon>0$. A \emph{subdivision} of a discrete chain $\gamma$ is a discrete chain that can be written as
	$$\alpha'=(x_{0}^0\cdots x_{0}^{m_0} )\cdots ( x_{i}^{0}\cdots x_{i}^{m_i})\cdots ( x_{n-1}^{0}\cdots x_{n-1}^{m_{n-1}}),$$ 
where $x_i^0=x_i, x_i^{m_i}=x_{i+1}$ and $x_{i}^{j_i}\in X$ for any $i$ and $j_i=1,\cdots,m_i-1$. In this case, we denote $\alpha'\supset \alpha$ or $\alpha\subset \alpha'$. 
	
A space $X$ is \emph{$\epsilon$-connected} if any two points $x,y\in X$ can be joined by an $\epsilon$-chain. If $X$ is $\epsilon$-connected for all $\epsilon>0$, we say that $X$ is \emph{chain-connected}. \label{para:epsilon-connected}

Recall from \cite{wilkins2011discrete} that a \emph{basic move} on an $\epsilon$-chain is defined as the addition or removal of a point that is not an endpoint, so that the resulting path is still an $\epsilon$-chain. In our paper, to avoid tedious arguments, basic moves also include the case when there are no changes.

\begin{definition}[$\epsilon$-homotopy]
\label{def:epsilon-homotopy} Two chains $\alpha$ and $\beta$ are \emph{$\epsilon$-homotopic}, denoted by $\alpha\sim_1^{\epsilon}\beta$, if there is a finite sequence of $\epsilon$-chains $$H=\{\alpha=\gamma_0,\gamma_1,\cdots,\gamma_{k-1},\gamma_k=\beta\}$$ such that each $\gamma_i$ differs from $\gamma_{i-1}$ by a basic move. We call $H$ an \emph{$\epsilon$-homotopy} and denote the $\epsilon$-homotopy class of an $\epsilon$-chain $\alpha$ by $[\alpha]_{\epsilon}.$ %If an $\epsilon$-loop $\gamma=x_0x_1\cdots x_n $ is $\epsilon$-homotopic to the trivial loop $\{x_0\}$, we say that $\gamma$ is \emph{$\epsilon$-null}. 
\end{definition}

The following reformulation (see \cite{barcelo2014discrete}) will be useful in the sequel. For a non-negative integer $n$, let $[n]:=\{0,\cdots,n-1\}.$\label{para:n set} Note that an $\epsilon$-chain in $X$ can also be regarded as an $\epsilon$-Lipschitz map $\gamma:([n],\ell^{\infty})\rightarrow (X,d_X)$ for some $n\in \Z_{\geq 0}$, i.e., $d_X(\gamma(i),\gamma(i+1))\leq \epsilon$ for all $i\in [n-1].$ A \emph{lazification} of an $\epsilon$-path $\gamma:[n]\rightarrow X$ is an $\epsilon$-path $\tilde{\gamma}:[m]\rightarrow X$ such that $m \geq n$ and $\tilde{\gamma}=\gamma\circ p$ where $p:[m]\rightarrow [n]$ is surjective and monotone. 
	
\begin{proposition}[Reformulation of $\epsilon$-homotopy]
\label{prop:Lipdef} Two chains $\alpha$ and $\beta$ are \emph{$\epsilon$-homotopic} if and only if there exists a triple $(\tilde{\alpha},\tilde{\beta},\tilde{H})$ where
		\begin{itemize}
			\item $\tilde{\alpha}$ and $\tilde{\beta}$, of the same size $n$, are lazifications of $\alpha$ and $\beta$, respectively;
			\item $\tilde{H}:([n]\times [m],\ell^{\infty})\rightarrow (X,d_X)$ is an $\epsilon$-Lipschitz map  such that $\tilde{H}(\cdot,0)=\alpha,\tilde{H}(\cdot,m)=\beta$, $\tilde{H}(0,t)=\alpha(0)=\beta(0)$ and $\tilde{H}(n,t)=\alpha(n)=\beta(n), \forall t$.
		\end{itemize} 
\end{proposition}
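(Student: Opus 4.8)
The plan is to prove Proposition \ref{prop:Lipdef} by establishing a dictionary between the combinatorial notion of $\epsilon$-homotopy (via finite sequences of basic moves) and the map-theoretic notion of an $\epsilon$-Lipschitz homotopy $\tilde H$ on a grid $[n]\times[m]$. Both directions require reconciling two bookkeeping issues: basic moves change the \emph{size} of a chain (adding or removing a point), whereas a homotopy $\tilde H:[n]\times[m]\to X$ has all its ``time slices'' $\tilde H(\cdot,t)$ of the \emph{same} size $n$. The device that bridges this gap is exactly \emph{lazification}: a chain of size $k<n$ can be padded to size $n$ by repeating points (precomposing with a surjective monotone $p:[n]\to[k]$), and such padding does not change the $\epsilon$-chain up to $\epsilon$-homotopy since inserting a duplicate of an adjacent point is itself a (trivial) basic move.

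First I would prove the easier direction, that the existence of $(\tilde\alpha,\tilde\beta,\tilde H)$ implies $\alpha\sim_1^\epsilon\beta$. Given the grid homotopy $\tilde H$, I read off its columns $\gamma_t:=\tilde H(\cdot,t)$ for $t=0,\dots,m$, each an $\epsilon$-chain since $\tilde H$ is $\epsilon$-Lipschitz in the $\ell^\infty$ metric. Consecutive columns $\gamma_{t}$ and $\gamma_{t+1}$ differ only in that corresponding entries move by at most $\epsilon$ (they are $\epsilon$-close pointwise). I would show that passing from $\gamma_t$ to $\gamma_{t+1}$ can be realized by a finite sequence of basic moves: intuitively, one walks along the chain and, at each index, inserts the new vertex $\gamma_{t+1}(i)$ next to the old vertex $\gamma_t(i)$ (legal because they are $\epsilon$-close and each is $\epsilon$-close to its neighbors), then deletes the old vertex. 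The endpoints are fixed throughout by the boundary conditions on $\tilde H$, so no illegal move touches an endpoint. Concatenating these move-sequences over all $t$, together with the basic moves witnessing that $\tilde\alpha,\tilde\beta$ are lazifications of (hence $\epsilon$-homotopic to) $\alpha,\beta$, yields an $\epsilon$-homotopy $\alpha\sim_1^\epsilon\beta$.

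For the converse, suppose $\alpha\sim_1^\epsilon\beta$ via a sequence $H=\{\alpha=\gamma_0,\dots,\gamma_k=\beta\}$ of basic moves. I would first lazify all the $\gamma_i$ to a common size: since a basic move changes the size by at most one, I can choose $n$ at least the maximal size occurring in $H$ and replace each $\gamma_i$ by a lazification $\hat\gamma_i:[n]\to X$, arranged so that consecutive $\hat\gamma_i,\hat\gamma_{i+1}$ agree except at a single coordinate (an added/removed vertex becomes a repeated vertex after padding), where they differ by an $\epsilon$-move. Stacking these rows $\hat\gamma_0,\dots,\hat\gamma_k$ defines a candidate $\tilde H:[n]\times[k]\to X$; I then check it is $\epsilon$-Lipschitz in $\ell^\infty$, i.e. horizontally (each row is an $\epsilon$-chain) and vertically (consecutive rows are $\epsilon$-close entrywise), with the correct fixed-endpoint boundary conditions, and set $\tilde\alpha=\hat\gamma_0$, $\tilde\beta=\hat\gamma_k$.

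The main obstacle is the careful alignment in the converse: making the lazifications of consecutive chains compatible so that a single basic move corresponds to changing exactly one grid coordinate, while keeping all rows of a uniform size $n$ and preserving the $\ell^\infty$-Lipschitz condition vertically. An addition move inserts a vertex between two existing ones, shifting all subsequent indices, so the monotone surjections $p_i:[n]\to[\size(\gamma_i)]$ defining the lazifications must be chosen coherently across $i$ so that the stacked map is genuinely $\epsilon$-Lipschitz and not merely column-wise valid. I expect this reindexing to be the technical heart; once the grid is set up correctly the Lipschitz verification is a routine case check on horizontal versus vertical adjacencies, and the equivalence of the two definitions follows.
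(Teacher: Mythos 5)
Your proposal is correct and follows essentially the same route as the paper: one direction reads off the columns of $\tilde H$ and realizes the passage between consecutive columns by inserting each new vertex (legal by the $\ell^\infty$-Lipschitz condition on diagonally adjacent grid points) and deleting the old one, while the converse builds the grid by coherently lazifying the chains in the basic-move sequence to a common size, which the paper organizes as an induction on the number of basic moves using the reindexing maps $p_i$ and $p^i$. The "technical heart" you flag (aligning the monotone surjections across consecutive rows) is exactly what the paper's inductive construction handles.
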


\begin{proof} We prove the `if' part by induction on the number of basic moves. For the base case, we suppose $\alpha=x_0 x_1\cdots x_n$ and $\{\alpha=\gamma_0,\gamma_1=\beta\}$ is an $\epsilon$-homotopy. If there is no change between $\alpha$ and $\beta$, we take $\tilde{H}(\cdot,0)=\gamma_0 \text{ and } \tilde{H}(\cdot,1)=\gamma_1.$

Otherwise, there is a removal or addition of a point from $\alpha$ to $\beta$. Define two maps $p_i:[n]\to [n-1]$ and $p^i:[n+1]\to [n]$ as
		\[p_i(j)=\begin{cases}j,&\mbox{if $j\neq i$}\\
		i-1,&\mbox{if $j=i$}
		\end{cases} \text{ and } p^i(j)=\begin{cases}j&\mbox{if $j< i$,}\\
		i-1,&\mbox{if $j\geq i$}
		\end{cases}.\]
If $\gamma_1$ is obtained from $\gamma_0$ by removing some point $x_i$, we set $\tilde{\gamma_1}=\gamma_1\circ p_i,\tilde{H}(\cdot,0)=\gamma_0 \text{ and }\tilde{H}(\cdot,1)=\tilde{\gamma_1}.$
If $\gamma_1$ is obtained from $\gamma_0$ by adding some point between $x_{i-1}$ and $x_i$, we take $\tilde{\gamma_0}=\gamma_1\circ p^i,\tilde{H}(\cdot,0)=\tilde{\gamma_0} \text{ and }\tilde{H}(\cdot,1)=\gamma_1.$

Suppose the statement is true for any two $\epsilon$-chains which differ by $k-1$ basic moves. Assume that $H=\{\alpha=\gamma_0,\gamma_1,\cdots,\gamma_{k-1},\gamma_k=\beta\}$ is an $\epsilon$-homotopy between $\alpha$ and $\beta$. By the induction hypothesis, there exists a required triple $(\tilde{\gamma_0},\tilde{\gamma_{k-1}},\tilde{H}_1)$. Since $\gamma_{k-1}$ and $\gamma_k$ differ by one basic move, we need to discuss three cases as before. Notice that $\tilde{H}_1$ is essentially a matrix with entries from $X$. If there is no change, we append one more row $\gamma_k$ to $\tilde{H}_1$ to obtain $\tilde{H}$. If $\gamma_{k-1}$ and $\gamma_k$ differ by the removal of the point $\gamma_{k-1}(i)$, we compose $\gamma_k$ with some $p_i:[\size(\gamma_{k-1})]\to [\size(\gamma_{k})]$ and append $\gamma_k\circ p_i$ as a new row to $\tilde{H}_1$. If they differ by the addition of the point $\gamma_{k}(i)$, we compose each row of $\tilde{H}_1$ with some $p^i:[\size(\gamma_{k})]\to [\size(\gamma_{k-1})]$ and append one more row $\gamma_k$ to $\tilde{H}_1$.
		
For the `only if' part, it suffices to establish it for the case $m=1$. Suppose $\beta=x_0'x_1'\cdots x_{n-1}'x_n'$, where $x_0'=x_0$ and $x_n'=x_n$. For each $i=0,\cdots,n-1$, we have $d_X(x_i,x_{i+1}')\leq \epsilon.$ Thus, $x_i'$ can be inserted between $x_{i-1}$ and $x_i$ into $\alpha$ for each $i=1,\cdots,n$, after which $x_i$ can be removed for each $i=1,\cdots,n$. Therefore, we can obtain $\beta$ from $\alpha$ via $2n$ basic moves.
\end{proof}

Fix a basepoint $x_0\in X$ and $\epsilon>0$. 
\begin{proposition}\label{prop:equivalence} The relation $\sim_1^{\epsilon}$ defines an equivalence relation on the set of discrete chains based at $x_0$, denoted by $\caL(X,x_0)$.
\end{proposition}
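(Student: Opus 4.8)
The plan is to verify directly the three defining properties of an equivalence relation—reflexivity, symmetry, and transitivity—for $\sim_1^{\epsilon}$ on $\caL(X,x_0)$. The entire argument is bookkeeping at the level of finite sequences of $\epsilon$-chains, and the two structural facts I would lean on are: (i) a basic move is permitted to be the ``no change'' move (precisely the convention adopted just before Definition \ref{def:epsilon-homotopy}); and (ii) the inverse of a basic move is again a basic move, since the removal of a non-endpoint point is undone by re-adding it, and conversely.

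For reflexivity, given $\alpha\in\caL(X,x_0)$ I would take the constant homotopy $H=\{\gamma_0=\alpha,\gamma_1=\alpha\}$ joined by a ``no change'' move, which immediately yields $\alpha\sim_1^{\epsilon}\alpha$. For symmetry, suppose $\alpha\sim_1^{\epsilon}\beta$ is witnessed by $H=\{\alpha=\gamma_0,\gamma_1,\dots,\gamma_k=\beta\}$; I would reverse the sequence to $H^{-1}=\{\beta=\gamma_k,\gamma_{k-1},\dots,\gamma_0=\alpha\}$ and check that each consecutive pair still differs by a basic move. This is exactly fact (ii): running a basic move backwards is again a basic move, and it preserves both the $\epsilon$-chain condition and the endpoints, so $H^{-1}$ is a legitimate $\epsilon$-homotopy and $\beta\sim_1^{\epsilon}\alpha$.

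For transitivity, suppose $\alpha\sim_1^{\epsilon}\beta$ via $H_1=\{\alpha=\gamma_0,\dots,\gamma_k=\beta\}$ and $\beta\sim_1^{\epsilon}\delta$ via $H_2=\{\beta=\delta_0,\dots,\delta_l=\delta\}$. I would concatenate at the common chain $\beta$, forming $\{\gamma_0,\dots,\gamma_k=\delta_0,\delta_1,\dots,\delta_l\}$. Every consecutive pair inherited from $H_1$ or from $H_2$ differs by a basic move, and the splice causes no difficulty because $\gamma_k$ and $\delta_0$ are literally the same $\epsilon$-chain; hence $\alpha\sim_1^{\epsilon}\delta$. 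I would then close with the remark that, because a basic move by definition fixes the two endpoints, all intermediate chains $\gamma_i$ and $\delta_j$ keep $x_0$ as their (common) basepoint, so the relation genuinely restricts to $\caL(X,x_0)$ and is well defined there.

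I do not anticipate a real obstacle: once the ``no change'' convention is in force, the statement is essentially formal. The only point worth a careful sentence is the reversibility invoked for symmetry, namely confirming that undoing a point-addition is a point-removal (and conversely), so that the reversed sequence is indeed a valid $\epsilon$-homotopy rather than merely a sequence of $\epsilon$-chains.
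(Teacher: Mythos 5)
Your proof is correct and follows essentially the same route as the paper's: the paper dismisses reflexivity and symmetry as clear (relying on the same ``no change'' convention and reversibility of basic moves you make explicit) and proves transitivity by concatenating the two homotopies at the common chain $\beta$, exactly as you do.
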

	
\begin{proof} Clearly, $\sim_1^{\epsilon}$ is reflexive and symmetric. Let $\alpha,\beta$ and $\gamma$ be any three chains such that $\alpha\sim_1^{\epsilon}\beta$ and $\beta\sim_1^{\epsilon}\gamma$. Then, there exists two sequences of $\epsilon$-chains $H_1=\{\alpha=\gamma_0,\gamma_1,\cdots,\gamma_{k-1},\gamma_k=\beta\}$ and $H_2=\{\beta=\gamma_{k+1},\gamma_{k+2},\cdots,\gamma_{k+l-1},\gamma_{k+l}=\gamma\}$ such that each $\gamma_i$ differs from $\gamma_{i-1}$ by a basic move. Then, $H=\{\alpha=\gamma_0,\gamma_1,\cdots,\gamma_{k+l-1},\gamma_{k+l}=\gamma\}$ is a sequence of $\epsilon$-chains, where each adjacent two differ by a basic move. Thus, $ \alpha\sim_1^{\epsilon} \gamma$.
\end{proof}
	
Let $\caL^{\epsilon}(X,x_0)$\label{para:epsilon loop} be the collection of all $\epsilon$-loops based at $x_0$. Given two $\epsilon$-loops $\gamma=x_0x_1\cdots x_{n-1}x_0$ and $\gamma'=x_0y_1\cdots y_{m-1} x_0$, define their \emph{concatenation} by \label{para:concatenation}
$$\gamma\ast \gamma'=x_0x_1\cdots x_{n-1}x_0y_1\cdots y_{m-1} x_0.$$  The \emph{birth time}\label{para:birth} of a discrete loop $\gamma=x_0x_1\cdots x_n $ is defined to be $$\birth(\gamma):=\max_{0\leq i\leq n-1} d_X(x_i,x_{i+1}).$$
The \emph{death time}\label{para:death} of $\gamma$ is defined to be $$\death(\gamma):=\inf\{\epsilon>0:\gamma\sim_1^{\epsilon}\{x_0\}\}.$$ 
Clearly, both $\birth(\gamma)$ and $\death(\gamma)$ are no larger than $\diam(X)$.

\begin{remark}Because $\caL^{\epsilon}(X,x_0)$ is a subset of $\caL(X,x_0)$, it follows from Proposition \ref{prop:equivalence} that $\sim_1^{\epsilon}$ defines an equivalence relation on $\caL^{\epsilon}(X,x_0)$. By Lemma 2.1.2 of \cite{wilkins2011discrete}, $\caL^{\epsilon}(X,x_0)/\sim_1^{\epsilon}$ is a group under the operation of concatenation, where the identity element is $[x_0]_{\epsilon}$ and $[\gamma]_{\epsilon}^{-1}=[\gamma^{-1}]_{\epsilon}$.
\end{remark}
	
\begin{definition}[Discrete fundamental group, \cite{wilkins2011discrete}]\label{def:discrete f. group} The \emph{discrete fundamental group at scale $\epsilon$} of a metric space $X$, based at $x_0$, is
		$$\pi_1^{\epsilon}(X,x_0):=\caL^{\epsilon}(X,x_0)/\sim_1^{\epsilon}.$$
\end{definition}

By Lemma 2.2.9 of \cite{wilkins2011discrete}, if $X$ is an $\epsilon$-connected metric space, then $\pi_1^{\epsilon}(X,x_0)\cong \pi_1^{\epsilon}(X,x_1)$ for any two points $x_0$ and $x_1$ in $ X$. In this case, the discrete fundamental groups at scale $\epsilon$ are independent of choices of basepoints, up to isomorphism, so we can omit the basepoints and simply write $\pi_1^{\epsilon}(X)$. 
	
\begin{example}[{Example 2.2.6, \cite{wilkins2011discrete}}]\label{ex:circle-discrete} Let $\mathbb{S}^1(r)$ be a circle of radius $r>0$, equipped with the geodesic metric. When $r=1$, we simply write $\bbS^1$ for $\mathbb{S}^1(1)$. Then, 
		\[\pi_1^{\epsilon}(\mathbb{S}^1(r))=\begin{cases}\Z,&\mbox{if $0<\epsilon< \tfrac{2\pi}{3}r$,}\\ 
		0,&\mbox{if $\epsilon\geq \tfrac{2\pi}{3}r$.}
		\end{cases}\] 
\end{example}

In Theorem \ref{thm:iso of d.f.g}, we will see that discrete fundamental groups of a compact metric space are in fact isomorphic to fundamental groups of its successive Vietoris-Rips complexes.

\paragraph*{Edge-path groups of Vietoris-Rips complexes.} Given a simplicial complex $K$, an \emph{edge} in $K$ is an ordered pair $e=v_1v_2$ for vertices of $K$, such that $v_1$ and $v_2$ are in the same simplex. An \emph{edge path} in $K$ is a sequence of vertices connected by edges. The concatenation of two edge paths can be defined in a natural way and called the \emph{product} of two edge paths. If $e=v_1v_2$ and $f=v_2v_3$ are such that $v_1,v_2$ and $v_3$ are vertices of a simplex, then the product $ef$ is \emph{edge equivalent} to $v_1v_3$. Two edge paths are \emph{edge equivalent} if one can be obtained from another by a sequence of such elementary edge equivalences. Let $v_0$ be a vertex of $K$. When an edge starts and ends at the same vertex $v_0$, we call it an \emph{edge loop at $v_0$}. We define $\pi_1^{\mathrm{E}}(K,v_0)$ to be the set of edge equivalence classes of edge loops at $v_0$, called the \emph{edge-path group} of $K$ (see page 87 of \cite{singer1967}). Let $|K|$ denote the geometric realization of $K$. \label{para:geo-realization}

\begin{theorem} [{Theorem 4 and 5, \textsection 4.4, \cite{singer1967}}]
\label{thm:edgepath} With the notations above, $\pi_1^{\mathrm{E}}(K,v_0)$ is a group, with identity $v_0v_0$, under the operation of product defined above. Furthermore,
$$\pi_1^{\mathrm{E}}(K,v_0)\cong \pi_1(|K|,v_0).$$
\end{theorem}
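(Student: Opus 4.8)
The plan is to prove the group structure by direct verification and then to establish the isomorphism $\pi_1^{\rmE}(K,v_0)\cong\pi_1(|K|,v_0)$ via the simplicial approximation theorem. For the group structure, the product is well defined on edge-equivalence classes because the elementary edge equivalences are local (each involves only the three vertices of a single simplex) and hence compatible with concatenation; associativity is inherited from concatenation of sequences of vertices, the class of $v_0v_0$ acts as a two-sided identity (any edge $vv$ can be absorbed using the elementary equivalence applied to the degenerate triple $vvw$), and the reversal of an edge loop furnishes an inverse. All of this is routine bookkeeping.

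To build the isomorphism, first I would define a map $\Phi$ sending an edge loop $v_0 w_1 \cdots w_{k-1} v_0$ to the loop in $|K|$ obtained by traversing, at constant speed, the straight-line segment inside each $1$-simplex $w_i w_{i+1}$. The key point is that $\Phi$ descends to edge-equivalence classes: an elementary edge equivalence replacing the product $(v_1 v_2)(v_2 v_3)$ by $v_1 v_3$, where $v_1,v_2,v_3$ span a simplex, changes the image loop by a straight-line homotopy carried out inside the corresponding $2$-simplex of $|K|$, and hence does not change its homotopy class. Since $\Phi$ manifestly respects products, it induces a homomorphism $\bar\Phi:\pi_1^{\rmE}(K,v_0)\to\pi_1(|K|,v_0)$. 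Because both $\pi_1^{\rmE}$ (whose generators and relations involve only $1$- and $2$-simplices) and $\pi_1(|K|)$ depend only on the $2$-skeleton $K^{(2)}$, I may assume throughout that $K$ is $2$-dimensional.

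Surjectivity of $\bar\Phi$ follows from the simplicial approximation theorem: any loop $\gamma:[0,1]\to|K|$ with $\gamma(0)=\gamma(1)=v_0$ is homotopic rel endpoints to a simplicial map from a sufficiently fine subdivision of $[0,1]$, and such a simplicial map is exactly the $\Phi$-image of an edge loop at $v_0$. For injectivity, suppose an edge loop $\alpha$ has $\Phi(\alpha)$ null-homotopic in $|K|$. A null-homotopy is a map $[0,1]^2\to|K|$ equal to $\Phi(\alpha)$ on the bottom edge and constant on the remaining three sides. Triangulating the square and applying simplicial approximation \emph{rel the boundary}, I obtain a simplicial null-homotopy; reading it off one triangle at a time expresses $\alpha$ as a finite composite of elementary edge equivalences with the constant loop $v_0v_0$, so $[\alpha]_{\rmE}=[v_0v_0]_{\rmE}$.

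The main obstacle is the injectivity argument, specifically arranging the simplicial approximation of the null-homotopy to agree with the given combinatorial data on the boundary of the square (so that the top and sides remain the trivial edge loop and the bottom remains $\alpha$ itself, not merely a loop homotopic to it), and then verifying that each elementary $2$-simplex crossed in the triangulated square contributes precisely one of the allowed edge moves. Once the rel-boundary simplicial approximation is set up carefully, the translation from ``simplicial homotopy across a single $2$-simplex'' into ``elementary edge equivalence'' is the crux that converts the topological null-homotopy into the required combinatorial sequence, and the same mechanism simultaneously confirms that $\bar\Phi$ is well defined and bijective.
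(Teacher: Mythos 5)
The paper does not prove this statement: it is quoted verbatim from the cited reference (Theorems 4 and 5 of \textsection 4.4 in Singer--Thorpe), and your sketch is essentially the standard simplicial-approximation proof given there — group structure by direct verification, surjectivity of the map $\pi_1^{\rmE}(K,v_0)\to\pi_1(|K|,v_0)$ by simplicial approximation of a loop, and injectivity by simplicially approximating a null-homotopy of the square rel its boundary and reading off elementary edge equivalences triangle by triangle. Your outline is correct, and you rightly flag the only delicate point (making the approximation of the homotopy restrict on the bottom edge to an edge loop edge-equivalent to $\alpha$ itself after the necessary subdivision), which is handled in the standard treatments.
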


We see that the following groups are isomorphic.
\begin{restatable}[Isomorphisms]{theorem}{isoofPI1} 
\label{thm:iso of d.f.g}
Let $(X,x_0)$ be a pointed compact metric space and let $\epsilon\geq 0$. Then
    $$\pi^{\epsilon}_1(X,x_0)\cong\pi^{\mathrm{E}}_1(\VR_{\epsilon }(X),x_0)\cong \pi_1\left( \left|  \VR_{\epsilon }(X)\right| ,x_0\right).$$
\end{restatable}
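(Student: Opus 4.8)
The second isomorphism $\pi_1^{\rmE}(\VR_\epsilon(X),x_0)\cong \pi_1\lbracket\lnorm\VR_\epsilon(X)\rnorm,x_0\rbracket$ is exactly the content of Theorem \ref{thm:edgepath} applied to the simplicial complex $K=\VR_\epsilon(X)$, so I would simply invoke it. The real work is the first isomorphism $\pi_1^\epsilon(X,x_0)\cong\pi_1^{\rmE}(\VR_\epsilon(X),x_0)$, and the plan is to show that the two groups are assembled from literally the same data, so that the naive ``identity on loops'' map descends to an isomorphism.

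First I would match the underlying sets and operations. An $\epsilon$-chain $x_0x_1\cdots x_n$ is by definition a sequence with $d_X(x_i,x_{i+1})\leq\epsilon$, which is precisely the condition that each $\{x_i,x_{i+1}\}$ is a $1$-simplex (an edge) of $\VR_\epsilon(X)$; hence an $\epsilon$-loop based at $x_0$ is exactly an edge loop at $x_0$, and $\caL^\epsilon(X,x_0)$ coincides with the set of edge loops at $x_0$. Under this identification the concatenation $\gamma\ast\gamma'$ of $\epsilon$-loops is precisely the product of edge paths, so the two monoid structures agree on the nose.

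The heart of the argument is to show that the two equivalence relations coincide, namely that $\sim_1^\epsilon$ (generated by basic moves) equals edge equivalence (generated by $v_1v_2v_3\sim v_1v_3$ whenever $\{v_1,v_2,v_3\}$ spans a simplex). The key observation is that these elementary moves are the \emph{same} move. Indeed, deleting an interior vertex $x_i$ from $\cdots x_{i-1}x_ix_{i+1}\cdots$ is a legal basic move exactly when the result is again an $\epsilon$-chain, i.e.\ $d_X(x_{i-1},x_{i+1})\leq\epsilon$; together with $d_X(x_{i-1},x_i),d_X(x_i,x_{i+1})\leq\epsilon$ (which hold since we started from an $\epsilon$-chain) this says precisely that $\diam\{x_{i-1},x_i,x_{i+1}\}\leq\epsilon$, i.e.\ that $\{x_{i-1},x_i,x_{i+1}\}$ is a $2$-simplex of $\VR_\epsilon(X)$ --- exactly the hypothesis for the elementary edge equivalence $x_{i-1}x_ix_{i+1}\sim x_{i-1}x_{i+1}$. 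Adding a vertex is the inverse move, so basic moves and elementary edge equivalences generate the same relation. Consequently the identity on loops descends to a well-defined bijection on equivalence classes that respects the group operation, yielding the desired isomorphism.

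The step I expect to require the most care is the bookkeeping for degenerate configurations: repeated vertices and the trivial loop. For instance, the discrete identity is the size-$0$ loop $\{x_0\}$ while the edge-path identity is the constant edge loop $x_0x_0$, so I must check that $\{x_0\}\sim_1^\epsilon x_0x_0$ and, more generally, that lazifications never change the $\epsilon$-homotopy class; this is supplied by the Lipschitz reformulation of $\epsilon$-homotopy in Proposition \ref{prop:Lipdef}. I would likewise confirm that the edge-equivalence relation in the remaining degenerate cases (two or more of $v_1,v_2,v_3$ coinciding) reduces to a basic move via the same diameter computation, and note that the entire argument remains valid --- and trivially degenerate --- at $\epsilon=0$, where both sides are the trivial group.
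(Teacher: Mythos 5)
Your proposal is correct and follows essentially the same route as the paper's proof: the second isomorphism is quoted from Theorem \ref{thm:edgepath}, and the first is obtained by identifying $\caL^{\epsilon}(X,x_0)$ with the set of edge loops in $\VR_{\epsilon}(X)$ and checking that basic moves coincide with elementary edge equivalences via the same diameter observation. Your extra attention to lazifications and degenerate configurations is a welcome refinement of details the paper leaves implicit, but it does not change the argument.
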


\begin{proof} The rightmost isomorphism follows directly from Theorem \ref{thm:edgepath}. The leftmost isomorphism is straightforward from the definitions. Indeed, if we write $\caL^{\mathrm{E}}(\VR_{\epsilon }(X),x_0)$ as the set of loops at $x_0$ in $\VR_{\epsilon }(X)$, it is clear that $\caL^{\epsilon}(X,x_0)= \caL^{\mathrm{E}}(\VR_{\epsilon }(X,x_0))$ as sets. It remains to check that $\epsilon$-homotopy equivalence is the same as the edge equivalence in $\caL^{\mathrm{E}}(\VR_{ \epsilon}(X,x_0))$, which can be done by proving that basic moves are equivalent to elementary edge equivalences. Indeed, for $x_1,x_3\in X$ such that $d_1(x_1,x_3)\leq \epsilon$, $x_1x_2x_3\sim_1^{\mathrm{E}} x_1x_3$ iff $x_1$, $x_2$ and $x_3 $ form a $2$-simplex in $\VR_{\epsilon}(X,x_0)$, iff $x_1x_2x_3\sim^{\epsilon}_1 x_1x_3$. 
\end{proof}

Given $\epsilon'>\epsilon$, an $\epsilon$-chain is also an $\epsilon'$-chain and an $\epsilon$-homotopy is also an $\epsilon'$-homotopy. Thus, there is a natural group homomorphism 
\begin{equation}\label{eq:Phieep}\Phi_{\epsilon,\epsilon'}: \pi_1^{\epsilon}(X,x_0)\rightarrow\pi_1^{\epsilon'}(X,x_0)\text{ with }[\alpha]_{\epsilon}\mapsto [\alpha]_{\epsilon'}.
\end{equation}
The collection $\{\ppi_1^{\epsilon}(X,x_0):=\pi_1^{\epsilon}(X,x_0)\}_{\epsilon>0}$ together with the natural group homomorphisms $\left\{\Phi_{\epsilon,\epsilon'}\right\}_{\epsilon'\geq \epsilon>0}$ forms a persistent group, denoted by $\ppi_1^{\bullet}(X,x_0)$, or $\ppi_1(X,x_0)$ for simplicity. \label{para:ppi_1^epsilon}

\begin{remark} The leftmost isomorphism in Theorem \ref{thm:iso of d.f.g} was first established in page 599 of \cite{plaut2009equivalent}. 
\end{remark}

\subsubsection{Persistent fundamental groups} %\label{subsec:persistent n-homotopy}

Recall from \textsection \ref{subsec:persistent theory} the general construction of persistent groups. 
By \emph{isomorphisms} between persistent groups, we will mean isomorphisms in the category $\Pgrp^{(\R_+,\leq)}$, denoted by $\cong$. We denote by $\bbmzero$ the trivial persistent group. 
For a group $G$ and an interval $I\subset \R_+$, the interval generalized persistence module $G[I]$ (see page \pageref{para:interval g.p.m.}) is also called the \emph{interval persistent group}. 

We immediately obtain the following proposition from the definitions of product and coproduct in a category.

\begin{proposition}[Products and coproducts]\label{prop:prod-coprod} In $\Pgrp^{(\R_+,\leq)}$, for any two objects $\bbG$ and $\bbH$,
		\begin{itemize}
			\item the product is given by component-wise direct products and denoted by $\bbG\times\bbH$;
			\item the coproduct is given by component-wise free products and denoted by $\bbG\ast\bbH$.
		\end{itemize}
\end{proposition}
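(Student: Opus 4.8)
The plan is to verify the two universal properties from \textsection\ref{sec:category} directly, exploiting the fact that $\Pgrp^{(T,\leq)}$ is a functor category and that products and coproducts in such categories are computed component-wise whenever they exist in the target category $\grp$. Since $\grp$ has all (small) products, given by direct products, and all (small) coproducts, given by free products, the candidate objects are the obvious ones: set $(\bbG\times\bbH)_t := G_t\times H_t$ and $(\bbG\ast\bbH)_t := G_t\ast H_t$, with structure maps defined component-wise from those of $\bbG$ and $\bbH$ together with the functoriality of $\times$ and $\ast$ in $\grp$. A routine check shows these assignments are functors $(T,\leq)\to\grp$, hence objects of $\Pgrp^{(T,\leq)}$.

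For the product, I would first define the projections $p^{\bbG}:\bbG\times\bbH\Rightarrow\bbG$ and $p^{\bbH}:\bbG\times\bbH\Rightarrow\bbH$ component-wise by the group-theoretic projections $G_t\times H_t\to G_t$ and $G_t\times H_t\to H_t$; naturality is immediate since the structure maps of $\bbG\times\bbH$ were defined component-wise. Given any persistent group $\bbB$ together with homomorphisms $f:\bbB\Rightarrow\bbG$ and $g:\bbB\Rightarrow\bbH$, the universal property of the direct product in $\grp$ produces, for each $t\in T$, a unique group homomorphism $u_t:B_t\to G_t\times H_t$ with $p^{\bbG}_t\circ u_t=f_t$ and $p^{\bbH}_t\circ u_t=g_t$. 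It remains to verify that the family $\{u_t\}_{t\in T}$ is itself a natural transformation, and this is exactly the step that needs care. I would argue it via uniqueness: for $t\leq t'$, both composites $(\bbG\times\bbH)(t\leq t')\circ u_t$ and $u_{t'}\circ\bbB(t\leq t')$ are homomorphisms $B_t\to G_{t'}\times H_{t'}$, and composing either of them with the two projections $p^{\bbG}_{t'},p^{\bbH}_{t'}$ yields $f_{t'}\circ\bbB(t\leq t')$ and $g_{t'}\circ\bbB(t\leq t')$ (here invoking naturality of $f$ and $g$). By the uniqueness clause in the universal property of $G_{t'}\times H_{t'}$, the two composites coincide, so the naturality square commutes.

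The coproduct is entirely dual. I would define inclusions $i^{\bbG}:\bbG\Rightarrow\bbG\ast\bbH$ and $i^{\bbH}:\bbH\Rightarrow\bbG\ast\bbH$ from the canonical inclusions $G_t\to G_t\ast H_t$ and $H_t\to G_t\ast H_t$, and for any $\bbB$ with $f:\bbG\Rightarrow\bbB$ and $g:\bbH\Rightarrow\bbB$ invoke the universal property of the free product in $\grp$ to obtain unique $u_t:G_t\ast H_t\to B_t$. The naturality of $\{u_t\}_{t\in T}$ again follows from uniqueness, now testing against precomposition with the inclusions $i^{\bbG}_t$ and $i^{\bbH}_t$ instead of postcomposition with projections.

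The only genuine content, and the step I would flag as the main (though mild) obstacle, is this naturality argument: showing that the pointwise comparison morphisms assemble into a morphism of persistent groups. Once one observes that this is forced by the uniqueness part of the pointwise universal properties, both statements follow. Alternatively, one may simply cite the general principle that limits and colimits in a functor category $\caC^{J}$ are computed objectwise whenever they exist in $\caC$, applied here to $J=(T,\leq)$ and $\caC=\grp$, with products (resp. coproducts) being the limits (resp. colimits) over the two-object discrete diagram.
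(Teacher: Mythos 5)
Your proof is correct. The paper offers no argument at all for this proposition --- it simply states that it follows ``immediately'' from the definitions of product and coproduct in \S\ref{sec:category} --- and your component-wise construction, with naturality of the comparison morphisms deduced from the uniqueness clause of the pointwise universal properties, is exactly the standard verification the paper is implicitly relying on.
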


Given a pointed metric space $(X,x_0)$, there are different approaches to construct persistent groups using homotopy information from the metric space, such as the persistent group $\ppi_1(X,x_0)$ constructed on page \pageref{para:ppi_1^epsilon}.

\begin{definition}[Persistent fundamental group]\label{def:persistent f. group} Given a pointed metric space $(X,x_0)$, the persistent group $\ppi_1(X,x_0)$, given by $\{\pi_1^{\epsilon}(X,x_0)\}_{\epsilon>0}$ together with the natural group homomorphisms $\left\{\Phi_{\epsilon,\epsilon'}\right\}_{\epsilon'\geq \epsilon>0}$, is called the \emph{persistent fundamental group} of $X$.
\end{definition}

Let us examine some properties of persistent fundamental groups here.

\begin{proposition}[Induced homomorphisms] \label{prop:induced} Let $X$ and $Y$ be chain-connected metric spaces, and let $\varphi:(X,x_0)\to (Y,y_0)$ be any pointed set map.
		\begin{romanlist}
			\item For any $\delta\geq \dis(\varphi)$ and $\epsilon>0$, the map \begin{center}
			    $\varphi_{\epsilon}^{\epsilon+\delta}:\pi_1^{\epsilon}(X)\to \pi_1^{\epsilon+\delta}(Y)$ with $[\alpha]_{\epsilon}\mapsto [\varphi(\alpha)]_{\epsilon+\delta}$
			\end{center} is a well-defined group homomorphism, and $(\varphi_{\epsilon}^{\epsilon+\delta})_{\epsilon>0}$ is a homomorphism of degree $\delta$ from $\ppi_1(X)$ to $\ppi_1(Y)$ (see page \pageref{para:Hom^delta(V,W)}).
			
			\item If $\varphi$ is $1$-Lipschitz, then for each $\epsilon>0$, the map  \begin{center}
			    $\varphi_{\epsilon}:\pi_1^{\epsilon}(X)\to \pi_1^{\epsilon}(Y)$ with $[\alpha]_{\epsilon}\mapsto [\varphi(\alpha)]_{\epsilon}$
			\end{center}is a well-defined group homomorphism, and $(\varphi_{\epsilon})_{\epsilon>0}$ defines a homomorphism from $\ppi_1(X)$ to $\ppi_1(Y)$ (see page \pageref{para:Hom(V,W)}).
		\end{romanlist}
	\end{proposition}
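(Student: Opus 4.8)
The plan is to verify in turn that $\varphi$ carries $\epsilon$-loops to $(\epsilon+\delta)$-loops, that the resulting map descends to $\epsilon$-homotopy classes, that it respects concatenation, and that the family assembles into a degree-$\delta$ homomorphism; part (2) will then follow as the specialization in which the scale need not be enlarged. The single computation driving everything is the distortion estimate: if $d_X(x,x')\le\epsilon$, then
\[d_Y(\varphi(x),\varphi(x'))\le d_X(x,x')+\left|d_Y(\varphi(x),\varphi(x'))-d_X(x,x')\right|\le \epsilon+\dis(\varphi)\le \epsilon+\delta.\]
Applying this to each consecutive pair of an $\epsilon$-loop $\alpha=x_0x_1\cdots x_{n-1}x_0$ shows that $\varphi(\alpha)=y_0\,\varphi(x_1)\cdots\varphi(x_{n-1})\,y_0$ (using $\varphi(x_0)=y_0$) is an $(\epsilon+\delta)$-loop based at $y_0$, so $\varphi$ indeed carries $\caL^\epsilon(X,x_0)$ into $\caL^{\epsilon+\delta}(Y,y_0)$.

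Next I would show well-definedness on homotopy classes, i.e.\ that $\alpha\sim_1^\epsilon\beta$ implies $\varphi(\alpha)\sim_1^{\epsilon+\delta}\varphi(\beta)$. Given an $\epsilon$-homotopy $H=\{\alpha=\gamma_0,\gamma_1,\dots,\gamma_k=\beta\}$ in $X$, I apply $\varphi$ termwise to obtain a sequence $\{\varphi(\gamma_0),\dots,\varphi(\gamma_k)\}$ of $(\epsilon+\delta)$-loops in $Y$, and check that each consecutive pair still differs by a basic move at scale $\epsilon+\delta$. This is where the distortion estimate is used again: if $\gamma_i$ arises from $\gamma_{i-1}$ by inserting an interior point $x$ between $x_j$ and $x_{j+1}$ (so $d_X(x_j,x),d_X(x,x_{j+1})\le\epsilon$), then $\varphi(\gamma_i)$ arises from $\varphi(\gamma_{i-1})$ by inserting $\varphi(x)$, and the two new gaps have $Y$-length at most $\epsilon+\delta$; removals are handled by the symmetric estimate on the single reconnected gap, and the no-change case is trivial. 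Since basic moves fix endpoints and $\varphi(x_0)=y_0$ throughout, this is a genuine pointed $(\epsilon+\delta)$-homotopy, so $\varphi_\epsilon^{\epsilon+\delta}$ descends to a well-defined map on $\pi_1^\epsilon(X)$.

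The homomorphism property is immediate from the fact that $\varphi$ commutes with concatenation: for loops $\alpha,\beta$ based at $x_0$, the definition of $\ast$ together with $\varphi(x_0)=y_0$ gives $\varphi(\alpha\ast\beta)=\varphi(\alpha)\ast\varphi(\beta)$, whence $\varphi_\epsilon^{\epsilon+\delta}([\alpha]_\epsilon\ast[\beta]_\epsilon)=\varphi_\epsilon^{\epsilon+\delta}([\alpha]_\epsilon)\ast\varphi_\epsilon^{\epsilon+\delta}([\beta]_\epsilon)$. To see that the family $(\varphi_\epsilon^{\epsilon+\delta})_{\epsilon>0}$ lies in $\Hom^{\delta}(\ppi_1(X),\ppi_1(Y))$ I would verify the naturality square: for $\epsilon\le\epsilon'$ both composites $\Phi^Y_{\epsilon+\delta,\epsilon'+\delta}\circ\varphi_\epsilon^{\epsilon+\delta}$ and $\varphi_{\epsilon'}^{\epsilon'+\delta}\circ\Phi^X_{\epsilon,\epsilon'}$ send $[\alpha]_\epsilon$ to $[\varphi(\alpha)]_{\epsilon'+\delta}$, so the square commutes and the family is a natural transformation $\ppi_1(X)\Rightarrow\ppi_1(Y)\circ S_\delta$. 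Finally, part (2) is the same argument run with the Lipschitz inequality $d_Y(\varphi(x),\varphi(x'))\le d_X(x,x')$ in place of the distortion estimate: this sends $\epsilon$-chains to $\epsilon$-chains without enlarging the scale, so every ``$\epsilon+\delta$'' above becomes ``$\epsilon$'' and one obtains $(\varphi_\epsilon)_{\epsilon>0}\in\Hom(\ppi_1(X),\ppi_1(Y))$.

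The proof is essentially a chain of routine verifications; the only place requiring care is the preservation of basic moves under $\varphi$ when $\varphi$ is not injective, since an inserted or removed interior point may be identified with a neighbor in $Y$. The convention (adopted earlier in the paper) that basic moves include the trivial ``no change'' move, together with the fact that a repeated vertex contributes a gap of length $0\le\epsilon+\delta$, ensures that every such degenerate image step is still a legitimate basic move, so no separate case analysis is needed.
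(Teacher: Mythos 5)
Your proposal is correct and follows essentially the same route as the paper: the same distortion estimate shows $\epsilon$-loops map to $(\epsilon+\delta)$-loops, well-definedness comes from $\varphi$ preserving basic moves, the homomorphism property from compatibility with concatenation, and naturality from the commuting square. The paper simply asserts that $\varphi$ preserves basic moves where you spell out the insertion/removal/no-change cases, so your writeup is a slightly more detailed version of the same argument.
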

	
\begin{proof} Given an $\epsilon$-loop $\alpha=x_0 x_1\cdots x_n$ in X, $\varphi(\alpha)=\varphi(x_0)\cdots \varphi(x_n)$ is an $(\epsilon+\delta)$-loop in $Y$, because for $0\leq i\leq n-1$,
		$$d_Y(\varphi(x_i),\varphi(x_{i+1}))\leq d_X(x_i,x_{i+1})+|d_Y(\varphi(x_i),\varphi(x_{i+1}))-d_X(x_i,x_{i+1})|\leq \epsilon+\delta.$$
Since $\varphi$ preserves basic moves, given an $\epsilon$-homotopy $H=\{\alpha=\alpha_0,\cdots,\alpha_k=\beta\}$, $\varphi(H):=\{\varphi(\alpha)=\varphi(\alpha_0),\cdots,\varphi(\alpha_k)=\varphi(\beta)\}$ is an $(\epsilon+\delta)$-homotopy in $Y$. Thus, $\varphi_{\epsilon}^{\epsilon+\delta}$ is well-defined. In addition, $\varphi_{\epsilon}^{\epsilon+\delta}$ is a group homomorphism, because for any $\alpha,\beta\in \caL(X,x_0)$,
\[ \varphi_{\epsilon}^{\epsilon+\delta}([\alpha\ast \beta]_{\epsilon})%=[\varphi(\alpha\ast \beta)]_{\epsilon+\delta}
    =[\varphi(\alpha)\ast \varphi(\beta)]_{\epsilon+\delta}
    =[\varphi(\alpha)]_{\epsilon+\delta}[\varphi(\beta)]_{\epsilon+\delta}=\varphi_{\epsilon}^{\epsilon+\delta}([\alpha]_{\epsilon})\varphi_{\epsilon}^{\epsilon+\delta}([\beta]_{\epsilon+\delta}).\]
Here $\ast$ denotes the concatenation of discrete chains.

The statement that $(\varphi_{\epsilon}^{\epsilon+\delta})_{\epsilon>0}$ is a homomorphism of degree $\delta$ from $\ppi_1(X)$ to $\ppi_1(Y)$ follows from the following commutative diagram (for all $\epsilon'>\epsilon$):
		\begin{center}
			\begin{tikzcd} 
				{[\alpha]}_{\epsilon} \ar[r, mapsto] 
				\ar[d,mapsto]
				& 
				{[\varphi(\alpha)]}_{\epsilon+\delta} \ar[d, mapsto] 
				\\
				{[\alpha]}_{\epsilon'}  \ar[r, mapsto] 
				& 
				{[\varphi(\alpha)]}_{\epsilon'+\delta}.
			\end{tikzcd}
		\end{center} 
		
Similar arguments can be applied to prove Part (2), so we omit them.
\end{proof}

\subsection{Persistent homotopy groups} \label{subsec:persistent n-homotopy}

In this section, we relate $\ppi_1(\bullet)$ with persistent fundamental groups arising from the Vietoris-Rips filtration and the Kuratowski filtration, and study persistent homotopy groups in dimensions $n\geq 2$, as well as persistent rational homotopy groups.

\subsubsection{Persistent \texorpdfstring{$\rmK$}{K}-homotopy groups and persistent \texorpdfstring{$\VR$}{VR}-homotopy groups}

For $n\in\Z_{\geq 1}$, assigning the $n$-th homotopy group to a pointed topological space forms a functor from the category of pointed spaces to the category of groups, called the \emph{homotopy group functor} and denoted by $\pi_n:\topo^*\to \grp$. In particular, when $n=1$, we call it the \emph{fundamental group functor}. In analogy to persistent homology theory, one could apply the homotopy group functor to filtrations of a given metric space to obtain a persistent group.
\medskip

\paragraph*{Kuratowski filtration and persistent $\rmK$-homotopy groups.} Let $(X,d_X)$ be a bounded metric space. We recall from \cite{borsuk1967retracts} a method to embed $X$ isometrically as a subset of some Banach space, which will eventually allow us to enlarge $X$. Let $K(X):=(L^{\infty}(X),\|\cdot\|_{\infty})$, where $L^{\infty}(X)$ is the space of bounded functions $f:X\to \R$ and $\|f\|_{\infty}:=\sup_{x\in X}|f(x)|$. For a bounded metric space $(X,d_X)$, the \emph{Kuratowski embedding} is given by 
\begin{align*}
    k_X: X&\to L^\infty(X), \\
    x&\mapsto d_X(x,\cdot).
\end{align*}
By the Kuratowski–Wojdysławski Theorem (see Theorem III.8.1 of \cite{borsuk1967retracts}), the Kuratowski embedding $k_X$ is distance-preserving. 
\begin{definition}[$\epsilon$-thickenings]\label{def:epsilon thickening} The \emph{$\epsilon$-thickening} of a compact metric space $(X,d_X)$ is the closed (resp. open) $\epsilon$-neighborhood of $k_X(X)$ in $K(X)$ , henceforth denoted by $X^{\epsilon}$ (resp. $X^{<\epsilon}$).
\end{definition}

Let $(X,x_0)$ be a pointed compact metric space. For any $0\leq \epsilon<\epsilon'$, there is a natural embedding $i_{\epsilon,\epsilon'}^X:X^{\epsilon}\to X^{\epsilon'}$, which forms a filtration in $\topo$, called the \emph{Kuratowski filtration} and denoted by $\bbX$. When there is no danger of confusion, we write $i^X$ for $i_{\epsilon,\epsilon'}^X$. Under the map $k_X$, $x_0$ can be viewed as a point in $X^{\epsilon}$ for each $\epsilon>0$, which induces a functor $(\bbX,x_0):(\R_{\geq 0},\leq)\to \topo^*$ such that $(\bbX,x_0)(\epsilon)=(X^{\epsilon},x_0)$ and $(\bbX,x_0)(\epsilon\leq \epsilon')=i_{\epsilon,\epsilon'}^X$.

\begin{definition}[Persistent $\rmK$-homotopy groups\footnote{$\rmK$ is for Kuratowski.}]\label{def:ppi^K} Let $(X,x_0)$ be a pointed compact metric space. By $(\bbX,x_0)$, composing the homotopy group functor $\pi_n$ with $(\bbX,x_0)$ induces a persistent group  
$$\ppi_n^{\rmK}(X,x_0):=\left\{\pi_n(X^{\epsilon},x_0)\right\}_{\epsilon\geq 0}, $$
together with the induced homomorphisms on homotopy groups. We call it the \emph{$n$-th persistent $\rmK$-homotopy group}.
\end{definition}

\begin{remark}\label{rmk:persistent-K-homology} Composing the homology functor $\rmH_n$ with $\bbX$ results into the \emph{persistent $\rmK$-homology group} $\pH_n^{\rmK}(X,x_0)$.
\end{remark}

Similarly, we can apply homotopy group functors to the geometric realizations of Vietoris-Rips complexes.
\begin{definition}[Persistent $\VR$-homotopy groups]\label{def:ppi^VR} Let $(X,x_0)$ be a pointed compact metric space. For each $n\in \Z_{\geq 1}$, we have the persistent group \label{para:ppi_1^K}  $$\ppi_n^{\VR}(X,x_0):=\left\{\pi_n\left(\left|  \VR_{\epsilon}(X)\right| ,x_0\right)\right\}_{\epsilon\geq 0},$$ 
together with the induced homomorphisms. We call it the \emph{$n$-th persistent $\VR$-homotopy group}. 
\end{definition}

As Vietoris-Rips complexes are simplicial complexes, one can also apply the edge-path group (see page \pageref{para:geo-realization}) functor to obtain a persistent group \label{para:ppi_1^E} 
    $$\ppi_1^{\mathrm{E}}(X,x_0):=\left\{\pi_1^{\mathrm{E}}(\VR_{\epsilon}(X),x_0)\right\}_{\epsilon\geq 0},$$
    together with the induced homomorphisms on edge-path groups. We have seen in Theorem \ref{thm:iso of d.f.g} that
$$\ppi_1^{\VR}(X,x_0)\cong \ppi_1^{\mathrm{E}}(X,x_0)\cong \ppi_1(X,x_0).$$
It will be seen in Theorem \ref{thm:iso-ppi_1's} that Definition $\ppi_1^{\rmK}(X,x_0)$ is isomorphic to the others as well.
 
\begin{remark} One advantage of persistent $\rmK$-homotopy groups and persistent $\VR$-homotopy groups is that they can be defined for $n$-th homotopy groups. However, it does not appear to be trivial to generalize the idea of persistent fundamental group or $\ppi_1^{\mathrm{E}}(X,x_0)$ to higher dimensions.
\end{remark}

\begin{theorem}[Theorem 3.1 of \cite{lim2020vietorisrips}] \label{thm:X*-VR(X)}Let $X$ be a compact metric space and $\epsilon\geq0$. Then,
$$\bbX^{<\bullet}\cong \left|  \VR_{<2\bullet}(X)\right| ,$$
as functors from $(\R_{\geq 0},\leq)$ to the homotopy category of $\topo$.
\end{theorem}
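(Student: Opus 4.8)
The plan is to realize both functors, scalewise, from a single open cover of the thickening $X^{<\epsilon}$, and then to promote the classical nerve theorem to a version that is natural in the scale parameter. Fix $\epsilon>0$ and cover $X^{<\epsilon}\subset L^{\infty}(X)$ by the open balls $U_x^{\epsilon}:=B(k_X(x),\epsilon)$ indexed by $x\in X$; since $X^{<\epsilon}$ is by definition the open $\epsilon$-neighborhood of $k_X(X)$, these balls cover it. Because $L^{\infty}(X)$ is a normed space, each $U_x^{\epsilon}$ is convex, and so is every finite intersection $\bigcap_{x\in\sigma}U_x^{\epsilon}$, which is therefore either empty or contractible. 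This is precisely the contractibility hypothesis required to feed the cover into a nerve theorem.

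First I would identify the nerve of $\{U_x^{\epsilon}\}_{x\in X}$ with $\VR_{<2\epsilon}(X)$. The necessity direction is the triangle inequality: if $f\in\bigcap_{x\in\sigma}U_x^{\epsilon}$ then for $x,x'\in\sigma$ one has $d_X(x,x')=\|k_X(x)-k_X(x')\|_{\infty}\le\|k_X(x)-f\|_{\infty}+\|f-k_X(x')\|_{\infty}<2\epsilon$, so $\diam(\sigma)<2\epsilon$. For sufficiency I would exploit the $\ell^{\infty}$ structure explicitly: writing $D:=\diam(\sigma)<2\epsilon$, the bounded function $f(t):=\max_{x\in\sigma}d_X(x,t)-D/2$ satisfies $0\le\max_{x\in\sigma}d_X(x,t)-d_X(x_0,t)\le D$ for each $x_0\in\sigma$, hence $|f(t)-d_X(x_0,t)|\le D/2<\epsilon$ for all $t\in X$; thus $f\in U_{x_0}^{\epsilon}$ for every $x_0\in\sigma$ and the intersection is nonempty. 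So $\sigma$ spans a simplex of the nerve iff $\diam(\sigma)<2\epsilon$, i.e. the nerve is exactly $\VR_{<2\epsilon}(X)$.

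With these two facts, the nerve theorem for open covers of a paracompact space (note $X^{<\epsilon}$, being metric, is paracompact, and compactness of $X$ keeps the cover controlled) yields a weak homotopy equivalence $X^{<\epsilon}\simeq\lnorm\VR_{<2\epsilon}(X)\rnorm$ for each fixed $\epsilon$; since both sides have the homotopy type of a CW complex ($\lnorm\VR_{<2\epsilon}(X)\rnorm$ tautologically, and $X^{<\epsilon}$ as an open subset of a normed space, hence an ANR), this is a genuine homotopy equivalence in the homotopy category. The remaining and principal task is naturality in the scale: I want an isomorphism of functors $(\R_{\geq 0},\leq)\to\topo$ in the homotopy category, not merely a scalewise one. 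The enabling observation is that the index set $X$ is the same for all $\epsilon$, with $U_x^{\epsilon}\subset U_x^{\epsilon'}$ for $\epsilon\le\epsilon'$, so the covers are compatibly nested and the induced maps of nerves are exactly the simplicial inclusions $\VR_{<2\epsilon}(X)\hookrightarrow\VR_{<2\epsilon'}(X)$. I would therefore replace the bare nerve lemma by its homotopy-colimit (Mayer–Vietoris blowup) form: the blowup complex of the cover maps by natural weak equivalences both to $X^{<\epsilon}$ (the projection, a weak equivalence by the projection lemma for numerable open covers) and to $\lnorm\VR_{<2\epsilon}(X)\rnorm$ (collapsing each contractible intersection piece to a point), and both legs of this zig-zag are natural in $\epsilon$ by the nesting just noted. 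Passing to the homotopy category inverts the weak equivalences and collapses the zig-zag into the asserted isomorphism $\bbX^{<\bullet}\cong\lnorm\VR_{<2\bullet}(X)\rnorm$.

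The step I expect to be the real obstacle is this last functoriality claim: the classical nerve theorem is stated objectwise, and its standard partition-of-unity proof is not manifestly natural, so the argument must be routed through the blowup/homotopy-colimit construction, whose naturality in $\epsilon$ is automatic. One must also verify that the hypotheses of the blowup comparison hold uniformly across scales (contractible finite intersections, an open numerable cover, paracompactness), which is exactly where convexity of balls in $L^{\infty}(X)$ and compactness of $X$ are used.
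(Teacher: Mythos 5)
Your argument is correct: identifying the nerve of the cover of $X^{<\epsilon}$ by the open balls $B(k_X(x),\epsilon)$ with $\VR_{<2\epsilon}(X)$ (convexity in $L^\infty(X)$ giving the good-cover condition, and the explicit witness $\max_{x\in\sigma}d_X(x,\cdot)-D/2$ giving sufficiency), and then invoking the blowup/homotopy-colimit form of the nerve theorem to get naturality in $\epsilon$, is essentially the route taken in the cited source \cite{lim2020vietorisrips} — the present paper only quotes the result without proof. The only remaining points are the standard technicalities you already flag (numerability of the open cover on the paracompact space $X^{<\epsilon}$, and CW/ANR homotopy type so that the objectwise weak equivalences become isomorphisms in the homotopy category).
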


\begin{remark} By a very similar argument with Theorem \ref{thm:X*-VR(X)}, one can check that if $X$ is a finite metric space, then 
$$\bbX^{\leq\bullet}\cong \left|  \VR_{\leq 2\bullet}(X)\right| .$$
If $X$ is not finite, it does not appear to be a trivial question whether the above isomorphism still holds. But we always have that 
\[\dhi \left( \bbX^{\leq\bullet}, \left|  \VR_{\leq 2\bullet}(X)\right| \right) = 0,\]
by approximating $X$ with a sequence of finite metric spaces under $\dgh$ and applying the triangle inequality, the stability of Kuratowski filtration and the Vietoris-Rips filtration. Indeed, let $\epsilon>0$ be arbitrarily small, and let $X_{\epsilon}$ be a finite metric space such that $\dgh(X,X_\epsilon)\leq \epsilon$. Then
\begin{align*}
    \dhi \left( \bbX^{\leq\bullet}, \left|  \VR_{\leq 2\bullet}(X)\right| \right) 
    & \leq \dhi \left( \bbX^{\leq\bullet}, \bbX_{\epsilon}^{\leq\bullet}\right)
    +\dhi \left( \bbX_{\epsilon}^{\leq\bullet}, \left|  \VR_{\leq 2\bullet}(X_{\epsilon})\right| \right)\\
    & +\dhi \left( \left|  \VR_{\leq 2\bullet}(X_{\epsilon})\right| , \left|  \VR_{\leq 2\bullet}(X)\right| \right)\\
    &\leq \epsilon + 0 + \epsilon = 2\epsilon.
\end{align*}
\end{remark}

\begin{corollary}\label{cor:iso-ppi_n} Let $(X,x_0)$ be a pointed compact metric space. Then,
$$\ppi_n^{\rmK,\bullet}(X,x_0)\cong \ppi_n^{\VR,2\bullet}(X,x_0)\text{  and  }\pH_n^{\rmK,\bullet}(X,x_0)\cong \pH_n^{\VR,2\bullet}(X,x_0),$$
holds for the open version and for the closed version with $X$ a finite metric space.
\end{corollary}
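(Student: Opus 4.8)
The plan is to deduce the corollary directly from Theorem \ref{thm:X*-VR(X)} together with the homotopy invariance of the functors $\pi_n$ and $\rmH_n$. The essential observation is that both $\pi_n$ and $\rmH_n$ are homotopy invariant, so they factor through the homotopy category of $\topo$ (in the pointed sense for $\pi_n$): homotopy equivalent spaces have isomorphic homotopy (resp.\ homology) groups, and homotopic maps induce the same homomorphism. Consequently, post-composing any functor valued in the homotopy category with $\pi_n$ or $\rmH_n$ yields an honest functor into $\grp$ (resp.\ $\ab$), i.e.\ a persistent group, and a natural isomorphism of functors valued in the homotopy category is carried to a genuine natural isomorphism of persistent groups.

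Concretely, first I would invoke Theorem \ref{thm:X*-VR(X)}, which furnishes a natural isomorphism $\bbX^{<\bullet}\cong\lnorm\VR_{<2\bullet}(X)\rnorm$ of functors from $(\R_{\geq 0},\leq)$ into the homotopy category of $\topo$. Second, I would post-compose this natural isomorphism with $\pi_n$ (resp.\ $\rmH_n$). Since $\pi_n$ and $\rmH_n$ collapse homotopies to equalities, the image diagrams of groups commute on the nose, so we obtain natural isomorphisms of persistent groups
\[\ppi_n^{\rmK,<\bullet}(X,x_0)\cong\ppi_n^{\VR,<2\bullet}(X,x_0)\quad\text{and}\quad\pH_n^{\rmK,<\bullet}(X,x_0)\cong\pH_n^{\VR,<2\bullet}(X,x_0),\]
which is exactly the open-version claim. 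Third, for the closed version with $X$ finite, I would run the identical argument, replacing the input of Theorem \ref{thm:X*-VR(X)} with the isomorphism $\bbX^{\leq\bullet}\cong\lnorm\VR_{\leq2\bullet}(X)\rnorm$ recorded in the Remark immediately preceding this corollary (valid precisely because $X$ is finite).

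The only genuine subtlety, and the main point requiring care, is the role of the basepoint: Theorem \ref{thm:X*-VR(X)} is stated for the unpointed homotopy category, whereas $\pi_n$ is defined on $\topo^*$. I would resolve this by observing that the homotopy equivalences underlying that theorem can be arranged to carry the Kuratowski image of $x_0$ to the vertex $x_0$ of $\lnorm\VR_{\epsilon}(X)\rnorm$; equivalently, since the relevant spaces are path connected, any homotopy equivalence induces isomorphisms of homotopy groups compatible with the basepoint-change isomorphisms, so the pointed persistent homotopy groups are identified naturally in $\epsilon$. For $\rmH_n$ no basepoint enters and this issue is vacuous. Apart from this bookkeeping, the corollary is a formal consequence of Theorem \ref{thm:X*-VR(X)} and functoriality.
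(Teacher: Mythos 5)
Your proposal is correct and follows essentially the same route as the paper, which states the corollary without an explicit proof as an immediate consequence of Theorem \ref{thm:X*-VR(X)} (and, for the closed version with $X$ finite, the remark preceding the corollary), obtained by post-composing with the homotopy-invariant functors $\pi_n$ and $\rmH_n$. Your additional care with the basepoint is a reasonable piece of bookkeeping that the paper leaves implicit.
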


\subsubsection{Isomorphism of persistent fundamental groups}
In this section, we prove the following isomorphism theorem of persistent fundamental groups and study the persistent fundamental group of the unit circle and tree-like metric spaces.
\isoofPPI*

\begin{proof} The first isomorphism in Eq. (\ref{eq:iso of p.f.g}) follows from Theorem \ref{thm:X*-VR(X)} directly. The second isomorphism is derived from Theorem \ref{thm:iso of d.f.g}, by checking the following collection of group isomorphisms  
    $$\pi^{\epsilon}_1(X,x_0)\cong \pi_1\left( \left|  \VR_{\epsilon }(X)\right| ,x_0\right), \forall\epsilon\geq 0 $$
forms an isomorphism between persistent groups.
\end{proof}

\begin{example}[Tree-like metric spaces] \label{ex:finite-tree}
Recall from page \pageref{def:hyp} that a tree-like metric space $(T,d_T)$ is a finite metric space where $d_T$ satisfies the four-point condition. Corollary 2.13 of \cite{lim2021some} shows that for any $\epsilon$, the Vietoris-Rips complex $\VR_\epsilon(T)$ is homotopy equivalent to a disjoint union of points. Thus, 
for each $x_0\in T$, 
\[\ppi_n(T,x_0)=\bbmzero.\]

For $n=1$, there is a more constructive proof for $\ppi_1(T,x_0)=\bbmzero$. Take arbitrary $\epsilon>0$ and any $\epsilon$-loop $\alpha=x_0x_1\dots x_nx_0$. We claim that for any $i$, if $d_T(x_i,x_0)\geq \max\{d_T(x_{i-1},x_0),d_T(x_{i+1},x_0)\}$, then $d_T(x_{i-1},x_{i+1})\leq \epsilon$. Indeed, the claim follows directly from the four-point condition:
\begin{align*}
    &d_T(x_{i-1},x_{i+1})\\
    \leq&\max\{ d_T(x_{i-1},x_0)+d_T(x_i,x_{i+1}),d_T(x_{i+1},x_0)+d_T(x_i,x_{i-1}) \}-d_T(x_i,x_0)\\
    \leq&\max\{ d_T(x_{i-1},x_0)+\epsilon,d_T(x_{i+1},x_0)+\epsilon \}-d_T(x_i,x_0)\\
    \leq&\max\{ d_T(x_{i-1},x_0),d_T(x_{i+1},x_0)\}-d_T(x_i,x_0)+\epsilon\leq \epsilon.
\end{align*}
By the claim, we can inductively remove the furthest point to $x_0$ in $\alpha$ to obtain an $\epsilon$-homotopy between $\alpha$ and the trivial loop $x_0.$ Thus, $\ppi_1(T,x_0)=\bbmzero$.

\end{example}

\begin{example}[Unit circle $\bbS^1$]\label{ex:circle-VR-ppi_n} Recall from Theorem 7.6 of \cite{AdamA15} that we have homotopy equivalence
 \begin{equation*}
 \left| \VR_{ r}(\bbS^1)\right| \cong \begin{cases} 
 \bbS^{2l+1},&\mbox{if $\tfrac{l}{2l+1}\, 2\pi<r<\tfrac{l+1}{2l+3}\, 2\pi$ for some $l=0,1,\cdots$,}\\
 \bigvee^{\mathfrak{c}}\bbS^{2l},&\mbox{if $r=\tfrac{l}{2l+1}\, 2\pi$ for some $l=0,1,\cdots$,}\\
 \ast, &\mbox{if $r\geq \pi.$}
 \end{cases}
 \end{equation*}
Here $\mathfrak{c}$ is the cardinality of the continuum (i.e. the cardinality of $\R$), and $\ast$ is the one-point space.

Let $k\in\Z_{\geq 1}$. As $\bigvee^{\mathfrak{c}}\bbS^{2k}$ is $(2k-1)$-connected, it follows from the Hurewicz theorem (cf. Theorem 4.32 of \cite{Hatcher01(AT)}) that
\[\pi_{2k} \left(\bigvee^{\mathfrak{c}}\bbS^{2k}\right)\cong \rmH_{2k} \left(\bigvee^{\mathfrak{c}}\bbS^{2k}\right)\cong\Z^{\times \mathfrak{c}}.\]
Then, the persistent homology groups of $\bbS^1$ are
 \begin{equation*} 
 \label{eq:pH of S1}
\pH_{n}^{\VR}(\bbS^1)\cong \begin{cases} 
 \Z\left(\tfrac{k-1}{2k-1}\, 2\pi,\,\tfrac{k}{2k+1}\, 2\pi\right),&\mbox{if $n=2k-1,$}\\[6pt] 
 \Z^{\times \mathfrak{c}}\left[\tfrac{k}{2k+1}\, 2\pi,\,\tfrac{k}{2k+1}\, 2\pi\right],&\mbox{if $n=2k$.}
 \end{cases}
\end{equation*}
Because $\pi_n(\bbS^m)$ is not totally known for the case $n>m$, the calculation of $\ppi_{n}^{\VR}(\bbS^1)$ can only be done for some choices of $n$. For example, we have
 \begin{equation*} \label{eq:ppi of S1}
\ppi_{n}^{\VR}(\bbS^1)\cong \begin{cases} 
 \Z\left( 0,\,\tfrac{2\pi}{3}\right),&\mbox{if $n=1$,}\\[6pt] 
 \Z^{\times \mathfrak{c}}\left[\tfrac{2\pi}{3},\,\tfrac{2\pi}{3}\right],&\mbox{if $n=2$.}
 \end{cases}
\end{equation*}
\end{example}

To circumvent the difficulty in computing $\ppi_n^{\VR}(\bbS^1)$, in the next subsection we study persistent rational homotopy groups, which is a weaker invariant compared to persistent homotopy groups but is easier to compute. In particular, we compute the $n$-th persistent rational homotopy groups of $\bbS^1$ in Remark \ref{rmk:p.rational.S1} for any $n\geq 1$. 

\subsubsection{Persistent rational homotopy groups} 
\label{sec:rational homotopy groups}

The \emph{$n$-th rational homotopy group} $\pi_n(X,x_0)\otimes_{\Z}\Q$ of a pointed topological space $(X,x_0)$ is the homotopy group $\pi_n(X,x_0)$ tensored with the rational numbers $\Q$. We assume that all topological spaces are path-connected and denote rational homotopy groups by $\pi_n(X)\otimes_{\Z}\Q$ for simplicity of notation. Compared with the difficulty of determining homotopy groups of spheres, the rational homotopy groups of spheres is substantially easier to compute and was done by Serre in 1951 (see \cite{serre1951homologie}):
\[\pi_n(\bbS^{2k-1})\otimes_{\Z}\Q\cong \begin{cases}
	\Q,&\mbox{$n=2k-1$,}\\ 
	0,&\mbox{otherwise,}
	\end{cases} \text{ and }
	\pi_n(\bbS^{2k})\otimes_{\Z}\Q\cong \begin{cases}
	\Q,&\mbox{$n=2k$ or $n=4k-1$,}\\ 
	0.&\mbox{otherwise.}
	\end{cases}\]
This inspires us to consider the notion of \emph{persistent rational homotopy groups}, by tensoring persistent homotopy groups with $\Q$.

Let $\vs$ be the category of vector spaces over $\Q$, $\ab$ be the category of Abelian groups, and $\Pab^{(\R_+,\leq)}$ be the category of functors $\bbG:(\R_+,\leq)\to \ab$. 
Let $-\otimes_{\Z} \Q$ be the tensor product functor from $\ab$ to $\vs$, which naturally 
% \[ -\otimes_{\Z} \Q (f) (= f\otimes_{\Z} \Q):=f\otimes_{\Z}\Id_{\Q}:G\otimes_{\Z}\Q\to H\otimes_{\Z}\Q,\] 
% Define $-\otimes_{\Z} \Q$ to be a functor from $\ab$ to $\vs$, given by $-\otimes_{\Z} \Q: G\mapsto G\otimes_{\Z}\Q$ and for each group homomorphism $f:G\to H$ of Abelian groups,
% \[ -\otimes_{\Z} \Q (f) (= f\otimes_{\Z} \Q):=f\otimes_{\Z}\Id_{\Q}:G\otimes_{\Z}\Q\to H\otimes_{\Z}\Q,\] 
%The functor $-\otimes_{\Z} \Q$ 
induces a functor from $\Pab^{(\R_+,\leq)}$ to $\Pvec^{(\R_+,\leq)}$ such that for each $\bbG\in \Pab^{(\R_+,\leq)}$, $\bbG\otimes_{\Z} \Q$ is the composition of the following two functors:
\[ (\R_+,\leq)\xrightarrow{\bbG} \ab \xrightarrow{-\otimes_{\Z} \Q} \vs. \]

\begin{remark} \label{rmk:p.rational.S1}
With the above construction and Example \ref{ex:circle-VR-ppi_n}, we can fully characterize the persistent rational homotopy groups of the unit circle $\bbS^1$:
 \begin{equation*}
\ppi_{n}^{\VR}(\bbS^1)\otimes_{\Z} \Q\cong \begin{cases} 
 \Q\left(\tfrac{2k-1}{4k-1}\, 2\pi,\,\tfrac{2k}{4k+1}\, 2\pi\right) \oplus \Q^{\times \mathfrak{c}}\left[\tfrac{2k}{4k+1}\, 2\pi,\,\tfrac{2k}{4k+1}\, 2\pi\right] ,&\mbox{if $n=4k-1,$}\\[6pt]
 \Q\left(\tfrac{2k}{4k+1}\, 2\pi,\,\tfrac{2k+1}{4k+3}\, 2\pi\right),&\mbox{if $n=4k+1,$}\\[6pt] 
 \Q^{\times \mathfrak{c}}\left[\tfrac{k}{2k+1}\, 2\pi,\,\tfrac{k}{2k+1}\, 2\pi\right],&\mbox{if $n=2k$,}
 \end{cases}
\end{equation*}
\end{remark}
On the other hand, we have the persistent rational homology groups of $\bbS^1$:
 \begin{equation} \label{eq:prH of S1}
\pH_{n}^{\VR}(\bbS^1;\Q)\cong\pH_{n}^{\VR}(\bbS^1)\otimes_{\Z} \Q\cong \begin{cases} 
 \Q\left(\tfrac{k-1}{2k-1}\, 2\pi,\,\tfrac{k}{2k+1}\, 2\pi\right),&\mbox{if $n=2k-1,$}\\[6pt] 
 \Q^{\times \mathfrak{c}}\left[\tfrac{k}{2k+1}\, 2\pi,\,\tfrac{k}{2k+1}\, 2\pi\right],&\mbox{if $n=2k$.}
 \end{cases}
\end{equation}
The leftmost isomorphism in Eq. (\ref{eq:prH of S1}) follows from the universal coefficient theorem for homology (cf. Theorem 3A.3 of \cite{Hatcher01(AT)}). In addition, it can be directly checked that \[\di\left(\ppi_{n}^{\VR}\left(\bbS^1\right)\otimes_{\Z} \Q,\pH_{n}^{\VR}\left(\bbS^1;\Q\right)\right)=0.\]
Note that in the case of $n=4k-1$, because the persistence modules $\ppi_{n}^{\VR}\left(\bbS^1\right)\otimes_{\Z} \Q$ and $\pH_{n}^{\VR}\left(\bbS^1;\Q\right)$ contain different types of indecomposables, they are not isomorphic to each other. 

We see that persistent rational homotopy groups are stable, following from the stability of persistent homotopy groups.
Indeed, any $\delta$-interleaving between two persistent Abelian groups $\bbG$ and $\bbH$ induces a $\delta$-interleaving between the persistence modules $\bbG\otimes_{\Z}\Q$ and $\bbH\otimes_{\Z}\Q$. Thus, 
\[\di\left(\bbG\otimes_{\Z}\Q,\bbH\otimes_{\Z}\Q\right)\leq \di\left(\bbG,\bbH\right).\]
So we have the following corollary of Theorem \ref{thm:stab-ppi_n}.
\begin{corollary} \label{cor:ppi_n rational}
Let $X$ and $Y$ be compact chain-connected metric spaces. Then, for each $n\in \Z_{\geq 2}$,
\[\di\left(\ppi_n^{\rmK}(X)\otimes_{\Z}\Q,\ppi_n^{\rmK}(Y)\otimes_{\Z}\Q\right)\leq  \dgh(X,Y).\]
When $\ppi_1^{\rmK}(X),\ppi_1^{\rmK}(Y)\in \Pab$, we also have
\[\di\left(\ppi_1^{\rmK}(X)\otimes_{\Z}\Q,\ppi_1^{\rmK}(Y)\otimes_{\Z}\Q\right)\leq  \dgh(X,Y).\]
\end{corollary}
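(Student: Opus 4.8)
The plan is to derive this as a purely formal consequence of Theorem \ref{thm:stab-ppi_n} together with the inequality $\di(\bbG\otimes_{\Z}\Q,\bbH\otimes_{\Z}\Q)\leq\di(\bbG,\bbH)$ recorded immediately above for persistent Abelian groups $\bbG$ and $\bbH$. In other words, no new metric-geometric input is needed; all the analytic content already lives in Theorem \ref{thm:stab-ppi_n}, and the corollary follows by applying a functor that does not increase interleaving distance.

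First I would handle the case $n\geq 2$. Here the standard fact that higher homotopy groups are Abelian guarantees that $\pi_n(X^{\epsilon},x_0)$ and $\pi_n(Y^{\epsilon},y_0)$ are Abelian for every $\epsilon\geq 0$, so that $\ppi_n^{\rmK}(X)$ and $\ppi_n^{\rmK}(Y)$ are objects of $\Pab^{(\R_+,\leq)}$ to which the componentwise functor $-\otimes_{\Z}\Q$ applies. Taking $\bbG=\ppi_n^{\rmK}(X)$ and $\bbH=\ppi_n^{\rmK}(Y)$ in the displayed inequality, and then bounding $\di(\ppi_n^{\rmK}(X),\ppi_n^{\rmK}(Y))$ by $\dgh(X,Y)$ via the chain-connected case of Theorem \ref{thm:stab-ppi_n}, the first assertion follows by concatenating the two estimates.

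The case $n=1$ is identical, the only difference being that $\pi_1$ need not be Abelian; this is precisely why I would impose the standing hypothesis $\ppi_1^{\rmK}(X),\ppi_1^{\rmK}(Y)\in\Pab$, which again places both persistent groups in $\Pab^{(\R_+,\leq)}$ and lets the same two-step argument run verbatim. The single point worth spelling out is the justification of the displayed inequality itself: given a $\delta$-interleaving $(f,g)$ of persistent Abelian groups, functoriality of $-\otimes_{\Z}\Q$ sends the homomorphisms $f_t,g_t$ to $\Q$-linear maps and carries the interleaving identities $(gS_{\delta})f=\bbG\eta_{2\delta}$ and $(fS_{\delta})g=\bbH\eta_{2\delta}$ to the analogous identities for $f\otimes_{\Z}\Q$ and $g\otimes_{\Z}\Q$, using that the structure maps of $\bbG\otimes_{\Z}\Q$ are by definition the images under $-\otimes_{\Z}\Q$ of the structure maps of $\bbG$. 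Hence tensoring produces a $\delta$-interleaving of the rational modules and the distance can only drop. I anticipate no genuine obstacle here: the corollary is a formal application of an interleaving-preserving functor, and the only conceptual subtlety is the clean split between $n\geq 2$, where Abelianness is automatic, and $n=1$, where it must be assumed.
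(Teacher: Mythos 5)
Your proposal is correct and follows exactly the paper's route: the paper also obtains the corollary by combining the observation that a $\delta$-interleaving of persistent Abelian groups is carried by $-\otimes_{\Z}\Q$ to a $\delta$-interleaving of the tensored modules (hence $\di(\bbG\otimes_{\Z}\Q,\bbH\otimes_{\Z}\Q)\leq\di(\bbG,\bbH)$) with the chain-connected case of Theorem \ref{thm:stab-ppi_n}. Your explicit remarks on why Abelianness is automatic for $n\geq 2$ and must be hypothesized for $n=1$ match the paper's treatment.
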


Remark \ref{rmk:p.rational.S1} is an example where persistent rational homotopy groups contain slightly more information than persistent rational homology groups. 
There are also cases when the distinguishing power of persistent rational homotopy groups is superior to that of persistent rational homology groups, see Example \ref{ex:rational_homotopy} in \textsection \ref{sec:stability-ppi_n}.

\subsection{Some properties of persistent homotopy groups}
\label{sec:properties}

In this section, we study persistent homotopy groups under products or wedge sums, and establish a persistent Hurewicz theorem to describe the relation between persistent homotopy and persistent homology.

\subsubsection{Persistent homotopy groups under products}\label{sec:ppi_n-product}
	
Let the Cartesian product $X\times Y$ of two metric spaces be equipped with the $\ell^{\infty}$ product metric:
	$$d_{X\times Y}((x,y),(x',y')):=\max\{d_X(x,x'),d_Y(y,y')\},\forall x,x'\in X; y,y'\in Y.$$
Note that when $X$ and $Y$ are chain-connected, $X\times Y$ is also chain-connected.

\begin{proposition}[Proposition 10.2, \cite{AdamA15}]\label{prop:product of VR} Let $X$ and $Y$ be pointed metric spaces. For each $\epsilon>0$, we have the basepoint preserving homotopy equivalence $$\left| \VR_{ \epsilon}(X)\right|  \times \left|  \VR_{ \epsilon}(Y)\right| \xrightarrow{\cong} \left|  \VR_{ \epsilon}(X\times Y)\right| .$$
Furthermore, for $0<\epsilon\leq\epsilon'$, we have the following commutative diagram:
	\begin{center}
		\begin{tikzcd}
			\left| \VR_{ \epsilon}(X)\right|  \times \left|  \VR_{ \epsilon}(Y)\right|  \ar[d, "\cong" left] 
			\ar[r, hook]
			& 
			\left| \VR_{ \epsilon'}(X)\right|  \times \left|  \VR_{ \epsilon'}(Y)\right| 	
			\ar[d,"\cong"]
			\\
			\left|  \VR_{ \epsilon}(X\times Y)\right| 
			\ar[r,hook]
			& 
			\left|  \VR_{ \epsilon'}(X\times Y)\right| .
		\end{tikzcd}
	\end{center} 
\end{proposition}

Recall from Proposition 4.2 of \cite{Hatcher01(AT)} the fact that $\pi_n(X\times Y,(x_0,y_0))\cong \pi_n(X,x_0)\times \pi_n(Y,y_0).$ Thus, by applying the homotopy group functor to the above commuting diagram where all maps preserve basepoints, we obtain the following corollary:

\begin{corollary}\label{cor:prod of ppi} Let $(X,x_0)$ and $(Y,y_0)$ be pointed metric spaces. There is a natural isomorphism of persistent groups:
$$\ppi_n^{\VR}(X\times Y,(x_0,y_0))\cong \ppi_n^{\VR}(X,x_0)\times \ppi_n^{\VR}(Y,y_0),$$ 
the product of $\ppi_n^{\VR}(X,x_0)$ and $\ppi_n^{\VR}(Y,y_0)$. %(see page \pageref{para:product}). It follows that
\begin{equation}\label{eq:prod of ppi_1}
    \ppi_1(X\times Y,(x_0,y_0))\cong \ppi_1(X,x_0)\times \ppi_1(Y,y_0).
\end{equation}
\end{corollary}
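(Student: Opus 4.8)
The plan is to obtain the claimed isomorphism of persistent groups by applying the homotopy group functor $\pi_n$ levelwise to the commutative ladder furnished by Proposition \ref{prop:product of VR}, and then verifying that the resulting family of group isomorphisms commutes with the structure maps, so that it constitutes an isomorphism in $\Pgrp^{(T,\leq)}$.

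First I would fix a scale $\epsilon>0$ and assemble the levelwise isomorphism. The basepoint-preserving homotopy equivalence $\lnorm\VR_{\epsilon}(X)\rnorm \times \lnorm\VR_{\epsilon}(Y)\rnorm \xrightarrow{\cong} \lnorm\VR_{\epsilon}(X\times Y)\rnorm$ of Proposition \ref{prop:product of VR} induces, upon applying $\pi_n$ (which sends basepoint-preserving homotopy equivalences to group isomorphisms), an isomorphism
$$\pi_n\lbracket\lnorm\VR_{\epsilon}(X)\rnorm \times \lnorm\VR_{\epsilon}(Y)\rnorm, (x_0,y_0)\rbracket \xrightarrow{\cong} \pi_n\lbracket\lnorm\VR_{\epsilon}(X\times Y)\rnorm, (x_0,y_0)\rbracket.$$
Composing with the isomorphism $\pi_n(A\times B)\cong \pi_n(A)\times \pi_n(B)$ of Proposition \ref{prop:property-H-Pi}(3) (Hatcher's Proposition 4.2), applied to $A=\lnorm\VR_{\epsilon}(X)\rnorm$ and $B=\lnorm\VR_{\epsilon}(Y)\rnorm$, yields a group isomorphism
$$\psi_{\epsilon}:\pi_n\lbracket\lnorm\VR_{\epsilon}(X)\rnorm\rbracket\times\pi_n\lbracket\lnorm\VR_{\epsilon}(Y)\rnorm\rbracket \xrightarrow{\cong} \pi_n\lbracket\lnorm\VR_{\epsilon}(X\times Y)\rnorm\rbracket.$$
By the description of products in $\Pgrp^{(T,\leq)}$ (Proposition \ref{prop:prod-coprod}), the left-hand side is exactly the $\epsilon$-component of $\ppi_n^{\VR}(X,x_0)\times\ppi_n^{\VR}(Y,y_0)$ and the right-hand side is the $\epsilon$-component of $\ppi_n^{\VR}(X\times Y,(x_0,y_0))$.

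Next, to promote $(\psi_{\epsilon})_{\epsilon>0}$ to an isomorphism of persistent groups, I would check compatibility with the structure maps for $\epsilon\leq\epsilon'$. This is precisely where the commutative square of Proposition \ref{prop:product of VR} is used: applying $\pi_n$ to that square shows that the homotopy-equivalence factor of $\psi_{\epsilon}$ commutes with the inclusion-induced maps, while the product decomposition $\pi_n(A\times B)\cong\pi_n(A)\times\pi_n(B)$ is natural in $A$ and $B$ and hence commutes with the componentwise inclusion-induced maps. Stacking the two commuting squares along their shared edge gives $\psi_{\epsilon'}\circ(\text{structure map})=(\text{structure map})\circ\psi_{\epsilon}$, so $(\psi_{\epsilon})_{\epsilon>0}$ is a natural isomorphism in $\Pgrp^{(T,\leq)}$. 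Finally, specializing to $n=1$ and invoking the isomorphism $\ppi_1^{\VR}\cong\ppi_1$ of Theorem \ref{thm:iso of d.f.g} transports the statement to $\ppi_1$, yielding \eqref{eq:prod of ppi_1}.

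The computations are routine; the only point demanding care is the bookkeeping in the second step. The main (mild) obstacle is confirming that the naturality of the homotopy-group product decomposition is compatible with the naturality furnished by Proposition \ref{prop:product of VR}, i.e. that both squares can be glued so that the composite $\psi$ is genuinely natural rather than merely a levelwise isomorphism. Once the naturality of each factor is isolated, this gluing is immediate, and everything else follows from functoriality of $\pi_n$.
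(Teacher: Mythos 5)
Your proposal is correct and follows essentially the same route as the paper: apply $\pi_n$ to the commutative ladder of Proposition \ref{prop:product of VR}, invoke $\pi_n(A\times B)\cong\pi_n(A)\times\pi_n(B)$, and transport to $\ppi_1$ via Theorem \ref{thm:iso of d.f.g}. You merely make explicit the naturality bookkeeping that the paper leaves implicit.
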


\begin{example}\label{ex:product of S1} Consider the torus $\mathbb{S}^1(r_1)\times \mathbb{S}^1(r_2):=\mathbb{S}^1(r_1)\times \mathbb{S}^1(r_2)$, where $0<r_1\leq r_2$ and $\mathbb{S}^1(r_1)$ and $ \mathbb{S}^1(r_2)$ are both endowed with their corresponding geodesic metrics. Then,  
\[\ppi_1\left( \mathbb{S}^1(r_1)\times \mathbb{S}^1(r_2)\right)=\Z\left( 0,\tfrac{2\pi}{3}\,r_1\right) \times \Z\left( 0,\tfrac{2\pi}{3}\,r_2\right)=(\Z\times\Z)\left( 0,\tfrac{2\pi}{3}\,r_1\right) \times \Z\left[ \tfrac{2\pi}{3}\,r_1 ,\tfrac{2\pi}{3}\,r_2\right),.\]

When $r_1=r_2=1$, we write $\bbT^2:=\bbT^2(1,1)$ and apply Example \ref{ex:circle-VR-ppi_n} to compute: 
\[\ppi_n^{\VR}(\bbT^2)\cong\begin{cases} 
 (\Z\times\Z)\left(0,\,\tfrac{2\pi}{3}\right),&\mbox{if $n=1,$}\\[6pt]
 (\Z^{\times \mathfrak{c}}\times\Z^{\times \mathfrak{c}})\left[\tfrac{2\pi}{3},\,\tfrac{2\pi}{3}\right],&\mbox{if $n=2$.}
 \end{cases}\]
 
As for the persistent homology groups of $\bbS^1\times \bbS^1$, because $\pH_i^{\epsilon}(\bbS^1)$ is torsion free for each $i\geq 0$ and $\epsilon>0$, the K\"{u}nneth Theorem (cf. Theorem 3B.5 of \cite{Hatcher01(AT)}) can be applied to calculate $\pH_n^{\VR,\epsilon}(\bbT^2)\cong \bigoplus_{i=0}^{n}\pH_i^{\epsilon}(\bbS^1)\times \pH_{n-i}^{\epsilon}(\bbS^1)$. On the other hand, if the homology groups $\pH_n^{\VR,\epsilon}(\bbT^2;\R)$ are computed with coefficients in $\R$, then for all integers $k\geq 0$, the corresponding undecorated persistence diagrams of $\bbT^2$ are (cf. Example 4.3 of \cite{lim2020vietorisrips}):
\begin{align*}
    \dgm_{2k+1}(\bbT^2)&=\left\{\left(\tfrac{k}{2k+1}\,2\pi,\,\tfrac{k+1}{2k+3}\,2\pi\right),\left(\tfrac{k}{2k+1}\,2\pi,\,\tfrac{k+1}{2k+3}\,2\pi\right)\right\},\\
    \dgm_{4k+2}(\bbT^2)&=\left\{\left(\tfrac{k}{2k+1}\,2\pi,\,\tfrac{k+1}{2k+3}\,2\pi\right)\right\},\\   
    \dgm_{4k+4}(\bbT^2)&=\emptyset.
\end{align*}
\end{example}

\subsubsection{Persistent fundamental groups under wedge sums}\label{sec:ppi_1-wedge}
		
The wedge sum $X\vee Y$ of two pointed metric spaces $X$ and $Y$ is the quotient space of the disjoint union of $X$ and $Y$ by the identification of basepoints $x_0\sim y_0$: ${\displaystyle X\vee Y=(X\amalg Y)\;/{\sim }}$. Denote the resulting basepoint of $X\vee Y$ by $z_0$. Let the wedge product be equipped with the \emph{gluing metric} (see \cite{AAGGPSWWZ17}):
	$$d_{X\vee Y}(x,y):= d_X(x,x_0)+d_Y(y,y_0),\forall x\in X, y\in Y$$ 
and $d_{X\vee Y}|_{X\times X}=d_X,d_{X\vee Y}|_{Y\times Y}=d_Y$. Notice that the above definition can be generalized to the case when we glue $n$ different pointed metric spaces $(X_1,x_1),\cdots,(X_n,x_n)$ by identifying $x_i\sim x_j$ for all $1\leq i,j\leq n$. We denote the resulting space by $\bigvee_{i=1}^n (X_i,x_i)$, or $\bigvee_{i=1}^n X_i$ for simplicity, with the gluing metric:
	$$d_{\bigvee_{i=1}^n X_i}(y_i,y_j):= d_{X_i}(y_i,x_i)+d_{X_j}(y_j,x_j),\forall i\neq j, y_i\in X_i, y_j\in X_j$$ 
and $d_{X_i\vee X_i}|_{X_i\times X_i}=d_{X_i}$ for $1\leq i\leq n$.

\begin{proposition}[Proposition 3.7, \cite{AAGGPSWWZ17}]\label{prop:wedge sum of VR} Let $X$ and $Y$ be pointed metric spaces. For each $\epsilon>0$, we have the basepoint preserving homotopy equivalence $$\left| \VR_{ \epsilon}(X)\right|  \vee \left|  \VR_{ \epsilon}(Y)\right| \xrightarrow{\cong} \left|  \VR_{ \epsilon}(X\vee Y)\right| .$$
Furthermore, for $0<\epsilon\leq\epsilon'$, we have the following commutative diagram:
	\begin{center}
		\begin{tikzcd}
			\left| \VR_{ \epsilon}(X)\right|  \vee \left|  \VR_{ \epsilon}(Y)\right|  \ar[d, "\cong" left] 
			\ar[r, hook]
			& 
			\left| \VR_{ \epsilon'}(X)\right|  \vee \left|  \VR_{ \epsilon'}(Y)\right| 	
			\ar[d,"\cong"]
			\\
			\left|  \VR_{ \epsilon}(X\vee Y)\right| 
			\ar[r,hook]
			& 
			\left|  \VR_{ \epsilon'}(X\vee Y)\right| .
		\end{tikzcd}
	\end{center} 
\end{proposition}

By applying the fundamental group functor to the above commuting diagram, where all maps preserve basepoints, we obtain a corollary about the persistent fundamental group of the wedge sum of metric spaces.
\begin{corollary}\label{cor:join of ppi}  
Let $(X,x_0)$ and $(Y,y_0)$ be pointed metric spaces, and let $z_0$ be the basepoint in $X\vee Y$ obtained by identifying $x_0$ and $y_0$. There is a natural isomorphism of persistent groups:
$$\ppi_1(X\vee Y,z_0)\cong \ppi_1(X,x_0)\ast \ppi_1(Y,y_0),$$ 
the coproduct of $\ppi_1(X)$ and $\ppi_1(Y)$ given by componentwise free product. %(see page \pageref{para:coproduct}).
\end{corollary}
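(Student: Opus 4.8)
The plan is to push the geometric statement of Proposition \ref{prop:wedge sum of VR} through the fundamental group functor and then recognize the resulting componentwise free product as the coproduct in $\Pgrp^{(T,\leq)}$. First I would fix $\epsilon>0$ and apply $\pi_1$ to the basepoint-preserving homotopy equivalence
$$\lnorm\VR_{\epsilon}(X)\rnorm \vee \lnorm\VR_{\epsilon}(Y)\rnorm \xrightarrow{\cong} \lnorm\VR_{\epsilon}(X\vee Y)\rnorm.$$
Since a basepoint-preserving homotopy equivalence induces an isomorphism of fundamental groups, this gives $\pi_1\lbracket\lnorm\VR_{\epsilon}(X)\rnorm \vee \lnorm\VR_{\epsilon}(Y)\rnorm\rbracket \cong \pi_1\lbracket\lnorm\VR_{\epsilon}(X\vee Y)\rnorm\rbracket$. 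Combining this with van Kampen's theorem in the form of Proposition \ref{prop:property-H-Pi}(4), namely $\pi_1(A\vee B)\cong \pi_1(A)\ast\pi_1(B)$, I obtain for each $\epsilon$ a group isomorphism
$$\pi_1\lbracket\lnorm\VR_{\epsilon}(X\vee Y)\rnorm\rbracket \cong \pi_1\lbracket\lnorm\VR_{\epsilon}(X)\rnorm\rbracket \ast \pi_1\lbracket\lnorm\VR_{\epsilon}(Y)\rnorm\rbracket.$$

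To justify the use of van Kampen I would observe that geometric realizations of simplicial complexes are CW complexes, so the wedge point (a $0$-cell) is a nondegenerate basepoint, i.e.\ its inclusion is a cofibration; this is exactly the hypothesis under which the wedge formula for $\pi_1$ holds. The pieces $\lnorm\VR_{\epsilon}(X)\rnorm$ and $\lnorm\VR_{\epsilon}(Y)\rnorm$ are path-connected whenever $X$ and $Y$ are $\epsilon$-connected, and in any case $\pi_1$ records only the path component of the basepoint, so no further care is required.

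The step I expect to demand the most attention is \emph{naturality}: I must check that the pointwise isomorphisms above assemble into an isomorphism of persistent groups, i.e.\ that they commute with the structure maps $\Phi_{\epsilon,\epsilon'}$. This follows from two facts. First, the homotopy equivalences of Proposition \ref{prop:wedge sum of VR} are compatible with the inclusions for $\epsilon\leq\epsilon'$, as recorded by the commutative square in that proposition, and applying $\pi_1$ preserves this commutativity. Second, the van Kampen isomorphism $\pi_1(A\vee B)\cong\pi_1(A)\ast\pi_1(B)$ is natural with respect to basepoint-preserving maps $A\to A'$ and $B\to B'$, because the free product is the coproduct in $\grp$ and the isomorphism is induced by the canonical inclusions $A\hookrightarrow A\vee B\hookleftarrow B$. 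Hence the inclusion-induced maps on the two sides agree under the identification, and the whole family constitutes a natural isomorphism of functors $(\R_+,\leq)\to\grp$.

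Finally I would translate the conclusion into the language of the statement. By Proposition \ref{prop:prod-coprod} the componentwise free product is precisely the coproduct $\ppi_1^{\VR}(X)\ast\ppi_1^{\VR}(Y)$ in $\Pgrp^{(T,\leq)}$, so the natural isomorphism just constructed reads $\ppi_1^{\VR}(X\vee Y,z_0)\cong\ppi_1^{\VR}(X,x_0)\ast\ppi_1^{\VR}(Y,y_0)$. Invoking the isomorphism $\ppi_1^{\VR}\cong\ppi_1$ of persistent groups from Theorem \ref{thm:iso of d.f.g} then yields the desired identity $\ppi_1(X\vee Y,z_0)\cong\ppi_1(X,x_0)\ast\ppi_1(Y,y_0)$.
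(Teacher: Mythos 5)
Your proposal is correct and follows essentially the same route as the paper, whose proof is a one-line citation of Theorem \ref{thm:iso of d.f.g}, Proposition \ref{prop:wedge sum of VR}, and van Kampen's theorem; you have simply spelled out the naturality check and the cofibration hypothesis that the paper leaves implicit. No changes needed.
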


\begin{proof} This follows immediately from Theorem \ref{thm:iso of d.f.g}, Proposition \ref{prop:wedge sum of VR} and the van-Kampen theorem (see Theorem 1.20 of \cite{Hatcher01(AT)}).
\end{proof}

\begin{remark} Via the isomorphism between $\ppi_n^{\VR}$ and $\ppi_n^{\rmK}$, Corollary \ref{cor:prod of ppi} and Corollary \ref{cor:join of ppi} can both be derived in a much simpler way using Kuratowski filtrations. See the proof of Theorem 4.1 of \cite{lim2020vietorisrips} for details.
\end{remark}

\begin{example}[Bouquets of circles]\label{ex:bouquet of circles} Let $0=:r_0<r_1< r_2<\cdots< r_n$ and let $k_1,k_2,\cdots,k_n$ be positive integers. Consider a \emph{bouquet of circles} given as follows: let 
\[C = \left(\bigvee^{k_1}\mathbb{S}^1(r_1)\right)\vee\left(\bigvee^{k_2}\mathbb{S}^1(r_2)\right)\vee\cdots \vee \left(\bigvee^{k_n}\mathbb{S}^1(r_n)\right)\] by gluing one point from each circle together. Then, 
\[\ppi_1^{\epsilon}(C)=\begin{cases}
		\Z^{\ast (k_i+\cdots+k_n)},&\mbox{if $ \tfrac{2\pi}{3}r_{i-1} \leq \epsilon< \tfrac{2\pi}{3}r_{i}$, for $i=1,\cdots,n$.}\\ 
		0,&\mbox{if $\epsilon\geq \tfrac{2\pi}{3}r_n$.}
		\end{cases},\]
where $\ppi_1^{0}(C):=\pi_1(C)=\Z^{\ast (k_1+\cdots+k_n)}$. In addition,
\[\ppi_1(C)=\coprod_{i=1}^{n} \Z^{\ast k_i}\left[0,\tfrac{2\pi}{3}r_{i}\right) =\coprod_{i=1}^{n} \Z^{\ast (k_i+\cdots+k_n)}\left[\tfrac{2\pi}{3}r_{i-1} ,\tfrac{2\pi}{3}r_{i}\right).\]
Notice that the number of free generators of $\ppi_1^{\epsilon}(C)$ can be represented by a staircase function:
	\[f_C(\epsilon):=\begin{cases}
	k_i+\cdots+k_n,&\mbox{if $\epsilon\in \left[\tfrac{2\pi}{3}r_{i-1}, \tfrac{2\pi}{3}r_{i}\right)$, for $i=1,\cdots,n$.}\\ 
	0,&\mbox{if $\epsilon\in \left[\tfrac{2\pi}{3}r_n,+\infty\right)$.}
	\end{cases}\] 

Taking $r_1=1,r_2=2$ and $r_3=3$, we obtain the function $f_C$ shown in Figure. \ref{fig:bounquet of circles}
\begin{figure}[ht!]	  
\centering
	\begin{tikzpicture}[scale=0.4]
    \draw[line width=0.5mm] (0,3) circle [blue, radius=1];
    \draw[line width=0.5mm] (0,4) circle [blue, radius=2];
    \draw[line width=0.5mm] (0,5) circle [blue, radius=3];
    \filldraw (0,2) [color=blue] circle[radius=1.5pt];
    \node[below=1pt of {(0,1.5)}, outer sep=1.5pt,fill=white] {$C$};
    \end{tikzpicture} \hspace{2cm}
    \begin{tikzpicture}
    \begin{axis} [ 
    height=4cm,
    axis y line=left, 
    axis x line=middle,
    xlabel=$\epsilon$,
    ytick={0,1,2,3},
    xtick={0,1,2,3},
    xticklabels={0,$\tfrac{2\pi}{3}$,$\tfrac{4\pi}{3}$,$\tfrac{6\pi}{3}$},
    xmin=0, xmax=4.5,
    ymin=0, ymax=3.5,]
    \addplot +[mark=none,color=blue,dashed] coordinates {(1,3) (1,-3)};
    \addplot +[mark=none,color=blue,dashed] coordinates {(2,2) (2,-2)};
    \addplot +[mark=none,color=blue,dashed] coordinates {(3,1) (3,-1)};
    \addplot[domain=2:3,color=blue,ultra thick]{1};
    \node[mark=none,color=blue,right] at (axis cs:2.5,2.5){$f_C$};
    \addplot[domain=1:2,color=blue,ultra thick]{2};
    \addplot[domain=0:1,color=blue,ultra thick]{3};
    \addplot[domain=3:4.5, color=blue,ultra thick]{0};
    \end{axis}
    \end{tikzpicture}
    \caption{Bouquets of three circles with $r_1=1,r_2=2$ and $r_3=3$ (left) and the corresponding function $f_C$ (right) representing the number of free generators of $\ppi_1^{\epsilon}(C)$.} \label{fig:bounquet of circles}
\end{figure}
\end{example}
	
\subsubsection{Relation to persistent homology groups: a persistent Hurewicz theorem}\label{sec:persistent-hurewicz}
	
In \cite{barcelo2014discrete}, Barcelo et al. established a slightly different definition of $\epsilon$-homotopy, where an $\epsilon$-homotopy between two $\epsilon$-chains $\gamma_1$ and $\gamma_2$ (of the same size $n$) is an $\epsilon$-Lipschitz map $H:([n]\times [m],\ell^{1})\rightarrow (X,d_X)$, instead of equipping $[n]\times [m]$ with the $\ell^{\infty}$ metric as we do in this paper. They proved a discrete version of the Hurewicz theorem under the $\ell^1$ metric in \cite{barcelo2014discrete}. Via a similar argument, we obtain the following:

\begin{restatable} [Discrete Hurewicz theorem]{theorem}{discreteHurewicz}
\label{thm:hurewicz-discrete} Let $\epsilon>0$. Let $(X,x_0)$ be a pointed $\epsilon$-connected metric space. Then, there is a surjective group homomorphism 
	$$\rho_{\epsilon}: \pi_1^{\epsilon}(X,x_0)\twoheadrightarrow \pH_1^{\VR,\epsilon}(X):=\pH_1^{\VR,\epsilon}(X;\Z)$$ such that $\ker(\rho_{\epsilon})=[\pi_1^{\epsilon}(X,x_0),\pi_1^{\epsilon}(X,x_0)]$, the commutator group of $\pi_1^{\epsilon}(X,x_0)$.
\end{restatable}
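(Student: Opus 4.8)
The plan is to reduce the statement to the classical one-dimensional Hurewicz theorem applied to the geometric realization $\lnorm\VR_{\epsilon}(X)\rnorm$, exploiting the identifications already established in the excerpt. By Theorem \ref{thm:iso of d.f.g} there is a group isomorphism $\Psi:\pi_1^{\epsilon}(X,x_0)\xrightarrow{\cong}\pi_1(\lnorm\VR_{\epsilon}(X)\rnorm,x_0)$, and by definition $\pH_1^{\VR,\epsilon}(X)=\rmH_1(\VR_{\epsilon}(X);\Z)$ is the first simplicial homology of $\VR_{\epsilon}(X)$, which coincides with the singular homology $\rmH_1(\lnorm\VR_{\epsilon}(X)\rnorm;\Z)$ of its realization. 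Since $(X,x_0)$ is $\epsilon$-connected, the complex $\VR_{\epsilon}(X)$ is connected, so $\lnorm\VR_{\epsilon}(X)\rnorm$ is a path-connected CW complex, and the classical Hurewicz theorem \cite[Theorem 2A.1]{Hatcher01(AT)} applies to it.

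First I would set $\rho_{\epsilon}:=h\circ\Psi$, where $h:\pi_1(\lnorm\VR_{\epsilon}(X)\rnorm,x_0)\to\rmH_1(\lnorm\VR_{\epsilon}(X)\rnorm;\Z)$ is the Hurewicz homomorphism. To make the kernel computation transparent I would record the explicit combinatorial form of $\rho_{\epsilon}$: an $\epsilon$-loop $\alpha=x_0x_1\cdots x_{n-1}x_0$ determines the simplicial $1$-chain $c_{\alpha}:=\sum_{i=0}^{n-1}[x_i,x_{i+1}]\in C_1(\VR_{\epsilon}(X);\Z)$, each consecutive pair spanning a $1$-simplex because $d_X(x_i,x_{i+1})\le\epsilon$, and $c_{\alpha}$ is a cycle since $\partial_1 c_{\alpha}$ telescopes to $0$. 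Setting $\rho_{\epsilon}([\alpha]_{\epsilon}):=[c_{\alpha}]$ agrees with $h\circ\Psi$ under the standard identification of simplicial and singular $\rmH_1$.

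Well-definedness and the homomorphism property then follow by checking that $[c_{\alpha}]$ is unchanged under a basic move: inserting or deleting a vertex $x_i$ replaces $[x_{i-1},x_i]+[x_i,x_{i+1}]$ by $[x_{i-1},x_{i+1}]$, and their difference equals $\partial_2[x_{i-1},x_i,x_{i+1}]$, a boundary, because a basic move forces all three pairwise distances among $x_{i-1},x_i,x_{i+1}$ to be at most $\epsilon$, so this triple spans a $2$-simplex of $\VR_{\epsilon}(X)$; additivity is immediate from $c_{\alpha\ast\beta}=c_{\alpha}+c_{\beta}$. Surjectivity and the identification of the kernel are then exactly the conclusion of the Hurewicz theorem: $h$ is onto with $\ker h$ the commutator subgroup of $\pi_1(\lnorm\VR_{\epsilon}(X)\rnorm,x_0)$, and transporting along the isomorphism $\Psi$ (which carries commutators to commutators) yields that $\rho_{\epsilon}$ is surjective with $\ker\rho_{\epsilon}=[\pi_1^{\epsilon}(X,x_0),\pi_1^{\epsilon}(X,x_0)]$.

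The only genuinely delicate point is matching my explicit $\rho_{\epsilon}$ with the abstract Hurewicz map through the simplicial-to-singular comparison, so that the target is literally $\pH_1^{\VR,\epsilon}(X)$; once this bookkeeping is in place everything else is formal. Alternatively, following the cited argument of Barcel\'o et al., one could bypass the realization entirely and prove directly, at the level of $\epsilon$-chains, that the abelianization of the concatenation product is isomorphic to the group of discrete $1$-cycles modulo discrete $2$-boundaries; I expect the main effort there to be re-deriving their $\ell^1$ null-homotopy arguments with the $\ell^{\infty}$ homotopies used here, which should be routine since Proposition \ref{prop:Lipdef} already translates basic moves into $\epsilon$-Lipschitz homotopies.
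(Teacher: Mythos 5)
Your proposal is correct, but it takes a genuinely different route from the paper. The paper's proof (adapting Barcel\'o et al.) is entirely combinatorial and self-contained: it defines $\tilde{\rho}(\gamma)=\sum_i\{x_i,x_{i+1}\}$ directly, verifies invariance under basic moves exactly as you do, but then proves surjectivity by hand (given a $1$-cycle $\sum n_i\{x_i,y_i\}$, it uses $\epsilon$-connectedness to choose $\epsilon$-chains $\beta_q$ from $x_0$ to each endpoint and assembles an explicit $\epsilon$-loop $\prod_i(\beta_{x_i}\sigma_i\beta_{y_i}^{-1})^{n_i}$ hitting that cycle) and proves $\ker\rho\subset[\pi_1^{\epsilon},\pi_1^{\epsilon}]$ by an explicit manipulation of $2$-simplices and conjugated edge loops. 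You instead transport the classical Hurewicz theorem across the isomorphism $\pi_1^{\epsilon}(X,x_0)\cong\pi_1\lbracket\lnorm\VR_{\epsilon}(X)\rnorm,x_0\rbracket$ of Theorem \ref{thm:iso of d.f.g} and the simplicial-to-singular comparison; the paper itself acknowledges in a remark immediately after its proof that this reduction works, but does not carry it out. Your route is shorter and outsources the hard steps (surjectivity, kernel) to standard machinery, at the cost of the bookkeeping you correctly flag: one must check that the explicit chain-level map agrees with $h\circ\Psi$ under the edge-path and simplicial/singular identifications. The paper's route buys explicitness and, more importantly, makes the naturality in $\epsilon$ needed for the persistent version (Theorem \ref{thm:hurewicz-persistent}) a direct diagram chase at the level of $\tilde{\rho}^{\epsilon}$ and $p^{\epsilon}$; with your approach one would additionally argue that all the identifications used are natural with respect to the inclusions $\VR_{\epsilon}(X)\hookrightarrow\VR_{\epsilon'}(X)$, which is true but is one more thing to record. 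Your well-definedness argument and the use of $\epsilon$-connectedness (to get path-connectedness of the realization) are both sound.
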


By checking that $\rho:=\{\rho_{\epsilon}\}_{\epsilon>0}$ is a homomorphism from $\ppi_1(X)$ to $\pH_1^{\VR}(X)$ (see page \pageref{pf:hurewicz-persistent}) and applying the isomorphism theorem of persistent fundamental groups, we obtain the persistent Hurewicz theorem:
\persistentHurewicz*
 
\begin{myproof}{Theorem \ref{thm:hurewicz-discrete}} 
\label{pf:hurewicz-persistent}
Here we adapt the proof of Theorem 4.1 from \cite{barcelo2014discrete}. For simplicity, we fix $\epsilon$ and write $\rho$ for $\rho_{\epsilon}.$ Also, we write $d$ for $d_X$ and omit $\ast$ for the concatenation of discrete loops.

Let $[\gamma]\in \pi_1^{\epsilon}(X,x_0)$ and choose a representative $\gamma=x_0x_1\cdots x_{n+1}$ with $x_{n+1}=x_0.$ For each $i\in [n]$, since $d_X(x_i,x_{i+1})\leq \epsilon$, $\sigma_i:=\{x_i,x_{i+1}\}$ is an $1$-simplex in $\VR_{\epsilon }(X)$, implying that $\sum_i\sigma_i\in C_1(\VR_{\epsilon }(X);\Z).$ Note that $\sum_i\sigma_i\in \ker(\partial_1)$, because $$\partial_1\left( \sum_i\sigma_i\right)=(x_1-x_0)+(x_2-x_1)+\cdots+(x_{n+1}-x_n)=0.$$
Define $\tilde{\rho}(\gamma)=\sum_i\sigma_i$. Let $p:\ker(\partial_1)\rightarrow\pH_1^{\VR,\epsilon}(X)$ be the canonical projection, and define $$\rho([\gamma]):=p(\tilde{\rho}(\gamma)).$$ 

\begin{claim} $\rho$ is well-defined.\end{claim}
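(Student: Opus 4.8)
The plan is to show that the value $p(\tilde{\rho}(\gamma))$ depends only on the $\epsilon$-homotopy class $[\gamma]$, i.e. that $\gamma\sim_1^{\epsilon}\gamma'$ forces $\tilde{\rho}(\gamma)-\tilde{\rho}(\gamma')\in\im(\partial_2)$, so that $p(\tilde{\rho}(\gamma))=p(\tilde{\rho}(\gamma'))$ in $\pH_1^{\VR,\epsilon}(X)$. Since by Definition \ref{def:epsilon-homotopy} an $\epsilon$-homotopy is a finite sequence of $\epsilon$-loops each obtained from the previous one by a single basic move, it suffices by induction on the number of moves to treat the case in which $\gamma$ and $\gamma'$ differ by exactly one basic move; the result then propagates along the whole homotopy because $\im(\partial_2)$ is a subgroup of $\ker(\partial_1)$.

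First I would dispose of the trivial basic move (no change), for which $\tilde{\rho}(\gamma)=\tilde{\rho}(\gamma')$ on the nose. For the remaining two cases the key geometric observation is that the three points involved in an insertion or a deletion always span a $2$-simplex of $\VR_{\epsilon}(X)$. Concretely, suppose $\gamma'$ is obtained from $\gamma=x_0\cdots x_i x_{i+1}\cdots x_0$ by inserting an interior vertex $y$ between $x_i$ and $x_{i+1}$; validity of the move forces $d(x_i,y)\leq\epsilon$ and $d(y,x_{i+1})\leq\epsilon$, and since already $d(x_i,x_{i+1})\leq\epsilon$ we get $\diam\{x_i,y,x_{i+1}\}\leq\epsilon$, so $\{x_i,y,x_{i+1}\}$ is a face of $\VR_{\epsilon}(X)$. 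With the orientation convention used in the displayed computation of $\partial_1$ above (so that $\partial_1[a,b]=b-a$), the only difference between the two cycles is that the edge $[x_i,x_{i+1}]$ appearing in $\tilde{\rho}(\gamma)$ is replaced by $[x_i,y]+[y,x_{i+1}]$ in $\tilde{\rho}(\gamma')$, whence
\[
\tilde{\rho}(\gamma')-\tilde{\rho}(\gamma)=[x_i,y]+[y,x_{i+1}]-[x_i,x_{i+1}]=\partial_2[x_i,y,x_{i+1}]\in\im(\partial_2).
\]
The deletion case is identical with the roles of $\gamma$ and $\gamma'$ interchanged (the removed vertex plays the role of $y$), again using that the relevant triple has diameter at most $\epsilon$ and hence bounds a $2$-simplex.

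Combining the two cases and inducting along the sequence of basic moves yields $p(\tilde{\rho}(\gamma))=p(\tilde{\rho}(\gamma'))$ whenever $\gamma\sim_1^{\epsilon}\gamma'$, so that $\rho([\gamma]):=p(\tilde{\rho}(\gamma))$ is independent of the chosen representative and $\rho$ is well defined. The only point requiring a word of care is the bookkeeping when a chain repeats a vertex (e.g. $x_i=x_{i+1}$, or $y$ equal to a neighbour): in that situation the corresponding ``edge'' is degenerate and is read as the zero $1$-chain, which is consistent with $\partial_1[a,a]=0$ and does not affect the identities above. I expect this degenerate-simplex bookkeeping, together with fixing one orientation convention, to be the only real friction; the substance of the argument is the elementary fact that each basic move changes the associated $1$-cycle by the boundary of a single $2$-simplex.
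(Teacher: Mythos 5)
Your proposal is correct and follows essentially the same route as the paper: reduce to a single basic move, observe that the three points involved in an insertion or deletion have pairwise distance at most $\epsilon$ and hence span a $2$-simplex of $\VR_{\epsilon}(X)$, and conclude that the two $1$-cycles differ by the boundary of that simplex, so they agree in $\pH_1^{\VR,\epsilon}(X)$. The only differences are cosmetic (you add a remark on degenerate edges and orientation conventions that the paper omits).
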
 

It suffices to prove that $\rho$ is well-defined under basic moves. Let $\gamma=x_0x_1\cdots x_{n}x_0$. Suppose that by removing some point $x_i$ ($1\leq i\leq n$), we obtain an $\epsilon$-chain $\gamma_i=x_0x_1\cdots x_{i-1}x_{i+1}\cdots x_{n}x_0$. Then, $$\tilde{\rho}(\gamma)-\tilde{\rho}(\gamma_i)=\{x_{i-1},x_i\}+\{x_{i},x_{i+1}\}-\{x_{i-1},x_{i+1}\}=\partial_2(\{x_{i-1},x_i,x_{i+1}\}).$$ 
Because $\gamma_i$ is an $\epsilon$-chain, $d_X(x_{i-1},d_{i+1})\leq \epsilon$, and thus $\{x_{i-1},x_i,x_{i+1}\}\in C_2\left(\VR_{\epsilon}(X);\Z\right)$. It follows that $\tilde{\rho}(\gamma)-\tilde{\rho}(\gamma_i)\in \im (\partial_2)$ and $\rho(\gamma)=\rho(\gamma_i).$
		
If adding $y_i$ results a new $\epsilon$-chain $\gamma_i'=x_0x_1\cdots x_{i-1}y_ix_{i}\cdots x_{n}x_0$, we can apply similar arguments to obtain $\tilde{\rho}(\gamma)-\tilde{\rho}(\gamma_i')=\partial_2(\{ x_{i-1},y_i,x_{i}\})\in \im(\partial_2)$ and thus $\rho(\gamma)=\rho(\gamma_i').$
		
\begin{claim} $\rho$ is a group homomorphism.\end{claim}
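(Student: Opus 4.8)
The plan is to reduce the homomorphism property to a single identity at the level of $1$-chains. Recall that the group law on $\pi_1^{\epsilon}(X,x_0)$ is induced by concatenation $\ast$ of discrete loops, whereas the group law on $\pH_1^{\VR,\epsilon}(X)$ is addition of homology classes. Since Claim 1 has already established that $\rho$ is well defined on $\epsilon$-homotopy classes, it suffices to verify $\rho([\gamma]\ast[\gamma'])=\rho([\gamma])+\rho([\gamma'])$ for arbitrarily chosen representatives $\gamma$ and $\gamma'$.

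First I would fix representatives $\gamma=x_0x_1\cdots x_{n-1}x_0$ and $\gamma'=x_0y_1\cdots y_{m-1}x_0$, so that their concatenation reads $\gamma\ast\gamma'=x_0x_1\cdots x_{n-1}x_0y_1\cdots y_{m-1}x_0$. The key observation is that the list of consecutive edges of $\gamma\ast\gamma'$ is exactly the list of edges of $\gamma$ followed by the list of edges of $\gamma'$: the edge $\{x_{n-1},x_0\}$ closing $\gamma$ and the edge $\{x_0,y_1\}$ opening $\gamma'$ both occur, with no cancellation and no repeated simplex at the shared basepoint $x_0$. Consequently
\[\tilde{\rho}(\gamma\ast\gamma')=\tilde{\rho}(\gamma)+\tilde{\rho}(\gamma')\]
as elements of $C_1(\VR_{\epsilon}(X);\Z)$, an identity that already takes place inside $\ker(\partial_1)$ since each summand is a cycle.

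Finally I would apply the canonical projection $p:\ker(\partial_1)\to\pH_1^{\VR,\epsilon}(X)$, which is itself a group homomorphism, to the displayed chain-level identity. Linearity of $p$ then gives $\rho([\gamma]\ast[\gamma'])=p\bigl(\tilde{\rho}(\gamma)+\tilde{\rho}(\gamma')\bigr)=p(\tilde{\rho}(\gamma))+p(\tilde{\rho}(\gamma'))=\rho([\gamma])+\rho([\gamma'])$, which is precisely the homomorphism property. I do not anticipate a genuine obstacle here: the whole content is the additivity of $\tilde{\rho}$ under concatenation, and the only point requiring a moment of care is confirming that assembling the concatenated edge list introduces neither spurious cancellations nor duplicated $1$-simplices at the junction $x_0$ — a bookkeeping verification rather than a conceptual difficulty.
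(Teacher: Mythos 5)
Your argument is the same as the paper's: both verify the chain-level identity $\tilde{\rho}(\gamma\ast\gamma')=\tilde{\rho}(\gamma)+\tilde{\rho}(\gamma')$ by observing that the edge list of the concatenation is the union of the two edge lists, and then push it through the linear projection $p$. The proposal is correct and adds only minor bookkeeping remarks (independence of representatives, no cancellation at the junction) beyond what the paper writes.
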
 

Let $\gamma_1=x_0x_1\cdots x_{n}x_{n+1}$ and $\gamma_2=y_0y_1\cdots y_{m}y_{m+1}$ where $x_0=x_{n+1}=y_0=y_{m+1}$. Then, $$\tilde{\rho}(\gamma_1\gamma_2)=\sum_{i=0}^n\{x_i,x_{i+1}\}+\sum_{j=0}^m\{y_j,y_{j+1}\}=\tilde{\rho}(\gamma_1)+\tilde{\rho}(\gamma_2)\implies \rho(\gamma_1\gamma_2)=\rho(\gamma_1)+\rho(\gamma_2).$$
		
\begin{claim} $\rho$ is surjective.\end{claim}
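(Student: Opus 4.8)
The plan is to prove surjectivity by showing that every homology class in $\pH_1^{\VR,\epsilon}(X)$ is hit on the nose: given an arbitrary class, I would choose a representing $1$-cycle $z=\sum_k n_k\sigma_k\in\ker(\partial_1)$, where each $\sigma_k=[a_k,b_k]$ is an oriented $1$-simplex of $\VR_\epsilon(X)$ (so $d(a_k,b_k)\le\epsilon$) and $n_k\in\Z$, and then construct a based $\epsilon$-loop $\gamma$ with $\tilde{\rho}(\gamma)=z$, whence $\rho([\gamma])=p(z)=[z]$.

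The key idea is to convert each oriented edge appearing in $z$ into a \emph{based} loop by prepending and appending connecting chains. Since $(X,x_0)$ is $\epsilon$-connected, for each vertex $v$ occurring in $z$ I would fix an $\epsilon$-chain $\lambda_v$ from $x_0$ to $v$ (taking $\lambda_{x_0}$ to be the constant chain); only finitely many are needed, as $z$ is a finite sum. For each $k$ set $\ell_k:=\lambda_{a_k}\ast(a_k b_k)\ast\lambda_{b_k}^{-1}$, which is an $\epsilon$-loop based at $x_0$, and define $\gamma$ to be the concatenation $\ell_1^{\,n_1}\ast\cdots\ast\ell_m^{\,n_m}$, where $\ell_k^{\,n_k}$ means traversing $\ell_k$ (or its reversal, when $n_k<0$) exactly $|n_k|$ times. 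This $\gamma$ is a based $\epsilon$-loop, so $[\gamma]\in\pi_1^\epsilon(X,x_0)$.

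Next I would compute $\tilde{\rho}(\gamma)$. Using $\tilde{\rho}(\alpha\ast\beta)=\tilde{\rho}(\alpha)+\tilde{\rho}(\beta)$ and $\tilde{\rho}(\alpha^{-1})=-\tilde{\rho}(\alpha)$ (both immediate from the definition of $\tilde{\rho}$, exactly as used in Claim~2), one obtains $\tilde{\rho}(\ell_k)=\tilde{\rho}(\lambda_{a_k})+\sigma_k-\tilde{\rho}(\lambda_{b_k})$, and therefore
\[
\tilde{\rho}(\gamma)=z+\sum_v c_v\,\tilde{\rho}(\lambda_v),\qquad
c_v:=\sum_{k:\,a_k=v}n_k-\sum_{k:\,b_k=v}n_k .
\]
The decisive point is that the correction term vanishes. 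Reading off the coefficient of each vertex $v$ in $\partial_1 z=\sum_k n_k(b_k-a_k)$ shows that the cycle hypothesis $\partial_1 z=0$ forces $c_v=0$ for every $v$. Hence $\tilde{\rho}(\gamma)=z$ exactly, so $\rho([\gamma])=p(\tilde{\rho}(\gamma))=[z]$, which establishes surjectivity.

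I expect the only genuine obstacle to be recognizing that the connecting-chain contributions could \emph{a priori} assemble into a nonzero $1$-cycle, so that $\tilde{\rho}(\gamma)$ need not literally equal $z$; what rescues the argument is not exactness of those contributions but their outright cancellation, dictated by the vertex-balance encoded in $\partial_1 z=0$. The remaining verifications --- that each $\ell_k$ and $\gamma$ are legitimate $\epsilon$-loops based at $x_0$, and that $\tilde{\rho}$ is additive under concatenation and sign-reversing under reversal --- are routine and mirror the bookkeeping already carried out in the well-definedness and homomorphism claims.
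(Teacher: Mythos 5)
Your proposal is correct and follows essentially the same route as the paper: both fix one connecting $\epsilon$-chain per vertex appearing in the cycle, form based loops $\lambda_{a_k}\ast\sigma_k\ast\lambda_{b_k}^{-1}$, concatenate them with multiplicities $n_k$, and observe that the connecting-chain contributions cancel outright because the vertex coefficients in $\partial_1 z=0$ vanish (your $c_v$ is the paper's $m_q$). No gaps.
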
 

Let $\lambda\in \ker(\partial_1)\subset C_1(\VR_{\epsilon }(X);\Z)$. Then, $\lambda$ can be written as $\sum_{i=1}^k n_i\sigma_i$ for some $n_i\in \Z$ and $\sigma_i=\{x_i,y_i\}$ such that $d(x_i,y_i)\leq \epsilon$ for all $i$. Since $\partial_1(\lambda)=0$, we have
		\begin{equation}\label{eq:surjcoef}
		\sum_{i=1}^k n_i(y_i-x_i)=0.
		\end{equation} 
Let $S=\{x_i,y_i:i=1,\cdots,k\}$. Given $q\in S$, 
		\begin{itemize}
			\item let $m_q$ be the sum of coefficients of $q$ in Eq. (\ref{eq:surjcoef}). Note that $m_q=0$.
			\item let $\beta_q$ be an $\epsilon$-chain from $x_0$ to $q$ in $X.$
		\end{itemize}
For each $i$, define $\eta_i:=\beta_{x_i}\sigma_i\beta_{y_i}^{-1}.$ Clearly, every $\eta_i$ is an $\epsilon$-loop. Let $\gamma=\eta_1^{n_1}\eta_2^{n_2}\cdots \eta_k^{n_k}$, which is again an $\epsilon$-loop. In addition, we have $\rho([\gamma])=\lambda$, because 
		\begin{eqnarray*}
			\tilde{\rho}(\gamma)&=&\sum_{i=1}^k n_i \left(\tilde{\rho}(\beta_{x_i})+\sigma_i-\tilde{\rho}(\beta_{y_i})\right)\\
			&=&\sum_{i=1}^k n_i\sigma_i+\sum_{i=1}^k n_i\left(\tilde{\rho}(\beta_{x_i})-\tilde{\rho}(\beta_{y_i})\right)\\
			&=&\sum_{i=1}^k n_i\sigma_i+\sum_{q\in S} m_q\tilde{\rho}(\beta_q)\\
			&=&\sum_{i=1}^k n_i\sigma_i=\lambda.
		\end{eqnarray*}
		
\begin{claim} $\ker(\rho)=[\pi_1^{\epsilon}(X,x_0),\pi_1^{\epsilon}(X,x_0)]$.\end{claim}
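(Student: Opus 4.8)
The plan is to recognize the statement as the discrete, scale-$\epsilon$ incarnation of the classical fact that $\rmH_1$ is the abelianization of $\pi_1$, and to prove it by exhibiting an explicit inverse to the map induced by $\rho$ on the abelianization. Write $G:=\pi_1^{\epsilon}(X,x_0)$ and $G^{\mathrm{ab}}:=G/[G,G]$. One inclusion is immediate: because $\pH_1^{\VR,\epsilon}(X)$ is an abelian group and $\rho$ is a homomorphism (Claim~2), every commutator lies in $\ker(\rho)$, so $[G,G]\subseteq\ker(\rho)$ and $\rho$ factors through a surjection $\bar\rho\colon G^{\mathrm{ab}}\twoheadrightarrow\pH_1^{\VR,\epsilon}(X)$ (using Claim~3). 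It therefore suffices to show $\bar\rho$ is injective, and the cleanest route is to produce a one-sided inverse $\psi$ with $\psi\circ\bar\rho=\Id_{G^{\mathrm{ab}}}$.

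First I would fix, for every point $q\in X$, an $\epsilon$-chain $\beta_q$ from $x_0$ to $q$ (possible since $X$ is $\epsilon$-connected), with $\beta_{x_0}=\{x_0\}$; this mirrors the chains used in the surjectivity argument of Claim~3. For an ordered pair $(x,y)$ with $d(x,y)\leq\epsilon$, set $\ell_{x,y}:=\beta_x\ast(xy)\ast\beta_y^{-1}$, an $\epsilon$-loop at $x_0$, and define a homomorphism $\Psi\colon C_1(\VR_{\epsilon}(X);\Z)\to G^{\mathrm{ab}}$ on oriented generators by sending $[x,y]$ to the class of $\ell_{x,y}$. Since $\ell_{y,x}=\ell_{x,y}^{-1}$, this respects the relation $[x,y]=-[y,x]$, so $\Psi$ is well defined on the free abelian group $C_1$. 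I would then restrict $\Psi$ to $\ker(\partial_1)$ and show it descends to homology by verifying $\Psi(\im(\partial_2))=0$: for a $2$-simplex $\{x,y,z\}$ one has $\partial_2[x,y,z]=[y,z]-[x,z]+[x,y]$, and $\Psi(\partial_2[x,y,z])$ is the class of $\ell_{x,y}\ast\ell_{y,z}\ast\ell_{x,z}^{-1}$; the connecting chains cancel telescopically, leaving a $\beta_x$-conjugate of the loop $x\,y\,z\,x$, which is $\epsilon$-null by successive basic moves because $\{x,y,z\}$ is a simplex. This yields a well-defined $\psi\colon\pH_1^{\VR,\epsilon}(X)\to G^{\mathrm{ab}}$. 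Finally, for $\gamma=x_0x_1\cdots x_nx_0$ I would compute $\psi(\bar\rho(\bar\gamma))=\sum_i(\text{class of }\ell_{x_i,x_{i+1}})$, and since $G^{\mathrm{ab}}$ is abelian this equals the class of the telescoping product $\ell_{x_0,x_1}\ast\cdots\ast\ell_{x_n,x_0}=\beta_{x_0}\ast\gamma\ast\beta_{x_0}^{-1}=\gamma$, giving $\psi\circ\bar\rho=\Id$. Hence $\bar\rho$ is injective and $\ker(\rho)=[G,G]$.

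I expect the main obstacle to be exactly the well-definedness step, $\Psi(\im(\partial_2))=0$: one must correctly set up oriented simplices, track the cancellation of the fixed chains $\beta_q$, and recognize the resulting triangle loop as $\epsilon$-null. The easy inclusion, the orientation compatibility of $\Psi$, and the telescoping identity $\psi\circ\bar\rho=\Id$ are all routine once the chains $\beta_q$ are chosen at the outset; fixing them globally (rather than re-choosing per cycle) is what makes $\psi$ manifestly well defined and sidesteps any independence-of-choices bookkeeping.
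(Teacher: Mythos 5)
Your proof is correct, but it takes a genuinely different route from the paper's. The paper proves the hard inclusion $\ker(\rho)\subseteq[G,G]$ directly: given $\gamma\in\ker(\rho)$ it writes $\tilde\rho(\gamma)=\partial_2(\sigma)$ for an explicit $2$-chain $\sigma$, builds from each triangle of $\sigma$ an $\epsilon$-null loop $\eta_i=\beta_{x_i}\sigma_i^1\sigma_i^2\sigma_i^3\beta_{x_i}^{-1}$, rewrites $\gamma$ up to $\epsilon$-homotopy as a product $\tau\eta^{-1}$ of conjugated edge-loops in which every edge-loop occurs with total exponent zero, and invokes the fact that such a word is a product of commutators. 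You instead construct, once and for all, a homomorphism $\psi\colon\pH_1^{\VR,\epsilon}(X)\to G^{\mathrm{ab}}$ inverse to $\bar\rho$ on the left, concentrating the work in the verification that $\Psi$ kills $\im(\partial_2)$; injectivity of $\bar\rho$ then falls out of a one-line telescoping identity. Both arguments rest on the same two facts --- triangle loops $xyzx$ in a $2$-simplex are $\epsilon$-null, and the connecting chains $\beta_q$ cancel telescopically --- but yours packages them into a map on homology while the paper's uses them per cycle. What your version buys: it exhibits the explicit inverse isomorphism $G^{\mathrm{ab}}\cong\pH_1^{\VR,\epsilon}(X)$ (given the surjectivity claim), avoids the exponent-counting and the ``exponent-sum zero implies product of commutators'' lemma, and, because $\psi$ is defined by globally fixed chains $\beta_q$, its compatibility with the inclusions $\VR_\epsilon\hookrightarrow\VR_{\epsilon'}$ is immediate, which would also streamline the persistent version. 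What the paper's version buys: it never has to check well-definedness of a map out of $C_1$ (orientation relations, degenerate simplices, independence of $\im(\partial_2)$), at the cost of slightly more delicate combinatorial bookkeeping at the end. One small point to keep in mind when writing yours up: make sure your convention for $C_1$ treats the degenerate pairs $\{x,x\}$ (which can occur as consecutive entries of an $\epsilon$-chain) consistently; your $\ell_{x,x}$ is $\epsilon$-null, so this is harmless, but it should be said.
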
 

By the previous claims, $\im(\rho)= \pH_1^{\VR,\epsilon}(X)$ is Abelian. Since $\im(\rho)\cong \pi_1^{\epsilon}(X,x_0)/\ker(\rho)$, it must be true that $[\pi_1^{\epsilon}(X,x_0),\pi_1^{\epsilon}(X,x_0)]\subset \ker(\rho)$. It remains to prove the reverse inclusion. Let $\gamma=v_0v_1\cdots v_r v_{r+1}$, with $v_0=v_{r+1}=x_0$, be an $\epsilon$-loop such that $\gamma\in\ker(\rho)$. It follows from $\rho(\gamma)=0$ that there exists $\sigma=\sum_{i=1}^k n_i\sigma_i$ with $\sigma_i=\{x_i,y_i,z_i\}\in C_2\left(\VR_{\epsilon}(X);\Z\right)$ such that 
		\begin{equation}\label{eq:kercoef}
		\gamma=\partial_2(\sigma)=\sum_{i=1}^k n_i \left(\{x_i,y_i\}+\{z_i,x_i\}+\{y_i,z_i\}\right).
		\end{equation}
Denote $\sigma_i^1=\{x_i,y_i\},\sigma_i^2=\{y_i,z_i\}$ and $\sigma_i^3=\{z_i,x_i\}.$ 
		\begin{itemize}
			\item Let $L:=\{\sigma_i^j:i=1,\cdots,k;j=1,2,3\}$. Given $\zeta\in L$, let $m_{\zeta}$ be the sum of coefficients of $\zeta$ in Eq. (\ref{eq:kercoef}).
			\item Let $S$ be the set of endpoints of all $\sigma_i^j$. For each $q\in S$, let $\beta_q$ be an $\epsilon$-chain from $x_0$ to $q$ in $X.$
		\end{itemize}
For each $i$, define $\eta_i:=(\beta_{x_i}\sigma_i^1\beta_{y_i}^{-1})(\beta_{y_i}\sigma_i^2\beta_{z_i}^{-1})(\beta_{z_i}\sigma_i^3\beta_{x_i}^{-1})=\beta_{x_i}\sigma_i^1\sigma_i^2\sigma_i^3\beta_{x_i}^{-1}$, which is an $\epsilon$-chain. Assume $\beta_{x_i}=x_0w_1\cdots w_l x_i$. Since $\{x_i,y_i,z_i\}\in C_2\left(\VR_{\epsilon}(X);\Z\right)$,
		\begin{eqnarray*}
			\eta_i&=&x_0w_1\cdots w_l x_ix_iy_iy_iz_iz_ix_iw_l\cdots w_1x_0\\
			&\sim_1^{\epsilon}&x_0w_1\cdots w_l x_iy_iz_ix_iw_l\cdots w_1x_0\\
			&\sim_1^{\epsilon}&x_0w_1\cdots w_l x_iy_ix_iw_l\cdots w_1x_0\\
			&\sim_1^{\epsilon}&x_0w_1\cdots w_l x_ix_iw_l\cdots w_1x_0\\
			&\sim_1^{\epsilon}& x_0.
		\end{eqnarray*}
Let $\eta=\eta_1^{n_1}\eta_2^{n_2}\cdots \eta_k^{n_k}$, which is $\epsilon$-homotopic to the constant loop $x_0$. 
		
Recall that $\gamma=v_0v_1\cdots v_r v_{r+1}$. Let $\tau_i=\{v_i,v_{i+1}\}$ for $i\in [r]$ and let $\tau =\prod_{i=0}^{r}\beta_{\tau_i(0)}\tau_i\beta_{\tau_i(1)}^{-1}$, which is $\epsilon$-homotopic to $\gamma.$ Therefore, $\gamma \sim_1^{\epsilon} \tau\eta^{-1}.$ For each $\zeta\in L$, the loop $\beta_{\zeta(0)}\zeta\beta_{\zeta(1)}^{-1}$ appears $m_{\zeta}$ times in $\tau$, and $-m_{\zeta}$ times in $\eta^{-1}$. So each $\beta_{\zeta(0)}\zeta\beta_{\zeta(1)}^{-1}$ appears in $\tau\eta^{-1}$ with exponent adding up to zero, so $[\gamma]_\epsilon=[\tau\eta^{-1}]_\epsilon\in [\pi_1^{\epsilon}(X,x_0),\pi_1^{\epsilon}(X,x_0)]$. 
\end{myproof}
 
\begin{remark}By the isomorphism of persistent fundamental groups (cf. Theorem \ref{thm:iso-ppi_1's}), Theorem \ref{thm:hurewicz-discrete} follows from the standard Hurewicz theorem. 
\end{remark}

\section{Dendrograms and a Metric on \texorpdfstring{$\pi_1(X,x_0)$}{pi1}}

In analogy to $0$-th persistent homotopy, which has a natural `tree structure', in this section we construct a dendrogram from the persistent fundamental group. 

\subsection{Discretization theorem} 
\label{subsec:discretization}
	
Let $\gamma:[0,1]\rightarrow X$ be a continuous path and $\epsilon>0$. An \emph{$\epsilon$-chain along $\gamma$} is an $\epsilon$-chain $x_0 x_1\cdots x_n$ where there exists a partition $\{0=t_0,\cdots,t_n=1\}$ of $[0,1]$ such that each $x_i=\gamma(t_i)$. A \emph{strong $\epsilon$-chain along $\gamma$} is an $\epsilon$-chain along $\gamma$ such that $\gamma([t_{i-1},t_i])\subset B(\gamma(t_{i-1}),\epsilon)$ for each $i$. When $\gamma$ is a continuous loop, a (strong) $\epsilon$-chain along $\gamma$ is also called a \emph{(strong) $\epsilon$-loop along $\gamma$}. 
The following lemma permits relating the fundamental group of a space to its discrete fundamental groups.
\begin{lemma}[{Lemma 3.1.7, \cite{wilkins2011discrete}}]\label{lem:epsilon-discret} 
	Let $X$ be a chain-connected metric space, and let $\epsilon>0$ be given. Then, the following map is a well-defined homomorphism (called the $\epsilon$-discretization homomorphism) $$\Phi_{\epsilon}:\pi_1(X)\rightarrow \pi_1^{\epsilon}(X)\text{ with }[\gamma]\mapsto [\alpha]_{\epsilon},$$ where $\alpha$ is a strong $\epsilon$-loop along $\gamma$. If the $\epsilon$-balls of X are path-connected (e.g. $X$ is geodesic or locally path connected), then $\Phi_{\epsilon}$ is surjective.
\end{lemma}
	
\begin{example}\label{ex:s.c.-space} Let $Y$ be any simply connected compact space. By Lemma \ref{lem:epsilon-discret} and the fact $\pi_1(Y)=0$, we have
	\[ \pi_1^{\epsilon}(Y)=0,\forall\epsilon>0.	\] 	
\end{example}

Theorem 3.2 of \cite{vigolo2018fundamental} states that for uniformly locally path connected and uniformly semi-locally simply connected metric spaces, the $\epsilon$-discretization homomorphism is in fact an isomorphism for small enough $\epsilon$, albeit using a slightly different definition for $\epsilon$-homotopy. A similar result and proof apply to our case as well. 

\begin{restatable}[Discretization theorem]{theorem}{discretization} \label{thm:discretization} If a metric space $(X,d_X)$ is u.l.p.c. and u.s.l.s.c., then the $\epsilon$-discretization homomorphism $\Phi_{\epsilon}:\pi_1(X)\xrightarrow{\cong} \pi_1^{\epsilon}(X)$ is an isomorphism for $\epsilon$ small enough.

If $X$ is geodesic and $\operatorname{LGC_1}(\rho,R)$ (see Definition \ref{def:lgc}), 
then $\Phi_{\epsilon}$ is an isomorphism for any $\epsilon\geq 0$ such that 
$\rho(2\epsilon)\leq R.$ %In particular, $\operatorname{sep}(\ppi_1(X))\geq \epsilon.$
\end{restatable}
	
\begin{proof} Because $X$ is locally path-connected, it follows from Lemma \ref{lem:epsilon-discret} that $\Phi_{\epsilon}$ is surjective for any $\epsilon$. Since $X$ is u.s.l.s.c., there exists $\delta'>0$ such that a loop in $B(x,\delta')$ is null-homotopic for any $x\in X$. Because $X$ is u.l.p.c., there exists $\delta<\delta'/2$ so that any two points in $B(x,\delta)$ can be connected by a path with image completely contained in $B(x,\delta'/2)$, for any $x\in X$. Fix a parameter value $0<\epsilon<\delta$. 

We now prove that  $\Phi_{\epsilon}$ is injective. Let $\gamma$ be a continuous loop based at $x_0$ whose $\epsilon$-discretization $\alpha=x_0 x_1\cdots x_n$ is trivial. Recall that $\alpha$ is a strong $\epsilon$-path along $\gamma$, i.e., there exists a partition $\{t_0,\cdots,t_n\}$ of $[0,1]$ such that each $x_i=\gamma(t_i)$ and $\gamma([t_{i},t_{i+1}])\subset B(\gamma(t_{i}),\epsilon)$ for each $i$. We want to show $\gamma$ is null-homotopic.
		
Let $H:[n]\times [m]\to X$ be an $\epsilon$-homotopy between $ \alpha$ and $ x_0$. For any $0\leq i\leq n-1$ and $0\leq j\leq m$, the following vertical and horizontal pairs are within distance $\epsilon$: 
		\begin{center}
			\begin{tikzcd} 
				H(i,j) \arrow[dash,"\leq \epsilon"]{r} \arrow[dash,"\alpha_{i,j}",swap]{r}
				\arrow[dash,"\leq \epsilon",swap]{d} \arrow[dash,"\beta_{i,j}"]{d} 
				& 
				H(i,j+1) \arrow[dash,"\leq \epsilon"]{d} 
				\arrow[dash,"\beta_{i,j+1}",swap]{d} 
				\\
				H(i+1,j) \arrow[dash,"\leq \epsilon"]{r}
				\arrow[dash,"\alpha_{i+1,j}",swap]{r}
				& 
				H(i+1,j+1)
			\end{tikzcd}
		\end{center} 
Since $d_X\left( H(i,j),H(i,j+1)\right)\leq \epsilon<\delta,$ there exists a path $\alpha_{i,j}$ joining them with image in $B(H(i,j),\delta'/2)$. Similarly, for $0\leq i\leq n$ and $0\leq j\leq m-1$, there exists a path $\beta_{i,j}$ joining $H(i,j)$ and $H(i+1,j)$ with image in $B(H(i,j),\delta'/2)$. It follows from the triangle inequality that the concatenations $\beta_{i,j}\alpha_{i+1,j}$ and $\alpha_{i,j}\beta_{i,j+1}$ are both contained in $ B(H(i,j),\delta')$. Since $\alpha_{i,j}\beta_{i,j+1}\alpha^{-1}_{i+1,j}\beta^{-1}_{i,j}$ is a loop in $B(H(i,j),\delta')$, it is null-homotopic. For any $0\leq i\leq n-1$ and $0\leq j\leq m-1$, let $\xi_{i,j}:=\alpha_{0,0}\cdots\alpha_{0,j-1}\beta_{0,j-1}\cdots \beta_{i-1,j}$, a path joining $x_0$ to $H(i,j)$, and then define $\eta_{i,j}:=\xi_{i,j}\alpha_{i,j}\beta_{i,j+1}\alpha^{-1}_{i+1,j}\beta^{-1}_{i,j}\xi_{i,j}^{-1}$, which is a null-homotopic loop at $x_0$ based on construction. Concatenating all the $\eta_{i,j}$ in an appropriate order results into a null-homotopic loop which is homotopic to $ \alpha_{0,0}\cdots\alpha_{n-1,0}=\gamma$. Thus, $\Phi_{\epsilon}$ is injective.

If $X$ is geodesic and $\operatorname{LGC_1}(\rho,R)$, we apply a similar argument as above with some technical changes. Choose $\epsilon\geq 0$ such that $\rho(2\epsilon)\leq R.$ 
Again, let $\alpha=x_0 x_1\cdots x_n$ be a trivial $\epsilon$-discretization of $\gamma$, and let $H:[n]\times [m]\to X$ be an $\epsilon$-homotopy between $ \alpha$ and $ x_0$. For any $0\leq i\leq n-1$ and $0\leq j\leq m$, because $X$ is geodesic, there exist a geodesic $\alpha_{i,j}$ joining $H(i,j)$ to $H(i,j+1)$ and a geodesic $\beta_{i,j}$ joining $H(i,j)$ to $H(i+1,j)$, both with image in $B\left(H(i,j),\epsilon\right)$. Thus, the concatenations $\beta_{i,j}\alpha_{i+1,j}$ and $\alpha_{i,j}\beta_{i,j+1}$ are both contained in $ B(H(i,j),2\epsilon)$. 
Because $X$ is $\operatorname{LGC_1}(\rho,R)$, $B\left(H(i,j),2\epsilon\right)$ is $1$-connected in $B\left(H(i,j),\rho(2\epsilon)\right).$ Thus, the loop $\alpha_{i,j}\beta_{i,j+1}\alpha^{-1}_{i+1,j}\beta^{-1}_{i,j}$ is null-homotopic, given that $\rho(2\epsilon)\leq R.$ We construct $\xi_{i,j}$ and $\eta_{i,j}$ as before, and see that concatenating all the $\eta_{i,j}$ gives a null-homotopic loop that is homotopic to $ \alpha_{0,0}\cdots\alpha_{n-1,0}=\gamma$. 
\end{proof}

The well-definedness of $\Phi_{\epsilon}$ indicates that any two $\epsilon$-discretizations of a continuous loop are $\epsilon$-homotopic. It follows that $\Phi_{\epsilon,\epsilon'}\circ \Phi_{\epsilon}=\Phi_{\epsilon'}$ for all $\epsilon\leq\epsilon'$. By the universal property of limit, there is a natural group homomorphism
	$$ \Phi:\pi_1(X)\rightarrow \lim \ppi_1(X) \text{ with }[\gamma]\mapsto \lim[\alpha]_{\epsilon}.$$
It follows from Theorem \ref{thm:discretization} that the following holds:
%By a similar argument as \cite[Theorem 3.2]{vigolo2018fundamental}, we obtain:
\begin{corollary} If a metric space $(X,d_X)$ is u.l.p.c. and u.s.l.s.c., then the homomorphism $\Phi:\pi_1(X)\rightarrow \lim \ppi_1(X)$ is an isomorphism. 
\end{corollary}

\subsection{Treegrams and a pseudo-metric on \texorpdfstring{$\caL(X,x_0)$}{loop space}} \label{sec:treegram-pseudo-metric}
By analogy with the ultra-metric $\mu^{(0)}$ discussed in \textsection \ref{sec:p-set}, we introduce a pseudo-ultra-metric on the set of all discrete loops on a given metric space. 
In dimension $0$, one cares about how points cluster to each other along the scale parameters. To apply a similar idea to dimension $1$, the first attempt we take is to study how loops merge with each other, by identifying the first scale parameter $\epsilon$ at which two discrete loops become $\epsilon$-homotopic.
In particular, we view the discrete loop space of a metric space as a metric space in itself:
	
\begin{proposition}\label{prop:metric mu^1} Given a pointed metric space $(X,x_0)$, the following defines a pseudo-ultra-metric on $\caL(X,x_0)$: for any $\gamma,\gamma'\in \caL(X,x_0)$,
		\[\mu_X^{(1)}(\gamma,\gamma'):=\inf\{\epsilon>0:\gamma\sim_1^{\epsilon}\gamma'\}.\]
\end{proposition}
	
\begin{proof}Clearly, $\mu_X^{(1)}(\gamma,\gamma')=\mu_X^{(1)}(\gamma',\gamma)$ for all $\gamma,\gamma'\in \caL(X,x_0)$. It remains to prove the strong triangle inequality. Given arbitrary $\alpha,\beta,\gamma\in \caL(X,x_0)$, let $\epsilon_1=\mu_X^{(1)}(\alpha,\beta)$ and $\epsilon_2=\mu_X^{(1)}(\beta,\gamma)$. For each $\delta_1>\epsilon_1$ and $\delta_2>\epsilon_2$, there exist a sequence of $\delta_1$-paths $H_1=\{\alpha=\gamma_0,\gamma_1,\cdots,\gamma_{k-1},\gamma_k=\beta\}$ and a sequence of $\delta_2$-paths $H_2=\{\beta=\gamma_{k+1},\gamma_{k+2},\cdots,\gamma_{k+l-1},\gamma_{k+l}=\gamma\}$ such that each $\gamma_i$ differs from $\gamma_{i-1}$ by a basic move. Then, $H=\{\alpha=\gamma_0,\gamma_1,\cdots,\gamma_{k+l-1},\gamma_{k+l}=\gamma\}$ is a sequence of $\max\{\delta_1,\delta_2\}$-paths, where each adjacent two paths differ by a basic move. Then, $\alpha$ and $\gamma$ are $\max\{\delta_1,\delta_2\}$-homotopic via $H$. Thus, $\mu_X^{(1)}(\alpha,\gamma)\leq\max\{\delta_1,\delta_2\}$. Letting $\delta_i\searrow \epsilon_i$ ($i=1,2$), we obtain $\mu_X^{(1)}(\alpha,\gamma)\leq\max\{\epsilon_1,\epsilon_2\}.$ 
\end{proof}

\begin{remark}\label{rmk:property-mu_1} For any $\gamma\in \caL(X,x_0),$ note that 
\[\birth(\gamma)=\mu_{X}^{(1)}(\gamma,\gamma)\text{ and }\death(\gamma)=\mu_{X}^{(1)}(\gamma,\{x_0\}).\]
Suppose $\alpha$ and $\beta$ are two discrete loops in $X$. Let $\epsilon=\max\{\birth(\alpha),\birth(\beta)\}$. Then, for any $\gamma\in \caL^{\epsilon}(X,x_0),$ we have 
$$\mu_X^{(1)}(\alpha\ast\gamma,\beta\ast\gamma)=\mu_X^{(1)}(\alpha,\beta).$$ This is because when $\delta\geq \epsilon$, there is a $\delta$-homotopy $H=\{\alpha,\cdots,\beta\}$ iff there is a $\delta$-homotopy $H\ast\gamma:=\{\alpha\ast\gamma,\cdots,\beta\ast\gamma\}$.
\end{remark}

\paragraph*{Stability of $\mu^{(1)}_{\bullet}$.} Let $(X,x_0)$ and $(Y,y_0)$ be pointed compact metric spaces. Let $R:X\xtwoheadleftarrow{\phi_X}Z\xtwoheadrightarrow{\phi_Y}Y$ be a pointed tripod between $X$ and $Y$. Note that $\phi_X$ and $\phi_Y$ induce surjective maps 
\[\caL(Z,z_0)\twoheadrightarrow \caL(X,x_0)\text{  and  } \caL(Z,z_0)\twoheadrightarrow \caL(Y,y_0)\] respectively, still denoted by $\phi_X$ and $\phi_Y$. Thus, we have a tripod between $\left( \caL(X,x_0),\mu_X^{(1)}\right)$ and $\left( \caL(Y,y_0),\mu_Y^{(1)}\right)$, denoted by $R_{\caL}$:
$$\caL(X,x_0)\xtwoheadleftarrow{\phi_X}\caL(Z,z_0)\xtwoheadrightarrow{\phi_Y}\caL(Y,y_0).$$ 
	
\begin{lemma}\label{lem:tripod} Each $R\in\mathfrak{R}^{\pt}((X,x_0),(Y,y_0))$ induces a tripod $R_{\caL}\in \mathfrak{R}(\caL(X,x_0),\caL(Y,y_0))$ with $\dis(R_{\caL})\leq \dis(R)$. In particular, if $\alpha_X$ is an $\epsilon$-chain in $X$ and $(\alpha_X,\alpha_Y)\in R_{\caL}$ (see page \pageref{para:tripod}), then $\alpha_Y$ is an $(\epsilon+\dis(R))$-chain in $Y$.
\end{lemma}

\begin{proof}  Let $R:X\xtwoheadleftarrow{\phi_X}Z\xtwoheadrightarrow{\phi_Y}Y$ be a pointed tripod between $X$ and $Y$. If $\alpha=x_0 x_1\cdots x_n$ is an $\epsilon$-loop in $X$, then there exists a discrete loop $\gamma=z_0\cdots z_n$ in $Z$ such that $\alpha=\phi_X(\gamma)$. Note that $\phi_Y(\gamma)$ is an $(\epsilon+\dis(R))$-loop in $Y$, because for $0\leq i\leq n-1$,
\begin{align*}	
&d_Y(\phi_Y(z_i),\phi_Y(z_{i+1}))\\
        \leq & d_X(\phi_X(z_i),\phi_X(z_{i+1}))+|d_Y(\phi_Y(z_i),\phi_Y(z_{i+1}))- d_X(\phi_X(z_i),\phi_X(z_{i+1}))|\\
			\leq &\epsilon+\dis(R).
\end{align*}
Let $(\alpha,\beta),(\alpha',\beta')\in R_{\caL}$ and $\delta>\mu_X^{(1)}(\alpha,\beta)$. Then, there is a finite sequence of $\delta$-loops in $X$ $$H_X=\{\alpha=\alpha_0,\alpha_1,\cdots,\alpha_{k-1},\alpha_k=\alpha'\}$$ such that each $\alpha_j$ differs from $\alpha_{j-1}$ by a basic move. For each $\alpha_j$, let $\beta_j=\phi_Y(\gamma_j)$ where $\gamma_j$ is such that $\alpha_j=\phi_X(\gamma_j)$. Then, each $\beta_j$ differs from $\beta_{j-1}$ by a basic move. In particular,
	$$H_Y=\{\beta=\beta_0,\beta_1,\cdots,\beta_{k-1},\beta_k=\beta'\}$$ is a $(\delta+\dis(R))$-homotopy between $\beta$ and $\beta'$. Therefore, $\mu_Y^{(1)}(\alpha',\beta')\leq \delta+\dis(R).$ Letting $\delta \searrow \mu_X^{(1)}(\alpha,\beta)$, we obtain $\mu_Y^{(1)}(\alpha',\beta')\leq \mu_X^{(1)}(\alpha,\beta)+\dis(R)$. Similarly, $\mu_X^{(1)}(\alpha,\beta)\leq \mu_Y^{(1)}(\alpha',\beta')+\dis(R)$. This is true for all $(\alpha,\beta),(\alpha',\beta')\in R_{\caL}$, implying that $\dis(R_{\caL})\leq \dis(R)$.
\end{proof}

\begin{theorem}[Stability theorem for $\mu^{(1)}_{\bullet}$]\label{thm:stab-mu1} 
Given pointed compact metric spaces $(X,x_0)$ and $(Y,y_0)$, 
		$$\dgh(\caL(X,x_0),\caL(Y,y_0))\leq \dgh^{\pt}((X,x_0),(Y,y_0)).$$
\end{theorem}
	
\begin{proof} By Lemma \ref{lem:tripod},
		\begin{eqnarray*}
			\dgh(\caL(X,x_0),\caL(Y,y_0))&=&\tfrac{1}{2}\inf\{\dis(R_{\caL}):R_{\caL}\in\mathfrak{R}(\caL(X,x_0),\caL(Y,y_0))\}\\
			&\leq &\tfrac{1}{2}\inf\{\dis(R_{\caL}):R_{\caL}\text{ induced by }R,R\in\mathfrak{R}(X,Y)\} \\
			&\leq &\tfrac{1}{2}\inf\{ \dis(R):R\in\mathfrak{R}(X,Y)\}\\
			&= & \dgh(X,Y).
		\end{eqnarray*}
\end{proof}

\begin{corollary}Given pointed compact metric spaces $(X,x_0)$ and $(Y,y_0)$, 
$$\inf_{(x_0,y_0)\in X\times Y} \dgh(\caL(X,x_0),\caL(Y,y_0))\leq \dgh(X,Y).$$
\end{corollary}

Though the metric $\mu^{(1)}_{\bullet}$ is stable, there is no promise that this stability helps us estimate the Gromov-Hausdorff distance between metric spaces. There is no good reason for one to believe that computing $\dgh(\caL(X,x_0),\caL(Y,y_0))$ is easier than $\dgh^{\pt}((X,x_0),(Y,y_0))$. 
Motivated by this concern, we identify some situations when the pseudo-ultra-metric $\mu^{(1)}_{\bullet}$ induces a treegram structure on the underlying space, in which case we will obtain an interleaving type stability (see Corollary \ref{cor:di-stab-subdendro}).

Given a pointed compact metric space $(X,x_0)$ and $\epsilon>0$, $\pi_1^{\epsilon}(X,x_0)=\caL^{\epsilon}(X,x_0)/\sim_1^{\epsilon}$ forms a partition of $\caL^{\epsilon}(X,x_0)\subset \caL(X,x_0)$, i.e., a subpartition of $\caL(X,x_0)$ (see page \pageref{para:subpartition}). We now show how a treegram (cf. Definition \ref{def:treegram}) can be induced on $\mathcal{L}(X,x_o)$ when $X$ is finite:

\begin{lemma}\label{lem:treegramofL} Let $X$ be a finite metric space. The map $\epsilon\mapsto \pi_1^{\epsilon}(X,x_0)$ induces a treegram over $\caL(X,x_0)$, which we denote by $\theta_{\caL(X,x_0)}^\rms$.
\end{lemma}

For a geodesic space $X$, Lemma \ref{lem:treegramofL} fails because the map $\epsilon\mapsto \pi_1^{\epsilon}(X,x_0)$ no longer satisfies the semi-continuity condition: for any $0<\epsilon<\epsilon'<\diam(X)$, there are always  $\epsilon'$-loops which  are not $\epsilon$-loops, implying that $\theta_{\caL(X,x_0)}^\rms(\epsilon)\neq \theta_{\caL(X,x_0)}^\rms(\epsilon')$.

\begin{myproof}{Lemma \ref{lem:treegramofL}} We want to check that conditions (1'), (2) and (3) in Definition \ref{def:treegram} are satisfied. For $\epsilon\leq \epsilon'$, $\caL^{\epsilon}(X,x_0)\subset\caL^{\epsilon'}(X,x_0)$ and $\theta_{\caL(X,x_0)}^\rms(\epsilon)$ refines $\theta_{\caL(X,x_0)}^\rms(\epsilon')|_{\caL^{\epsilon}(X,x_0)}$, i.e., condition (1') is satisfied. The semi-continuity, i.e., Condition (2) that for all $r$ there exists $\epsilon>0$ s.t. $\theta_{\caL(X,x_0)}^\rms(r)=\theta_{\caL(X,x_0)}^\rms(t)$ for all $t\in [r,r+\epsilon]$, follows from %Theorem \ref{thm:critical-value} and 
the fact $X$ is finite. %that $\mathrm{Cr}(X)$ is discrete and bounded above in $\R_+$ when $X$ is finite or geodesic (cf. Theorem \ref{thm:critical-value}). 
Condition (3) holds because $\theta_{\caL(X,x_0)}^\rms(\epsilon)$ is the single block partition, whenever $\epsilon\geq \diam(X)$. 
\end{myproof}

Examples of treegrams arising from discrete fundamental groups of finite metric spaces are available in the appendix of \cite{memoli2019persistent} (and the extended preprint version of this paper). 

It is not hard to verify that $\mu_X^{(1)}=\mu_{\theta_{\caL(X,x_0)}^\rms}^\rms$, so we can apply Proposition \ref{prop:treegram} to conclude:
	\begin{corollary}
	\label{cor:di-stab-subdendro}
	Given pointed compact metric spaces $(X,x_0)$ and $(Y,y_0)$, $$\inf_{(x_0,y_0)\in X\times Y} \di \left(\caV_{\F}\circ\theta_{\caL(X,x_0)}^\rms, \caV_{\F}\circ\theta_{\caL(Y,y_0)}^\rms\right)\leq \dgh(X,Y).$$
\end{corollary}
In general, %in the case of finite metric spaces, 
the computation of the Gromov-Hausdorff distance is NP-hard \cite{Schmiedl17}, while in the left-hand-side the interleaving distance $\di \big(\caV_{\F}\circ\theta_{\caL(X,x_0)}^\rms, \caV_{\F}\circ\theta_{\caL(Y,y_0)}^\rms\big)$ between persistence modules over the real-line is computable in polynomial time, which is a consequence of Theorem \ref{thm:di-db} and the fact that the bottleneck distance is computable in polynomial time \cite{cohen2007stability}. In general, the interleaving distance between persistence modules over $\mathbb{R}^d$ for $d>1$ is NP-hard \cite{bjerkevik2020computing}.

To obtain the semi-continuity condition for the map $\epsilon\mapsto \pi_1^{\epsilon}(X,x_0)$ in the case of geodesic spaces,
we invoke Plaut and Wilkins' results about the so-called set of  \emph{critical values} of discrete fundamental groups, i.e. the parameters at which the isomorphism type of the discrete fundamental group changes.  

\medskip
\paragraph*{Critical values and critical spectrum.} Let $X$ be a chain-connected metric space. A \emph{non-critical interval of $X$} is a non-empty open interval $I\subset\R_+$, such that for $\epsilon,\epsilon'\in I$ with $\epsilon<\epsilon'$, the map $\Phi_{\epsilon,\epsilon'}$ defined as Eq. (\ref{eq:Phieep})  
is bijective. We call a positive number $\epsilon$ a \emph{critical value of $X$} if it is not contained in any non-critical interval. We denote the subset of $\R_+$ consisting of all critical values of $X$ by $\mathrm{Cr}(X)$, and call this the \emph{critical spectrum of $X$} (see \cite{wilkins2011discrete,conant2012discrete}).
\label{para:critical value}

\begin{theorem}[Theorem 3.1.11 of \cite{wilkins2011discrete}]\label{thm:critical-value} Let $X$ be a compact geodesic space. Then, $\mathrm{Cr}(X)$ is discrete and bounded above in $\R_+$. 
\end{theorem}

In the next section, we will consider compact geodesic spaces satisfying some additional fairly mild conditions and apply the above theorem to obtain a dendrogram over the standard fundamental group $\pi_1(X,x_0)$ (instead of the set of all discrete loops).

\subsection{A dendrogram structure and its  ultra-metric on \texorpdfstring{$\pi_1(X,x_0)$}{pi1}}\label{subsec:dendrogram}
	
Let $\bbG=(\{G_{t}\}_{t>0},$ $\{f_{ts}\}_{t\leq s\in \R_+})$ be a persistent group, which is naturally an $(\R_+,\leq)$-shaped diagram in $\grp$. We can consider its limit, i.e., the group 
	$$G_0:= \lim \bbG=\left\{(a_t)_{t>0}\in \prod_{t>0} G_t:a_s=f_{ts}(a_t),\forall\, t\leq s\right\},$$
endowed with natural projections $p_t:G_0\to G_t$ picking out the $t$-th component. Since $p_s=f_{ts}\circ p_t$ for $t\leq s$, we have $\ker(p_t)\subset \ker(p_s)$. If $p_t$ is surjective, then $G_t\cong G_0/\ker(p_t)$ can be regarded as a partition of the limit $G_0$. If $p_t$ and $p_s$ are both surjective for some $t\leq s$, then, as partitions of $G_0$, $G_t$ refines $G_s$. 

By Lemma \ref{lem:epsilon-discret}, if a metric space $(X,d_X)$ is locally path-connected, then each $\Phi_{\epsilon}^X$ is surjective. In this case, every $\pi_1^{\epsilon}(X)$ can be regarded as a partition of $\pi_1(X)$. 

\dendrogram*
\begin{proof} 
We first verify that $\theta_{\pi_1(X)}$ is a generalized dendrogram by checking Condition (1), (2) and (3) of Definition \ref{def:dendrogram}. 
Condition (1) is immediate, because $\pi_1^\epsilon(X)$ refines $\pi_1^{\epsilon'}(X)$ as partitions of $\pi_1(X)$, for any $\epsilon\leq \epsilon'$. Condition (3) is also trivial, since $\theta_{\pi_1(X)}(\epsilon)$ is the single block partition, whenever $\epsilon\geq \diam(X)$. The semi-continuity, i.e., Condition (2) that for all $r$ there exists $\epsilon>0$ s.t. $\theta_{\pi_1(X)}(r)=\theta_{\pi_1(X)}(t)$ for all $t\in [r,r+\epsilon]$, follows from Theorem \ref{thm:critical-value} because $X$ is a compact geodesic space. 

By Theorem \ref{thm:discretization}, if $X$ is both u.l.p.c. and u.s.l.s.c., then $\pi_1(X)\cong  \lim \pi_1^{\epsilon}(X)$. Thus, the generalized dendrogram $\theta_{\pi_1(X)}$ becomes a dendrogram. 
\end{proof}

As an example, let us look at the dendrogram associated with $\bbS^1$ and determine the induced metric $\mu_{\theta_{\pi_1(\bbS^1)}}$.
\begin{example} [The ultra-metric $\mu_{\theta_{\pi_1(\bbS^1)}}$]
When $r=1$ in Example \ref{ex:circle-discrete}, associated to $\ppi_1(\bbS^1)$ we have a dendrogram over $\pi_1(\bbS^1)\cong \Z$ shown in Figure \ref{fig:dendrogram-pi1} on page \pageref{fig:dendrogram-pi1}. The metric $\mu_{\theta_{\pi_1(\bbS^1)}}$ induces an ultra-metric $d_X$ on $\Z$ given by
\[d_X(n,m)=\begin{cases}\tfrac{2\pi}{3},&\mbox{if $n\neq m$,}\\ 
		0,&\mbox{if $n=m$.}
\end{cases}\] 
\end{example}

For a more complicated example of the dendrogram induced by the persistent fundamental groups, let us consider the product of spaces. If $X$ and $Y$ are both geodesic, u.l.p.c. and u.s.l.s.c., so is $X\times Y$. 
Meanwhile, the product of dendrograms $\theta_{\pi_1(X)}\times \theta_{\pi_1(Y)}$ gives a dendrogram over $\pi_1(X)\times \pi_1(Y)\cong \pi_1(X\times Y)$, cf. page \pageref{para:prod of dendrogram}. In particular, the isomorphism given in Eq. (\ref{eq:prod of ppi_1}) implies that for each $\epsilon\geq 0$, the following forms a partition of $\pi_1(X\times Y):$
\begin{equation*}
\theta_{\pi_1(X\times Y)}(\epsilon)=\left( \theta_{\pi_1(X)}\times \theta_{\pi_1(Y)} \right)(\epsilon):=\begin{cases} \pi_1^{\epsilon}(X)\times \pi_1^{\epsilon}(Y), & \mbox{if $\epsilon>0$}\\
\pi_1(X)\times\pi_1(Y), & \mbox{if $\epsilon=0$,}
\end{cases}
\end{equation*}

\begin{example}[Dendrogram over $\pi_1(\mathbb{S}^1(r_1)\times \mathbb{S}^1(r_2))\cong \Z\times\Z$]
\label{ex:dendrogram: product of S1} 
Recall from Example \ref{ex:product of S1} the persistent fundamental group of the torus $\mathbb{S}^1(r_1)\times \mathbb{S}^1(r_2)$, where $0<r_1\leq r_2$. The dendrogram associated to $\ppi_1(\mathbb{S}^1(r_1)\times \mathbb{S}^1(r_2))$ over $\pi_1(\mathbb{S}^1(r_1)\times \mathbb{S}^1(r_2))\cong \Z\times\Z$ is described in Figure \ref{fig:dendrogram-T2}.
\begin{figure}[ht!]
\centering 
	\begin{tikzpicture}
    \begin{axis} [ 
    axis y line=left, 
    axis x line=none,
    xlabel=$\epsilon$,
    ylabel=$\pi_1(\bbT^2(r_1\text{,}r_2))$,
    ytick={-1,0,1},
    yticklabels={-1,0,1},
    xtick={0,1,1.5},
    xticklabels={0,$\tfrac{2\pi}{3}r_1$,$\tfrac{2\pi}{3}r_2$},
    xmin=0, xmax=2,
    ymin=-1.8, ymax=1.8,]
    \addplot +[mark=none] coordinates {(1,0) (1,0.07)};
    \addplot +[mark=none,color=blue, ultra thick, opacity=0.4] coordinates {(1.5,1.7) (1.5,-1.7)};
    \addplot +[mark=none,color=blue,ultra thick, opacity=0.4] coordinates {(1,-0.37) (1,0.37)};
    \addplot +[mark=none,color=blue,ultra thick, opacity=0.4] coordinates {(1,0.63) (1,1.37)};
    \addplot +[mark=none,color=blue,ultra thick, opacity=0.4] coordinates {(1,-0.63) (1,-1.37)};
    \addplot[domain=0:1.5,color=blue,ultra thick,opacity=0.4]{1};
    \addplot[domain=0:1.5,color=blue,ultra thick,opacity=0.4]{-1};
    \addplot[domain=0:1,color=blue,thick,opacity=0.4]{0.37};
    \addplot[domain=0:1,color=blue,thick,opacity=0.4]{0.3};
    \addplot[domain=0:1,color=blue,thick,opacity=0.4]{0.25};
    \addplot[domain=0:1,color=blue,thick,opacity=0.4]{0.2};    \addplot[domain=0:1,color=blue,thick,opacity=0.4]{-0.37};
    \addplot[domain=0:1,color=blue,thick,opacity=0.4]{-0.3};
    \addplot[domain=0:1,color=blue,thick,opacity=0.4]{-0.25};
    \addplot[domain=0:1,color=blue,thick,opacity=0.4]{-0.2};
    \addplot[domain=0:1,color=blue,thick,opacity=0.4]{1.37};
    \addplot[domain=0:1,color=blue,thick,opacity=0.4]{1.3};
    \addplot[domain=0:1,color=blue,thick,opacity=0.4]{1.25};
    \addplot[domain=0:1,color=blue,thick,opacity=0.4]{1.2};    \addplot[domain=0:1,color=blue,thick,opacity=0.4]{0.63};
    \addplot[domain=0:1,color=blue,thick,opacity=0.4]{0.7};
    \addplot[domain=0:1,color=blue,thick,opacity=0.4]{0.75};
    \addplot[domain=0:1,color=blue,thick,opacity=0.4]{0.8};
    \addplot[domain=0:1,color=blue,thick,opacity=0.4]{-1.37};
    \addplot[domain=0:1,color=blue,thick,opacity=0.4]{-1.3};
    \addplot[domain=0:1,color=blue,thick,opacity=0.4]{-1.25};
    \addplot[domain=0:1,color=blue,thick,opacity=0.4]{-1.2};    \addplot[domain=0:1,color=blue,thick,opacity=0.4]{-0.63};
    \addplot[domain=0:1,color=blue,thick,opacity=0.4]{-0.7};
    \addplot[domain=0:1,color=blue,thick,opacity=0.4]{-0.75};
    \addplot[domain=0:1,color=blue,thick,opacity=0.4]{-0.8};
    \addplot[domain=0.45:0.55,color=blue,dotted,thick,opacity=0.4]{0.1};
    \addplot[domain=0.45:0.55,color=blue,dotted,thick,opacity=0.4]{-0.1};
    \addplot[domain=0.45:0.55,color=blue,dotted,thick,opacity=0.4]{0.9};
    \addplot[domain=0.45:0.55,color=blue,dotted,thick,opacity=0.4]{-0.9};
    \addplot[domain=0.45:0.55,color=blue,dotted,thick,opacity=0.4]{-1.1};
    \addplot[domain=0.45:0.55,color=blue,dotted,thick,opacity=0.4]{1.1};
    \addplot[domain=0.7:0.79,color=blue,dotted, ultra thick,opacity=0.4]{1.6};
    \addplot[domain=0.7:0.78,color=blue,dotted,ultra thick,opacity=0.4]{-1.6};
    \addplot[color=blue,ultra thick,opacity=0.4]{0};
   \node[below right=0.1pt of {(1,0)}, outer sep=1pt,fill=white] {$\tfrac{2\pi}{3}r_1$};
     \node[below right=0.1pt of {(1.5,0)}, outer sep=0.9pt,fill=white] {$\tfrac{2\pi}{3}r_2$};
    \end{axis} 
    \end{tikzpicture}
    \caption{The dendrogram associated to $\ppi_1(\mathbb{S}^1(r_1)\times \mathbb{S}^1(r_2))$. The $y$-axis represents elements of $\pi_1(\mathbb{S}^1(r_1)\times \mathbb{S}^1(r_2))\cong \Z\gamma_1\times\Z\gamma_2$, where $\gamma_1$ and $\gamma_2$ are generators of $\pi_1(\bbS^1(r_1))$ and $\pi_1(\bbS^1(r_2))$, respectively. More precisely, for $k\geq 1$ and $l\in \Z$, the element $(\gamma_1^{\pm k},\gamma_2^{l})$ is represented by the $y$-value $l\pm \tfrac{1}{k+2}$ and the element $(\gamma_1^{0},\gamma_2^{l})$ is represented by the $y$-value $l$. %Notice that the dendrogram associated to $\ppi_1(\mathbb{S}^1(r_1)\times \mathbb{S}^1(r_2))$ contains the dendrogram associated to $\ppi_1(\bbS^1(r_2))$.
    } \label{fig:dendrogram-T2}
\end{figure}
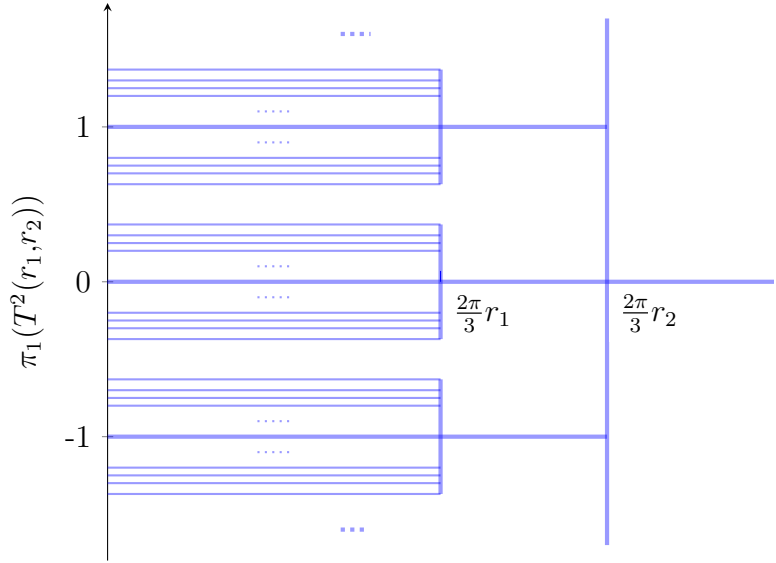  
\end{example}

\begin{proposition} \label{prop:cr=im}
For a compact geodesic s.l.s.c. space $X$, we have $$\mathrm{Cr}(X) = \mathrm{Im}\big(\mu_{\theta_{\pi_1(X)}}\big).$$ Thus, $\sup \mathrm{Cr}(X)=\max \mathrm{Cr}(X) = \mathrm{diam}\big(\pi_1(X)\big)$ and $\inf \mathrm{Cr}(X) = \mathrm{sep}\big(\pi_1(X)\big)$.
\end{proposition}

\begin{proof}
Indeed, $a\in \mathrm{Cr}(X)$ iff there is some $\epsilon>0$ such that $\pi_1^{t}(X)\cong \pi_1^a(X)$ for all $t\in (a-\epsilon,a]$ but $\pi_1^a(X)\not\cong \pi_1^{a+\delta}(X)$ for any $\delta>0$, iff $a$ is a `merging' point for branches of the dendrogram, iff $a\in \mathrm{Im}\big(\mu_{\theta_{\pi_1(X)}}\big)$. 
\end{proof}

We will see via Proposition \ref{prop:stab-cr} that the critical spectrum is stable due to the stability of persistent fundamental groups. As one would expect, the persistent fundamental group is a stronger invariant than the critical spectrum. For example, the critical spectra of $\bbS^1\vee\bbS^1$ and $\bbS^1\times\bbS^1$ coincide and are equal to $\{\frac{2\pi}{3}\}$ but the persistent fundamental groups of these two spaces are not only isomorphic but are at a positive interleaving distance, cf. Example \ref{ex:prod-wedge-S1-S1}.

\subsubsection{The diameter and separation of \texorpdfstring{$\pi_1(X)$}{pi1}} 
\label{sec:diam and sep}

In this section, we see some applications of the diameter and separation of the ultra-metric space $\left(\pi_1(X),\mu_{\theta_{\pi_1(\bbS^1)}}\right)$ (or simply $\pi_1(X)$).

First, it is clear that a simply-connected compact space $Y$ has a trivial associated dendrogram. Thus, the metric space $\big( \pi_1(Y),\mu_{\theta_{\pi_1(Y)}}\big)$ is isometric to the one-point metric space. By Proposition \ref{prop:property-GH},
the following corollary holds:
\begin{corollary} Let a compact geodesic space $X$ be semi-locally simply connected (s.l.s.c.) and let $Y$ be a simply connected compact space, then
$$\dgh\left( \left( \pi_1(X),\mu_{\theta_{\pi_1(X)}}\right),\left( \pi_1(Y),\mu_{\theta_{\pi_1(Y)}}\right)\right)=\tfrac{1}{2}\diam(\pi_1(X)).$$
\end{corollary}
For example, taking $X=\bbS^1$ and $Y=\mathbb{S}^2$, we obtain that 
$$\dgh\left( \left( \pi_1(\bbS^1),\mu_{\theta_{\pi_1(\bbS^1)}}\right),\left( \pi_1(\bbS^2),\mu_{\theta_{\pi_1(\bbS^2)}}\right)\right)=\tfrac{1}{2}\diam(\pi_1(\bbS^1))=\tfrac{\pi}{3}.$$

Another application of the diameter of the induced ultra-metric on $\pi_1(X)$ is that it provides a lower bound for the hyperbolicity of $X$ (Definition \ref{def:hyp}):
\begin{proposition}
For a space $X$ satisfying the conditions in Theorem \ref{thm:dendrogram}, we have
\[\diam(\pi_1(X))\leq \frac{1}{2} \hyp(X).\]
\end{proposition}
\begin{proof} If $\hyp(X)=\infty$, this is trivial. If $\hyp(X)<\infty$, the proposition follows directly from Corollary 8.1 of \cite{lim2020vietorisrips} which shows that for any $t>\hyp(X)$, the Vietoris-Rips complex $\VR_{2t}(X)$ is contractible.
\end{proof}

The next useful geometric quantity of the induced ultra-metric on $\pi_1(\cdot)$ is the separation (see page \pageref{para:sep}), which coincides
with the first critical value of the discrete fundamental groups. Using $\operatorname{sep}(\pi_1(\cdot))$, we obtain a lower bound for the interleaving distance between persistent fundamental groups, which will be applied in \textsection \ref{sec:finiteness} to prove finiteness theorems for the fundamental groups and persistent fundamental groups.
\begin{proposition}\label{prop:sep_of_dendrogram}
For two spaces $X$ and $Y$ satisfying the conditions in Theorem \ref{thm:dendrogram}, we have
\[\min\{\operatorname{sep}(\pi_1(X)),\operatorname{sep}(\pi_1(Y))\}\leq 2\cdot\di(\ppi_1(X),\ppi_1(Y)).\]
\end{proposition}

\begin{proof}
Suppose not. Then, there is a $\delta$-interleaving between $\ppi_1(X)$ and $\ppi_1(Y)$ for some $\delta<\frac{1}{2}\cdot \min\{\operatorname{sep}(\pi_1(X)),\operatorname{sep}(\pi_1(Y))\}.$ Then, for some $\epsilon>0$ small enough, we have the following commutative diagram:
\[
\begin{tikzcd}
    &
    \pi_1(X)\ar[dl] \ar[d] \ar[dr]
    &
    \\
    \pi_1^\epsilon(X)\ar[r]
    \ar[dr,color=red]
    &
    \pi_1^{\epsilon+\delta}(X)\ar[r]
    \ar[dr,color=red]
    &
    \pi_1^{\epsilon+2\delta}(X)
    \\
    \pi_1^\epsilon(Y)\ar[r]
    \ar[ur,color=blue]
    &
    \pi_1^{\epsilon+\delta}(Y)\ar[r]
    \ar[ur,color=blue]
    &
    \pi_1^{\epsilon+2\delta}(Y)
    \\
    &
    \pi_1(Y)\ar[ul] \ar[u] \ar[ur]
    &
\end{tikzcd}
\]
such that all black arrows are group isomorphism. Let $f$ be the composition of $\pi_1(X)\to  \pi_1^\epsilon(X)\to \pi_1^{\epsilon+\delta}(Y)\to \pi_1(Y)$, which is equal to the composition of $\pi_1(X)\to  \pi_1^{\epsilon+\delta}(X)\to \pi_1^{\epsilon+2\delta}(Y)\to \pi_1(Y)$ (because the diagram commutes). Similarly, let $g$ be the composition of $\pi_1(Y)\to  \pi_1^\epsilon(Y)\to \pi_1^{\epsilon+\delta}(X)\to \pi_1(X)$. 
% \[\pi_1(X)\cong \pi_1^\epsilon(X)\xrightarrow{f}\pi_1^{\epsilon+\delta}(Y)\cong \pi_1(Y)\xrightarrow{g}\pi_1^{\epsilon+2\delta}(X)\cong \pi_1(X)\]
% and
% \[\pi_1(Y)\cong \pi_1^\epsilon(Y)\xrightarrow{g}\pi_1^{\epsilon+\delta}(X)\cong \pi_1(X)\xrightarrow{f}\pi_1^{\epsilon+2\delta}(Y)\cong \pi_1(Y)\]
It follows by diagram chasing that $g\circ f=\Id_{\pi_1(X)}$ and $f\circ g=\Id_{\pi_1(Y)}$. Thus, $\pi_1(X)\cong \pi_1(Y)$, which gives a contradiction.
\end{proof}

\subsubsection{Stability of the ultrametric \texorpdfstring{$\mu_{\theta_{\pi_1(\bullet)}}$}{pi1}}
\label{sec:stab of ultra metric}

It follows immediately from Theorem \ref{thm:dendrogram}, Proposition \ref{prop:stab-dendrogram} and Theorem \ref{thm:discretization} that the Gromov-Hausdorff distance between fundamental groups can be estimated by the interleaving distance between dendrograms.
\begin{theorem} \label{thm:pi_1-interleaving} If compact geodesic spaces $X$ and $Y$ are l.p.c. and u.s.l.s.c., then 
	$$\tfrac{1}{2}\cdot\di (\caV_{\F}\circ\theta_{\pi_1(X)},\caV_{\F}\circ\theta_{\pi_1(Y)})\leq \dgh\left(  \pi_1(X), \pi_1(Y)
	\right)\leq\di(\ppi_1(X),\ppi_1(Y)).$$
\end{theorem}

Next, we prove the following $\ell^{\infty}$-stability result for $\mu_{\theta_{\pi_1(\bullet)}}$ and discuss an application of it to Riemannian manifolds.

 \begin{restatable}[$\ell^{\infty}$-stability of $\mu_{\theta_{\pi_1(\bullet)}}$]{theorem}{infystabofpi}
 \label{thm:stab-l^infty-mu} Let compact geodesic spaces $(X,d_X)$ and $(Y,d_Y)$ be u.l.p.c. and u.s.l.s.c. Suppose $X$ and $Y$ are homotopy equivalent, i.e., there exists continuous maps $f:X\to Y$ and $g:Y\to X$ such that $f\circ g$ is homotopic to $ \Id_{Y}$ and $g\circ f$ is homotopic to $ \Id_{X}$. Then, 
 $$\left\|\mu_{\theta_{\pi_1(X)}}-\mu_{\theta_{\pi_1(Y)}}\circ (\pi_1f,\pi_1f)\right\|_{\infty}\leq \max\{\dis(f),\dis(g)\},$$
 where $\pi_1f:\pi_1(X)\to \pi_1(Y)$ is the induced homomorphism. In particular, when $X=Y$ and $d_X$ and $d_Y$ induce the same topology on $X$, we can take $f$ and $g$ to be the identity map. In this case, 
 \[\left\|\mu_{\theta_{\pi_1(X)}}-\mu_{\theta_{\pi_1(Y)}}\right\|_{\infty}\leq \|d_X-d_Y\|_{\infty}.\] 
 \end{restatable}

\begin{proof} Note that the induced homomorphisms $\pi_1f:\pi_1(X)\to \pi_1(Y)$ and $\pi_1g:\pi_1(Y)\to \pi_1(X)$ are isomorphisms, and these isomorphisms are inverses of each other. Since $\dis(\pi_1f)=\codis(\pi_1f,\pi_1g)=\dis(\pi_1g)$, we have $$\max\{\dis(\pi_1f),\codis(\pi_1f,\pi_1g),\dis(\pi_1g)\}=\dis(\pi_1f).$$ 
Let $\gamma$ and $\gamma'$ be two continuous loops in $X$ based at $x_0$. Let $\delta>\mu_{\theta_{\pi_1(X)}}(\gamma,\gamma')$. Then, there exists $\epsilon\geq \delta$ and $\epsilon$-loops $\alpha$ and $\alpha'$, which are strong $\epsilon$-loops along $\gamma$ and $\gamma'$ respectively, such that $\alpha\sim^{\epsilon}_1 \alpha'$ via an $\epsilon$-homotopy $H$. By an argument similar to that of Proposition \ref{prop:induced}, $f(\alpha)$ and $f(\alpha')$ are $(\epsilon+\dis(f))$-loops, and $f(H)$ is an $(\epsilon+\dis(f))$-homotopy. Directly checking from the definition, we see that $f(\alpha)$ and $f(\alpha')$ are strong $(\epsilon+\dis(f))$-loops along $\pi_1f(\gamma)$ and $\pi_1f(\gamma')$, respectively. Thus, $\mu_{\theta_{\pi_1(Y)}}(\pi_1f(\gamma),\pi_1f(\gamma'))\leq \epsilon+\dis(f)$. Letting $\delta \searrow \mu_{\theta_{\pi_1(X)}}(\gamma,\gamma')$, we obtain $\mu_{\theta_{\pi_1(Y)}}(\pi_1f(\gamma),\pi_1f(\gamma'))\leq \mu_{\theta_{\pi_1(X)}}(\gamma,\gamma')+\dis(f).$ Using the fact that $\pi_1f$ and $\pi_1g$ are inverses of each other, we can prove similarly that $\mu_{\theta_{\pi_1(X)}}(\gamma,\gamma')\leq \mu_{\theta_{\pi_1(Y)}}(\pi_1f(\gamma),\pi_1f(\gamma'))+\dis(g).$ It follows that 
\[\left\|\mu_{\theta_{\pi_1(X)}}-\mu_{\theta_{\pi_1(Y)}}\circ (\pi_1f,\pi_1f)\right\|_{\infty}=\dis(\pi_1f)\leq \max\{\dis(f),\dis(g)\}.\]
\end{proof} 

\begin{remark} Despite Theorem \ref{thm:stab-l^infty-mu}, it is not clear whether
\begin{equation}\label{eq:stab-pi_1}
    \dgh\left( \left( \pi_1(X),\mu_{\theta_{\pi_1(X)}}\right),\left( \pi_1(Y),\mu_{\theta_{\pi_1(Y)}}\right)\right)\leq \dgh(X,Y).
\end{equation} The difficulty in proving such a claim is that arbitrary set maps do not necessarily induce homomorphisms between fundamental groups. In \textsection \ref{sec:stability-ppi_n}, we will prove a weaker version of Eq. (\ref{eq:stab-pi_1}): with a factor $2$ on the right-hand side (cf. Theorem \ref{thm:stab-pi_1}).
\end{remark}

\paragraph*{An application to Riemannian manifolds.} 
As an application of Theorem \ref{thm:stab-l^infty-mu}, let us consider a connected Riemannian manifold $(M,g)$, where $M$ is a smooth manifold and $g$ smoothly assigns an inner product $g_p$ to each tangent space $T_p M$ for each $p\in M$. Let $\lambda:M\to \R_+$ be a smooth positive function. Then, $(M,\lambda\cdot g)$ is also a Riemannian manifold. A change of Riemannian metric $g\to \tilde{g}$ is called a \emph{conformal change}, if angles between two vectors with respect to $g$ and $\tilde{g}$ are the same at every point of the manifold. It is clear that $g\to \lambda\cdot g$ is a conformal change. Let $\lambda $ and $\tilde{\lambda}$ be two smooth positive functions from $M$ to $\R_+$. Given any two distinct points $x$ and $x'$ in $M$, if $\gamma$ is a smooth curve on $M$ from $x$ to $x'$, then 
\[\left|  L_{\lambda g}(\gamma)-L_{\tilde{\lambda}g}(\gamma)\right| =\left|  \int_{\gamma} \lambda g-\tilde{\lambda}g\right| \leq \|\lambda -\tilde{\lambda}\|_{\infty}\cdot \int_{\gamma}g,\]
where $L_{g}(\gamma)$ represents the length of $\gamma$ in $(M,g)$. It follows that
\begin{align*}
  d_{\lambda g}(x,x')&\leq \inf_{\gamma} L_{\lambda g}(\gamma)+\inf_{\gamma}\left|  L_{\lambda g}(\gamma)-L_{\tilde{\lambda}g}(\gamma)\right| \\
  &\leq \inf_{\gamma} L_{\tilde{\lambda}g}(\gamma)+ \|\lambda -\tilde{\lambda}\|_{\infty}\cdot \inf_{\gamma}\int_{\gamma}g\\ 
    &\leq d_{\tilde{\lambda}g}(x,x')+ \|\lambda -\tilde{\lambda}\|_{\infty}\cdot \diam(M,g).
\end{align*}
Thus, we have 
\begin{equation}\label{eq:conformal change}
  \| d_{\lambda g}-d_{\tilde{\lambda}g}\|_{\infty}\leq  \|\lambda -\tilde{\lambda}\|_{\infty}\cdot \diam(M,g).  
\end{equation}

Since $(M,\lambda\cdot g )$ and $(M,\tilde{\lambda}\cdot g)$ have the same homotopy type, we can apply Theorem \ref{thm:stab-l^infty-mu} and Eq. (\ref{eq:conformal change})
to obtain:
\begin{corollary}
$\left\|\mu_{\theta_{\pi_1(M,\lambda g)}}-\mu_{\theta_{\pi_1(M,\tilde{\lambda} g)}}\right\|_{\infty}\leq  \|\lambda -\tilde{\lambda}\|_{\infty}\cdot \diam(M,g).$
\end{corollary}
For example, consider the $2$-dimensional flat torus $(\bbT^2,g)$ defined as the quotient space of the Euclidean square $[0,2\pi]\times[0,2\pi]$ obtained by identifying the opposite sides (see Figure \ref{fig:torus}). Then, we have
\[\left\|\mu_{\theta_{\pi_1(\bbT^2,\lambda  g)}}-\mu_{\theta_{\pi_1(\bbT^2,\tilde{\lambda} g)}}\right\|_{\infty}\leq \|\lambda -\tilde{\lambda}\|_{\infty}\cdot \sqrt{5}\pi.\]

\begin{figure}[ht!]	  
\centering
\begin{tikzpicture}[scale=2]
  \fill[blue!10!white] (0,0) --(0,1)-- (1,1) --(1,0);
  \draw[
        decoration={markings, mark=at position 0.5 with {\arrow{>}}},
        postaction={decorate}
        ](0,0) --(1,0); 
  \draw[
        decoration={markings, mark=at position 0.5 with {\arrow{>}}},
        postaction={decorate}
        ] (0,1) -- (1,1);
  \draw[
        decoration={markings, mark=at position 0.55 with {\arrow{>}}},
        postaction={decorate}
        ] (0,0) -- (0,1);
  \draw[
        decoration={markings, mark=at position 0.55 with {\arrow{>}}},
        postaction={decorate}
        ] (1,0) -- (1,1);
  \draw[
        decoration={markings, mark=at position 0.45 with {\arrow{>}}},
        postaction={decorate}
        ] (0,0) -- (0,1);
  \draw[
        decoration={markings, mark=at position 0.45 with {\arrow{>}}},
        postaction={decorate}
        ] (1,0) -- (1,1);
\draw[dashed, red](0,0)--(0.5,1);
    \end{tikzpicture} 
\caption{The manifold $(\bbT^2,g)$, as the quotient space of the Euclidean square $[0,2\pi]\times[0,2\pi]$ by identifying the opposite sides. The length of the red line realizes the diameter of $\bbT^2$.}\label{fig:torus}
\end{figure}

\section{Stability of Persistent Homotopy Groups} \label{sec:stability-ppi_n} 

Recall that the interleaving distance between persistent groups $\bbG$ and $\bbH$ is given in Definition \ref{def:interleaving} by letting $\caC=\grp$:		
$$\di(\bbG,\bbH):=\inf\{\delta\geq0: \bbG\text{ and }\bbH\text{ are }\delta\text{-interleaved}\}.$$
In this section, we prove the following stability theorem for the interleaving distance of persistent $\rmK$-homotopy groups (equivalently, for persistent $\VR$-homotopy groups):
\stabofppiKn*

As a consequence of the above theorem, we can derive a stability theorem for fundamental groups:
\GHstabofpi*

Given the isomorphism between the persistent fundamental group and the persistent $\rmK$-fundamental group (cf. Theorem \ref{thm:iso-ppi_1's}), another immediate application of Theorem \ref{thm:stab-ppi_n} is the stability of persistent fundamental groups; see the rightmost inequality of the following theorem. 
The leftmost inequality below follows from the persistent Hurewicz theorem.

\begin{restatable} [$\di$-stability of $\ppi_1(\bullet)$]{theorem}{stabofppi} \label{thm:stab-di-ppi_1} 
Let $(X,x_0)$ and $(Y,y_0)$ be pointed compact metric spaces. Then
		$$\di(\pH_1(X),\pH_1(Y))\leq\di(\ppi_1(X,x_0),\ppi_1(Y,y_0))\leq 2\cdot \dgh^{\pt}((X,x_0),(Y,y_0)).$$
\end{restatable}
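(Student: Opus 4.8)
The plan is to obtain this statement as an essentially immediate consequence of the stability theorem for persistent $\rmK$-homotopy groups (Theorem \ref{thm:stab-ppi_n}) combined with the identification of the three models of the persistent fundamental group (Theorems \ref{thm:iso of d.f.g} and \ref{thm:iso-ppi_1's}). No new geometric estimate is required: the whole task is to transport the bound of Theorem \ref{thm:stab-ppi_n} across the relevant isomorphisms while keeping careful track of the scale reparametrization, which is precisely what produces the factor of $2$.

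The first step is to replace $\ppi_1(\bullet)$ by its Vietoris--Rips model. Assembling the scalewise isomorphisms of Theorem \ref{thm:iso of d.f.g} over all $\epsilon$ gives a natural isomorphism of persistent groups $\ppi_1(X,x_0)\cong\ppi_1^{\VR}(X,x_0)$, and likewise for $(Y,y_0)$; since $\di$ depends only on the isomorphism class of its arguments (Definition \ref{def:interleaving}), it suffices to bound $\di(\ppi_1^{\VR}(X,x_0),\ppi_1^{\VR}(Y,y_0))$. The second step is to pass to the Kuratowski model and apply Theorem \ref{thm:stab-ppi_n}. By Theorem \ref{thm:iso-ppi_1's} one has $\ppi_1^{\rmK,\bullet}\cong\ppi_1^{\VR,2\bullet}$, so that the $\rmK$-model is the $\VR$-model precomposed with the doubling $\epsilon\mapsto 2\epsilon$. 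A short computation directly from the interleaving definition shows that precomposing a pair of functors with $\epsilon\mapsto c\epsilon$ rescales their interleaving distance by $c^{-1}$; taking $c=2$ gives
\[\di(\ppi_1^{\VR}(X,x_0),\ppi_1^{\VR}(Y,y_0))=2\,\di(\ppi_1^{\rmK}(X,x_0),\ppi_1^{\rmK}(Y,y_0)).\]
Invoking Theorem \ref{thm:stab-ppi_n} in the case $n=1$, namely $\di(\ppi_1^{\rmK}(X,x_0),\ppi_1^{\rmK}(Y,y_0))\le\dgh^{\pt}((X,x_0),(Y,y_0))$, then yields the claimed factor-$2$ bound. (Equivalently, one may cite directly the $\ppi_n^{\VR}$ form of Theorem \ref{thm:stab-ppi_n}, into which this rescaling has already been folded.)

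The only genuine subtlety, and the step I expect to require the most care, concerns the closed version when $X$ is infinite: there the comparison $\ppi_1^{\rmK,\bullet}\cong\ppi_1^{\VR,2\bullet}$ is guaranteed by Theorem \ref{thm:iso-ppi_1's} only in the form $\dhi=0$ of the underlying $\R$-spaces, rather than as a strict isomorphism of persistent groups. I would resolve this by observing that $\dhi=0$ descends, upon applying the homotopy-invariant functor $\pi_1$, to $\di(\ppi_1^{\rmK,\bullet}(X,x_0),\ppi_1^{\VR,2\bullet}(X,x_0))=0$: for each $\delta>0$ a $\delta$-homotopy-interleaving of the $\R$-spaces induces a $\delta$-interleaving of the associated persistent fundamental groups, since $\pi_1$ identifies homotopic maps and thus turns the homotopy-commuting interleaving triangles into strictly commuting ones. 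Absorbing these two vanishing terms through the triangle inequality for $\di$ before applying Theorem \ref{thm:stab-ppi_n} then makes the argument uniform across both the open and closed versions, completing the proof.
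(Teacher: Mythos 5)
Your proposal is correct and follows essentially the same route as the paper: the theorem is obtained there as an immediate consequence of Theorem \ref{thm:stab-ppi_n} (whose proof already establishes $\di(\ppi_1^{\VR}(X,x_0),\ppi_1^{\VR}(Y,y_0))\leq 2\cdot\dgh^{\pt}((X,x_0),(Y,y_0))$ before halving it for the $\rmK$-model) together with the isomorphisms of Theorem \ref{thm:iso-ppi_1's}, which is exactly the factor-of-two bookkeeping you describe, and your handling of the closed/infinite case via $\dhi=0$ matches the paper's own mechanism (Lemma \ref{lem:di-dhi}). The paper also gives an independent, constructive second proof via tripod-induced $(\epsilon,\delta)$-homomorphisms in \S\ref{sec:stability-second-proof}, but that is not the primary argument and your derivation does not need it.
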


In the previous sections, we have studied the notion of persistent homotopy groups, its properties and some applications. In the next section, we show that persistent homotopy groups are stable under the Gromov-Hausdorff distance of metric spaces, and see that there are cases when persistent homotopy has stronger distinguishing power than persistent homology.

%\f{\sout{Also: discuss: in many parts of the ppere we keep talking about PInK and PinVR as if they are different. This is ineffcient: once we prove they are isomorphic we should coalesce the notions somehow}}

\subsection{Stability \& discriminating power of \texorpdfstring{$\ppi_n^{\rmK}(\bullet)$, $\ppi_n^{\VR}(\bullet)$}{K and VR}, \texorpdfstring{$\mu_{\theta_{\pi_1(\bullet)}}$}{pi1} \& \texorpdfstring{$\mathrm{Cr}(\bullet)$}{Cr}} \label{sec:stability-proof} 
To prove Theorem \ref{thm:stab-ppi_n}, we use the stability of homotopy interleaving distance under the Gromov-Hausdorff distance (see Theorem \ref{thm:stability-HI}). As homotopy groups depend on basepoints when spaces are not connected, in this section all maps between topological spaces are required to be basepoint preserving. 

\paragraph*{$\R$-spaces and homotopy interleavings.} Recall that $\topo$ is the category of compactly generated weakly Hausdorff topological spaces, and an object in $\Ptop^{(\R,\leq)}$ is also called an \emph{$\R$-space} (see \cite{blumberg2017universality}). Let $\bbX$ be an $\R$-space. If for every $t\leq s\in \R$, the map $X_t\to X_s$ is an inclusion, then we call $\bbX$ a \emph{filtration}. Blumberg and Lesnick showed that the interleaving distance between $\R$-spaces is not homotopy invariant (see Remark 3.3 of \cite{blumberg2017universality}), and then defined homotopy interleavings between $\R$-spaces as certain homotopy invariant analogues of interleavings. 

Given two $\R$-spaces $\bbX$ and $\bbY$, a natural transformation $f:\bbX\Rightarrow \bbY$ is an \emph{(objectwise) weak equivalence} if for each $t\in \R$, $f_t:X_t\to Y_t$ is a weak homotopy equivalence, i.e., it induces isomorphisms on all homotopy groups. \label{para:weak h.e.} The $\R$-spaces $\bbX$ and $\bbY$ are \emph{weakly equivalent}, denoted by $\bbX\simeq \bbY$, if there exists an $\R$-space $\bbW$ and natural transformations $f:\bbW\Rightarrow \bbX$ and $g:\bbW\Rightarrow \bbY$ that are (objectwise) weak equivalences:
\[\bbX\xLeftarrow{f} \bbW\xRightarrow{g} \bbY.\]
The relation $\bbX\simeq \bbY$ is an equivalence relation (see \cite{blumberg2017universality} for details). Given $T\subset \R$, we can consider the restriction of any given $\R$-space to $T$ and study $T$-spaces similarly. For $\delta\geq 0$, two $\R$-spaces $\bbX$ and $\bbY$ are \emph{$\delta$-homotopy-interleaved} if there exist $\R$-spaces $\bbX'\simeq \bbX$ and $\bbY'\simeq \bbY$ such that $\bbX'$ and $\bbY'$ are $\delta$-interleaved, as in Definition \ref{def:delta-interleaving} with $\caC=\topo$.
 
\begin{definition}[Homotopy interleaving distance] \label{def:HI} The \emph{homotopy interleaving distance} between two $\R$-spaces $\bbX$ and $\bbY$ is given by
	$$\dhi(\bbX,\bbY):=\inf \left\{\delta\geq 0:\bbX,\bbY \text{ are }\delta\text{-homotopy-interleaved}\right\}.$$ 
\end{definition}

\begin{theorem} [{Theorem 1.6, \cite{blumberg2017universality}}] \label{thm:stability-HI} Given two compact metric spaces $X$ and $Y$, we have
	$$\db(\dgm_k(X),\dgm_k(Y))\leq \dhi(\left| \VR_{\bullet}(X)\right| ,\left| \VR_{\bullet}(Y)\right| )\leq  2\cdot \dgh(X,Y).$$
Here $\left| \VR_{\bullet}(X)\right| $ is the $\R_{\geq 0}$-space given by the geometric realizations of Vietoris-Rips complexes of $X$.
\end{theorem}

Given an $\R_+$-space $\bbV:(\R_+,\leq)\to\topo$, we denote by $\lim \bbV$ the limit of $\bbV$ together with morphisms $v_t:\lim \bbV\to \bbV(t)$ for each $t\in \R_+$. For any $x\in \lim \bbV$, let $(\bbV,x)$ be the functor from $(\R_+,\leq)$ to $\topo^*$ such that $(\bbV,x)(t)=(V_t,v_t(x))$ and $(\bbV,x)(t\leq s)=v_t^s.$ Given two $\R_+$-spaces $\bbV$ and $\bbW$, as well as $x\in \lim \bbV$ and $y\in \lim \bbW$, we say that $(\bbV,x)$ and $(\bbW,y)$ are weakly equivalent if $\bbV\simeq \bbW$ via basepoint preserving maps. And $(\bbV,x)$ and $(\bbW,y)$ are said to be $\delta$-homotopy-interleaved if there exist $\bbV',\bbW':(\R_+,\leq)\to\topo^*$ such that $(\bbV',x')\simeq (\bbV,x)$, $(\bbW',y')\simeq (\bbW,y)$ for some $x'\in \lim \bbV'$ and $y'\in \lim \bbW'$, and $(\bbV',x')$ and $(\bbW',y')$ are $\delta$-interleaved via basepoint preserving maps. We define the homotopy interleaving distance $\dhi((\bbV,x),(\bbW,y))$ between $(\bbV,x)$ and $(\bbW,y)$ to be the infimum of $\delta$ such that $(\bbV,x)$ and $(\bbW,y)$ are $\delta$-homotopy-interleaved.

\begin{lemma}\label{lem:iso of ppi} Let $\bbV$ and $\bbW$ be two $\R_+$-spaces. If $\bbV\simeq \bbW$, then there exists $x\in \lim\bbV$ and $y\in \lim \bbW$ such that $(\bbV,x)\simeq (\bbW,y)$. Moreover, if $(\bbV,x)\simeq (\bbW,y)$, then for all $n\geq1,$
\[\pi_n\circ(\bbV,x)\cong \pi_n\circ(\bbW,y).\]
\end{lemma}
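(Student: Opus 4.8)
The plan is to unwind the definition of pointed weak equivalence, transport a single basepoint across the witnessing zigzag to obtain the first assertion, and then apply the homotopy group functor objectwise to obtain the second.

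For the first assertion, recall that $\bbV\simeq\bbW$ means there is an $\R_+$-space $\bbU$ together with objectwise weak equivalences $f\colon\bbU\Rightarrow\bbV$ and $g\colon\bbU\Rightarrow\bbW$. My first step is to observe that a natural transformation of $(\R_+,\leq)$-shaped diagrams in $\topo$ induces a map on limits: since $f$ is natural and the projections $u_t\colon\lim\bbU\to U_t$ form the limit cone, the maps $f_t\circ u_t$ constitute a cone over $\bbV$, so by the universal property they factor uniquely through a map $\lim f\colon\lim\bbU\to\lim\bbV$ satisfying $v_t\circ\lim f=f_t\circ u_t$, and similarly one gets $\lim g$. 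Choosing a point $u\in\lim\bbU$ (nonempty in every case to which the lemma is applied; for instance the limit of the Kuratowski filtration of a compact metric space $X$ is $k_X(X)\cong X$), I set $x:=(\lim f)(u)$ and $y:=(\lim g)(u)$. The identity $v_t(x)=f_t(u_t(u))$ then says precisely that $f$ carries the basepoint $u_t(u)$ of $(U_t,u_t(u))$ to the basepoint $v_t(x)$ of $(V_t,v_t(x))$; that is, $f\colon(\bbU,u)\Rightarrow(\bbV,x)$ is basepoint preserving, and likewise $g\colon(\bbU,u)\Rightarrow(\bbW,y)$. As $f$ and $g$ are still objectwise weak equivalences on underlying spaces, the pointed zigzag $(\bbV,x)\xLeftarrow{f}(\bbU,u)\xRightarrow{g}(\bbW,y)$ witnesses $(\bbV,x)\simeq(\bbW,y)$.

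For the second assertion, suppose $(\bbV,x)\simeq(\bbW,y)$, witnessed by such a pointed zigzag. I apply $\pi_n\colon\topo^*\to\grp$ objectwise to $f$, obtaining a natural transformation $\pi_n(f)\colon\pi_n\circ(\bbU,u)\Rightarrow\pi_n\circ(\bbV,x)$ whose $t$-component is $\pi_n(f_t)$; naturality is functoriality of $\pi_n$ applied to the commuting squares of $f$. Because $f_t$ is a basepoint-preserving weak homotopy equivalence, $\pi_n(f_t)\colon\pi_n(U_t,u_t(u))\to\pi_n(V_t,v_t(x))$ is an isomorphism for every $n\geq 1$ and every $t$; by the characterization of isomorphisms in $\Pgrp^{(\R_+,\leq)}$ (componentwise isomorphism), $\pi_n(f)$ is an isomorphism of persistent groups, its componentwise inverses assembling into a natural inverse. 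The same holds for $g$, so $\pi_n\circ(\bbU,u)\cong\pi_n\circ(\bbV,x)$ and $\pi_n\circ(\bbU,u)\cong\pi_n\circ(\bbW,y)$; composing the latter with the inverse of the former gives the desired $\pi_n\circ(\bbV,x)\cong\pi_n\circ(\bbW,y)$.

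The routine verifications — that $\lim f$ is well defined, that applying $\pi_n$ to a natural transformation yields a natural transformation, and that a componentwise isomorphism of persistent groups is invertible — all follow immediately from the universal property of limits and the definitions in \textsection\ref{subsec:persistent theory}. The one genuinely delicate point is the basepoint bookkeeping in the first assertion: one must choose a single point $u$ in the common apex $\lim\bbU$ of the zigzag and push it forward to both sides, rather than selecting $x$ and $y$ independently, since an objectwise weak equivalence supplies no map in the reverse direction. This is also where nonemptiness of $\lim\bbU$ is used, which is what makes the assertion meaningful in the intended applications.
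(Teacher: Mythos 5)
Your proof is correct and follows essentially the same route as the paper's: both construct $\lim f$ and $\lim g$ via the universal property of the limit, push a single point of $\lim\bbU$ forward to obtain compatible basepoints, and then apply $\pi_n$ objectwise to the pointed zigzag, inverting the componentwise isomorphisms to get the natural isomorphism of persistent groups. Your explicit remark about why the basepoints must come from the common apex (and hence why nonemptiness of $\lim\bbU$ matters) is a point the paper leaves implicit, but it does not change the argument.
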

\begin{proof}Since $\bbV\simeq \bbW$, there exist an $\R_+$-space $\bbU$ and natural transformations $f:\bbU\Rightarrow \bbV$ and $g:\bbU\Rightarrow \bbW$ that are (objectwise) weak equivalences. Because of the universal property of $\lim \bbV$, there exists a morphism $\lim f$ such that the following diagram commutes for all $0<t\leq s$:
	\begin{center}
		\begin{tikzcd}
		V_t \ar[dd] &&& 
		U_t \ar[lll,"\simeq" below, "f_t" above]
		\ar[dd]
			\\
		& \lim \bbV \ar[ul]
		    \ar[dl]
		 &\lim \bbU \ar[l, "\lim f" above]
		 \ar[ur]
		 \ar[dr]\\
		 V_s
		&&	& U_s. \ar[lll,"\simeq" below, "f_s" above]
		\end{tikzcd}
	\end{center} 
Similarly, there exists a morphism $\lim g:\lim \bbU\to \lim \bbW$ satisfying a similar commutative diagram. Take any $z\in \lim \bbU$ and let $x:=(\lim f)(z)$ and $y:=(\lim g)(z)$. It is clear that $(\bbV,x)\simeq (\bbW,y)$.

In addition, we have the following commuting diagrams (for each $0<t\leq s$):
\begin{center}
	\begin{tikzcd}[column sep=2.5cm ]
		\pi_n(V_t,v_t(x)) \ar[d] & 
	    \pi_n(W_t,w_t(y)) \ar[l,"\cong" below, "\pi_n(f_t)\circ (\pi_n(g_t))^{-1}" above] \ar[r,"\cong" below, "\pi_n(g_t)\circ (\pi_n(f_t))^{-1}" above]
		\ar[d] & \pi_n(V_t,v_t(x)) \ar[d]
			\\
		\pi_n(V_s,v_s(x))
		& \pi_n(W_s,w_s(y)) \ar[l,"\cong" above, "\pi_n(f_s)\circ (\pi_n(g_s))^{-1}" below] \ar[r,"\cong" above, "\pi_n(g_s)\circ (\pi_n(f_s))^{-1}" below] &	\pi_n(V_s,v_s(x)).
	\end{tikzcd}
\end{center} 
Thus, $\pi_n\circ(\bbV,x)$ and $\pi_n\circ(\bbW,y)$ are naturally isomorphic.
\end{proof}	

\begin{lemma}\label{lem:di-dhi} Let $\bbV$ and $\bbW$ be two $\R_+$-spaces. Then, for any $x\in \lim\bbV$ and $y\in \lim \bbW$, 
\[\di\left(\pi_n\circ(\bbV,x), \pi_n\circ(\bbW,y)\right)\leq \dhi\left((\bbV,x),(\bbW,y)\right).\]
\end{lemma}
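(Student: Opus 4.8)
The plan is to unwind the definition of $\dhi$, push a homotopy-interleaving through the functor $\pi_n$, and then use Lemma \ref{lem:iso of ppi} to discard the auxiliary spaces at the end.

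First I would fix any $\delta>\dhi((\bbV,x),(\bbW,y))$. By Definition \ref{def:HI} together with the pointed notion of $\delta$-homotopy-interleaving recalled at the beginning of \textsection\ref{sec:stability-proof}, there exist $\R_+$-spaces $\bbV',\bbW':(\R_+,\leq)\to\topo^*$ with $(\bbV',x')\simeq (\bbV,x)$ and $(\bbW',y')\simeq (\bbW,y)$ for some $x'\in\lim\bbV'$ and $y'\in\lim\bbW'$, such that $(\bbV',x')$ and $(\bbW',y')$ are (strictly) $\delta$-interleaved in $\topo^*$.

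The key step is that applying the functor $\pi_n:\topo^*\to\grp$ to this $\delta$-interleaving yields a $\delta$-interleaving of persistent groups. Indeed, a $\delta$-interleaving consists of basepoint-preserving natural transformations $f:(\bbV',x')\Rightarrow (\bbW',y')\circ S_\delta$ and $g:(\bbW',y')\Rightarrow (\bbV',x')\circ S_\delta$ whose composites equal the structure maps prescribed by $\eta_{2\delta}$ (Definition \ref{def:delta-interleaving} with $\caC=\topo$). Since $\pi_n$ is a functor, it sends these to homomorphisms $\pi_n f$ and $\pi_n g$ of degree $\delta$, and it preserves the two commuting triangles, because functors carry commutative diagrams to commutative diagrams and $(\pi_n\circ(\bbV',x'))\circ S_\delta=\pi_n\circ((\bbV',x')\circ S_\delta)$. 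Hence $\pi_n\circ(\bbV',x')$ and $\pi_n\circ(\bbW',y')$ are $\delta$-interleaved in $\grp$, giving $\di(\pi_n\circ(\bbV',x'),\pi_n\circ(\bbW',y'))\le\delta$.

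Finally I would transport this back to $\bbV$ and $\bbW$. By Lemma \ref{lem:iso of ppi}, the weak equivalences $(\bbV',x')\simeq(\bbV,x)$ and $(\bbW',y')\simeq(\bbW,y)$ induce isomorphisms $\pi_n\circ(\bbV',x')\cong\pi_n\circ(\bbV,x)$ and $\pi_n\circ(\bbW',y')\cong\pi_n\circ(\bbW,y)$ of persistent groups. Since the interleaving distance descends to isomorphism classes (Definition \ref{def:interleaving}), we obtain $\di(\pi_n\circ(\bbV,x),\pi_n\circ(\bbW,y))\le\delta$, and letting $\delta\searrow\dhi((\bbV,x),(\bbW,y))$ yields the claim. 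The main obstacle is the middle step: verifying carefully that a \emph{strict} $\delta$-interleaving in $\topo^*$ is carried by $\pi_n$ to a $\delta$-interleaving in $\grp$. This is pure functoriality, but it is exactly the place where one must use that the interleaving built into the definition of $\dhi$ is honest (strict) rather than merely homotopy-commutative — which is precisely why $\delta$-homotopy-interleaving factors the flexibility into the separate weak equivalences $\bbV'\simeq\bbV$ and $\bbW'\simeq\bbW$.
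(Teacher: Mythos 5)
Your proposal is correct and follows essentially the same route as the paper's proof: extract the auxiliary $\delta$-interleaved pair $(\bbV',x')$, $(\bbW',y')$ from the definition of $\dhi$, push the strict interleaving through the functor $\pi_n$, and use Lemma \ref{lem:iso of ppi} to identify $\pi_n\circ(\bbV',x')$ with $\pi_n\circ(\bbV,x)$ (the paper phrases this as a $0$-interleaving plus the triangle inequality, which is the same as your ``isomorphic, and $\di$ descends to isomorphism classes''). No gaps.
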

\begin{proof}Suppose $(\bbV,x)$ and $(\bbW,y)$ are $\delta$-homotopy-interleaved for some $\delta\geq0$. Then, there exist $\bbV',\bbW':(\R_+,\leq)\to\topo^*$ such that $(\bbV',x')\simeq (\bbV,x)$, $(\bbW',y')\simeq (\bbW,y)$ for some $x'\in \lim \bbV'$ and $y'\in \lim \bbW'$, and $(\bbV',x')$ and $(\bbW',y')$ are $\delta$-interleaved. By Lemma \ref{lem:iso of ppi}, we know that $\pi_n\circ(\bbV,x)$ and $\pi_n\circ(\bbV',x')$ are $0$-interleaved, and $\pi_n\circ(\bbW,y)$ and $\pi_n\circ(\bbW',y')$ are $0$-interleaved. Thus,
\begin{align*}
    &\di\left(\pi_n\circ(\bbV,x),\, \pi_n\circ(\bbW,y)\right)\\
    \leq& \di\left(\pi_n\circ(\bbV,x),\, \pi_n\circ(\bbV',x')\right)+
    \di\left(\pi_n\circ(\bbV',x'),\, \pi_n\circ(\bbW',y')\right)\\
    &+
    \di\left(\pi_n\circ(\bbW',y'),\, \pi_n\circ(\bbW,y)\right)\\
    \leq& 0+\di\left((\bbV',x'),\, (\bbW',y')\right)+0\\
    \leq& \delta.
\end{align*}
\end{proof}
	
\begin{myproof}{Theorem \ref{thm:stab-ppi_n}} We first notice that the proof of Theorem \ref{thm:stability-HI} (see page 20 of \cite{blumberg2017universality}) can be modified to show that 
$$\dhi\left(\left(\left| \VR(X)\right| ,x_0\right),\,\left(\left| \VR(Y)\right| ,y_0\right)\right)\leq 2\cdot \dgh^{\pt}((X,x_0),(Y,y_0)).$$
Applying Lemma \ref{lem:di-dhi}, for each $n\in\Z_{\geq 1}$ we obtain
\begin{align*}
    \di\left(\ppi_n^{\VR}(X,x_0),\ppi_n^{\VR}(Y,y_0)\right)&= \di\left(\pi_n\circ\left(\left| \VR(X)\right| ,x_0\right),\,\pi_n\circ\left(\left| \VR(Y)\right| ,y_0\right)\right)\\
    &\leq \dhi\left(\left(\left| \VR(X)\right| ,x_0\right),\,\left(\left| \VR(Y)\right| ,y_0\right)\right)
    \\
    &\leq 2\cdot \dgh^{\pt}((X,x_0),(Y,y_0)).
\end{align*}
It follows from Corollary \ref{cor:iso-ppi_n} that
\begin{align*}
    &\di\left(\ppi_n^{\rmK}(X,x_0),\ppi_n^{\rmK}(Y,y_0)\right)\\
    =&\tfrac{1}{2}\cdot\di\left(\ppi_n^{\VR}(X,x_0),\ppi_n^{\VR}(Y,y_0)\right)\leq  \dgh^{\pt}((X,x_0),(Y,y_0)).
\end{align*}
When $X$ and $Y$ are chain-connected, we can take the infimum over all pairs $(x_0,y_0)\in X\times Y$ of both sides of the above equation to get
$$\di\left(\ppi_n^{\rmK}(X),\ppi_n^{\rmK}(Y)\right)=\tfrac{1}{2}\cdot\di\left(\ppi_n^{\VR}(X),\ppi_n^{\VR}(Y)\right)\leq  \dgh(X,Y).$$
\end{myproof}

\begin{myproof}{Theorem \ref{thm:stab-pi_1}} By Theorem \ref{thm:pi_1-interleaving}, we have the first inequality below:
\begin{align}\label{eq:dgh of pi-di of ppi_1}
    &\dgh\left( \left( \pi_1(X),\mu_{\theta_{\pi_1(X)}}\right),\left( \pi_1(Y),\mu_{\theta_{\pi_1(Y)}}\right)\right)\\
    \leq& \di \left(\theta_{\pi_1(X)},\theta_{\pi_1(Y)}\right)\leq \di(\ppi_1(X),\ppi_1(Y)).
\end{align}
The second inequality is true because any $\delta$-interleaving between $\ppi_1(X)$ and $\ppi_1(Y)$ clearly induces a $\delta$-interleaving between $\theta_{\pi_1(X)}$ and $\theta_{\pi_1(Y)}$. It follows from Theorem \ref{thm:iso-ppi_1's} that
\begin{equation}\label{eq:cor-ppi-ppi*}
\di(\ppi_1(X),\ppi_1(Y))=2\cdot\di\left(\ppi_1^{\rmK}(X),\ppi_1^{\rmK}(Y)\right).
\end{equation}
Finally we combine the above equations and 
%Eq. (\ref{eq:dgh of pi-di of ppi_1}) and Eq. (\ref{eq:cor-ppi-ppi*}) together, and then 
apply Lemma \ref{lem:di-dhi} for the case $n=1$, to obtain
$$\dgh\left( \left( \pi_1(X),\mu_{\theta_{\pi_1(X)}}\right),\left( \pi_1(Y),\mu_{\theta_{\pi_1(Y)}}\right)\right)\leq 2\cdot \dgh(X,Y).$$
\end{myproof}

\paragraph*{Stability of critical spectrum.} Recall from page \pageref{para:critical value} the definition of critical values and critical spectra of metric spaces. As an application of Theorem \ref{thm:stab-pi_1}, we prove that the Hausdorff distance between critical spectra of metric spaces is stable under the Gromov-Hausdorff distance between the spaces:
\begin{proposition}
\label{prop:stab-cr}
Given compact geodesic s.l.s.c. spaces $X$ and $Y$, we have
\[\dhaus^{\R}(\mathrm{Cr}(X),\mathrm{Cr}(Y))\leq  2\cdot \dgh\left( \left( \pi_1(X),\mu_{\theta_{\pi_1(X)}}\right),\left( \pi_1(Y),\mu_{\theta_{\pi_1(Y)}}\right)\right) \leq 4\cdot\dgh(X,Y).\]
\end{proposition}

It seems interesting to try to define the critical spectrum as a \emph{multiset} by considering a suitable notion of multiplicity for critical points. By doing so, one could then study a notion of stability result of such critical spectra under a stronger distance: the bottleneck distance, in the same spirit as how \cite{cohen2007stability} studied the stability of persistence diagrams.
We leave this as future work.

To prove the Proposition \ref{prop:stab-cr}, first recall the following lemma from Theorem 3.4 of \cite{memoli2012some}:
\begin{lemma} \label{lem:stab-dh-cr}
For any two metric spaces $(Z_1,d_1)$ and $(Z_2,d_2)$, \[\dhaus^{\R}(\mathrm{Im}(d_1),\mathrm{Im}(d_2))\leq 2\cdot \dgh(Z_1,Z_2).\] 
\end{lemma}

\begin{myproof}{Proposition \ref{prop:stab-cr}} 
Recall from Proposition \ref{prop:cr=im} that the critical spectrum $\mathrm{Cr}(X)$ agrees with the image of the ultra-metric $\mu_{\theta_{\pi_1(X)}}$ on $\pi_1(X)$. Combining with Lemma \ref{lem:stab-dh-cr} and Theorem \ref{thm:stab-pi_1}, we obtain
\begin{align*}
\dhaus^{\R}(\mathrm{Cr}(X),\mathrm{Cr}(Y)) =& \dhaus^{\R}\left(\mathrm{Im}\big(\mu_{\theta_{\pi_1(X)}}\big),\mathrm{Im}\big(\mu_{\theta_{\pi_1(Y)}}\big)\right)\\
\leq & 2\cdot \dgh\left( \left( \pi_1(X),\mu_{\theta_{\pi_1(X)}}\right),\left( \pi_1(Y),\mu_{\theta_{\pi_1(Y)}}\right)\right) \\
\leq & 4\cdot \dgh(X,Y).    
\end{align*} 
\end{myproof}

\subsection{Examples of stability of persistent homotopy groups}

Stability results such as Theorem \ref{thm:stab-di-ppi_1} can be used to estimate the Gromov-Hausdorff distance between compact metric spaces. See \cite{lim2021gromov} for the precise calculation of the Gromov-Hausdorff distance between spheres.

To see that persistent homotopy sometimes provides a better approximation to the Gromov-Hausdorff distance than persistent homology, we examine several examples in this section.

\begin{example}[$\bbS^1\times \bbS^1$ vs. $\bbS^1\vee \bbS^1$]\label{ex:prod-wedge-S1-S1} 
Let $\bbT^2=\bbS^1\times \bbS^1$ denote the torus constructed as the $\ell^{\infty}$ product of two unit circles, and let $C=\bbS^1\vee \bbS^1$ be the wedge sum of two unit circles. By Example \ref{ex:circle-VR-ppi_n} and Example \ref{ex:bouquet of circles}, the persistent homology groups of $C$ are: for all $k\geq 1$,
\[\pH_{n}^{\VR}(C)\cong\begin{cases} 
 (\Z\times\Z)\left(\tfrac{k-1}{2k-1}\, 2\pi,\,\tfrac{k}{2k+1}\, 2\pi\right),&\mbox{if $n=2k-1,$}\\[6pt] 
 (\Z^{\times \mathfrak{c}}\times\Z^{\times \mathfrak{c}})\left[\tfrac{k}{2k+1}\, 2\pi,\,\tfrac{k}{2k+1}\, 2\pi\right],&\mbox{if $n=2k$.}
 \end{cases}\]
Following from the above, we can compute the persistent homology with coefficients  in $\R$ and obtain the undecorated persistence diagrams of $C$ as follows: for all $k\geq 0$,
\begin{align*}
    \dgm_{2k+1}(C)&=\left\{\left(\tfrac{k}{2k+1}\,2\pi,\,\tfrac{k+1}{2k+3}\,2\pi\right),\left(\tfrac{k}{2k+1}\,2\pi,\,\tfrac{k+1}{2k+3}\,2\pi\right)\right\},\\   
    \dgm_{2k+2}(C)&=\emptyset.
\end{align*}
It is clear that $\dgm_0(C)=\dgm_0(\bbT^2)=\{(0,\infty)\}$. Together with Example \ref{ex:product of S1}, for all $k\geq 0$, we have $\dgm_{2k+1}(\bbT^2)=\dgm_{2k+1}(C)$, $\dgm_{4k+4}(\bbT^2)=\dgm_{4k+4}(C)$,
\[ \dgm_{4k+2}(\bbT^2)=\left\{\left(\tfrac{k}{2k+1}\,2\pi,\,\tfrac{k+1}{2k+3}\,2\pi\right)\right\} \text{ and }\dgm_{4k+2}(C)=\emptyset.\] 
Thus,
\begin{align*}
    \sup_{n\geq 0}\db\left(\dgm_{n}(\bbT^2),\dgm_{n}(C)\right)
    &=\sup_{n=4k+2,k\geq 0}\db\left(\dgm_{n}(\bbT^2),\dgm_{n}(C)\right)\\
    &=\sup_{k\geq 0}\db\left(\left\{\left(\tfrac{k}{2k+1}\,2\pi,\,\tfrac{k+1}{2k+3}\,2\pi\right)\right\},\emptyset\right)\\
    &=\sup_{k\geq 0}\tfrac{\pi}{(2k+3)(2k+1)}=\tfrac{\pi}{3}.
\end{align*}
Thus, in this case the best approximation to $\dgh(\bbT^2,C)$ given by persistence diagrams is obtained at homological dimension $2$. Now let us see that the same lower bound for $\dgh(\bbT^2,C)$ can already be realized by the interleaving distance between persistent fundamental groups. Recall that
	\[\ppi_1(\bbT^2)=(\Z\times \Z)\left(0,\tfrac{2\pi}{3}\right)\text{ and }
	\ppi_1(C)=(\Z* \Z)\left(0,\tfrac{2\pi}{3}\right).\] 
We claim that $$\di(\ppi_1(\bbT^2),\ppi_1(C))= \tfrac{\pi}{3}.$$
Indeed, because the composition of group homomorphisms $\Z\ast \Z\to \Z\times\Z\to \Z\ast \Z$ can never be $\Id_{\Z\ast \Z}$, it must be true that $\di(\ppi_1(\bbT^2),\ppi_1(C))\geq \tfrac{\pi}{3}.$ By directly checking that $\ppi_1(\bbT^2)$ and $\ppi_1(C)$ are $\tfrac{\pi}{3}$-interleaved, we can conclude that $\di(\ppi_1(\bbT^2),\ppi_1(C))= \tfrac{\pi}{3}.$
\end{example}

To compute further examples, we recall from \cite{Hatcher01(AT)} and \cite{whitehead1950generalization} the following facts about homotopy groups under wedge sum. 
\begin{proposition}
 \label{prop:homotopy_wedge}
For $n\geq 2$, $\pi_n(X\vee Y)\cong \pi_n(X)\oplus \pi_n(Y)\oplus \pi_n(X\times Y,X\vee Y).$
    \begin{romanlist}
        \item $\pi_n(\bbS^1\vee \bbS^n)\cong\Z[t,t^{-1}]$ the Laurent polynomials in $t$ and $t^{-1}$. %(see \cite[Example 4.27]{Hatcher01(AT)}).
%        \item %Let $\bbS^n_{\alpha}:=\bbS^n$ for each index $\alpha$. Then,         $\pi_n\left(\bigvee_{\alpha}\bbS^n_{\alpha}\right)\cong\bigoplus_{\alpha}\Z$. %(see \cite[Example 4.26]{Hatcher01(AT)}).
        \item 
        $\pi_n\left(\bbS^p\vee \bbS^q\right)\cong\pi_n(\bbS^p)\oplus\pi_n(\bbS^q)\oplus\pi_n(\bbS^{p+q-1})$ for $n<p+q+\min\{p,q\}-3$.
    \end{romanlist}
\end{proposition}

\begin{example}[$\bbS^1\times \bbS^m$ vs. $\bbS^1\vee \bbS^m$] \label{ex:prod-wedge-S1-Sn}
For $m\geq 1$, let $\bbS^m$ denote the $m$-sphere of radius $1$, equipped with the geodesic distance. We let $a_m:=\tfrac{1}{2}\arccos(-\tfrac{1}{m+1})> \tfrac{\pi}{4}$. It follows from Theorem 10 of \cite{lim2020vietorisrips} that $\left(\bbS^m\right)^{\epsilon}$ is homotopy equivalent to $\bbS^m$ for any $\epsilon\in \left(0,a_m\right)$ and the inclusion $\left(\bbS^m\right)^{\epsilon}\hookrightarrow \left(\bbS^m\right)^{\epsilon'}$ is a homotopy equivalence for $0<\epsilon\leq\epsilon' \leq a_m$. Thus, 
$$\left(\left|  \VR_{2\bullet}(\bbS^m)\right| \right)|_{(0,a_m)} \cong (\bbS^m)^{\bullet}|_{(0,a_m)}\cong \bbS^m(0,a_m),$$
where $|_{(0,a_m)}$ means restriction of functors to the interval $(0,a_m)$ and $\bbS^m(0,a_m)$ is an interval persistence module (see page \pageref{para:interval g.p.m.}). Since $a_m\leq a_1$, we have $\left(\left|  \VR_{2\bullet}(\bbS^1)\right| \right)|_{(0,a_m)} \cong \bbS^1(0,a_m)$, and thus, for each $n\geq 1$,
\begin{center}
\begin{tabular}{ c|c|c } 
 & $\ppi_n^{\VR}|_{(0,2a_m)}$ & $\pH_n^{\VR}|_{(0,2a_m)}$ \\[1ex]
\hline
\rule{0pt}{4ex} 
$\bbS^1\times \bbS^m$ & $\left(\pi_n(\bbS^1\times \bbS^m)\right) (0,2a_m)$ & $\left( \rmH_n(\bbS^1\times \bbS^m)\right) (0,2a_m)$\\ [1ex]
$\bbS^1\vee \bbS^m$ & $\left(\pi_n(\bbS^1\vee \bbS^m)\right) (0,2a_m)$ & $\left( \rmH_n(\bbS^1\vee \bbS^m)\right) (0,2a_m)$ \\ [1ex]
\end{tabular}.
\end{center}

When $n=m\geq 2$, we apply Proposition \ref{prop:homotopy_wedge} (i) to the above table to obtain
\begin{center}
\begin{tabular}{ c|c|c } 
 & $\ppi_m^{\VR}|_{(0,2a_m)}$ & $\pH_m^{\VR}|_{(0,2a_m)}$ \\[1ex]
\hline
\rule{0pt}{4ex} 
$\bbS^1\times \bbS^m$ & $\Z(0,2a_m)$ & $\Z(0,2a_m)$\\ [1ex]
$\bbS^1\vee \bbS^m$ & $(\Z[t,t^{-1}] )(0,2a_m)$ & $\Z(0,2a_m)$ \\ [1ex]
\end{tabular}.
\end{center}
Because the composition $\Z[t,t^{-1}]\to \Z\to \Z[t,t^{-1}]$ can never be $\Id_{\Z[t,t^{-1}]}$, the leftmost inequality below is true:  
$$\tfrac{1}{2}\cdot a_m\leq
\tfrac{1}{2}\cdot \di\left(\ppi_m^{\VR}\left(\bbS^1\vee\bbS^m\right),\ppi_m^{\VR}\left(\bbS^1\times\bbS^m\right)\right)
\leq \dgh\left(\bbS^1\vee\bbS^m,\bbS^1\times\bbS^m\right).$$
The rightmost inequality follows from Theorem \ref{thm:stab-ppi_n}. On the contrary, 
$$\tfrac{1}{2}\cdot\di\left(\pH_m^{\VR}\left(\bbS^1\vee\bbS^m\right),\pH_m^{\VR}\left(\bbS^1\times\bbS^m\right)\right)\leq 
\tfrac{1}{2}\cdot \tfrac{\pi-2a_m}{2}
<
\tfrac{1}{2}\cdot a_m.$$
Thus, in this example, the persistent homotopy contains richer information of spaces, compared to persistent homology.
%the restriction of persistent homology $\pH_m^{\VR}|_{(0,2a_m)}$ is not enough to distinguish $\bbS^1\vee\bbS^m$ and $\bbS^1\times\bbS^m$. The reason to work on the restriction instead of the whole persistent group is that homotopy groups $\pi_n\left(\left| \VR_{\epsilon}(\bbS^m)\right| \right)$ (or homology groups $\rmH_n\left(\left| \VR_{\epsilon}(\bbS^m)\right| \right)$) are not totally known when $\epsilon\geq a_m$. 

The case of $n=1$ was covered in Example \ref{ex:prod-wedge-S1-S1}, where we saw
\begin{center}
\begin{tabular}{ c|c|c } 
 & $\ppi_1^{\VR}$ & $\pH_1^{\VR}$ \\[1ex]
\hline
\rule{0pt}{4ex} 
$\bbS^1\times \bbS^1$ & $(\Z\times \Z )(0,\tfrac{2\pi}{3})$ & $(\Z\times \Z )(0,\tfrac{2\pi}{3})$\\ [1ex]
$\bbS^1\vee \bbS^1$ & $(\Z\ast \Z )(0,\tfrac{2\pi}{3})$ & $(\Z\times \Z )(0,\tfrac{2\pi}{3})$ \\ [1ex]
\end{tabular}.
\end{center}
\end{example}

\begin{example}[$\bbS^1\times \bbS^1$ vs. $\bbS^1\vee \bbS^1\vee \bbS^2$] \label{ex:torus-S1S1S2} By an argument similar to that of Example \ref{ex:prod-wedge-S1-Sn}, we obtain the following table where $a_2=\tfrac{1}{2}\arccos(-\tfrac{1}{3})$ and each $n\geq 1$:
\begin{center}
\begin{tabular}{ c|c|c } 
 & $\ppi_n^{\VR}|_{(0,2a_2)}$ & $\pH_n^{\VR}|_{(0,2a_2)}$ \\[1ex]
\hline
\rule{0pt}{4ex} 
$\bbS^1\times \bbS^1$ & $\left(\pi_n(\bbS^1\times \bbS^1)\right) (0,2a_2)$ & $\left( \rmH_n(\bbS^1\times \bbS^1)\right) (0,2a_2)$\\ [1ex]
$\bbS^1\vee \bbS^1\vee\bbS^2$ & $\left(\pi_n(\bbS^1\vee \bbS^1\vee\bbS^2)\right) (0,2a_2)$ & $\left( \rmH_n(\bbS^1\vee \bbS^1\vee\bbS^2)\right) (0,2a_2)$ \\ [1ex]
\end{tabular}.
\end{center}
Because $\bbS^1\times \bbS^1$ and $\bbS^1\vee \bbS^1\vee \bbS^2$ have isomorphic homology groups in all dimensions, their persistent homology restricted on the interval $(0,2a_2)$ are also isomorphic. However,
\[\ppi_1^{\VR}|_{(0,2a_2)}(\bbS^1\times \bbS^1)=(\Z\times\Z)(0,2a_2)  \text{ and } \ppi_1^{\VR}|_{(0,2a_2)}(\bbS^1\vee \bbS^1\vee \bbS^2)=(\Z\ast\Z)(0,2a_2) \]
are not isomorphic. Moreover, because the composition of group homomorphisms $\Z\ast \Z\to \Z\times\Z\to \Z\ast \Z$ can never be $\Id_{\Z\ast \Z}$, the leftmost inequality below is true:
$$\tfrac{1}{2}\cdot a_2\leq 
\tfrac{1}{2}\cdot \di\left(\ppi_1^{\VR}\left(\bbS^1\vee\bbS^1\vee\bbS^2 \right),\ppi_1^{\VR}\left(\bbS^1\times\bbS^1\right)\right)
\leq \dgh\left(\bbS^1\vee\bbS^1\vee\bbS^2,\bbS^1\times\bbS^1\right).$$
The rightmost inequality follows from Theorem \ref{thm:stab-ppi_n} with $n=1$.
\end{example}

In Example \ref{ex:prod-wedge-S1-S1}, Example \ref{ex:prod-wedge-S1-Sn} and Example \ref{ex:torus-S1S1S2}, we have seen how persistent fundamental groups help in distinguishing spaces. In the next example, we consider a pair of simply connected spaces in which case the persistent rational homotopy groups, a weaker invariant than persistent homotopy groups, still capture more information than the persistent homology groups with rational coefficients.

\begin{example}[$\bbS^m\times\bbS^m$ vs. $\bbS^m\vee\bbS^m\vee\bbS^{2m}$] 
\label{ex:rational_homotopy} Let $m\geq 2$.
Recall from Example \ref{ex:prod-wedge-S1-Sn} that for any $m$, $a_m=\tfrac{1}{2}\arccos(-\tfrac{1}{m+1})$ and $\left(\left|  \VR_{2\bullet}(\bbS^m)\right| \right)|_{(0,a_m)} \cong \bbS^m(0,a_m)$.
Since $a_{2m}\leq a_m$, we have $\left(\left|  \VR_{2\bullet}(\bbS^m)\right| \right)|_{(0,a_{2m})} \cong \bbS^m(0,a_{2m})$. Thus, for each $n\geq 1$, the persistent homology of the two spaces restricted on the interval $(0,2a_{2m})$ are isomorphic:
\[\pH_n^{\VR}(\bbS^m\times \bbS^m;\Q)|_{(0,2a_{2m})}\cong\pH_n^{\VR}(\bbS^m\vee\bbS^m\vee \bbS^{2m};\Q)|_{(0,2a_{2m})}.\]
But the persistent rational homotopy groups can tell the two spaces apart. 

Let $m\geq 3$ and $n=3m-1$. Then, $3m-1<m+2m+\min\{m,2m\}-3$, so it follows from Proposition \ref{prop:homotopy_wedge} (ii) that 
\[\pi_{3m-1}(\bbS^m\vee \bbS^{2m})=\pi_{3m-1}(\bbS^{m})\oplus\pi_{3m-1}(\bbS^{2m})\oplus\pi_{3m-1}(\bbS^{3m-1})=\Z.\]
Applying Proposition \ref{prop:homotopy_wedge} again, we see that $$\pi_{3m-1}(\bbS^m\vee\bbS^m\vee \bbS^{2m})\otimes \Q=\Q\oplus \pi_{3m-1}\left(\bbS^m\times(\bbS^m\vee \bbS^{2m}),\bbS^m\vee(\bbS^m\vee \bbS^{2m})\right)\otimes\Q$$ has rank at least $1$. Therefore, the restriction of persistent rational homotopy $\ppi_{3m-1}^{\VR}(\bbS^m\vee\bbS^m\vee \bbS^{2m})\otimes\Q|_{(0,2a_{2m})}$ is non-zero everywhere, whereas $\ppi_{3m-1}^{\VR}(\bbS^m\times \bbS^m)\otimes\Q|_{(0,2a_{2m})}=\bbmzero$. Combined with the stability of persistent rational homotopy groups, we obtain
$$\tfrac{1}{2}\cdot a_{2m}
\leq \dgh\left(\bbS^m\vee\bbS^m\vee \bbS^{2m},\bbS^m\times \bbS^m\right).$$

In the case of $m=2$, take $n=3$. It follows from $\pi_{3}(\bbS^2)=\Z$ and Proposition \ref{prop:homotopy_wedge} that $\pi_{3}(\bbS^2\vee\bbS^2\vee \bbS^{4})\otimes \Q$ has rank at least $1$, while $\pi_3(\bbS^2\times\bbS^2)=0$. Applying a similar argument as before, we conclude that $\tfrac{1}{2}\cdot a_{4}
\leq \dgh\left(\bbS^2\vee\bbS^2\vee \bbS^{4},\bbS^2\times \bbS^2\right).$
\end{example}

\subsection{A finiteness theorem for \texorpdfstring{$\pi_1(\bullet)$}{pi1} %and \texorpdfstring{$\ppi_1(\bullet)$}{ppi1}
} 
\label{sec:finiteness}

In this section, we establish a finiteness theorem for $\pi_1(\bullet)$ as an application of our stability result for $\ppi_1(\bullet)$ (cf. Theorem \ref{thm:stab-di-ppi_1}).

Finiteness theorems are rooted in the classification problem for manifolds. By considering various bounds on Riemannian invariants, such as curvature, diameter, volume, etc., one tries to prove that there are only finitely many diffeomorphism/homeomorphism/homotopy types of Riemannian manifolds satisfying such restrictions. We refer the readers to Berger's exposition in \textsection 12.4 of \cite{berger2003panoramic} for a detailed overview.

For example, in 1967, Weinstein showed that the class of even-dimensional Riemannian manifolds with both positive lower and upper bounds on their sectional curvature has finitely many homotopy types \cite{weinstein1967homotopy}. Around the same time, Cheeger established a finiteness theorem for the diffeomorphism types of manifolds satisfying uniform lower and upper bounds on sectional curvature, a lower bound on volume and an upper bound on their diameter \cite{cheeger1970finiteness}. In 1984, Peters reproved Cheeger's finiteness theorem via Gromov's compactness theorem, see \cite{Peters+1984+77+82}. In \cite{grove1988bounding} Grove and Petersen removed the upper bound for sectional curvature in Cheeger's theorem for the case of homotopy types, and in \cite{grove1990geometric} together with Wu improved their results to homomorphism types and diffeomorphism types with minor restrictions on the dimension of the manifolds.

In \cite{yamaguchi1988homotopy}, Yamaguchi proved that any precompact family of Riemannian manifolds with a uniform lower bound on their contractibility radii has finitely many homotopy types, where the contractibility radius of a manifold is the supremum of $r$ such that every metric ball of radius $r$ contains no critical points of the distance function from the center of the ball. Petersen generalized this result in \cite{peter1990finiteness} to the setting of compact metric spaces using a finer notion of local contractibility, called local geometric $n$-connectedness (see Definition \ref{def:lgc} for the case $n=1$).

In \cite{gromov1999metric}, Gromov proved that the fundamental group of a compact Riemannian manifold $M$ has a set $\{g_i\}_i$ of generators each with length at most twice of $\diam(M)$ and the relations are of the form $g_ig_j=g_k$. 
A fundamental group finiteness theorem follows immediately when considering any family of spaces with a global bound $N$ such that for any space $M$ in the family $\pi_1(M)$ can be generated by at most $N$ many loops each with length at most twice of $\diam(M).$ In this case, $\pi_1(M)$ has at most $N$ generators and $N^3$ relations, so there are no more than $2^{N^3}$ many choices for $\pi_1(M)$.

In \cite{plaut2013discrete}, via discrete homotopy methods, Plaut and Wilkins generalized Gromov's theorem by (1) proving a result which is applicable to semi-locally simply connected (s.l.s.c.) geodesic spaces (a far more general setting than Riemannian manifolds); (2) providing an explicit upper bound on the number of generators for the fundamental groups (see Eq. (\ref{eq:num of g_i})). 

In this section, we continue this line of work and establish (see Proposition \ref{prop:finiteness_pi1})  a finiteness result for the fundamental groups of spaces satisfying an $\operatorname{LGC_1}(\rho,R)$ condition for some geometric contractibility function $\rho$. This condition is slightly stronger than s.l.s.c. and permits proving finer estimates on the number of isomorphism classes of fundamental groups, cf. Example \ref{ex:finiteness}.
This finiteness result follows  by combining the stability property of $\ppi_1(\bullet)$ (see Theorem \ref{thm:stab-di-ppi_1} and Lemma \ref{lem:same_pi_1}) with a  result on the covering number of Gromov-Hausdorff precompact classes (cf. Lemma \ref{lem:finiteness}).

\paragraph*{Finiteness result for $\pi_1(\bullet)$% and $\ppi_1(\bullet)$
.} 
Recall from Definition \ref{def:lgc} that a metric space $X$ is said to be $\operatorname{LGC_1}(\rho,R)$ for some $R\geq 0$ and some non-decreasing function $\rho:[0,R]\to [0,\infty)$, if for all $x\in X$ and $r\in [0,R]$ the ball $B(x,r)$ is $0$-connected and $1$-connected inside $B(x,\rho(r))$. 
Here $R\geq 0$ and $\rho:[0,R]\to [0,\infty)$ is a function (not necessarily continuous) such that $\rho(\epsilon)\geq \epsilon$ for all $\epsilon$ and $\rho(\epsilon)\to 0$ as $\epsilon\to 0$.

Recall that a family $\mathcal{F}$ of compact metric spaces is said to be \emph{precompact} (with respect to the Gromov-Hausdorff distance) if, for any $\epsilon>0$, $\mathcal{F}$ can be covered by finitely many open $\epsilon$-balls.
Given $\epsilon>0$ and a compact metric space $X$, let $\mathrm{C}(X,\epsilon)$ be the minimal number of $\epsilon$-balls that cover $X$. 
It follows from Proposition 5.2 of \cite{gromov1999metric} that for a precompact family $\mathcal{F}$ of compact metric spaces and any $\epsilon>0$, $$\mathrm{C}_\mathcal{F}(\epsilon):=\sup_{X\in \mathcal{F}}\mathrm{C}(X,\epsilon)$$ is finite, i.e. every space in the family is covered by at most $\mathrm{C}_\mathcal{F}(\epsilon)$ $\epsilon$-balls. 
Let 
\begin{itemize}
\item $\epsilon_\rho \left(\leq \tfrac{R}{2}\right)$ be the largest $\epsilon>0$ such that $\rho(2\epsilon)\leq R$, i.e.
$$\epsilon_\rho:=\sup\{\epsilon \in[0,R]|\,\rho(2\epsilon)\leq R\}.
$$ %Necessarily, $\epsilon_\rho\leq \tfrac{R}{2}$.
\item $G:\N\rightarrow \N$ be the function given by $$G(n):=n^{\frac{n(n-1)}{2}+1}.$$ 
\end{itemize}

We establish the following finiteness result for $\pi_1(\bullet)$.

\begin{proposition}[Finiteness result for $\pi_1(\bullet)$]\label{prop:finiteness_pi1}
Let $\mathcal{F}$ be a precompact family of compact geodesic $\operatorname{LGC_1}(\rho,R)$ metric spaces. Define an equivalence relation on $\mathcal{F}$ via $X\approx Y$ iff  $\pi_1(X)\cong \pi_1(Y)$. 
Then, 
\begin{equation}\label{eq:our bound}
\card\left(\mathcal{F}/\approx\right)\leq G\left(\mathrm{C}_\mathcal{F}\left(\tfrac{\epsilon_\rho}{12}\right)\right).
\end{equation}
\end{proposition}

Proposition \ref{prop:finiteness_pi1} provides an explicit bound on the number of different isomorphism classes of fundamental groups. Let us compare this result with what can be derived from Theorem 1 of \cite{plaut2013discrete}, i.e. Eq. (\ref{eq:plaut bound}) in the remark below. 

\begin{remark}[Plaut and Wilkins' finiteness result for $\pi_1(\bullet)$]\label{rmk:finiteness_pi1}
Let $\mathcal{F}$ be as in Proposition \ref{prop:finiteness_pi1}. By Theorem 1 of \cite{plaut2013discrete},
\begin{equation}\label{eq:plaut bound}
\card\left(\mathcal{F}/\approx\right)\leq 2^{\left(\left(\frac{2D}{\epsilon_\rho}+8\right)\mathrm{C}_\mathcal{F}\left(\epsilon_\rho\right)^{\frac{2D}{\epsilon_\rho}+8}\right)^3}. %> 2^{8^3m^{24}}.
\end{equation}
%where $ m:=\mathrm{C}_\mathcal{F}\left(\epsilon_\rho\right)$ and $\delta:=\frac{2D}{\epsilon_\rho}+8$.

Eq. (\ref{eq:plaut bound}) holds because of the following. First, there is clearly a uniform upper bound on the diameters of spaces in $\mathcal{F}$, which we denote by $D$. For any $X\in \mathcal{F}$, let $\sigma(X)$ be the infimum of lengths of non null-homotopic closed geodesics in $X$, and recall from Theorem 1 of \cite{plaut2013discrete} that $\pi_1(X)$ has a set $\{g_i\}_i$ of generators with at most 
\begin{equation}\label{eq:num of g_i}
    \left(\frac{8D}{\sigma(X)}+1\right)\mathrm{C}\left(X,\frac{\sigma(X)}{4}\right)^{\frac{8D}{\sigma(X)}+1}
\end{equation}
elements with relations of the form $g_ig_j=g_k$. 

We claim that for any non-simply-connected $X\in \mathcal{F}$, 
\[\sigma(X)\geq 4\epsilon_\rho.\]
Indeed, 
because there is a non null-homotopic loop $\gamma$ in $X$ with length $\sigma(X)>0$, for arbitrary $t>0$, we can find open balls of radius $\frac{\sigma(X)}{2}+t$ (e.g. take such balls to be centered at points in $\gamma$) which are not $1$-connected in $X$ and thus are not $1$-connected in any $\rho\big(\frac{\sigma(X)}{2}+t\big)$-ball. %Since $X$ is $\operatorname{LGC_1}(\rho,R)$, if $\frac{\sigma(X)}{2}+t\leq R$ then $\frac{\sigma(X)}{2}+t$-balls are $1$-connected in some $\rho\big(\frac{\sigma(X)}{2}+t\big)$-balls, which gives a contradiction. 
So we have
$\frac{\sigma(X)}{2}+t>R\geq 2\epsilon_\rho$ for any $t>0$. Thus, $\sigma(X)\geq 4\epsilon_\rho.$ %for any $X\in \mathcal{F}$. 

Since Eq. (\ref{eq:num of g_i}) is non-increasing with respect to $\sigma(X)$ and $\sigma(X)\geq 4\epsilon_\rho$, Eq. (\ref{eq:num of g_i}) is upper bounded by
\[N_{\mathcal{F}}:=\left(\frac{2D}{\epsilon_\rho}+8\right)\cdot \mathrm{C}_\mathcal{F}\,\left(\epsilon_\rho\right)^{\frac{2D}{\epsilon_\rho}+8},\]
Since $\card(\mathcal{F}/\approx)\leq 2^{(N_{\mathcal{F}})^3}$, this implies Eq. (\ref{eq:plaut bound}).
\end{remark}

\medskip

We now consider a case when our bound (i.e. Eq. (\ref{eq:our bound})) is tighter than Eq. (\ref{eq:plaut bound}).

\begin{example} \label{ex:finiteness} 
Consider the family $\mathcal{F}$ in Proposition \ref{prop:finiteness_pi1} to be a precompact family of compact $\operatorname{LGC_1}(\rho,R)$ $d$-dimensional Riemannian manifolds with non-negative Ricci curvature.  %and such that all manifolds in $\mathcal{F}$ are  spaces for some geometric contractibility function $\rho$ and $R>0$.
We now see that this family $\mathcal{F}$ is one for which our upper bound is (asymptotically) smaller than the upper bound obtained via Theorem 1 from \cite{plaut2013discrete}. 

\medskip
%\facundo{\sout{why are we talking about 'methods' all of a sudden? 'Method' is different from 'bound'. Please be precise}}
Let $0<\epsilon\ll \min\{\epsilon_\rho,1\}$. Because the right-hand side of Eq. (\ref{eq:our bound}) and is non-increasing with respect to $\epsilon_\rho$, replacing $\epsilon_\rho$ with $\epsilon$ gives us the following:
\[\card\left(\mathcal{F}/\approx\right)\leq G\left(\mathrm{C}_\mathcal{F}\left(\tfrac{\epsilon}{12}\right)\right)= 2^{\frac{n^2+n}{2}\log_2 n}=:A, \]
where $n:=\mathrm{C}_\mathcal{F}\left(\frac{\epsilon}{12}\right)$. By a similar reason, Eq. (\ref{eq:plaut bound}) implies that 
\[
\card\left(\mathcal{F}/\approx\right)\leq 2^{m^{3\delta}\delta^3}=:B,
\]
where $ m:=\mathrm{C}_\mathcal{F}\left(\epsilon\right)$ and $\delta:=\frac{2D}{\epsilon}+8$. Here $D<\infty$ is a uniform upper bound on the diameter of spaces in $\mathcal{F}$.

For the quantities $n,m$ and $\delta$, we have the following asymptotic estimates since $0<\epsilon\ll 1$:
\[n=\mathrm{C}_\mathcal{F}\left(\frac{\epsilon}{12}\right)\sim \epsilon^{-d}\sim \mathrm{C}_\mathcal{F}\left(\epsilon\right)=m\,\,\text{  and  }\,\,\delta=\frac{2D}{\epsilon}+8 \sim \epsilon^{-1}.\]
Applying the above to $A$ and $B$ respectively, we obtain
\[A= 2^{\frac{n^2+n}{2}\log_2 n} < 2^{n^{3}}
\,\ll\,   
2^{n^{3\delta}\delta^3}\sim 2^{m^{3\delta}\delta^3} = B.\]
Therefore, our upper bound, $A$, for $\card(\mathcal{F}/\approx)$ is (asymptotically) smaller than $B$, the one derived from Theorem 1 from \cite{plaut2013discrete}.
\end{example}

\paragraph*{The proof of Proposition \ref{prop:finiteness_pi1}.}
Below, we state a lemma (Lemma \ref{lem:finiteness}) that has been implicitly used in several articles, such as \cite{yamaguchi1988homotopy,peter1990finiteness}. Since we were not able to find a complete proof of this lemma in the literature, we provide one in \textsection \ref{app}.

\begin{lemma}\label{lem:finiteness}
 Let $\mathcal{F}$ be a precompact family of connected compact metric spaces. Then, for any $\epsilon>0$, $(\mathcal{F},\dgh)$ can be covered by $G\left(\mathrm{C}_\mathcal{F}\left(\tfrac{\epsilon}{3 }\right)\right)$ many closed $\epsilon$-balls. 
\end{lemma}
 
Proposition \ref{prop:finiteness_pi1} follows from Lemma \ref{lem:finiteness} and Lemma \ref{lem:same_pi_1} below.

\begin{lemma}\label{lem:same_pi_1}
 Let the metric spaces $X$ and $Y$ be geodesic and $\operatorname{LGC_1}(\rho,R)$. 
 If $\dgh(X,Y)<\tfrac{1}{4}\epsilon_\rho$, then $X$ and $Y$ have isomorphic fundamental groups.
\end{lemma}

\begin{remark}
It follows from the theorem on page 392 of \cite{peter1990finiteness} that if $\dgh(X,Y)<\epsilon$ for $\epsilon>0$ s.t. $\rho_1(18\epsilon+8\rho_1(4\epsilon))<R$, then $X$ and $Y$ are homotopy equivalent. This does not imply Lemma \ref{lem:same_pi_1}, where $\epsilon$ is only required to satisfy the weaker condition that $\rho_1(8\epsilon)<R$. 

This condition is indeed less stringent than $\rho_1(18\epsilon + 8 \rho_1(\epsilon))<R$ since $\rho_1$ is non-decreasing: for any $\epsilon$ such that $\rho_1(18\epsilon + 8 \rho_1(\epsilon))<R$ we also have $\rho_1(8\epsilon)\leq \rho_1(18\epsilon )\leq \rho_1(18\epsilon + 8 \rho_1(\epsilon))<R$.
For example, if $\rho:[0,R]\to [0,\infty)$ is given by $\rho(\epsilon)=C\cdot\epsilon$ for some constant $C>0$, then $\rho_1(\epsilon)=(C+1)\epsilon$, cf. page 389 of \cite{peter1990finiteness}. In this case, $\rho_1(8\epsilon)<R$ iff $\epsilon<\frac{R}{8(C+1)}$, while $\rho_1(18\epsilon + 8 \rho_1(\epsilon))<R$ iff $\epsilon<\frac{R}{(8C+26)(C+1)}\left(<\frac{1}{3}\cdot\frac{R}{8(C+1)}\right)$.
\end{remark}

\begin{myproof}{Lemma \ref{lem:same_pi_1}}
Assume $\pi_1(X)$ is not isomorphic to $\pi_1(Y)$. We first point out that
\[\epsilon_\rho\leq  \min\{\operatorname{sep}(\ppi_1(X)),\operatorname{sep}(\ppi_1(Y))\}\leq 2\cdot \di (\ppi_1(X),\ppi_1(Y)),\]
where the leftmost inequality follows from Theorem \ref{thm:discretization} and the rightmost inequality follows from Proposition \ref{prop:sep_of_dendrogram}. Combined with Theorem \ref{thm:stab-di-ppi_1} the stability of persistent fundamental groups, which will be proved in \S \ref{sec:stability-ppi_n}, we have
\[\tfrac{1}{4} \epsilon_\rho\leq  \tfrac{1}{2}\cdot\di (\ppi_1(X),\ppi_1(Y))\leq \dgh(X,Y)< \tfrac{1}{4}\epsilon_\rho,\]
which gives a contradiction.
\end{myproof}

\subsection{Second proof of stability of \texorpdfstring{$\ppi_1(\bullet)$}{persistent fundamental groups}}\label{sec:stability-second-proof}	
In this section we provide an alternative proof of Theorem \ref{thm:stab-di-ppi_1} inspired by Wilkins' work \cite{wilkins2011discrete}. This alternative proof is more constructive and independent of the notion of persistent $\rmK$-fundamental groups. Let us recall the stability theorem here:
\stabofppi*

Based on Lemma 6.3.3 of \cite{wilkins2011discrete}, we first introduce a way to construct maps between discrete fundamental groups and prove Lemma \ref{lem:epsi-del-homo}. 

\begin{definition}[$(\epsilon,\delta)$-homomorphisms] \label{def:epsilon homomorphism by a tripod} 
Let $X$ and $Y$ be compact metric spaces with basepoints $x_0$ and $y_0$, respectively. Let $$R:X\xtwoheadleftarrow{\phi_X}Z\xtwoheadrightarrow{\phi_Y}Y$$ be a pointed tripod (i.e., there exists $z_0\in Z$ such that $\phi_X(z_0)=x_0$ and $\phi_Y(z_0)=y_0$) with $\dis(R)\leq \epsilon$. Since $\phi_Y$ is surjective, for each $\delta>0$ and each $\delta$-loop $\beta:[n]\to Y$, there is a discrete loop $\gamma_\beta$ in $Z$ such that $\phi_Y\circ\gamma_\beta=\beta$ and $\gamma_\beta(0)=\gamma_\beta(n)=z_0$, as expressed by the following diagram:
	\begin{center}
		\begin{tikzcd} 
			&Z
			\ar[dl,"\phi_X" above left,twoheadrightarrow ]
			\ar[dr,"\phi_Y",twoheadrightarrow]
			&
			\\
			X&
		    {[n]}
			\ar[u,"\exists \gamma_\beta" right]
			\ar[l, dashrightarrow ]
			\ar[r,"\beta" below] 
			& 
			Y
		\end{tikzcd}
	\end{center}
Using the above, we define a map $$\psi_{\delta}^{\delta+\epsilon}:\pi_1^{\delta}(Y,y_0)\rightarrow\pi_1^{\epsilon+\delta}(X,x_0)\text{ such that } [\beta]_{\delta}\mapsto [\phi_Y\circ\gamma_\beta]_{\epsilon+\delta},$$
and similarly a map
$$\phi_{\delta}^{\delta+\epsilon}:\pi_1^{\delta}(Y,y_0)\rightarrow\pi_1^{\epsilon+\delta}(X,x_0)\text{ such that } [\alpha]_{\delta}\mapsto [\phi_X\circ\gamma_\alpha]_{\epsilon+\delta}.$$ 
We call the maps $\psi_{\delta}^{\delta+\epsilon}$ and $\phi_{\delta}^{\delta+\epsilon}$ the \emph{$(\epsilon,\delta)$-homomorphisms induced by the tripod $R$}.
\end{definition}

In \cite{wilkins2011discrete}, Wilkins' construction of group homomorphisms between discrete fundamental groups is induced by some isometric embeddings of $X$ and $Y$ into a common metric space $Z$, and is applicable to any $\epsilon>2 \dhaus^Z(X,Y)$. Notice that given a tripod $R$ between metric spaces $X$ and $Y$, $R$ induces a metric space $Z_R$ which $X$ and $Y$ are isometrically embedded into, with $\dhaus^{Z_R}(X,Y)=2\dis(R)$. We will prove in the next lemma that Definition \ref{def:epsilon homomorphism by a tripod} defines group homomorphisms for all $\epsilon> 2\dis(R)=\dhaus^{Z_R}(X,Y)$, which is an improvement over Wilkins' construction.

\begin{lemma}\label{lem:epsi-del-homo} 
With the notation from Definition \ref{def:epsilon homomorphism by a tripod}, the $(\epsilon,\delta)$-homomorphisms induced by a tripod $R$: \[\psi_{\delta}^{\delta+\epsilon}:\pi_1^{\delta}(Y,y_0)\rightarrow \pi_1^{\delta+\epsilon}(X,x_0)\text{ and }\phi_{\delta}^{\delta+\epsilon}:\pi_1^{\delta}(X,x_0)\rightarrow \pi_1^{\delta+\epsilon}(Y,y_0)\] are well-defined group homomorphisms. Furthermore, $\ppi_1(X)$ and $\ppi_1(Y)$ are $\epsilon$-interleaved via $\psi_{\epsilon}:=\{\psi_{\delta}^{\delta+\epsilon}\}_{\delta>0}$ and $\phi_{\epsilon}:=\{\psi_{\delta}^{\delta+\epsilon}\}_{\delta>0}$.
\end{lemma}
	
\begin{proof} Fix $\delta>0$. For a $\delta$-loop $\beta$ in $Y$, since $\phi_Y$ is surjective, let $\gamma_\beta$ be a discrete loop in $Z$ such that $\phi_Y(\gamma_\beta)=\beta$ and $\gamma_\beta(0)=\gamma_\beta(n)=z_0$. Because $(\phi_X(\gamma_\beta),\beta)\in R_{\caL}$ and by Lemma \ref{lem:tripod}, $\phi_X(\gamma_\beta)$ is an $(\epsilon+\delta)$-loop in $X$. So $\psi_{\delta}^{\delta+\epsilon}$ maps to $\pi_1^{\epsilon+\delta}(X,x_0).$

\begin{claim} $\psi_{\delta}^{\delta+\epsilon}$ is independent of the choice of $\gamma_\beta$.\end{claim} 

Suppose $\beta=y_0y_1\cdots y_n$ with $y_n=y_0$ is a $\delta$-loop in $Y$. Suppose that $\gamma_\beta=z_0\cdots z_n$ and $\gamma_\beta'=z'_0\cdots z'_n $, where $z_n=z'_{n}=z'_0=z_0$ the basepoint of $Z$. For each $i\in [n]$, as $y_i=\phi_Y(z_i)=\phi_Y(z_i'),$ we have
	$$d_X(\phi_X(z_i),\phi_X(z_i'))\leq d_{Y}(y_i,y_i)+\dis(R)\leq \epsilon,$$
and for each $i=0,\cdots,n-1$, we have
	$$d_X(\phi_X(z_i),\phi_X(z_{i+1}'))\leq d_{Y}(y_i,y_{i+1})+\dis(R)\leq\delta+\epsilon.$$
Thus, inserting $\phi_X(z_i')$ between $\phi_X(z_{i-1})$ and $\phi_X(z_i)$ is a basic move up to $(\delta+\epsilon)$-homotopy, for all $i=1,\cdots,n$. This results into the following $(\delta+\epsilon)$-chain in $X$:
$$\phi_X(z_0)\phi_X(z_1')\phi_X(z_1)\cdots \phi_X(z_n')\phi_X(z_n).$$
By sequentially removing $\phi_X(z_i)$ for each $i=1,\cdots,n$, we obtain a $(\delta+\epsilon)$-homotopy from $\phi_X(\gamma_{\beta})$ to $\phi_X(\gamma_{\beta}')$. Hence, $[\phi_X(\gamma_{\beta})]_{\delta+\epsilon}=[\phi_X(\gamma_{\beta}')]_{\delta+\epsilon}$. 
		
\begin{claim} $\psi_{\delta}^{\delta+\epsilon}$ is independent of the choice of $\beta$.\end{claim} 

Suppose $\beta\sim_1^{\delta}\beta'$ in $Y$, where $\beta=y_0y_1\cdots y_n$ and $\beta'=y_0'y_1'\cdots y_m'$ with $y_n=y_m'=y_0'=y_0$. Then, there is a $\delta$-homotopy in $Y$:     $$H_Y=\left\{\beta=\beta_0,\beta_1,\cdots,\beta_{k-1},\beta_k=\beta'\right\}$$ 
such that each $\beta_j$ differs from $\beta_{j-1}$ by a basic move. For each $\beta_j$, let $\gamma_{\beta_j}$ be a discrete loop in $Z$ such that $\phi_Y(\gamma_{\beta_j})=\beta_j$ and $\gamma_{\beta_j}(0)=\gamma_{\beta_j}(n)=z_0$. Then
	$$H_X=\left\{\phi_X(\gamma_{\beta_0}),\phi_X(\gamma_{\beta_1}),\cdots,\phi_X(\gamma_{\beta_{k-1}}),\phi_X(\gamma_{\beta_k})\right\}$$ 
is an $(\epsilon+\delta)$-homotopy between $\phi_X(\gamma_{\beta})$ and $\phi_X(\gamma_{\beta'})$. Therefore, \[\psi_{\delta}^{\delta+\epsilon}([\beta]_{\delta})= [\phi_X(\gamma_{\beta})]_{\epsilon+\delta}=[\phi_X(\gamma_{\beta'})]_{\epsilon+\delta}=\psi_{\delta}^{\delta+\epsilon}([\beta']_{\delta}).\]
		
\begin{claim} $\psi_{\delta}^{\delta+\epsilon}$ is a group homomorphism.\end{claim} 

Take two $\delta$-loops $\beta$ and $\beta'$ in $Y$, and choose $\gamma_\beta$ and $\gamma_{\beta'}$ so that $\psi_{\delta}^{\delta+\epsilon}([\beta]_{\delta})=[\phi_X(\gamma_{\beta})]_{\epsilon+\delta}$ and $\psi_{\delta}^{\delta+\epsilon}([\beta']_{\delta})=[\phi_X(\gamma_{\beta'})]_{\epsilon+\delta}$ as before. It follows from $\phi_X(\gamma_{\beta}\ast \gamma_{\beta'})=\phi_X(\gamma_{\beta})\ast \phi_X(\gamma_{\beta'})$ that $\left(\phi_X(\gamma_{\beta})\ast \phi_X(\gamma_{\beta'}),\beta\ast \beta'\right)\in R_{\caL}$. Thus,
	$$\psi_{\delta}^{\delta+\epsilon}([\beta\ast\beta']_{\delta})=[\phi_X(\gamma_{\beta})\ast \phi_X(\gamma_{\beta'})]_{\epsilon+\delta}=\psi_{\delta}^{\delta+\epsilon}([\beta]_{\delta})\psi_{\delta}^{\delta+\epsilon}([\beta']_{\delta}).$$
		
\begin{claim} $\ppi_1(X)$ and $\ppi_1(Y)$ are $\epsilon$-interleaved via $\psi_{\epsilon}$ and $\phi_{\epsilon}$.\end{claim} 

Note that the following diagram commutes for all $\delta\leq \delta'$:
		\begin{center}
			\begin{tikzcd} 
				\ppi_1^{\delta}(Y) \ar[r, "\psi_{\epsilon}"] 
				\ar[d, "\Phi^Y_{\delta,\delta'}" left]
				& 
				\ppi_1^{\delta+\epsilon}(X) \ar[d, "\Phi^X_{\delta+\epsilon,\delta'+\epsilon}"]
				\\
				\ppi_1^{\delta'}(Y)\ar[r ,"\psi_{\epsilon}" below]
				& 
				\ppi_1^{\delta'+\epsilon}(X).
			\end{tikzcd}
\end{center} Indeed, given a $\delta$-loop $\beta$ in $Y$, let $\gamma_\beta$ be a discrete loop in $Z$ chosen as before. Then,
\begin{align*}\psi_{\epsilon}\circ\Phi^Y_{\delta,\delta'}([\beta]_{\delta})&=\psi_{\epsilon}([\beta]_{\delta'})=[\phi_X(\gamma_\beta)]_{\delta'+\epsilon},\\
\Phi^X_{\delta+\epsilon,\delta'+\epsilon}\circ \psi_\epsilon([\beta]_{\delta})&=\Phi^X_{\delta+\epsilon,\delta'+\epsilon}([\phi_X(\gamma_\beta)]_{\delta+\epsilon})=[\phi_X(\gamma_\beta)]_{\delta'+\epsilon}.
\end{align*}
Therefore, $\psi_{\epsilon}$ defines a homomorphism of degree $\epsilon$ from $\ppi_1(Y)$ to $\ppi_1(X))$, and similarly $\phi_{\epsilon}$ defines a homomorphism of degree $\epsilon$ from $\ppi_1(X)$ to $\ppi_1(Y)$. Also, by directly checking the relevant definitions, it is not hard to see that $\psi_{\epsilon}\circ\phi_{\epsilon}=1_{\ppi_1(X)}^{2\epsilon}$ and $\phi_{\epsilon}\circ\psi_{\epsilon}=1_{\ppi_1(Y)}^{2\epsilon}.$ 
\end{proof}	

It is clear that Lemma \ref{lem:epsi-del-homo} immediately implies Theorem \ref{thm:stab-di-ppi_1}.

\section{Discussion and Open Problems}

\paragraph{Computability questions.} Due to Theorem \ref{thm:iso of d.f.g}, computing discrete fundamental groups is equivalent to computing edge-path groups on clique complexes. According to Theorem 4.6 and Theorem 5.16 of \cite{lupton2019digital}  all finitely presented groups   %\cite{lupton2019digital}
can arise as edge-path groups of (finite) clique complexes. Since the \emph{word problem} is undecidable even for finitely presented groups \cite{novikov1955algorithmic,boone1959word} (the problem of deciding whether an element is trivial in a group), computing persistent/standard fundamental groups is also expected to be difficult. 

In terms of existing work related to the computation of fundamental groups, effort has been put into computing presentations \cite{brendel2015computing} (in the standard case) or invariants \cite{Letscher:2012:PHK:2090236.2090270} (in the persistent setting) for fundamental groups. Another thread is to restrict to certain families of spaces with fundamental groups admitting decidable word problems, such as compact 3-manifolds whose fundamental groups are finitely presented residually finite groups which have a decidable word problem (see page 31 of \cite{miller1992decision}). Computing persistent fundamental groups of such spaces is then an interesting and possibly manageable question.

\paragraph{Discrete versions of higher persistent homotopy groups.}
Because the persistent fundamental group $\ppi_1(\bullet)$ agrees with the persistent $\rmK$-homotopy groups $\ppi_n^{\rmK}(\bullet)$ (thus, also with persistent $\VR$-homotopy groups) for $n=1$, we can regard $\ppi_n^{\rmK}(\bullet)$ as generalizations of $\ppi_1(\bullet)$ to higher dimensions, cf. Theorem \ref{thm:iso-ppi_1's}.  
Since our notion of $\ppi_1(\bullet)$ arose from discrete homotopy theory ideas, it is natural to try to lift higher-dimensional the discrete homotopy groups constructed by Barcelo et al. \cite{barcelo2005perspectives} to a persistent version and see how they relate to $\ppi_n^{\rmK}(\bullet)$ or other choices of persistent homotopy groups arising from topological filtrations. %One idea in this direction would be to invoke the construction of the cubical homotopy theory (originally called A-theory) on graphs developed by Barcelo and Laubenbacher \cite{barcelo2005perspectives}.

\paragraph{Vietoris-Rips filtration v.s. Kuratowski filtration under closed convention.} In Corollary \ref{cor:iso-ppi_n}, we saw that $\ppi_n^{\rmK}(\bullet)$ and $\ppi_n^{\VR}(\bullet)$ are isomorphic when considering the open convention. Notice that the closed convention can be more informative than the open one, such as in the case of the unit geodesic circle \cite{AdamA15}.  
It is then natural to ask whether the isomorphism holds for the closed convention in general, and a further question is whether for non-finite metric spaces the closed Vietoris-Rips filtration and the closed Kuratowski filtration are homotopy equivalent.

\paragraph{Rationalization of persistent homotopy groups.} We considered the rationalization of persistent homotopy groups  as a weaker but %\f{\sout{computable (careful: do we feel this is sufficiently true?)}} 
more manageable alternative to persistent homotopy groups. We proved the stability for all persistent (rational) homotopy groups, and established some geometric applications of persistent fundamental groups, e.g. the finiteness result in Proposition \ref{prop:finiteness_pi1}.

\paragraph{Tree structure on higher persistent homotopy groups.}
We have generalized the tree-structure in the $0$-th persistent homotopy to dimension $1$. How about higher dimensional persistent homotopy groups?
%One can certainly construct a pseudo-ultra-metric on $\ppi_n^{\VR}$ similarly to Proposition \ref{prop:metric mu^1} by defining the distance between two  
Via a similar strategy, the first step would be to identify conditions on the metric space $X$ such that the transition maps in its persistent homotopy groups are surjective, from which we will obtain a treegram structure. To reach a dendrogram structure, in particular asking for a dendrogram over $\pi_n(X)$, further constraints on $X$ should be identified so that all homotopy classes in $\VR_\epsilon(X)$ (or some other alternative geometric filtration) are induced by some homotopy classes in $X$.

\appendix

% Appendices should be used only when absolutely necessary. They
% should come before the Acknowledgment. If there is more than one
% appendix, number them alphabetically. Number displayed equations
% in the way, e.g.~(\ref{that}), (A.2), etc.
% \begin{equation}
% f(j\delta, i\delta) \cong \tfrac{\pi}{M} \sum^M_{n=1}
% Q_{\theta_n} (j \cos \theta_n + i \sin \theta_n)\,. \label{that}
% \end{equation}

\section*{Acknowledgements}
We thank Prof. C. Plaut for bringing to our attention his results \cite{plaut2009equivalent} on connecting the discrete fundamental group construction in \cite{wilkins2011discrete} to the Vietoris-Rips complex (cf. Theorem \ref{thm:iso-ppi_1's}). We thank Prof. R. Jardine for sharing his work on persistent fundamental groupoids and Gunnar Carlsson for his suggestion that we consider (persistent) rational homotopy groups. We also thank Dr. S. Lim for helping us with the proof of Lemma \ref{lem:finiteness}.

This research was supported by NSF under grants DMS-1723003, CCF-1740761, and CCF-1526513.

\section{Proof of Lemma \ref{lem:finiteness}}
\label{app}

\begin{myproof}{Lemma \ref{lem:finiteness}} The proof is modeled from Proposition 5.2 of \cite{gromov1999metric}.\footnote{This particular proof strategy was suggested by Dr. Sunhyuk Lim.}  

Let $\epsilon':=\tfrac{\epsilon}{3 }$, $C:=\mathrm{C}_\mathcal{F}\left(\epsilon'\right)$ and $m\in\{1,\dots,C\}$. Let $\mathcal{F}_m$ be the subfamily of $\mathcal{F}$ consisting of spaces $X\in \mathcal{F}$ such that $\mathrm{C}(X,\epsilon')=m.$
For any $X\in \mathcal{F}_m$, let $P_X\subset X$ be an $\epsilon'$-net of $X$ with $\card(P_X)=m$. \\

\noindent \textit{Claim 1:} for any $x,x'\in P_X$, $d_X(x,x')\in \left[0,2\epsilon'm\right]$. Thus, $P_X$ induces a vector $v_X\in \left[0,2\epsilon'm\right]^{\frac{m^2-m}{2}}$.  

Because $X$ is connected, for any $x\in P_X$, the ball $B(x,\epsilon)$ intersects with some other ball $B(x',\epsilon)$ for $x'\in P_X$. In this case, $d_X(x,x')<2\epsilon$. Thus, for arbitrary $x,x'\in P_X$, there is a sequence of distinct $\epsilon$-balls centered at points in $P_X$ that connects $B(x,\epsilon)$ and $B(x',\epsilon)$. It follows that $d_X(x,x')<2\epsilon m.$
Fix an arbitrary ordering on elements in $P_X$, and define a vector $v_X\in \left[0,2\epsilon'm\right]^{\frac{m^2-m}{2}}$ by ordering $d_X(x_i,x_j)$ lexicographically by $(i,j)$, for all $1\leq i<j\leq m$. \\

\noindent \textit{Claim 2:} if there exist $v_X$ for $X$ and $v_Y$ for $Y$ such that $\|v_X-v_Y\|_{\infty}\leq \epsilon'$, then $ \dgh(X,Y)\leq \epsilon$. 

If $\|v_X-v_Y\|_{\infty}< \epsilon'$, then \[ \dgh(P_X,P_Y)\leq  \min_{\mathrm{bijection}\, f:X\to Y}\dis(f)< \epsilon' .\] 
Thus, $ \dgh(X,Y)\leq  \dgh(X,P_X)+ \dgh(P_X,P_Y)+ \dgh(P_Y,Y)<3 \cdot\epsilon'=\epsilon.$\\

\noindent \textit{Claim 1:} $(\mathcal{F}_m,d)$ can be covered by $ m^{\frac{m^2-m}{2}}$ many $\epsilon$-balls, for each $m=1,\dots,C$. 

It is clear that the closed $\epsilon'$-ball covering number of $\left[0,2\epsilon'm\right]^{\frac{m^2-m}{2}}$ (equipped with the $\ell^\infty$-metric) is $m^{\frac{m^2-m}{2}}$. Thus,
it follows from Claim 2 that $\mathcal{F}_m$ can be covered by no more than $m^{\frac{m^2-m}{2}}$ many $\epsilon$-balls. 

\medskip
Recall that $C=\mathrm{C}_\mathcal{F}\left(\epsilon'\right)$ and $\epsilon':=\tfrac{\epsilon}{3 }$. By Claim 3, we see that $(\mathcal{F},d)$ can be covered by $G\left(\mathrm{C}_\mathcal{F}\left(\tfrac{\epsilon}{3 }\right)\right)$ many closed $\epsilon$-balls, since
\[%\card(\mathcal{F}/\sim)\leq \sum_{m=1}^C \card(\mathcal{F}_m/\sim)\leq 
\sum_{m=1}^C m^{\frac{m^2-m}{2}}\leq C^{\frac{C^2-C}{2}+1}=G\left(\mathrm{C}_\mathcal{F}\left(\epsilon'\right)\right)=G\left(\mathrm{C}_\mathcal{F}\left(\tfrac{\epsilon}{3 }\right)\right).\]
\end{myproof}

\section{Application to finite metric spaces% (arXiv only) Comment line 990 in .cls for arXiv version
}\label{sec:app to finite}

In this section, we analyze several examples of finite metric spaces to further understand the treegrams obtained in Lemma \ref{lem:treegramofL}. In general, given a pointed metric space $(X,x_0)$, the set of discrete loops $\caL(X,x_0)$ is in bijection with the set $X^{\N}=\{\{x_i\}_{i\in \N}:x_i\in X\}$. Even when $X$ is a finite metric space, $\caL(X,x_0)$ can have large cardinality, making the illustration of its treegram rather difficult. To simplify the graphical representation of the resulting treegrams, we introduce the equivalence relation $\sim$ on $\caL(X,x_0)$ given by: \label{para:relation sim}
\begin{center}
    $\gamma\sim\gamma'$ iff $\gamma$ and $\gamma'$ have the \emph{same} birth time $\delta\geq0$ and $\gamma\sim_1^{\delta}\gamma'$,
\end{center} which implies that $\gamma\sim_1^{\epsilon}\gamma'$ for all $\epsilon\geq \delta$. Clearly, $\sim$ is an equivalence relation and we denote  \label{para:L space}
$$L(X,x_0):=\caL(X,x_0)\,/\sim,$$
and write $[\gamma]$ for the equivalence class of a discrete loop $\gamma$ under the relation $\sim$. Let $p_X:\caL(X,x_0)\twoheadrightarrow L(X,x_0)$ be the resulting quotient map, i.e., $p_X(\gamma)=[\gamma]$. It is not hard to see that $L(X,x_0)$ is a monoid under the operation $p_X(\gamma)\bullet p_X(\gamma'):=p_X(\gamma\ast\gamma').$ The existence of inverse elements is not guaranteed because $\gamma\ast\gamma^{-1}$, where $\birth(\gamma)=\delta>0$, is a $\delta$-loop and cannot be equivalent to the $0$-loop $x_0$. Let $L^{\epsilon}(X,x_0)$ be the set of equivalence classes of discrete loops in $X$ with birth time $\epsilon$, i.e., \label{para:L epsilon space} $$L^{\epsilon}(X,x_0):=\left(\caL^{\epsilon}(X,x_0)-\bigcup_{\delta<\epsilon}\caL^{\delta}(X,x_0)\right)\Big/\sim.$$ 
It is clear that $L^{\epsilon}(X,x_0)$ is a semigroup, i.e., it is closed under the operation $\bullet$ which satisfies the associative property. Furthermore, $L^{\epsilon}(X,x_0)$ is a sub-semigroup of $L(X,x_0)$.

It can be directly checked that the constructions and results in \textsection \ref{sec:treegram-pseudo-metric} all apply here:
\begin{itemize}
    \item The pseudo-ultra-metric $\mu_X^{(1)}$ on $\caL(X,x_0)$ induces a pseudo-ultra-metric on $L(X,x_0)$, still denoted by $\mu_X^{(1)}$. In particular, for any two discrete loops $\gamma$ and $\gamma'$, $$\mu_X^{(1)}\left([\gamma],[\gamma']\right):=\mu_X^{(1)}(\gamma,\gamma').$$
    \item  Let $(X,x_0)$ and $(Y,y_0)$ be pointed compact metric spaces. Then, each pointed tripod $$R:X\xtwoheadleftarrow{\phi_X}Z\xtwoheadrightarrow{\phi_Y}Y$$ induces the following tripod between $L(X,x_0)$ and $L(Y,y_0)$:
    $$R_{L}:=L(X,x_0)\xtwoheadleftarrow{p_X\circ\phi_X}\caL(Z,z_0)\xtwoheadrightarrow{p_Y\circ\phi_Y}L(Y,y_0) ,$$  
    with $\dis(R)\leq \dis(R_{L})$.
    \item  Given $(X,x_0)$ and $(Y,y_0)$ be pointed compact metric spaces, 
		$$\dgh(L(X,x_0) ,L(Y,y_0))\leq \dgh^{\pt}((X,x_0),(Y,y_0)).$$
    \item Let $X$ be a compact geodesic space or a finite metric space. The map $\epsilon\mapsto \pi_1^{\epsilon}(X,x_0)$ induces a treegram over $L(X,x_0)$, denoted by $\theta_{L(X,x_0)}^\rms$.
\end{itemize}

\begin{proposition}\label{prop:L-epsion-subsemigroup} Given a pointed compact metric space $(X,x_0)$ and $\epsilon>0$, if $L^{\epsilon}(X,x_0)$ is non-empty, then $L^{\epsilon}(X,x_0)$ is isomorphic to a sub-semigroup of $\pi_1^{\epsilon}(X,x_0)$.
\end{proposition}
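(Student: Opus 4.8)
The plan is to exhibit an explicit injective semigroup homomorphism $\Psi\colon L^\epsilon(X,x_0)\to \pi_1^\epsilon(X,x_0)$ and then identify $L^\epsilon(X,x_0)$ with its image. Write $A_\epsilon:=\caL^{\epsilon}(X,x_0)-\bigcup_{\delta<\epsilon}\caL^{\delta}(X,x_0)$ for the set of discrete loops based at $x_0$ whose birth time equals exactly $\epsilon$, so that by definition $L^\epsilon(X,x_0)=A_\epsilon/\!\sim$. The candidate map is the obvious one, sending the $\sim$-class of $\gamma\in A_\epsilon$ to its $\epsilon$-homotopy class,
\[
\Psi([\gamma]):=[\gamma]_\epsilon\in \pi_1^\epsilon(X,x_0),
\]
which at least lands in $\pi_1^\epsilon(X,x_0)$ since every $\gamma\in A_\epsilon$ is in particular an $\epsilon$-loop based at $x_0$.

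The conceptual crux, from which most of the verification follows at once, is the observation that \emph{on $A_\epsilon$ the relation $\sim$ coincides with $\sim_1^\epsilon$}. Indeed, any two $\gamma,\gamma'\in A_\epsilon$ share the birth time $\epsilon$, so by the defining clause of $\sim$ (Page \pageref{para:relation sim}) we have $\gamma\sim\gamma'$ if and only if $\gamma\sim_1^\epsilon\gamma'$. This gives well-definedness of $\Psi$ immediately: $\gamma\sim\gamma'$ forces $[\gamma]_\epsilon=[\gamma']_\epsilon$. For the homomorphism property I would first record that $A_\epsilon$ is closed under concatenation: if $\birth(\gamma)=\birth(\gamma')=\epsilon$, then $\gamma\ast\gamma'$ introduces no edge beyond those already present in $\gamma$ and $\gamma'$, whence $\birth(\gamma\ast\gamma')=\max\{\birth(\gamma),\birth(\gamma')\}=\epsilon$ and $\gamma\ast\gamma'\in A_\epsilon$ (this is also precisely why $\bullet$ is well-defined on $L^\epsilon(X,x_0)$). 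Then
\[
\Psi([\gamma]\bullet[\gamma'])=\Psi([\gamma\ast\gamma'])=[\gamma\ast\gamma']_\epsilon=[\gamma]_\epsilon\,[\gamma']_\epsilon=\Psi([\gamma])\,\Psi([\gamma']),
\]
using that concatenation of $\epsilon$-loops realizes the group operation of $\pi_1^\epsilon(X,x_0)$.

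The one step deserving genuine (if brief) attention is injectivity, which I expect to be the main obstacle, because $\pi_1^\epsilon(X,x_0)=\caL^\epsilon(X,x_0)/\!\sim_1^\epsilon$ is the quotient of a set \emph{strictly larger} than $A_\epsilon$, and a priori passing to this bigger quotient could collapse distinct classes of $L^\epsilon(X,x_0)$. The point to nail down is that this does not happen. Suppose $\Psi([\gamma])=\Psi([\gamma'])$ with $\gamma,\gamma'\in A_\epsilon$, i.e.\ $\gamma\sim_1^\epsilon\gamma'$. The intermediate loops of a witnessing $\epsilon$-homotopy may well have birth time strictly below $\epsilon$ and hence lie outside $A_\epsilon$; but this is harmless, since $\sim$ on $A_\epsilon$ is defined through the existence of such an $\epsilon$-homotopy in $\caL^\epsilon(X,x_0)$, not through homotopies confined to $A_\epsilon$. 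Thus $\gamma\sim_1^\epsilon\gamma'$ already yields $\gamma\sim\gamma'$, that is $[\gamma]=[\gamma']$ in $L^\epsilon(X,x_0)$. Consequently $\Psi$ is an injective semigroup homomorphism, its image $\Psi(L^\epsilon(X,x_0))$ is a sub-semigroup of $\pi_1^\epsilon(X,x_0)$ (being closed under the group operation via $\Psi([\gamma])\Psi([\gamma'])=\Psi([\gamma]\bullet[\gamma'])$), and $\Psi$ restricts to an isomorphism $L^\epsilon(X,x_0)\cong \Psi(L^\epsilon(X,x_0))$, which is the claim.
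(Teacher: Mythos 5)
Your proposal is correct and follows essentially the same route as the paper: the same map $[\gamma]\mapsto[\gamma]_\epsilon$, with well-definedness and injectivity both reduced to the observation that on loops of birth time exactly $\epsilon$ the relation $\sim$ coincides with $\sim_1^\epsilon$. Your extra remark that $A_\epsilon$ is closed under concatenation (so the semigroup operation is genuinely defined there) is a small but welcome addition the paper leaves implicit.
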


\begin{proof} It suffices to show the map $f:L^{\epsilon}(X,x_0)\to\pi_1^{\epsilon}(X,x_0)$, with $[\gamma]\to [\gamma]_{\epsilon}$ is an injective semigroup homomorphism. Given discrete loops $\gamma$ and $\gamma'$ with birth time $\epsilon$ such that $\gamma\sim \gamma'$, we have $\gamma\sim^{\epsilon}_1\gamma'$. Thus, $f$ is well-defined. Clearly, $f$ preserve the semigroup operation. It remains to check that $f$ is injective. Indeed, if $f([\gamma])=f([\gamma'])$ for some $\gamma$ and $\gamma'$ with birth time $\epsilon$, then $\gamma\sim^{\epsilon}_1\gamma'$. Thus, we have $\gamma\sim \gamma'$. 
\end{proof}
The following corollary follows immediately:
\begin{corollary}\label{cor:L-epsilon} Let $(X,x_0)$ be a compact metric space and $\epsilon>0$. If $\pi_1^{\epsilon}(X,x_0)=0$ and $L^{\epsilon}(X,x_0)\neq\emptyset$, i.e., there exists a discrete loop $\gamma_\epsilon$ in $X$ with birth time $\epsilon$, then 
$$L^{\epsilon}(X,x_0)=\left\{[\gamma_{\epsilon}]\right\}.$$
\end{corollary}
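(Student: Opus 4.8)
The plan is to derive Corollary \ref{cor:L-epsilon} as a direct consequence of Proposition \ref{prop:L-epsion-subsemigroup} together with the hypotheses. Recall that the statement asserts: if $\pi_1^\epsilon(X,x_0)=0$ (the trivial group) and $L^\epsilon(X,x_0)\neq\emptyset$, then $L^\epsilon(X,x_0)$ is a singleton, namely $\{[\gamma_\epsilon]\}$ for any discrete loop $\gamma_\epsilon$ of birth time $\epsilon$.

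First I would invoke Proposition \ref{prop:L-epsion-subsemigroup}: since $L^\epsilon(X,x_0)$ is assumed non-empty, the map $f:L^\epsilon(X,x_0)\to\pi_1^\epsilon(X,x_0)$ sending $[\gamma]\mapsto[\gamma]_\epsilon$ is an injective semigroup homomorphism, realizing $L^\epsilon(X,x_0)$ as a sub-semigroup of $\pi_1^\epsilon(X,x_0)$. Now the key observation is that injectivity of $f$ transfers cardinality information downward: the cardinality of $L^\epsilon(X,x_0)$ is at most the cardinality of $\pi_1^\epsilon(X,x_0)$. Under the hypothesis $\pi_1^\epsilon(X,x_0)=0$, the target group has exactly one element, so $L^\epsilon(X,x_0)$ has at most one element.

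Combining the two constraints finishes the argument immediately. On one hand, by assumption there exists at least one discrete loop $\gamma_\epsilon$ of birth time exactly $\epsilon$, so $[\gamma_\epsilon]\in L^\epsilon(X,x_0)$ and the set is non-empty. On the other hand, the injection $f$ into the trivial group forces $|L^\epsilon(X,x_0)|\leq 1$. Hence $L^\epsilon(X,x_0)$ has exactly one element, and since $[\gamma_\epsilon]$ is an element of it, we conclude $L^\epsilon(X,x_0)=\{[\gamma_\epsilon]\}$.

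I do not anticipate a genuine obstacle here, since the corollary is essentially a one-line pigeonhole consequence of the proposition; the only point requiring mild care is making explicit that injectivity of a map into a one-element set forces the domain to have at most one element, and then noting that the existence of $\gamma_\epsilon$ guarantees the domain is non-empty so that the bound is achieved with equality. One subtlety worth flagging is that $f$ is only a semigroup homomorphism (not a group homomorphism, as $L^\epsilon$ need not have inverses), but this is irrelevant to the cardinality argument, which uses only that $f$ is an injective set map.
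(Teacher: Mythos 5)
Your proof is correct and matches the paper's intent exactly: the paper states the corollary "follows immediately" from Proposition \ref{prop:L-epsion-subsemigroup}, and your argument—injectivity of $f$ into the one-element group forces $|L^{\epsilon}(X,x_0)|\leq 1$, while the existence of $\gamma_\epsilon$ gives non-emptiness—is precisely the immediate deduction the authors had in mind. Your remark that only the injectivity of $f$ as a set map is needed (not any group structure on $L^\epsilon$) is a sensible clarification.
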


\subsection{Finite metric spaces arising from graphs} \label{sec:finite-graph}
Next we study the discrete loop set and the persistent fundamental group of graphs. All our graphs are finite, undirected and simple (i.e., with no self-loops or multiple edges) graphs, with weight $1$ on each edge. Given a graph $G$, we can obtain a metric space by equipping the vertex set with the shortest path distance $d_G$. For simplicity, the resulting metric space is written as $(G,d_G)$, where the base space shall be understood as the vertex set of $G$. The \emph{girth} of a graph is the length of its shortest cycle or $\infty$ for a forest (see \cite{Adam13}).

\begin{proposition}[Fact 2.1 \& Proposition 2.2, \cite{Adam13}] \label{prop:ppi_1 of graph} For any connected graph $G$ and $r\geq 1$, the map of fundamental groups
$$\pi_1\left( \left|  \VR_{ 1}(G)\right| \right)\twoheadrightarrow \pi_1\left( \left|  \VR_{ r}(G)\right| \right)$$
induced by the inclusion $\VR_{ 1}(G)\hookrightarrow \VR_{ r}(G)$ is surjective. Furthermore, if $r\geq 1$ is such that $G$ is a graph of girth at least $3r+1$, then 
$$\pi_1\left( \left|  \VR_{ k-1}(G)\right| \right)\xrightarrow{\cong} \pi_1\left( \left|  \VR_{ k}(G)\right| \right)$$
for each $2\leq k\leq r$.
\end{proposition}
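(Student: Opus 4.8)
The plan is to compute both fundamental groups combinatorially and to feed in the geometry of $G$ only through its girth. By Theorem~\ref{thm:edgepath} and Theorem~\ref{thm:iso of d.f.g}, the inclusion-induced maps $\pi_1\lbracket\lnorm\VR_{k-1}(G)\rnorm\rbracket\to\pi_1\lbracket\lnorm\VR_{k}(G)\rnorm\rbracket$ are the structure maps $\Phi_{k-1,k}$ of the discrete fundamental groups, equivalently the maps of edge-path groups $\pi_1^{\rmE}(\VR_{k-1}(G),x_0)\to\pi_1^{\rmE}(\VR_{k}(G),x_0)$. I would use the edge-path presentation: since $\VR_{j}(G)$ is the clique complex of the power graph $G^{\leq j}$ (with an edge for each pair of vertices at distance $\le j$), the group $\pi_1^{\rmE}(\VR_{j}(G),x_0)$ has one generator $g_{ab}$ for every edge $\{a,b\}$ of $G^{\leq j}$, the relations $g_{ab}=1$ for the edges of a fixed spanning tree of $G$, and a triangle relation $g_{ab}g_{bc}=g_{ac}$ for each $3$-clique $\{a,b,c\}$ of $G^{\leq j}$.

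For the surjectivity statement (no girth hypothesis) I would subdivide along geodesics. Given an edge $\{a,b\}$ of $G^{\leq j}$, let $a=v_0,v_1,\dots,v_m=b$ be a geodesic in $G$, so $m=d_G(a,b)\le j$ and each $\{v_i,v_{i+1}\}$ is a distance-$1$ edge; the triangle relations of the cliques $\{v_i,v_{i+1},b\}$ telescope to $g_{ab}=g_{v_0v_1}g_{v_1v_2}\cdots g_{v_{m-1}v_m}$. Hence for every $j\ge 1$ the group $\pi_1^{\rmE}(\VR_{j}(G),x_0)$ is generated by the classes of distance-$1$ edges, which already lie in $\VR_{j-1}(G)$; consequently every structure map $\Phi_{k-1,k}$ and the composite $\Phi_{1,r}$ are surjective. (In the language of discrete loops this is just the statement that an $r$-loop becomes a $1$-loop after inserting geodesic vertices, each insertion being a basic move at scale $r$.)

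For the isomorphism when $\mathrm{girth}(G)\ge 3r+1$ and $2\le k\le r$, it remains to prove injectivity of $\Phi_{k-1,k}$, which I would obtain by showing that passing from $G^{\leq k-1}$ to $G^{\leq k}$ introduces no new relations. Two consequences of the girth bound drive this. First, since $2r<\mathrm{girth}(G)$, any two vertices at distance $\le r$ are joined by a \emph{unique} geodesic, so the telescoped word $g_{ab}=g_{v_0v_1}\cdots g_{v_{m-1}v_m}$ is canonical; this lets me eliminate, by Tietze transformations ordered by decreasing distance, every generator of distance $\ge 2$ in terms of distance-$1$ edges. Second, since $3r<\mathrm{girth}(G)$, the union of the three unique geodesics bounding any $3$-clique $\{a,b,c\}$ of $G^{\leq k}$ has at most $3k\le 3r$ edges and therefore contains no cycle: it is a subtree, and in a tree $a,b,c$ admit a median vertex through which all three geodesics pass. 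Substituting the canonical geodesic words into the relation $g_{ab}g_{bc}=g_{ac}$ and cancelling the backtracks at the median shows that this relation reduces to the empty word in the distance-$1$ generators. Thus after the Tietze elimination the only surviving relations are the spanning-tree relations of $G$ (note that $\mathrm{girth}(G)\ge 3r+1>3$ leaves no distance-$1$ triangles), whence $\pi_1^{\rmE}(\VR_k(G),x_0)\cong\pi_1(G)$ for every $k\le r$; in particular each $\Phi_{k-1,k}$ is an isomorphism.

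The main obstacle is this last reduction, namely verifying that every triangle relation created at scale $k$ is a consequence of the relations already present at scale $k-1$. Its crux is the tree-degeneracy of geodesic triangles with sides $\le r$, and this is exactly where the quantitative bound $\mathrm{girth}(G)\ge 3r+1$ enters: a perimeter of at most $3r$ must fall strictly below the girth for the three geodesics to enclose no cycle. The rest is bookkeeping, namely checking that the Tietze eliminations can be carried out consistently (guaranteed by uniqueness of geodesics) and that the median cancellation remains valid when the median coincides with a vertex of the triangle; but all of this uses only the triangle and tree relations and creates nothing new.
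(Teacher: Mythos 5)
The paper does not prove this proposition: it is quoted directly from \cite{Adam13} (Fact 2.1 and Proposition 2.2), so there is no internal proof to compare against. Your argument is correct and self-contained, and it amounts to a reconstruction of the combinatorial proof behind the cited result. The surjectivity half works as you say: the telescoping relations $g_{v_ib}=g_{v_iv_{i+1}}g_{v_{i+1}b}$ are genuine triangle relations of $\VR_j(G)$, because every vertex of a geodesic from $a$ to $b$ lies within distance $d_G(a,b)\le j$ of $b$; hence each generator equals a word in distance-$1$ generators, all of which come from $\VR_1(G)$. The two uses of the girth in the injectivity half are placed exactly where they are needed: girth $>2r$ forces geodesics between vertices at distance $\le r$ to be unique (two distinct ones of length $m$ would produce a cycle of length $\le 2m$), so the substitution $g_{ab}\mapsto W_{ab}$ is well defined; girth $>3r$ forces the union of the three geodesics spanned by a $3$-clique of $G^{\le k}$ to be acyclic, hence a tree with a median $m$, after which $W_{ab}W_{bc}$ reduces freely to $W_{ac}$ because the two subwords along the branch between $m$ and $b$ cancel. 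Two details deserve an explicit sentence in a polished write-up: first, the decomposition of each geodesic through the median relies on uniqueness of paths in the tree $T_{abc}$ (each geodesic of $G$ is a path in $T_{abc}$, hence \emph{the} path in $T_{abc}$, hence passes through $m$); second, to conclude that $\Phi_{k-1,k}$ itself is an isomorphism you should either observe that your identifications $\pi_1^{\rmE}(\VR_k(G),x_0)\cong\pi_1(G)$ fix the distance-$1$ generators and therefore commute with the inclusion-induced maps, or simply combine the already-established surjectivity with the Hopf property of the finitely generated free group $\pi_1(G)$.
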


Let $n\in \Z_{\geq 3}$. A \emph{cycle graph} $C_n$ is a graph on $n$ vertices containing a single cycle through all its vertices. A \emph{star graph} $S_n$ is a tree on $n$ vertices where one vertex (called the \emph{center}) has degree $n-1$ and the others have degree $1$. Because of the symmetry, the set of discrete loops $\caL(C_n,v_0)$ and the persistent fundamental group $\ppi_1(C_n,v_0)$ do not depend on choices of the basepoint $v_0$. As for $S_n$, we always choose its center to be the basepoint. For convenience, we denote the vertex set $V(C_n)=\{0,1,\cdots,n-1\}$ such that $d_{C_n}(i,i+1)=1$, and denote the vertex set $V(S_n)=\{0,1,\cdots,n-1\}$, with $0$ the basepoint in both cases.

\begin{figure}[ht!]	  
\centering
\begin{tikzpicture}[scale=0.8]
\filldraw (-2,1) [color=blue] circle[radius=1.5pt];
\filldraw (-1.135,-0.5)[color=blue] circle[radius=1.5pt];
\filldraw (-2.866,-0.5) [color=blue] circle[radius=1.5pt];
\node[below right=0.4pt of {(-1.135,-0.5)}, outer sep=1.5pt,fill=white] {1};
\node[below left=0.4pt of {(-2.866,-0.5)}, outer sep=1.5pt,fill=white] {2};
\node[above=0.4pt of {(-2,1)}, outer sep=1.5pt,fill=white] {0};
\draw [dashed]  (-2,1)--(-1.135,-0.5);
\draw [dashed] (-2,1) --(-2.866,-0.5);
\draw [dashed] (-2.866,-0.5)--(-1.135,-0.5);

\filldraw (2,1) [color=blue] circle[radius=1.5pt];
\filldraw (1.135,-0.5)[color=blue] circle[radius=1.5pt];
\filldraw (2.866,-0.5) [color=blue] circle[radius=1.5pt];
\filldraw (2,0) [color=blue] circle[radius=1.5pt];
\node[below left=0.4pt of {(1.135,-0.5)}, outer sep=1.5pt,fill=white] {3};
\node[below right=0.4pt of {(2.866,-0.5)}, outer sep=1.5pt,fill=white] {2};
\node[above=0.4pt of {(2,1)}, outer sep=1.5pt,fill=white] {1};
\node[right=0.4pt of {(2,0.2)}, outer sep=1.5pt,fill=white] {0};
\draw [dashed]  (2,1)--(2,0);
\draw [dashed] (2,0) --(2.866,-0.5);
\draw [dashed] (2,0)--(1.135,-0.5);
    \end{tikzpicture} 
\caption{Graphs $C_3$ (left) and $S_4$ (right).} \label{fig:C3-S4}
\end{figure}
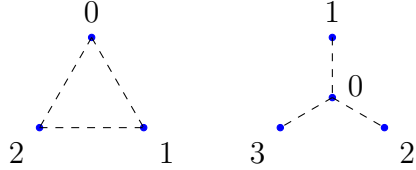
For later purpose, we introduce another metric space on the set of $n$ points, $E_n:=\left(\{0,1,\cdots,n-1\},d_{E_n}\right)$, where \label{para:E_n}
\[d_{E_n}(i,j)=\begin{cases}1,&\mbox{if $i\neq j$,}\\
0,&\mbox{if $i=j$}.
\end{cases}\]

\begin{theorem}[Corollary 6.6, \cite{Adam13}]\label{thm:VR-cycle-graph} For any $n\geq 3$ and $0\leq r<\tfrac{n}{2}$, there are homotopy equivalences
 \begin{equation*}
 \left| \VR_{ r}(C_n)\right| \cong \begin{cases}\bbS^{2l+1},&\mbox{if $\tfrac{l}{2l+1}<\tfrac{r}{n}<\tfrac{l+1}{2l+3}$ for some $l=0,1,\cdots$,}\\
 \bigvee^{n-2r-1}\bbS^{2l},&\mbox{if $\tfrac{r}{n}=\tfrac{l}{2l+1}$ for some $l=0,1,\cdots.$}
 \end{cases}
 \end{equation*}
For any integer $k\geq \tfrac{1}{2}n$, $\left| \VR_{ k}(C_n)\right| $ is contractible.
\end{theorem}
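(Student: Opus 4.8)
The plan is to first translate the statement into combinatorics. Since $d_{C_n}(i,j)=\min\{|i-j|,\,n-|i-j|\}$ on the vertex set $\Z/n$, a finite subset $\sigma\subset\Z/n$ is a face of $\VR_r(C_n)$ exactly when $\diam(\sigma)\le r$, i.e. when $\sigma$ is a clique of the $r$-th power graph $C_n^{(r)}$, whose edges join vertices at cyclic distance at most $r$. Thus $\lnorm\VR_r(C_n)\rnorm$ is the geometric realization of the flag (clique) complex $\mathrm{Cl}(C_n^{(r)})$, which carries a transitive $\Z/n$ rotation symmetry; since $d_{C_n}$ is integer-valued we may assume $r\in\Z_{\geq 0}$. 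The contractibility claim is then immediate: if $k\ge\tfrac{n}{2}$ then $d_{C_n}(i,j)\le\lfloor\tfrac{n}{2}\rfloor\le k$ for all $i,j$, so $C_n^{(k)}$ is complete and $\mathrm{Cl}(C_n^{(k)})$ is the full simplex $\Delta^{n-1}$, which is contractible.

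\textbf{Main case via discrete Morse theory.} For $0\le r<\tfrac{n}{2}$ I would compute the homotopy type of $\mathrm{Cl}(C_n^{(r)})$ by building an acyclic matching on its face poset and invoking the fundamental theorem of discrete Morse theory (the complex is homotopy equivalent to a CW-complex with one cell per unmatched critical face). The guiding principle is the analogy with the circle: writing $w:=r/n$ for the \emph{winding fraction}, the finite complex $\mathrm{Cl}(C_n^{(r)})$ should mirror the homotopy types of $\lnorm\VR_{2\pi w}(\bbS^1)\rnorm$ recalled in Example \ref{ex:circle-VR-ppi_n}, where odd spheres $\bbS^{2l+1}$ occur on the open ranges and wedges of even spheres occur at the thresholds $w=\tfrac{l}{2l+1}$. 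Concretely, I would fix the cyclic ordering of the vertices, match a face $\sigma$ with $\sigma\triangle\{v\}$ for a canonically chosen vertex $v$ (a ``first free slot'' rule adapted to the cyclic ordering), and prove this matching is acyclic. The point to verify is that on each open range $\tfrac{l}{2l+1}<w<\tfrac{l+1}{2l+3}$ exactly two critical faces survive, in dimensions $0$ and $2l+1$, whence $\mathrm{Cl}(C_n^{(r)})\simeq\bbS^{2l+1}$; for $l\ge 1$ one also checks simple-connectivity (e.g. using Proposition \ref{prop:ppi_1 of graph}) so that the single top cell yields a sphere rather than a more complicated space.

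\textbf{The threshold case.} At $w=\tfrac{l}{2l+1}$ the matching degenerates and one must count critical cells carefully: I expect exactly one critical vertex together with $n-2r-1$ critical faces in dimension $2l$, each a cycle, with all attaching maps trivial, giving $\mathrm{Cl}(C_n^{(r)})\simeq\bigvee^{\,n-2r-1}\bbS^{2l}$. Equivalently, in the generic range one can realize $\mathrm{Cl}(C_n^{(r)})$ as the boundary complex of a cyclic polytope $C(n,2l+2)$ via Gale's evenness condition, directly producing the $(2l+1)$-sphere, and then analyze the degeneration at the threshold. Either route must pin down the precise wedge count $n-2r-1$ and confirm that the even-dimensional critical cells assemble into a genuine wedge of spheres with no nontrivial gluing.

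\textbf{Main obstacle.} The hard part is the bookkeeping of critical cells: constructing a matching that is provably acyclic on the cyclic face poset, and then proving that the critical faces are exactly as listed --- in particular identifying the dimension $2l+1$ (resp.\ $2l$) as a function of $r/n$ and extracting the wedge multiplicity $n-2r-1$ at the boundary fractions. Verifying that the top-dimensional critical cell(s) are not killed, i.e. that the homology is free of the claimed rank and the space is simply connected above dimension one, is where the genuine topological content lies; the $\pi_1$-level inputs from Proposition \ref{prop:ppi_1 of graph} settle only the case $l=0$.
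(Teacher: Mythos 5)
The paper does not actually prove this statement: it is imported verbatim as Corollary 6.6 of \cite{Adam13}, so there is no internal proof to compare against. Judged on its own, your proposal gets the easy reductions right: $\lnorm\VR_r(C_n)\rnorm$ is the clique complex of the $r$-th power graph $C_n^r$, and for $k\geq \tfrac{n}{2}$ that graph is complete, so the complex is a full simplex and contractible. One small simplification: if an acyclic matching leaves exactly two critical cells, in dimensions $0$ and $2l+1$, the resulting CW model has one point and one top cell and is therefore $\bbS^{2l+1}$ automatically; no separate simple-connectivity check via Proposition \ref{prop:ppi_1 of graph} is needed, and that proposition could not help for $l\geq 1$ anyway.

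The genuine gap is that everything constituting the theorem is deferred. You never actually define the matching (a ``first free slot'' rule has no canonical meaning on a cyclic vertex set, where there is no distinguished first element), never prove acyclicity, and never carry out the critical-cell census; you yourself label this the ``main obstacle.'' Since the entire content of the statement is precisely which dimension the critical cells land in as a function of $r/n$, and the multiplicity $n-2r-1$ at the threshold fractions, what remains is a plan rather than a proof. Moreover, the suggested shortcut via cyclic polytopes is false as stated: $\mathrm{Cl}(C_n^r)$ is not the boundary complex of $C(n,2l+2)$. For example, with $n=7$, $r=2$ one has $\tfrac{2}{7}<\tfrac{1}{3}$, so the complex is homotopy equivalent to $\bbS^1$, yet it contains $2$-simplices such as $\{0,1,2\}$ and hence cannot be the boundary complex of a polygon; the two complexes at best share a homotopy type, which is exactly what would need proving. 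A correct argument along discrete-Morse lines does exist in the literature, but it hinges on the explicit matching and its analysis, which the proposal does not supply.
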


For $n\in \Z_{\geq 3}$ and $r=0,\cdots, \floor*{\tfrac{n}{2}}$, we denote the following $r$-loop in $C_n$ as \label{para:gamma_r}
$$\gamma_r:=0r(r+1)\cdots (n-1)0.$$
Although $\gamma_r$ depends on $n$, for simplicity of notation we do not specify $n$ unless necessary.

\begin{proposition}\label{prop:C_n} Fix $n\in \Z_{\geq 3}$. 
\begin{enumerate} 
\item  Let $ \floor*{x}$ denote the largest integer less than or equal to $x$. Then
$$\ppi_1(C_n)=\pH_1^{\VR}(C_n)=\Z\left[1,\floor*{\tfrac{n+2}{3}}\right).$$ 
Furthermore, for $r=1,\cdots, \floor*{\tfrac{n-1}{3}}$, a generator of $\ppi_1^{r}(C_n)$ is $[\gamma_r]_r$.
\item  We have pseudo-ultra-metric space
$$L(C_n)=\left\{[\gamma_0],[\gamma_r^{\ast k}],[\gamma_s]:r=1,\cdots, \floor*{\tfrac{n-1}{3}};s= \floor*{\tfrac{n+2}{3}},\cdots, \floor*{\tfrac{n}{2}}; k\in \Z-\{0\}\right\},$$ 
with the pseudo-metric $\mu^{(1)}_{C_n}$ given by: for $k,k'\in \Z-\{0\}$, 
\[\mu^{(1)}_{C_n}([\gamma_r^{\ast k}],[\gamma_{r'}^{\ast k'}])=\begin{cases}0,&\mbox{if $r=r'=0$}\\ 
\max\left\{\floor*{\tfrac{n-1}{3}},r\right\},&\mbox{if $r'=0$, $r\geq 1$}\\ 
\max\{r,r'\},&\mbox{otherwise.}
\end{cases}\]  
\end{enumerate}
\end{proposition}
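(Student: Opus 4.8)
The plan is to treat the two parts separately, extracting the pointwise structure of $\ppi_1(C_n)$ from the known homotopy types of Vietoris--Rips complexes of cycle graphs and then bootstrapping to the global statements.

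For Part (1), I would first compute each group $\pi_1^\epsilon(C_n)$. By Theorem \ref{thm:iso of d.f.g} we have $\pi_1^\epsilon(C_n)\cong \pi_1(\lnorm\VR_\epsilon(C_n)\rnorm)$, and since all distances in $C_n$ are integers one has $\VR_\epsilon(C_n)=\VR_{\floor*{\epsilon}}(C_n)$, so it suffices to understand integer scales $r$. Theorem \ref{thm:VR-cycle-graph} gives $\lnorm\VR_r(C_n)\rnorm\cong \bbS^1$ precisely when $0<\tfrac{r}{n}<\tfrac13$ (the case $l=0$) and a simply connected space (an odd sphere $\bbS^{2l+1}$ with $l\ge 1$, or a wedge of even spheres) otherwise; hence $\pi_1^r(C_n)\cong\Z$ for $1\le r<\tfrac n3$, i.e. $1\le r\le\floor*{\tfrac{n-1}{3}}$, and is trivial for all larger $r$ (and for $r=0$). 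To upgrade this pointwise computation to the interval module $\Z[1,\floor*{\tfrac{n+2}{3}})$, I would invoke Proposition \ref{prop:ppi_1 of graph}: the girth of $C_n$ is $n$, so for every $k\le \floor*{\tfrac{n-1}{3}}$ one has $n\ge 3k+1$, and the structure maps $\pi_1^{k-1}(C_n)\to\pi_1^{k}(C_n)$ are isomorphisms throughout $1\le k\le\floor*{\tfrac{n-1}{3}}$; this identifies the module as the stated interval module. The homology statement then follows from the discrete Hurewicz Theorem \ref{thm:hurewicz-discrete}: as $\pi_1^r(C_n)\cong\Z$ is abelian, the commutator kernel is trivial and $\rho_r$ is an isomorphism onto $\pH_1^{\VR,r}(C_n)$. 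Finally, to see that $[\gamma_r]_r$ generates $\pi_1^r(C_n)$, I would check that $\gamma_r$ is an $r$-loop ($d_{C_n}(0,r)=r$ and all other steps have length $1$) and exhibit an explicit $r$-homotopy collapsing the unit-speed loop $\gamma_1=0\,1\,2\cdots(n-1)\,0$ onto $\gamma_r$ by successively deleting the vertices $1,\dots,r-1$ (each deletion is a legal basic move, since $d_{C_n}(0,j+1)=j+1\le r$). As $\gamma_1$ is the standard once-around generator, $[\gamma_r]_r=[\gamma_1]_r$ is a generator.

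For Part (2), I would organize $L(C_n)$ by birth time, which for a nonconstant loop is an integer $\epsilon\in\{1,\dots,\floor*{\tfrac n2}\}$, together with the constant class $[\gamma_0]$. Each $L^\epsilon(C_n)$ embeds, by the injective semigroup homomorphism of Proposition \ref{prop:L-epsion-subsemigroup}, into $\pi_1^\epsilon(C_n)$. I would then split into two regimes. When $\floor*{\tfrac{n+2}{3}}\le\epsilon\le\floor*{\tfrac n2}$ the group $\pi_1^\epsilon(C_n)$ is trivial while $\gamma_\epsilon$ has birth time exactly $\epsilon$; Corollary \ref{cor:L-epsilon} then forces $L^\epsilon(C_n)=\{[\gamma_\epsilon]\}$. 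When $1\le\epsilon\le\floor*{\tfrac{n-1}{3}}$ the group is $\Z$ with generator $[\gamma_\epsilon]_\epsilon$ from Part (1); the concatenations $\gamma_\epsilon^{\ast k}$ all have birth time $\epsilon$ and map to the nonzero integers, and two birth-$\epsilon$ loops with the same image are $\sim$-equivalent, so these account for the classes $[\gamma_\epsilon^{\ast k}]$ with $k\ne 0$. Collecting over all $\epsilon$ and adjoining $[\gamma_0]$ gives the asserted description of $L(C_n)$. The pseudo-metric is read off from the interval-module structure via $\mu^{(1)}_{C_n}(\gamma,\gamma')=\inf\{\delta:\gamma\sim_1^\delta\gamma'\}$: once $\delta\ge\max\{r,r'\}$ both $\gamma_r^{\ast k}$ and $\gamma_{r'}^{\ast k'}$ live in $\pi_1^\delta(C_n)=\Z$ (up to its death), where they agree exactly according to their winding numbers, while the distance to $[\gamma_0]$ is the death time of the relevant generator; this produces the three cases of the displayed formula.

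The main obstacle I anticipate is precisely the enumeration of $L^\epsilon(C_n)$ in the nontrivial regime $1\le\epsilon\le\floor*{\tfrac{n-1}{3}}$. One must determine which elements of $\Z=\pi_1^\epsilon(C_n)$ actually arise from loops whose birth time is exactly $\epsilon$ rather than strictly smaller, and must account for the birth-$\epsilon$ ``there-and-back'' loops such as $0\,\epsilon\,0$, which are $\epsilon$-null and hence map to $0\in\pi_1^\epsilon(C_n)$; showing that these either coincide with a listed class or are correctly recorded, and pinning down the open/closed convention so that the death times in the $\mu^{(1)}$ formula are consistent with the support interval $[1,\floor*{\tfrac{n+2}{3}})$ of Part (1), is the delicate point where the bookkeeping must be done carefully.
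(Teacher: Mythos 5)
Your proposal follows essentially the same route as the paper's proof: Part (1) via Theorem \ref{thm:VR-cycle-graph} to read off $\pi_1^r(C_n)$ at integer scales, the girth criterion of Proposition \ref{prop:ppi_1 of graph} to see that the structure maps are isomorphisms up to $\floor*{\tfrac{n-1}{3}}$ and to transport the generator $[\gamma_1]_1$ to $[\gamma_r]_r$; Part (2) via stratification by birth time, Corollary \ref{cor:L-epsilon} in the regime where $\pi_1^\epsilon(C_n)$ is trivial, and the fact that $[\gamma_\epsilon]_\epsilon$ generates $\pi_1^\epsilon(C_n)\cong\Z$ otherwise. Your explicit collapse of $\gamma_1$ onto $\gamma_r$ and the appeal to the discrete Hurewicz theorem for the homology claim are small additions the paper does not spell out, but they change nothing structurally.

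The ``delicate point'' you flag at the end is, however, a genuine gap, and the paper's own proof does not close it either: the paper simply asserts that every discrete loop with birth time $r$ is $r$-homotopic to $\gamma_r^{\ast k}$ for some $k\neq 0$ because $[\gamma_r]_r$ generates $\pi_1^r(C_n)$. This fails for the loop $\beta_r:=0\,r\,0$ when $1\leq r\leq\floor*{\tfrac{n-1}{3}}$ (so $n\geq 4$): it has birth time exactly $r$, yet it is $r$-null (remove the middle vertex), so under the injection $L^r(C_n)\hookrightarrow\pi_1^r(C_n)$ of Proposition \ref{prop:L-epsion-subsemigroup} it maps to $0$, whereas every $[\gamma_r^{\ast k}]$ with $k\neq 0$ maps to $k$; and since $\sim$ requires equal birth times, $[\beta_r]$ cannot coincide with $[\gamma_0]$ either. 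Hence $[\beta_r]$ is an element of $L^r(C_n)$ not accounted for by the displayed description, and a complete proof must either enlarge the list or adjust the equivalence relation. Relatedly, your prescription that the distance to $[\gamma_0]$ is the death time of the relevant generator yields $\mu^{(1)}_{C_n}\lbracket[\gamma_r^{\ast k}],[\gamma_0]\rbracket=\floor*{\tfrac{n+2}{3}}$ for $1\leq r\leq\floor*{\tfrac{n-1}{3}}$ (consistent with the subdendrogram the paper draws for $C_4$, where $[\gamma_1^{\ast k}]$ merges with $[\gamma_0]$ at $2$), not the value $\floor*{\tfrac{n-1}{3}}$ printed in the second case of the stated formula; carrying out your computation faithfully will therefore not reproduce that case verbatim.
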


\begin{proof} For Part (1), we first apply Theorem \ref{thm:VR-cycle-graph} to obtain that $\left| \VR_{ r}(C_n)\right| \cong \bbS^1$ iff $1\leq r< \floor*{\tfrac{n+2}{3}}$ and $\ppi_1^{r}(C_n)=\pH_1^{\VR,r}(C_n)=0$ otherwise. Clearly, a generator of $\ppi_1^1(C_n)$ is $[\gamma_1]_1$. Since the girth of $C_n$ is $n$, Proposition \ref{prop:ppi_1 of graph} implies that the inclusion $\VR_{ 1}(C_n)\hookrightarrow \VR_{ r}(C_n)$ induces an isomorphism $$\ppi_1^1(C_n)\to \ppi_1^r(C_n)\text{ with }[\gamma_1]_1\mapsto [\gamma_1]_r=[\gamma_r]_r,$$ for $r=1,\cdots, \floor*{\tfrac{n-1}{3}}$. Thus, $[\gamma_r]_r$ is a generator of $\ppi_1^{r}(C_n)$ for $r=1,\cdots,\floor*{\tfrac{n-1}{3}}$.

For (2), we first notice that the possible birth times of discrete loops in $C_n$ are $0,\cdots,\floor*{\tfrac{n}{2}}$ and each $\gamma_r$ has birth time $r$. For $r=0$ or $ \floor*{\tfrac{n+2}{3}},\cdots,\floor*{\tfrac{n}{2}}$, because Part (1) implies that $\ppi_1^{r}(C_n)=0$, by Proposition \ref{cor:L-epsilon} we then have
$$L^r(C_n)=\{[\gamma_r]\}.$$
When $r=1,\cdots, \floor*{\tfrac{n+2}{3}}$, we claim $$L^r(C_n)=\left\{[\gamma_i^{\ast k}]:k\in \Z-\{0\}\right\}.$$ It is clear that $\{[\gamma_r^{\ast k}]:k\in \Z-\{0\}\}\subset L^r(C_n)$, so it remains to show that any discrete loop with birth time $r$ is $r$-homotopic to $\gamma_r^k$ for some $k\in \Z-\{0\}$, which is true because $\ppi_1^{r}(C_n)$ is generated by $ [\gamma_r]_r$. The calculation of $\mu_{C_n}^{(1)}$ is straightforward, given Proposition \ref{prop:ppi_1 of graph}.
\end{proof}

\begin{remark}Note that $\left( L(C_n)\,,\mu^{(1)}_{C_n}\right)$ can be represented by the treegram depicted in Figure \ref{fig:treegram of Cn}, where the pseudo-ultra-metric induced by the treegram (see page \pageref{para:treegram}) agrees with $\mu^{(1)}_{C_n}$. 
\end{remark}

\begin{figure}[ht!]
\centering
    \begin{tikzpicture}
    \begin{axis} [ 
    height=9cm,
    width= 11cm,
    axis y line=middle, 
    axis x line=middle,
    xlabel=$\epsilon$,
    ylabel=$L(C_n)$,
    ytick={0.2,0.5,0.8,1.1,1.6,1.9,2.4},
    every axis y label/.style={at={(current axis.north west)},above right=3mm}, 
   yticklabels={$[\gamma_0]$, $[\gamma_1^{\ast k}]$,$[\gamma_2^{\ast k}]$,$[\gamma_3^{\ast k}]$,$\left[\gamma_{\floor*{(n-1)/3}}^{\ast k}\right]$,$\left[\gamma_{\floor*{(n+2)/3}}\right]$,$\left[\gamma_{\floor*{n/2}}\right]$ },
    xtick={0,0.3,0.6,0.9,1.4,1.7,2.2},
    xticklabels={0,1,2,3,$\floor*{\tfrac{n-1}{3}}$,$\floor*{\tfrac{n+2}{3}}$,$\floor*{\tfrac{n}{2}}$},
    xmin=-.3, xmax=2.6,
    ymin=-.2, ymax=2.6,]
    \addplot[domain=0:2.5,color=blue,opacity=0.2, ultra thick]{0.2};    
    \addplot[domain=0.3:1.4,color=blue, ultra thick,opacity=0.2]{0.5}; 
    \addplot [mark=none,color=blue, ultra thick,opacity=0.2] coordinates {(0.6,0.5) (0.6,0.8)};
    \addplot [mark=none,color=blue,  ultra thick,opacity=0.2] coordinates {(0.9,0.5) (0.9,1.1)};
    \addplot [mark=none,color=blue,  ultra thick, opacity=0.2] coordinates {(1.4,0.2) (1.4,1.6)};
    \addplot [mark=none,color=blue, ultra thick, opacity=0.2] coordinates {(1.7,0.2) (1.7,1.9)};
    \addplot [mark=none,color=blue, ultra thick, opacity=0.2] coordinates {(2.2,0.2) (2.2,2.4)};   
    \node[mark=none,color=blue, opacity=0.2] at (axis cs:1.17,1){$\dots$};    
    \node[mark=none,color=blue, opacity=0.2] at (axis cs:1.97,1){$\dots$};    
    \node[mark=none] at (axis cs:1.1,-0.15){$\dots$};
    \node[mark=none] at (axis cs:2,-0.15){$\dots$};
    \node[mark=none] at (axis cs:-0.2,2.1){$\dots$};
    \node[mark=none] at (axis cs:-0.2,1.3){$\dots$};
    \end{axis}
    \end{tikzpicture}
    \caption{Treegram over $L(C_n)$, where $k$ runs over non-zero integers for each $[\gamma_r^{\ast k}]$ when $r=1,\cdots, \floor*{\tfrac{n-1}{3}}$.} \label{fig:treegram of Cn}
\end{figure}

Now let us apply Proposition \ref{prop:C_n} to the cases $n=3$ and $4$. It follows that $\ppi_1(C_3)=\bbmzero,$ and $\left( L(C_3)\, ,\mu_{C_3}^{(1)}\right)$ is a two-point pseudo-metric space given by the distance matrix
\begin{equation*}
\mu_{C_3}^{(1)}=
  \begin{blockarray}{*{2}{c} l}
    \begin{block}{*{2}{>{$\footnotesize}c<{$}} l}
      $[\gamma_0]$ & $[\gamma_1]$ \\
    \end{block}
    \begin{block}{[*{2}{c}]>{$\footnotesize}l<{$}}
      0 & 1 & $[\gamma_0]$ \\
       & 1 & $[\gamma_1]$ \\
    \end{block}
  \end{blockarray}.
\end{equation*}
Similarly, we have $\ppi_1(C_4)=\Z[1,2).$ The corresponding treegrams over $L(C_3)$ and $L(C_4)$ are depicted in Figure \ref{fig:treegram of C3}.
\begin{figure}[ht!]
\centering
 \begin{tikzpicture}
    \begin{axis} [ 
    height=4cm,
    axis y line=left, 
    axis x line=middle,
    xlabel=$\epsilon$,
    ylabel=$L(C_3)$,
    ytick={0.5,1},
    every axis y label/.style={at={(current axis.north west)},above=2mm},   
    yticklabels={$[\gamma_0]$, $[\gamma_1]$},
    xtick={0,1.33},
    xticklabels={0,1},
    xmin=0, xmax=2.5,
    ymin=0, ymax=1.5,]
    \addplot +[mark=none,color=blue, ultra thick,opacity=0.2] coordinates {(1.33,0.5)(1.33,1)};
    \addplot[domain=0:2.5,color=blue,ultra thick,opacity=0.2]{0.5};
    \end{axis}
    \end{tikzpicture}\hspace{1cm}
    \begin{tikzpicture}
    \begin{axis} [ 
    height=4.5cm,
    axis y line=left, 
    axis x line=middle,
    every axis y label/.style={at={(current axis.north west)},above=2mm},
    xlabel=$\epsilon$,
    ylabel=$L(C_4)$,
    ytick={0.5,1,1.5},
    yticklabels={$[\gamma_0]$, $[\gamma_1^{\ast k}]$, $[\gamma_2]$},
    xtick={0,1,2},
    xticklabels={0,1,2},
    xmin=0, xmax=2.5,
    ymin=0, ymax=2,]
    \addplot +[mark=none,color=blue, ultra thick,opacity=0.2] coordinates {(2,1.5) (2,0.5)};
    \addplot[domain=0:2.5,color=blue,ultra thick,opacity=0.2]{0.5};
    \addplot[domain=1:2,color=blue,  ultra thick,opacity=0.2]{1};   
    \end{axis}
    \end{tikzpicture}
    \caption{Treegram over $L(C_3)$ (left) and $L(C_4)$ (right).}  \label{fig:treegram of C3} 
\end{figure}

\begin{proposition}\label{prop:S_n} Fix $n\in \Z_{\geq 3}$. Then,
\begin{enumerate}
\item $\ppi_1(S_n)=0;$
\item $L(S_n)=\{[\lambda_0],[\lambda_1],[\lambda_2]\}$, where $\lambda_0=0$, $\lambda_1=010$ and $\lambda_2=0120$. And the pseudo-metric $\mu^{(1)}_{S_n}$ is given by the distance matrix
\begin{equation*}
\mu^{(1)}_{S_n}=
  \begin{blockarray}{*{3}{c} l}
    \begin{block}{*{3}{>{$\footnotesize}c<{$}} l}
      $[\lambda_0]$ & $[\lambda_1]$ & $[\lambda_2]$ \\
    \end{block}
    \begin{block}{[*{3}{c}]>{$\footnotesize}l<{$}}
      0 & 1 & 2 & $[\lambda_0]$ \\
       & 1 & 2 & $[\lambda_1]$ \\
       & & 2 & $[\lambda_2]$ \\
    \end{block}
  \end{blockarray}
\end{equation*}
\end{enumerate}
\end{proposition}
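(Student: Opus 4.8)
The plan is to treat the two parts separately, with Part (1) essentially immediate and Part (2) requiring only a short case analysis organized by birth time.

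For Part (1), I would observe that the star graph $S_n$, equipped with the shortest-path metric $d_{S_n}$, is a finite tree metric space: the underlying graph $S_n$ is itself a tree with non-negative (indeed unit) edge lengths realizing $d_{S_n}$, so the four-point condition from Page \pageref{para:tree m.s.} holds. Theorem \ref{thm:finite-tree} then applies verbatim and yields $\ppi_1(S_n,0)=\bbmzero$, i.e. $\pi_1^{\epsilon}(S_n,0)=0$ for every $\epsilon>0$. No further work is needed.

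For Part (2), the key observation is that the only distances occurring in $S_n$ are $0$ (a point to itself), $1$ (center to a leaf), and $2$ (between two distinct leaves), so every discrete loop based at $0$ has birth time in $\{0,1,2\}$. Since $\sim$ preserves birth time, $L(S_n)$ is the disjoint union $L^0(S_n)\sqcup L^1(S_n)\sqcup L^2(S_n)$, and it suffices to identify each stratum. At birth time $0$ the only loops are the constant ones $0\cdots0$, which are mutually $0$-homotopic by deleting interior vertices, so $L^0(S_n)=\{[\lambda_0]\}$ with $\lambda_0=0$. For $\epsilon=1,2$ I would invoke Part (1): since $\pi_1^{\epsilon}(S_n,0)=0$ and the sets $L^1(S_n),L^2(S_n)$ are non-empty (witnessed by $\lambda_1=010$, of birth time $1$, and $\lambda_2=0120$, of birth time $2$ because of the leaf-to-leaf jump $1\mapsto 2$), Corollary \ref{cor:L-epsilon} forces each to be a singleton, $\{[\lambda_1]\}$ and $\{[\lambda_2]\}$ respectively. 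This gives $L(S_n)=\{[\lambda_0],[\lambda_1],[\lambda_2]\}$.

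It then remains to compute $\mu^{(1)}_{S_n}$. The diagonal entries are the birth times, $\mu^{(1)}_{S_n}([\lambda_i],[\lambda_i])=\birth(\lambda_i)=i$, by Remark \ref{rmk:property-mu_1}. For the off-diagonal entries I would use $\mu^{(1)}_{S_n}([\gamma],[\gamma'])=\inf\{\epsilon>0:\gamma\sim_1^{\epsilon}\gamma'\}$: below $\max\{\birth(\gamma),\birth(\gamma')\}$ the two loops cannot be simultaneous $\epsilon$-loops, while at $\epsilon=\max\{\birth(\gamma),\birth(\gamma')\}$ both are $\epsilon$-null by Part (1) (equivalently, $\lnorm\VR_{\epsilon}(S_n)\rnorm$ is contractible once $\epsilon\geq\diam(S_n)=2$, and $\lnorm\VR_{1}(S_n)\rnorm$ is the contractible tree $S_n$), hence $\epsilon$-homotopic to each other. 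This yields $\mu^{(1)}_{S_n}([\lambda_0],[\lambda_1])=1$ and $\mu^{(1)}_{S_n}([\lambda_0],[\lambda_2])=\mu^{(1)}_{S_n}([\lambda_1],[\lambda_2])=2$, matching the stated distance matrix. The argument is routine throughout; the only point demanding care — and where I expect the mild main obstacle to lie — is the bookkeeping that $L(S_n)$ is exactly the union of the three birth-time strata and that no loop of birth time $1$ or $2$ escapes the single class predicted by Corollary \ref{cor:L-epsilon}, which is settled cleanly by the contractibility of $\lnorm\VR_{\epsilon}(S_n)\rnorm$ for $\epsilon\geq 1$.
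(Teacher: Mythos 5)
Your proposal is correct and follows essentially the same route as the paper: Part (1) via Theorem \ref{thm:finite-tree} applied to the tree metric space $S_n$, and Part (2) by stratifying $L(S_n)$ by the possible birth times $0,1,2$ and invoking Corollary \ref{cor:L-epsilon} to see each stratum is a singleton. Your explicit verification of the entries of $\mu^{(1)}_{S_n}$ (diagonal entries as birth times via Remark \ref{rmk:property-mu_1}, off-diagonal entries via $\epsilon$-nullity at $\epsilon=\max$ of the birth times) is detail the paper's proof leaves implicit, but it is consistent with the paper's argument.
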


\begin{proof} The possible birth times of discrete loops in $S_n$ are $0,1$ and $2$. Since $S_n$ is a finite tree, by Proposition \ref{ex:finite-tree} we have $\ppi_1^r(S_n)=0$ for all $r$. It then follows from Proposition \ref{cor:L-epsilon} that there is only one equivalence class for each possible birth time, which are $\lambda_0$, $\lambda_1$ and $\lambda_2$.
\end{proof}

The corresponding generalized subdengrogram over $L(S_n)$ is depicted in Figure \ref{fig:treegram of Sn}.
\begin{figure}[ht!]
\centering
    \begin{tikzpicture}
    \begin{axis} [ 
    height=4.5cm,
    axis y line=left, 
    axis x line=middle,
    every axis y label/.style={at={(current axis.north west)},above=2mm},
    xlabel=$\epsilon$,
    ylabel=$L(S_n)$,
    ytick={0.5,1,1.5},
    yticklabels={$[\lambda_0]$, $[\lambda_1]$, $[\lambda_2]$},
    xtick={0,1,2},
    xticklabels={0,1,2},
    xmin=0, xmax=2.5,
    ymin=0, ymax=2,]
    \addplot +[mark=none,color=blue, ultra thick,opacity=0.2] coordinates {(2,1.5) (2,0.5)};
    \addplot[domain=0:2.5,color=blue,ultra thick,opacity=0.2]{0.5};
    \addplot[domain=1:2,color=blue, ultra thick,opacity=0.2]coordinates {(1,1) (1,0.5)};   
    \end{axis}
    \end{tikzpicture}
    \caption{Treegram over $L(S_n)$.}  \label{fig:treegram of Sn} 
\end{figure}

Next we compute the Gromov-Hausdorff distance between cycle graphs and star graphs, and then compare it with several distances given in Proposition \ref{prop:bounds for dgh of Cm and Sn}.
\begin{proposition}\label{prop:dgh of Cm and Sn} For $m,n\in \Z_{\geq 3}$, we have 
\[\dgh(C_{m},S_{n})=\begin{cases}\tfrac{1}{2}\left(\floor*{\tfrac{m}{2}}-1\right),&\mbox{if $m\geq 6$,}\\ 
1,&\mbox{if $3\leq m\leq 5$ and $m< n-1$,}\\
\tfrac{1}{2} &\mbox{if $3\leq m\leq 5$ and $m\geq n-1$.}
	\end{cases}
\]
\end{proposition}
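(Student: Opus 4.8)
The plan is to compute $\dgh(C_m,S_n)$ by sandwiching it between matching lower and upper bounds via the tripod characterization $\dgh=\tfrac12\inf_R\dis(R)$ (Theorem 7.3.25). First I would record the elementary metric invariants: writing $D:=\floor*{\tfrac{m}{2}}$, one has $\diam(C_m)=D$, while $\diam(S_n)=2$ and $\rad(S_n)=1$ (the center is the unique center of the star), and crucially every vertex of $C_m$ has an antipode, namely the diametrically opposite vertex on the cycle. These feed directly into Propositions~\ref{prop:property-GH} and~\ref{prop:property-GH-rad}.

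For the lower bounds I would extract three facts. Proposition~\ref{prop:property-GH-rad}, applied with $X=C_m$ (every point has an antipode) and $Y=S_n$ (since $\rad(S_n)=1\le D$), gives $\dgh(C_m,S_n)\ge\tfrac12(D-1)$; this is the decisive bound once $m\ge6$. Proposition~\ref{prop:property-GH} gives $\dgh\ge\tfrac12|D-2|$, which yields $\dgh\ge\tfrac12$ when $m=3$ (where $D=1$). The third fact, and the source of the $n$-dependence for small $m$, is a pigeonhole argument: in any tripod $R:C_m\twoheadleftarrow Z\twoheadrightarrow S_n$ choose for each of the $n-1$ leaves a preimage in $Z$ and push it to $C_m$; if $n-1>m$ two distinct leaves $\ell,\ell'$ land on a common vertex $v$, so $\dis(R)\ge|d_{C_m}(v,v)-d_{S_n}(\ell,\ell')|=|0-2|=2$, whence $\dgh\ge1$.

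For the upper bounds I would build explicit correspondences, and one key observation drives everything: with distortion budget $\dis=D-1$ and $D\ge3$ (i.e. $m\ge6$), the only real constraint is that each preimage set in $C_m$ have diameter $\le D-1$, for the cross terms are then automatic — for two leaves $|d_{C_m}-2|\le D-1$ and for center-versus-leaf $|d_{C_m}-1|\le D-1$ hold for every realizable distance $d_{C_m}\in[0,D]$. Thus I would cover $C_m$ by two arcs of at most $D$ consecutive vertices (each of diameter $\le D-1$) assigned to two leaves, send the center and all remaining leaves to the single vertex $0$, and check surjectivity; this realizes $\dis=D-1$ for \emph{every} $n\ge3$, giving $\dgh\le\tfrac12(D-1)$ and hence equality when $m\ge6$. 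When $3\le m\le5$ (so $D\le2$), Proposition~\ref{prop:property-GH} already gives $\dgh\le\tfrac12\max\{D,2\}=1$, which together with the pigeonhole bound settles the subcase $m<n-1$. In the subcase $m\ge n-1$ I would instead produce a correspondence of distortion $\le1$: because $D\le2$ a vertex may no longer be shared by two leaves (that would cost distortion $2$), so the $n-1$ leaf-preimages must be \emph{disjoint} sets of diameter $\le1$ (vertices or edges of $C_m$); since $n-1\le m$ and $2(n-1)\ge m-2$, one can tile all but an arc of at most two vertices by such disjoint sets and hand the leftover arc to the center, all cross distortions being $\le1$ because $\diam(C_m)\le2$. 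This yields $\dgh\le\tfrac12$, matching the radius/diameter lower bound.

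The main obstacle I anticipate is the upper-bound bookkeeping in the regime $3\le m\le5$ with $m\ge n-1$: one must simultaneously keep the leaf-preimages pairwise disjoint (forced by the distortion budget $D-1=1$), cover every vertex of $C_m$, and leave only a diameter-$\le1$ remainder for the center. I would dispatch this uniformly by solving $2a+b=m-s$ and $a+b=n-1$ for the number $a$ of edge-sets, the number $b$ of singleton-sets, and a leftover arc of size $s\in\{0,1,2\}$, and verifying that the solvability interval $\max(0,m-2(n-1))\le s\le\min(2,m-(n-1))$ is nonempty for all admissible pairs $(m,n)$. Everything else reduces to the cited Gromov--Hausdorff estimates and routine distortion checks.
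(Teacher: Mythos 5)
Your lower bounds are all correct and coincide with the paper's (Proposition~\ref{prop:property-GH-rad} for $m\ge 4$, Proposition~\ref{prop:property-GH} for $m=3$, and the pigeonhole argument on the $n-1$ leaves for $m<n-1$), and your reformulation of the upper-bound constraint for $m\ge 6$ --- that a correspondence has distortion $\le D-1$ iff every preimage of a vertex of $S_n$ has diameter $\le D-1$ in $C_m$ --- is right and is equivalent to the paper's Condition~$(\ast)$ (no antipodal pair of $C_m$ maps to a single vertex of $S_n$). However, your explicit construction realizing this constraint fails whenever $m$ is odd. For $m=2D+1$, any subset of $C_m$ of diameter $\le D-1$ is contained in an arc of at most $D$ consecutive vertices, so two such sets cover at most $2D=m-1$ vertices of $C_m$; your ``two arcs assigned to two leaves, everything else to the single vertex $0$'' correspondence therefore cannot be surjective onto $C_m$ for any odd $m\ge 7$ (e.g.\ $C_7$: two diameter-$2$ sets have at most $3$ vertices each and miss a vertex), and enlarging an arc to $D+1$ vertices puts an antipodal pair into one preimage, forcing distortion $D$. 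The fix is cheap --- give the \emph{center} of $S_n$ an arc as its preimage as well, so that $C_m$ is partitioned into three arcs of sizes $\le D$ (possible since $3D\ge m$), with all cross-distortions still bounded by $D-1$ --- but as written the $m\ge 6$ upper bound is only established for even $m$. By contrast, the paper builds four explicit interleaved correspondences $R_1,\dots,R_4$ (split by the parity of $m$ and by whether $m\ge n$) and verifies Condition~$(\ast)$ directly, reusing $R_2,R_4$ for the small-$m$, $m\ge n-1$ subcase.

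A second, more minor, issue: in your tiling scheme for $3\le m\le 5$ with $m\ge n-1$, your solvability interval admits $s=0$ (and indeed forces $s=0$ when $m=n-1$), in which case the center of $S_n$ receives no preimage and the relation is not a tripod. You should additionally relate the center to some vertex already assigned to a leaf; this costs only $|d-1|\le 1$ in every cross term since $\diam(C_m)\le 2$, so the distortion bound survives, but the step needs to be said.
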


\begin{remark}\label{rmk:dgh pt for Cm and Sn} Exactly same statement in Proposition \ref{prop:dgh of Cm and Sn} is true for $\dgh^{\pt}((C_m,0),(S_n,\bfzero))$ as well, because the proof still holds if we require all tripods to be pointed, i.e., containing $(0,\bfzero)$.
\end{remark}

\begin{proposition}\label{prop:bounds for dgh of Cm and Sn} For $m,n\in \Z_{\geq 3}$, we have the following:
\begin{enumerate}
    \item $\dgh\left(\left( C_{m},\mu^{(0)}_{C_{m}}\right),\left( S_{n},\mu^{(0)}_{S_{n}}\right)\right)=\tfrac{1}{2}$ when $m\neq n$, and $0$ otherwise.
    \item $\tfrac{1}{2}\cdot\di(\ppi_1(C_{m}),\ppi_1(S_n))= \tfrac{1}{2}\cdot\db(\dgm_1(C_{m}),\dgm_1(S_n))=\tfrac{1}{4}\cdot\left(\floor*{\tfrac{m+2}{3}}-1\right).$
\end{enumerate}
\end{proposition}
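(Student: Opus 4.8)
The plan is to treat the two claims separately, reducing each to a computation on very simple objects. For part (1), I would first observe that on both graphs the ultrametric $\mu^{(0)}_\bullet$ collapses to the uniform one. Since $C_m$ and $S_n$ are connected through unit-length edges, any two distinct vertices $x,x'$ are joined by a $1$-chain, so $\mu^{(0)}(x,x')\leq 1$; and because the smallest positive value of the shortest-path metric is $1$, no $\epsilon$-chain with $\epsilon<1$ connects distinct vertices, forcing $\mu^{(0)}(x,x')=1$. Hence $\lbracket C_m,\mu^{(0)}_{C_m}\rbracket$ is isometric to the uniform space $E_m$ and $\lbracket S_n,\mu^{(0)}_{S_n}\rbracket$ to $E_n$ (see Page \pageref{para:E_n}), and the problem becomes the computation of $\dgh(E_m,E_n)$.

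For $\dgh(E_m,E_n)$, the upper bound $\tfrac12$ when $m\neq n$ is immediate from Proposition \ref{prop:property-GH}, as $\diam(E_m)=\diam(E_n)=1$; and when $m=n$ the two spaces are isometric, so the distance is $0$. For the matching lower bound when $m\neq n$, say $m<n$, I would invoke the tripod characterization of the Gromov--Hausdorff distance: given any tripod $R:E_m\xtwoheadleftarrow{\phi_X}Z\xtwoheadrightarrow{\phi_Y}E_n$, choosing for each point of $E_n$ a $\phi_Y$-preimage in $Z$ and pushing it forward through $\phi_X$ produces a map $E_n\to E_m$; since $n>m$, the pigeonhole principle yields distinct $y\neq y'$ in $E_n$ identified with a single $x\in E_m$, represented by elements $z,z'\in Z$ with $\phi_X(z)=\phi_X(z')=x$ and $\phi_Y(z)=y\neq y'=\phi_Y(z')$. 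Then $\dis(R)\geq |d_{E_m}(x,x)-d_{E_n}(y,y')|=1$, so $\dgh(E_m,E_n)=\tfrac12\inf_{R\in\frR}\dis(R)\geq\tfrac12$, completing part (1).

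For part (2) I would import the explicit descriptions already established. By Proposition \ref{prop:C_n} one has $\ppi_1(C_m)=\pH_1^{\VR}(C_m)=\Z\left[1,\floor*{\tfrac{m+2}{3}}\right)$, while Proposition \ref{prop:S_n} together with Theorem \ref{thm:finite-tree} gives $\ppi_1(S_n)=\bbmzero$ and hence $\pH_1^{\VR}(S_n)=0$, so that $\dgm_1(C_m)=\left\{\lbracket 1,\floor*{\tfrac{m+2}{3}}\rbracket\right\}$ and $\dgm_1(S_n)=\emptyset$. Both remaining quantities then amount to measuring a single interval against the trivial object. A direct check shows that an interval persistent group (or interval module) supported on $[a,b)$ is $\delta$-interleaved with the trivial object exactly when every internal map from level $t$ to level $t+2\delta$ vanishes, i.e. when $2\delta\geq b-a$, so its interleaving distance to the trivial object is $\tfrac12(b-a)$; likewise the bottleneck distance of the single point $\lbracket 1,\floor*{\tfrac{m+2}{3}}\rbracket$ to the empty diagram equals its $\ell^\infty$-distance to the diagonal, namely $\tfrac12\lbracket\floor*{\tfrac{m+2}{3}}-1\rbracket$. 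Multiplying each by $\tfrac12$ yields the common value $\tfrac14\lbracket\floor*{\tfrac{m+2}{3}}-1\rbracket$.

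The routine ingredients --- the diameter upper bound, the interval-module bookkeeping, and the distance-to-diagonal computation --- present no real difficulty. The single step deserving care is the lower bound in part (1): I must phrase the pigeonhole argument genuinely at the level of tripods (correspondences) rather than of single-valued maps, and verify that the identified pair $(x,y),(x,y')$ actually lies in the correspondence so that the distortion inequality applies. This correspondence-level reasoning, being the only place where the argument is not purely algebraic, is where I expect the main (though modest) obstacle to lie.
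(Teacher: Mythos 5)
Your proposal is correct and follows essentially the same route as the paper: reduce part (1) to $\dgh(E_m,E_n)$ and settle it with the diameter upper bound plus a pigeonhole argument on tripods, and reduce part (2) to comparing the interval $\Z\left[1,\floor*{\tfrac{m+2}{3}}\right)$ (empty for $m=3$, which your formula still handles) against the trivial object via the persistence diagrams from Propositions \ref{prop:C_n} and \ref{prop:S_n}. The only cosmetic difference is that you compute the interleaving and bottleneck distances to the trivial object directly, whereas the paper routes through the identification $\ppi_1(C_m)=\pH_1^{\VR}(C_m)$ and the isometry theorem; both are sound.
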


\begin{remark} Since Proposition \ref{prop:dgh of Cm and Sn} implies that $\dgh\left( C_m,S_n\right)\to\infty$ as $m\to \infty$, we can see that the value given by Proposition \ref{prop:bounds for dgh of Cm and Sn} (1) is not ideal as a lower bound for $\dgh(C_m,S_n)$. Although strictly less than $\dgh(C_m,S_n)$, Proposition \ref{prop:bounds for dgh of Cm and Sn} (2) increases to infinity as $m\to \infty$. 
\end{remark}

The lower bound in Proposition \ref{prop:property-GH} can be improved in the following case:
\begin{proposition}\label{prop:property-GH-rad} Let $(X,d_X)$ and $(Y,d_Y)$ be compact metric spaces. Suppose $\rad(Y)\leq \diam(X)$ and every $x\in X$ has an antipode, i.e. a point $\tilde{x}\in X$ such that $d_X(x,\tilde{x})=\diam(X)$. Then
\begin{equation}
\label{eq:property-GH-rad}
    \tfrac{1}{2}\left(\diam(X)-\rad(Y)\right)\leq \dgh(X,Y).
\end{equation}
\end{proposition}

\begin{proof} The eccentricity of a point $y\in Y$ is $\ecc(y):=\sup\{d_X(y,y'):y'\in Y\}.$ Note that $\rad(Y):=\inf\{\ecc(y):y\in Y\}.$ Let $y_0$ be a point in $Y$ such that $\ecc(y_0)=\rad(Y)$.
Suppose $R$ is an arbitrary tripod between $X$ and $Y$. Clearly, there exists some $x\in X$ such that $(x,y_0)\in R$. Let $\tilde{x}$ be an antipode of $x$. There exists some $y\in Y$ such that $(\tilde{x},y)\in R$. Therefore,
$$\diam(X)-\rad(Y)\leq d_{X}(x,\tilde{x})-d_Y(y_0,y)\leq \dis(R).$$
Since $R$ is arbitrary, we can conclude that Eq. (\ref{eq:property-GH-rad}) is true.
\end{proof}

\begin{myproof}{Proposition \ref{prop:dgh of Cm and Sn}}
The proof is divided into four cases:
\begin{enumerate}[label=(\alph*)]
\item \label{m=6} $m\geq 6$.
\item \label{m>n-1} $m\geq 4$ and $m\geq n-1$.
\item \label{m=4,5} $m=4$ or $5$, and $m<n-1$.
\item \label{m=3} $m=3$.
\end{enumerate}
To distinguish points in $C_m$ and $S_n$, we will write the vertex set of $C_m$ as $\{0,1,\cdots,m-1\}$ and the vertex set of $S_n$ as $\{\bfzero,\bfone,\cdots,\bfn-\bfone\}$. For any two positive integers $l$ and $k$, by $l \operatorname{Mod} k$ we will mean the remainder of the Euclidean division of $l$ divided by $k$.

When $m\geq 4$, $\diam(C_m)=\floor*{\tfrac{m}{2}}\geq 1=\rad(S_n)$ and each point in $C_m$ has an antipode. Thus, we can apply Proposition \ref{prop:property-GH-rad} to obtain
\begin{equation}\label{eq:lower bound}
\tfrac{1}{2}\cdot\left(\floor*{\tfrac{m}{2}}-1\right)\leq \dgh(C_{m},S_{n})\leq \tfrac{1}{2}\cdot \floor*{\tfrac{m}{2}}.
\end{equation}

For case \ref{m=6}: $m=6$, we claim that $\dgh(C_{m},S_{n})=\tfrac{1}{2}\cdot\left(\floor*{\tfrac{m}{2}}-1\right)$. Because of Equation (\ref{eq:lower bound}), it remains to construct a tripod $R$ such that $\dis(R)\leq\floor*{\tfrac{m}{2}}-1.$ In other words, we want the tripod $R$ to satisfy the following condition: $\forall (i_1,\bfj_{\bfone}),(i_2,\bfj_{\bftwo})\in R$,
\[|d_{C_m}(i_1,i_2)-d_{S_n}(\bfj_{\bfone},\bfj_{\bftwo})|< \floor*{\tfrac{m}{2}}.\]
Given $m\geq 6$, it is always true that $\forall (i_1,\bfj_{\bfone}),(i_2,\bfj_{\bftwo})\in R$,
\begin{equation}\label{eq:d_Cm-d_Sn}
    \floor*{\tfrac{m}{2}}<-2\leq d_{C_m}(i_1,i_2)-d_{S_n}(\bfj_{\bfone},\bfj_{\bftwo})\leq \floor*{\tfrac{m}{2}}.
\end{equation}
Since the leftmost inequality of Equation (\ref{eq:d_Cm-d_Sn}) is strict, we have that
\begin{align*}
&|d_{C_m}(i_1,i_2)-d_{S_n}(\bfj_{\bfone},\bfj_{\bftwo})|= \floor*{\tfrac{m}{2}},\\
\text{iff } & d_{C_m}(i_1,i_2)-d_{S_n}(\bfj_{\bfone},\bfj_{\bftwo})= \floor*{\tfrac{m}{2}},\\
\text{iff }  & d_{C_m}(i_1,i_2)=\floor*{\tfrac{m}{2}}\text{ and }d_{S_n}(\bfj_{\bfone},\bfj_{\bftwo})=0.
\end{align*}
Therefore, to construct a tripod $R$ with $\dis(R)\leq\floor*{\tfrac{m}{2}}-1$, it suffices to construct a tripod where any pair of antipodes in $C_m$ do not correspond to the same point in $S_n$. More precisely, $R$ shall satisfy the condition:
\begin{align*}\label{eq:condition*}\tag{$\ast$}
\forall (i_1,\bfj_{\bfone}),(i_2,\bfj_{\bftwo})\in R,\text{ if } d_{C_m}(i_1,i_2)=\floor*{\tfrac{m}{2}},\text{ then } \bfj_{\bfone}\neq\bfj_{\bftwo}.
\end{align*}

Suppose $m=2k$ for some $k\geq 3$. When $m\geq n$, we define a tripod $R_1$ between $C_m$ and $S_n$ by
$$R_1:=\left\{(i,\,\bftwo\bfi\operatorname{Mod} n),\left( k+i,\,(\bftwo\bfi\bfplus\bfone)\operatorname{Mod} n\right): i=0,\cdots,k-1\right\}.$$
When $m< n$, we define a tripod $R_2$ between $C_m$ and $S_n$ by
$$R_2:=\left\{(i\operatorname{Mod} m,\,\bftwo\bfi),\left( (k+i)\operatorname{Mod} m,\,\bftwo\bfi\bfplus\bfone\right): i=0,\cdots,\floor*{\tfrac{n-1}{2}}\right\}.$$

Next we assume $m=2k+1$ for some integer $k\geq 3$. When $m\geq n$, we define a tripod $R_3$ between $C_m$ and $S_n$ by
\[R_3:=\left\{(0,\bfzero),(k-i,(\bftwo\bfi\bfplus\bfone)\operatorname{Mod} n),\left( m-i,\bftwo\bfi \operatorname{Mod} n\right): i=0,\cdots,k-1\right\}.\]
When $m< n$, we define a tripod $R_4$ between $C_m$ and $S_n$ by
\[R_4:=\left\{((k-i)\operatorname{Mod} m,\,\bftwo\bfi\bfplus\bfone),\left( (m-i)\operatorname{Mod} m,\,\bftwo\bfi\right): i=0,\cdots,\floor*{\tfrac{n-1}{2}}\right\}.\]

It can be directly checked that $R_1$, $R_2$, $R_3$ and $R_4$ are tripods satisfying Condition (\ref{eq:condition*}).

For case \ref{m>n-1}: $m\geq 4$ and $m\geq n-1$, we claim that $\dgh(C_{m},S_{n})=\tfrac{1}{2}\cdot\left(\floor*{\tfrac{m}{2}}-1\right)$. Let $k:=\floor*{\tfrac{m}{2}}.$ Because of Equation (\ref{eq:lower bound}), it remains to construct a tripod $R$ such that $\dis(R)\leq\floor*{\tfrac{m}{2}}-1.$ We will use the same constructions $R_1$, $R_2$, $R_3$ and $R_4$ as case \ref{m=6} in corresponding cases. Indeed, when $m$ is even, we use $R_1$ if $m\geq n$ and $R_2$ if $m=n-1$; when $m$ is odd, we use $R_3$ if $m\geq n$ and $R_4$ if $m=n-1$. Assume $l=1,2,3,4$. Let $i_1$ and $i_2$ be any two points from $C_m$, and suppose $(i_1,\bfj_{\bfone})$ are $(i_2,\bfj_{\bftwo})$ in $R_l$. If $i_2=i_1+k$, then it can be checked that
\begin{equation}\label{eq:dis bound}
    0\leq k-2\leq d_{C_m}(i_1,i_2)-d_{S_n}(\bfj_{\bfone},\bfj_{\bftwo})\leq k-1.
\end{equation}
If $d_{C_m}(i_1,i_2)\leq k-1$, then
\begin{equation}\label{eq:dis bound 1}
-(k-1)\leq-1\leq d_{C_m}(i_1,i_2)-d_{S_n}(\bfj_{\bfone},\bfj_{\bftwo})\leq k-1.
\end{equation}
The second inequality of Equation (\ref{eq:dis bound 1}) is true because when $i_1=i_2\neq 0$, we always have $\bfj_{\bfone}=\bfj_{\bftwo}$ due to the construction of $R_l$, and when $i_1=i_2=0$, we have $d_{S_n}(\bfj_{\bfone},\bfj_{\bftwo})\leq 1$ (for $R_2$ and $R_4$, this argument relies on the condition $m=n-1$). By Equation (\ref{eq:dis bound}) and Equation (\ref{eq:dis bound 1}), we have $\dis(R_l)\leq k-1$. 

For case \ref{m=4,5}: $m=4$ or $5$, and $m<n-1$, we claim that $\dgh(C_{m},S_{n})=\tfrac{1}{2}\cdot\floor*{\tfrac{m}{2}}=1.$ By the upper bound from Equation (\ref{eq:lower bound}), it suffices to show that for each tripod $R$ between $C_m$ and $S_n$, $\dis(R)\geq \floor*{\tfrac{m}{2}}=2.$ By the pigeonhole principle, since $m<n-1$, there must be two elements $\bfi,\bfj\in \{\bfone,\cdots,\bfn-\bfone\}$ such that $(l,\bfi),(l,\bfj)\in R$ for some $l\in C_m$. Therefore, $\dis(R)\geq d_{S_n}(\bfi,\bfj)=2.$

For case \ref{m=3}: $m=3$, since $\diam(C_3)=1$ and $\diam(S_n)=2$, Proposition \ref{prop:property-GH} implies that 
\begin{equation*}\label{eq:diam bound}
    \tfrac{1}{2}=\tfrac{1}{2}\cdot\left|  1-2\right| \leq \dgh(C_{m},S_{n})\leq \tfrac{1}{2}\cdot \max\left\{1,2\right\}=1.
\end{equation*} 
If $m<n-1$, we can apply the pigeonhole principle as in case \ref{m=4,5}, to show each tripod between $C_m$ and $S_n$ has distortion $2$. Thus, $\dgh(C_{m},S_{n})$ reaches the upper bound value $1$. If $m\geq n-1$, i.e., $n=3$ or $4$, we claim that $\dgh(C_{m},S_{n})=\tfrac{1}{2}$ by constructing tripod with distortion $1$. Indeed, we can again utilize the construction $R_3$ for $n=3$ and $R_4$ for $n=4$. It is not hard to verify that their distortions are both $1$.
\end{myproof}

\begin{myproof}{Proposition \ref{prop:bounds for dgh of Cm and Sn}} 
For (1), we first notice that $\left( C_{m},\mu^{(0)}_{C_{m}}\right)\cong(E_{m},d_{E_{m}})$ and $\left( S_n,\mu^{(0)}_{S_n})\right)\cong(E_n,d_{E_n})$, where the metric space $(E_n,d_{E_n})$ is given in page \pageref{para:E_n}. When $m=n$, it is clear that $\dgh\left((E_n,d_{E_n}),(E_n,d_{E_n})\right)=0$. When $m\neq n$, suppose without loss of generality that $m>n$. We claim that $$\dgh\left((E_m,d_{E_m}),(E_n,d_{E_n})\right)=\tfrac{1}{2}.$$ By Proposition \ref{prop:property-GH}, $\dgh\left((E_m,d_{E_m}),(E_n,d_{E_n})\right)\leq\tfrac{1}{2}$. It remains to show that $\tfrac{1}{2}$ is also a lower bound for $\dgh\left((E_m,d_{E_m}),(E_n,d_{E_n})\right)$. Given any pointed tripod $R,$ as $E_m$ has more points than $E_n$, some distinct points in $E_m$ must correspond to the same point in $E_n$. Thus, $\dis(R)\geq 1-0=1.$ 

For Part (2), note that $$\di(\ppi_1(C_{m}),\ppi_1(S_n))=\di(\pH_1^{\VR}(C_{m}),\pH_1^{\VR}(S_n))= \db(\dgm_1(C_{m}),\dgm_1(S_n)).$$ It follows from Proposition \ref{prop:C_n} (1) that $\dgm_1(C_{m})=\left\{ \left( 1,\floor*{\tfrac{m+2}{3}}\right) \right\}$ for $m>3$ and $\emptyset$ for $m=3$, and from Proposition \ref{prop:S_n} (1) it follows that $\dgm_1(S_n)=\emptyset.$ Thus, $$\db(\dgm_1(C_{m}),\dgm_1(S_n))=\tfrac{1}{2}\cdot\left(\floor*{\tfrac{m+2}{3}}-1\right).$$
\end{myproof}

\subsection{Discretization of the geodesic circle}\label{sec:discretization-circle}
Let the unit circle $\mathbb{S}^1$ be embedded into the complex plane with the center at 0, i.e., $\mathbb{S}^1=\{z\in \C:|z|=1\}$. 
Again, we equip $\bbS^1$ with its geodesic metric $d$. For $n\in \Z_{\geq 0}$, we consider the metric subspace of $(\bbS^1,d)$: $$\Delta_n:=\left\{e^{\tfrac{2\pi i k}{n}}: k\in [n-1] \right\},$$ where $i=\sqrt{-1}$. 
For simplicity, we write $\Delta_n=\{0,1,\cdots, n-1\}$ and assume 0 is the basepoint. Also, we denote $d|_{\Delta_n\times\Delta_n}$ by $d_n$ and note that $\left( \Delta_n,d_n\right)\cong\left( C_n,\tfrac{2\pi}{n}d_{C_n}\right)$. For example, Figure \ref{fig:delta3-4} shows the graphs corresponding to $\Delta_3$ and $\Delta_4$.
\begin{figure}[ht!]	  
\centering
\begin{tikzpicture}[scale=0.8]
\draw [dashed, radius=1] (-2,0) circle ;
\filldraw (-2,1) [color=blue] circle[radius=1.5pt];
\filldraw (-1.135,-0.5)[color=blue] circle[radius=1.5pt];
\filldraw (-2.866,-0.5) [color=blue] circle[radius=1.5pt];
\node[below right=0.4pt of {(-1.135,-0.5)}, outer sep=1.5pt,fill=white] {1};
\node[below left=0.4pt of {(-2.866,-0.5)}, outer sep=1.5pt,fill=white] {2};
\node[above=0.4pt of {(-2,1)}, outer sep=1.5pt,fill=white] {0};
\draw [dashed, radius=1] (2,0) circle ;
\filldraw (2,1) [color=blue] circle[radius=1.5pt];
\filldraw (2,-1)[color=blue] circle[radius=1.5pt];
\filldraw (3,0) [color=blue] circle[radius=1.5pt];
\filldraw (1,0) [color=blue] circle[radius=1.5pt];
\node[right=0.4pt of {(3,0)}, outer sep=1.5pt,fill=white] {1};
\node[below=0.4pt of {(2,-1)}, outer sep=1.5pt,fill=white] {2};
\node[above=0.4pt of {(2,1)}, outer sep=1.5pt,fill=white] {0};
\node[left=0.4pt of {(1,0)}, outer sep=1.5pt,fill=white] {3};
    \end{tikzpicture} 
\caption{Graphs of $\Delta_3$ (left) and $\Delta_4$ (right).}\label{fig:delta3-4}
\end{figure}
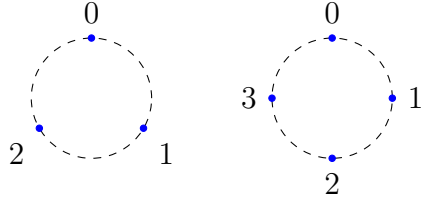

\begin{proposition}\label{prop:distance for Delta_n} We have the following:
\begin{enumerate}
    \item $\dgh((\Delta_3,d_3),(\Delta_4,d_4))=\tfrac{\pi}{4}$.
    \item $\dgh\left(\left(\Delta_3,\mu^{(0)}_{\Delta_3}\right),\left(\Delta_4,\mu^{(0)}_{\Delta_4}\right)\right)=\tfrac{\pi}{4}$.
   \item $\dgh\left(\left( L(\Delta_3)\, ,\mu^{(1)}_{\Delta_3}\right),\left( L(\Delta_4)\, ,\mu^{(1)}_{\Delta_4}\right)\right)=\tfrac{\pi}{6}$.
    \item $\tfrac{1}{2}\cdot \di(\ppi_1(\Delta_3),\ppi_1(\Delta_4))= \tfrac{1}{2}\cdot\db(\dgm_1(\Delta_3),\dgm_1(\Delta_4))=\tfrac{\pi}{8}.$
\end{enumerate}
\end{proposition}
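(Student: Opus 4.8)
The plan is to pin down each of the four values by a matching upper and lower bound. The uniform starting point is the scaling identity $(\Delta_n,d_n)\cong(C_n,\tfrac{2\pi}{n}d_{C_n})$ recorded just before the statement: since an $\epsilon$-chain in $\Delta_n$ is precisely a $\tfrac{n}{2\pi}\epsilon$-chain in $C_n$, every combinatorial quantity computed in Proposition \ref{prop:C_n} transfers after multiplying lengths by $\tfrac{2\pi}{n}$. Concretely I will use $\mu^{(1)}_{\Delta_n}=\tfrac{2\pi}{n}\mu^{(1)}_{C_n}$, the isometry $(\Delta_n,\mu^{(0)}_{\Delta_n})\cong\tfrac{2\pi}{n}E_n$ (from the proof of Proposition \ref{prop:bounds for dgh of Cm and Sn}(1)), and $\ppi_1(\Delta_n)$ equal to $\ppi_1(C_n)$ reparametrised by $\tfrac{2\pi}{n}$. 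For $n=3$ this yields $\ppi_1(\Delta_3)=\bbmzero$ and $L(\Delta_3)=\{[\gamma_0],[\gamma_1]\}$ with birth times $0,\tfrac{2\pi}{3}$ and mutual distance $\tfrac{2\pi}{3}$; for $n=4$ it yields $\ppi_1(\Delta_4)=\Z[\tfrac{\pi}{2},\pi)$ and $L(\Delta_4)=\{[\gamma_0],[\gamma_1^{\ast k}],[\gamma_2]\}$ with birth times $0,\tfrac{\pi}{2},\pi$, the cross-distances being the $\tfrac{\pi}{2}$-rescaling of the matrix in Proposition \ref{prop:C_n}(2).

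For (1) and (2) the two spaces being compared have $3$ and $4$ points, with smallest positive interpoint distance $\tfrac{\pi}{2}$ on the four-point side ($\Delta_4$, respectively $\tfrac{\pi}{2}E_4$). The lower bound is the pigeonhole argument used for Proposition \ref{prop:bounds for dgh of Cm and Sn}: for any tripod $R$, choosing a preimage of each of the four points produces a map to the three-point space, forcing two distinct points $y\neq y'$ to share a partner $x$, whence $\dis(R)\geq |d(x,x)-d(y,y')|=d(y,y')\geq\tfrac{\pi}{2}$ and $\dgh\geq\tfrac{\pi}{4}$. For the upper bound I exhibit the correspondence gluing two adjacent points of the four-point space to a single vertex of the three-point space and matching the remaining two vertices bijectively; checking the six pairs shows $\dis(R)=\tfrac{\pi}{2}$, so $\dgh\leq\tfrac{\pi}{4}$.

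The substantive case is (3), where the diagonal (self-distance) terms of the distortion, i.e. the birth times, carry the information. For the lower bound, in any tripod the class $[\gamma_2]\in L(\Delta_4)$ of birth time $\pi$ is matched to some class in $L(\Delta_3)$ whose birth time lies in $\{0,\tfrac{2\pi}{3}\}$; taking $z=z'$ in the distortion supremum gives $\dis(R)\geq\min\{\pi,\;\pi-\tfrac{2\pi}{3}\}=\tfrac{\pi}{3}$, hence $\dgh\geq\tfrac{\pi}{6}$. For the upper bound I send $[\gamma_0]$ to $[\gamma_0]$ and collapse all the $[\gamma_1^{\ast k}]$ together with $[\gamma_2]$ onto the single class $[\gamma_1]\in L(\Delta_3)$; the worst pairs are those involving $[\gamma_2]$ (birth $\pi$ against $\tfrac{2\pi}{3}$), contributing exactly $\tfrac{\pi}{3}$, so $\dis(R)=\tfrac{\pi}{3}$ and $\dgh\leq\tfrac{\pi}{6}$. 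I will stress that the generic stability estimate (Theorem \ref{thm:stab-mu1}) only gives the weaker bound $\dgh(L(\Delta_3),L(\Delta_4))\leq\dgh(\Delta_3,\Delta_4)=\tfrac{\pi}{4}$, so the explicit tripod is genuinely needed.

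Finally (4) is a computation. By Proposition \ref{prop:C_n}(1), $\ppi_1=\pH_1^{\VR}$ for cycle graphs, so after scaling $\di(\ppi_1(\Delta_3),\ppi_1(\Delta_4))=\di(\pH_1^{\VR}(\Delta_3),\pH_1^{\VR}(\Delta_4))$, which equals $\db(\dgm_1(\Delta_3),\dgm_1(\Delta_4))$ by the bottleneck--interleaving isometry (Theorem \ref{thm:stab-bot}; note $\pH_1^{\VR}$ is $\rmq$-tame here). Since $\dgm_1(\Delta_3)=\emptyset$ and $\dgm_1(\Delta_4)=\{(\tfrac{\pi}{2},\pi)\}$, the bottleneck distance is the $\ell^\infty$-distance of the single off-diagonal point to the diagonal, namely $\tfrac12(\pi-\tfrac{\pi}{2})=\tfrac{\pi}{4}$, and halving gives $\tfrac{\pi}{8}$. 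I expect the only real obstacle to be the bookkeeping in (3): correctly transporting the description of $(L(C_n),\mu^{(1)}_{C_n})$ through the scaling and confirming that none of the infinitely many matched pairs in the chosen correspondence exceeds $\tfrac{\pi}{3}$.
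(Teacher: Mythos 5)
Your proposal is correct and follows essentially the same route as the paper: pigeonhole lower bounds plus the explicit ``glue two adjacent points'' correspondences for (1)--(2), the correspondence collapsing $[\gamma_1^{\ast k}]$ and $[\gamma_2]$ onto $[\gamma_1]$ for (3), and the direct persistence-diagram computation for (4). The only cosmetic difference is that for the lower bound in (3) you use the self-distance (birth time) of $[\gamma_2]$ directly, whereas the paper invokes the diameter comparison of Proposition \ref{prop:property-GH}; these amount to the same estimate $\tfrac{1}{2}\lbracket\pi-\tfrac{2\pi}{3}\rbracket=\tfrac{\pi}{6}$.
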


\begin{proof} For (1), we first note that $\im(d_3)=\left\{0,\tfrac{2\pi}{3}\right\}$ and $\im(d_4)=\left\{0,\tfrac{\pi}{2},\pi\right\}$. For any pointed tripod $R:(\Delta_3,0)\xtwoheadleftarrow{\phi}Z\xtwoheadrightarrow{\psi}(\Delta_4,0)$, we have 
$$\tfrac{\pi}{2}=\min\left\{\pi-0,\tfrac{\pi}{2}-0\right\}\leq \max_{z,z'\in Z}\left|  d_3(\phi(z),\phi(z'))-d_4(\psi(z),\psi(z'))\right| \leq
\dis(R),$$ 
since $\Delta_4$ has more elements than $\Delta_3$. It follows that $\dgh(\Delta_3,\Delta_4)\geq\tfrac{\pi}{4}.$ Next we construct a tripod $R$ whose distortion is $\tfrac{\pi}{2}$, by first defining two set maps $f$ and $g$ as in Figure \ref{fig:set maps}. We set $Z=\Delta_3\sqcup\Delta_4$, and define set maps $\phi=(\Id_{\Delta_3},f):\Delta_3\sqcup\Delta_4\to\Delta_3$ and $\psi=(g,\Id_{\Delta_4}):\Delta_3\sqcup\Delta_4\to\Delta_4$. This forms a pointed tripod $R$. By direct calculation, we obtain $\dis(R)=\tfrac{\pi}{2}.$
 
\begin{figure}[ht!]
\centering
  \begin{tikzpicture}
    [group/.style={ellipse, draw, minimum height=30pt, minimum width=20pt, label=above:#1},
      my dot/.style={circle, fill, minimum width=2.5pt, label=above:#1, inner sep=0pt}]
    \node (a)  {0};
    \node (b) [below=3pt of a] {1};
    \node (c) [below=3pt of b] {2};    
    \node (d) [right=50pt of a] {0};
    \node (e) [below=1pt of d] {1};
    \node (f) [below=1pt of e] {2};
    \node (g) [below=1pt of f] {3};
    \foreach \i/\j in {a/d,b/e,c/f}
      \draw [->, shorten >=2pt] (\i) -- (\j);
    \node [fit=(a) (b) (c), group=$\Delta_3$] {};
    \node [fit=(d) (e) (f) (g), group=$\Delta_4$] {};
    \node[right =20pt of {(0.1,0.5)}, outer sep=1.5pt,fill=white] {$f$};
  \end{tikzpicture}\hspace{2cm}
    \begin{tikzpicture}
    [group/.style={ellipse, draw, minimum height=30pt, minimum width=20pt, label=above:#1},
      my dot/.style={circle, fill, minimum width=2.5pt, label=above:#1, inner sep=0pt}]
    \node (a)  {0};
    \node (b) [below=3pt of a] {1};
    \node (c) [below=3pt of b] {2};    
    \node (d) [right=50pt of a] {0};
    \node (e) [below=1pt of d] {1};
    \node (f) [below=1pt of e] {2};
    \node (g) [below=1pt of f] {3};
    \foreach \i/\j in {a/d,b/e,c/f,c/g}
      \draw [<-, shorten >=2pt] (\i) -- (\j);
    \node [fit=(a) (b) (c), group=$\Delta_3$] {};
    \node [fit=(d) (e) (f) (g), group=$\Delta_4$] {};
    \node[right =20pt of {(0.1,0.5)}, outer sep=1.5pt,fill=white] {$g$};
  \end{tikzpicture}
    \caption{Two set maps $f:\{0,1,2\}\to \{0,1,2,3\}$ and $g:\{0,1,2,3\}\to \{0,1,2\}$.}
    \label{fig:set maps}
\end{figure} 

For (2), apply a similar argument as in Part (1): since $\Delta_4$ has more elements than $\Delta_3$, any pointed tripod $R$ satisfies
$$\tfrac{\pi}{2}=\tfrac{\pi}{2}-0\leq\dis(R).$$
It follows that $\dgh\left(\left(\Delta_3,\mu^{(0)}_{\Delta_3}\right),\left(\Delta_4,\mu^{(0)}_{\Delta_4}\right)\right)\geq\tfrac{\pi}{4}$. To see that $\dgh\left(\left(\Delta_3,\mu^{(0)}_{\Delta_3}\right),\left(\Delta_4,\mu^{(0)}_{\Delta_4}\right)\right)=\tfrac{\pi}{4}$, we utilize the same set maps $f$ and $g$ given in Figure \ref{fig:set maps} and construct the same tripod $R$ as in (1). With the metrics $\mu^{(0)}_{\Delta_3}$ and $\mu^{(0)}_{\Delta_4},$ a direct calculation shows that $\dis(R)=\tfrac{\pi}{4}$.

For (3), we first apply Proposition \ref{prop:property-GH} to see that 
$$\dgh\left(\left( L(\Delta_3)\, ,\mu^{(1)}_{\Delta_3}\right),\left( L(\Delta_4)\, ,\mu^{(1)}_{\Delta_4}\right)\right)\geq \tfrac{1}{2}\cdot\left(\pi-\tfrac{2\pi}{3}\right)=\tfrac{\pi}{6}.$$ 
Next we construct a tripod $R$ whose distortion is $\tfrac{\pi}{3}$, by first defining two set maps $f$ and $g$ as in Figure \ref{fig:set maps for L}. We set $Z=L(\Delta_3)\sqcup L(\Delta_4)$, and define set maps $\phi=(\Id_{L(\Delta_3)},f)$ and $\psi=(g,\Id_{ L(\Delta_4)})$. These define a pointed tripod $R$, with $\dis(R)=\tfrac{\pi}{3}.$

\begin{figure}[ht!]
\centering
  \begin{tikzpicture} 
    [group/.style={ellipse, draw, minimum height=30pt, minimum width=20pt, label=above:#1},
      my dot/.style={circle, fill, minimum width=2.5pt, label=above:#1, inner sep=0pt}]
    \node (a)  {$[\gamma_0]$};
    \node (b) [below=3pt of a] {$[\gamma_1]$};
    \node (d) [right=50pt of a] {$[\gamma_0]$};
    \node (e) [below=1pt of d] {$[\gamma_1]$};
    \node (f) [below=1pt of e] {$[\gamma_2]$};
    \node (g) [below=1pt of f] {$\dots$};
    \foreach \i/\j in {a/d,b/e,b/f}
      \draw [->, shorten >=2pt] (\i) -- (\j);
    \node [fit=(a) (b) (c), group=$L(\Delta_3)$] {};
    \node [fit=(d) (e) (f) (g), group=$L(\Delta_4$)] {};
    \node[right =20pt of {(0.3,0.5)}, outer sep=1.5pt,fill=white] {$f$};
  \end{tikzpicture}\hspace{2cm}
    \begin{tikzpicture}
    [group/.style={ellipse, draw, minimum height=30pt, minimum width=20pt, label=above:#1},
      my dot/.style={circle, fill, minimum width=2.5pt, label=above:#1, inner sep=0pt}]
    \node (a)  {$[\gamma_0]$};
    \node (b) [below=3pt of a] {$[\gamma_1]$};
    \node (d) [right=50pt of a] {$[\gamma_0]$};
    \node (e) [below=1pt of d] {$[\gamma_1]$};
    \node (f) [below=1pt of e] {$[\gamma_2]$};
    \node (g) [below=1pt of f] {$\dots$};
    \foreach \i/\j in {a/d,b/e,b/f,b/g}
      \draw [<-, shorten >=2pt] (\i) -- (\j);
    \node [fit=(a) (b) (c), group=$L(\Delta_3)$] {};
    \node [fit=(d) (e) (f) (g), group=$L(\Delta_4$)] {};
    \node[right =20pt of {(0.3,0.5)}, outer sep=1.5pt,fill=white] {$g$};
  \end{tikzpicture}
    \caption{Two set maps $f:L(\Delta_3)\to L(\Delta_4)$ and $g:L(\Delta_4)\to L(\Delta_3)$. Recall from page \pageref{para:gamma_r} that in the cycle graph $C_n$, we defined the discrete loop $\gamma_r$ for $r=1,\cdots,\floor*{\tfrac{n-1}{3}}$. As $\Delta_n$ and $C_n$ have the same underlying space $\{0,\cdots,n-1\}$, the notation $\gamma_r$ can be inherited. Although $\gamma_r$ depends on $n$, we will not specify $n$ in the notation when the concept is clear. The dots in $L(\Delta_4)$ represent $[\gamma_1^{\ast k}]$ for integer $k\neq 0,1$. }
    \label{fig:set maps for L}
\end{figure} 

For (4), because $\left( \Delta_n,d_n\right)\cong\left( C_n,\tfrac{2\pi}{n}d_{C_n}\right)$, we have
	\[\ppi_1(\Delta_3)=\bbmzero\text{ and }\ppi_1(\Delta_4)=\Z\left[ \tfrac{\pi}{2},\pi\right).\] 
Therefore, $\dgm_1(\Delta_3)=\emptyset$ and $\dgm_1(\Delta_4)=\{(\tfrac{\pi}{2},\pi)\}$. It follows immediately that
$$ \di(\ppi_1(\Delta_3),\ppi_1(\Delta_4))= \di(\pH_1^{\VR}(\Delta_3),\pH_1^{\VR}(\Delta_4))= \db(\dgm_1(\Delta_3),\dgm_1(\Delta_4))=\tfrac{\pi}{4}.$$
\end{proof}

\begin{remark}The symmetry property of $(\Delta_n,d_n)$ guarantees that $\dgh((\Delta_3,d_3),(\Delta_4,d_4))=\dgh^{\pt}((\Delta_3,0,d_3),(\Delta_4,0,d_4))$. In Theorem \ref{thm:stab-pi0}, Theorem \ref{thm:stab-mu1} and Theorem \ref{thm:stab-di-ppi_1}, we have proved that (2), (3) and (4) are lower bounds of (1), the Gromov-Hausdorff distance, under certain restrictions. 
\end{remark}

%%%%%%%%%%%%%%%%%%%%%%%%%%%%%%%%%%


\begin{thebibliography}{0}

\bibitem{Adam13}
M.~Adamaszek.
\newblock Clique complexes and graph powers.
\newblock {\small\it Israel Journal of Mathematics}, {\small\bf 196(1)} (Aug 2013) 295--319.

\bibitem{AdamA15}
M.~Adamaszek and H.~Adams.
\newblock The {V}ietoris-{R}ips complexes of a circle.
\newblock {\small\it Pacific J. Math.} {\small\bf 290} (2017) 1--40.

\bibitem{AAGGPSWWZ17}
M.~Adamaszek, H.~Adams, E.~Gasparovic, M.~Gommel, E.~Purvine, R.~Sazdanovic,
  B.~Wang, Y.~Wang, and L.~Ziegelmeier.
\newblock On homotopy types of {V}ietoris-{R}ips complexes of metric gluings.
\newblock {\small\it Journal of Applied and Computational Topology} {\small\bf 4(3)} (2020) 425--454.


\bibitem{adamaszek2018metric}
M.~Adamaszek, H.~Adams, and F.~Frick.
\newblock Metric reconstruction via optimal transport.
\newblock {\small\it SIAM Journal on Applied Algebra and Geometry} {\small\bf 2(4)} (SIAM, 2018) 597--619.

\bibitem{adams2021persistent}
H.~Adams, F.~M{\'e}moli, M.~Moy, and Q.~Wang.
\newblock The persistent topology of optimal transport based metric thickenings.
\newblock {\small\it Algebraic \& Geometric Topology} (2022). to appear.


\bibitem{anderson1990short}
M.~T. Anderson.
\newblock Short geodesics and gravitational instantons.
\newblock {\small\it Journal of Differential geometry} {\small\bf 31(1)} (1990) 265--275.

\bibitem{barcelo2014discrete}
H.~Barcelo, V.~Capraro, and J.~A. White.
\newblock Discrete homology theory for metric spaces.
\newblock {\small\it Bulletin of the London Mathematical Society} {\small\bf 46(5)} (2014) 889--905.

\bibitem{barcelo2005perspectives}
H.~Barcelo and R.~Laubenbacher.
\newblock Perspectives on {A}-homotopy theory and its applications.
\newblock {\small\it Discrete mathematics} {\small\bf 298(1-3)} (Elsevier, 2005) 39--61.

\bibitem{batan2019persistent}
M.~A. Batan, M.~Pamuk, and H.~Varli.
\newblock Persistent homotopy.
\newblock preprint (2019), arXiv:1909.08865.

\bibitem{berestovskii2001covering}
V.~Berestovskii and C.~Plaut.
\newblock Covering group theory for topological groups.
\newblock {\small\it Topology and its Applications} {\small\bf 114(2)} (2001) 141--186.

\bibitem{berestovskii2007uniform}
V.~Berestovskii and C.~Plaut.
\newblock Uniform universal covers of uniform spaces.
\newblock {\small\it Topology and its Applications} {\small\bf 154(8)} (2007) 1748--1777.

\bibitem{berger2003panoramic}
M.~Berger.
\newblock {\small\it A panoramic view of Riemannian geometry} (Springer, 2003).

\bibitem{bjerkevik2020computing}
H.~B. Bjerkevik, M.~B. Botnan and M. Kerber.
\newblock Computing the interleaving distance is NP-hard.
\newblock {\small\it Foundations of Computational Mathematics} {\small\bf 20(5)} (2020) 1237--1271.

\bibitem{blumberg2017universality}
A.~J. Blumberg and M.~Lesnick.
\newblock Universality of the homotopy interleaving distance.
\newblock preprint (2017), arXiv:1705.01690.

\bibitem{boone1959word}
W.W.~Boone.
\newblock The word problem.
\newblock {\small\it Annals of mathematics} {\small\bf 70(2)} (1959) 207--265.

\bibitem{borsuk1955some}
K.~Borsuk.
\newblock On some metrizations of the hyperspace of compact sets.
\newblock {\small\it Fundamenta Mathematicae} {\small\bf 41(2)} (1955) 168--202.

\bibitem{borsuk1967retracts}
K.~Borsuk.
\newblock {\small\it Theory of retracts}.
\newblock Państwowe Wydawn (Naukowe, 1967).

\bibitem{brendel2015computing}
P.~Brendel, P.~D{\l}otko, G.~Ellis, M.~Juda and M.~Mrozek
\newblock Computing fundamental groups from point clouds.
\newblock {\small\it Applicable Algebra in Engineering, Communication and Computing} {\small\bf 26(1)} (Springer, 2015) 27--48.


\bibitem{bubenik2021homotopy}
P.~Bubenik and N.~Mili{\'c}evi{\'c}.
\newblock Homotopy, homology and persistent homology using \v{C}ech's closure spaces.
\newblock preprint (2021), arXiv:2104.10206.

\bibitem{Bubenik2014}
P.~Bubenik and J.~A. Scott.
\newblock Categorification of persistent homology.
\newblock {\small\it Discrete {\&} Computational Geometry} {\small\bf 51(3)} (Apt 2014) 600--627.

\bibitem{burago2001course}
D.~Burago, I.~D. Burago, Y.~Burago, S.~A. Ivanov, and S.~Ivanov.
\newblock {\small\it A course in metric geometry},
\newblock American Mathematical Soc., Vol~33 (2001).

\bibitem{carlsson2009topology}
G.~Carlsson.
\newblock Topology and data.
\newblock {\small\it Bulletin of the American Mathematical Society} {\small\bf 46(2)} (2009) 255--308.

\bibitem{carlsson2010characterization}
G.~Carlsson and F.~M\'{e}moli.
\newblock Characterization, stability and convergence of hierarchical
  clustering methods.
\newblock {\small\it Journal of machine learning research} {\small\bf 11} (Apr 2010) 1425--1470.

\bibitem{CCSGMO09}
F.~Chazal, D.~Cohen-Steiner, L.~J. Guibas, F.~M{\'e}moli, and S.~Y. Oudot.
\newblock Gromov-{H}ausdorff stable signatures for shapes using persistence.
\newblock In {\small\it Computer Graphics Forum} {\small\bf 28} (2009) 1393--1403.

\bibitem{CSGO16}
F.~Chazal, V.~de~Silva, M.~Glisse, and S.~Oudot.
\newblock {\small\it The Structure and Stability of Persistence Modules} (Springer, 2016).

\bibitem{CSO14}
F.~Chazal, V.~de~Silva, and S.~Oudot.
\newblock Persistence stability for geometric complexes.
\newblock {\small\it Geometriae Dedicata} {\small\bf 173(1)} (Dec 2014) 193--214.

\bibitem{cheeger1970finiteness}
J.~Cheeger.
\newblock Finiteness theorems for riemannian manifolds.
\newblock {\small\it American Journal of Mathematics} {\small\bf 92(1)} (1970) 61--74.

\bibitem{chowdhury2017distances}
S.~Chowdhury and F.~M{\'e}moli.
\newblock Distances and isomorphism between networks and the stability of
  network invariants.
\newblock preprint (Aug 2017), arXiv:1708.04727.

\bibitem{chowdhury2018functorial}
S.~Chowdhury and F.~M{\'e}moli.
\newblock A functorial {D}owker theorem and persistent homology of asymmetric
  networks.
\newblock {\small\it Journal of Applied and Computational Topology} {\small\bf 2(1-2)} (2018) 115--175.

\bibitem{chowdhury2018metric}
S.~Chowdhury and F.~M{\'e}moli.
\newblock The metric space of networks.
\newblock preprint (Apr 2018), arXiv:1804.02820.

\bibitem{cohen2007stability}
D.~Cohen-Steiner, H.~Edelsbrunner and J.~Harer.
\newblock Stability of persistence diagrams.
\newblock {\small\it Discrete \& Computational Geometry} {\small\bf 37(1)} (2007) 103--120.

\bibitem{conant2012discrete}
J.~Conant, V.~Curnutte, C.~Jones, C.~Plaut, K.~Pueschel, M.~Walpole and J.~Wilkins.
\newblock Discrete homotopy theory and critical values of metric spaces.
\newblock preprint (2012), arXiv:1205.2925.


\bibitem{edelsbrunner2008persistent}
H.~Edelsbrunner and J.~Harer.
\newblock Persistent homology-a survey.
\newblock {\small\it Contemporary mathematics} {\small\bf 453} (2008) 257--282.

\bibitem{edelsbrunner2000topological}
H.~Edelsbrunner, D.~Letscher, and A.~Zomorodian.
\newblock Topological persistence and simplification.
\newblock In {\small\it Proceedings 41st Annual Symposium on Foundations of Computer
  Science} (2000) pp. 454--463, IEEE.

\bibitem{frosini1990distance}
P.~Frosini.
\newblock A distance for similarity classes of submanifolds of a euclidean
  space.
\newblock {\small\it Bulletin of the Australian Mathematical Society} {\small\bf 42(3)} (1990) 407--415.

\bibitem{frosini1992measuring}
P.~Frosini.
\newblock Measuring shapes by size functions.
\newblock In {\small\it Intelligent Robots and Computer Vision X: Algorithms and
  Techniques}, Vol. 1607 (1992) 122--133. International Society for Optics
  and Photonics.

\bibitem{frosini1999metric}
P.~Frosini.
\newblock Metric homotopies.
\newblock {\small\it Atti Sem. Mat. Fis. Univ. Modena} {\small\bf 47} (1999) 271--292.

\bibitem{frosini1999size}
P.~Frosini and M.~Mulazzani.
\newblock Size homotopy groups for computation of natural size distances.
\newblock {\small\it Bulletin of the Belgian Mathematical Society-Simon Stevin} {\small\bf 6(3)} (1999) 455--464.

\bibitem{ghrist2008barcodes}
R.~Ghrist.
\newblock Barcodes: the persistent topology of data.
\newblock {\small\it Bulletin of the American Mathematical Society} {\small\bf 45(1)} (2008) 61--75.

\bibitem{gromov1999quantitative}
M.~Gromov.
\newblock Quantitative homotopy theory.
\newblock {\small\it Prospects in Mathematics: Invited Talks on the Occasion of the
  250th Anniversary of Princeton University (H. Rossi, ed.)} (1999) pp. 45--49.

% \bibitem{gromov2007metric}
% M.~Gromov.
% \newblock {\small\it Metric structures for Riemannian and non-Riemannian spaces}.
% \newblock Springer Science \& Business Media, 2007..

\bibitem{gromov1983filling}
M.~Gromov.
\newblock Filling Riemannian manifolds.
\newblock {\small\it Journal of Differential Geometry} {\small\bf 18(1)} (1983) 1--147.

\bibitem{gromov1999metric}
M.~Gromov, M.~Katz, P.~Pansu, and S.~Semmes.
\newblock {\small\it Metric structures for Riemannian and non-Riemannian spaces},
Vol. 152 (Boston: Birkhäuser, 1999).

\bibitem{grove1988bounding}
K.~Grove and P.~Petersen.
\newblock Bounding homotopy types by geometry.
\newblock {\small\it Annals of Mathematics} {\small\bf 128(2)} (1988) 195--206.

\bibitem{grove1990geometric}
K.~Grove, P.~Petersen, and J.-Y. Wu.
\newblock Geometric finiteness theorems via controlled topology.
\newblock {\small\it Inventiones mathematicae} {\small\bf 99(1)} (1990) 205--213.

\bibitem{Hatcher01(AT)}
A.~Hatcher.
\newblock {\small\it Algebraic Topology}.
\newblock Cambridge University Press (Cambridge, 2001).

\bibitem{Jardine2019DataAH}
J.~F. Jardine.
\newblock Data and homotopy types.
\newblock preprint (Aug 2019), arXiv:1908.06323.

\bibitem{Jardine2020Persistent}
J.~F. Jardine.
\newblock Persistent homotopy theory.
\newblock preprint (2020), arXiv:2002.10013.

\bibitem{kalton1999distances}
N.~J. {Kalton} and M.~I. {Ostrovskii}.
\newblock {Distances between Banach spaces}.
\newblock {\small\it Walter de Gruyter} {\small\bf 11(1)} (1999) 17--48.

\bibitem{lesnick2015theory}
M.~Lesnick.
\newblock {The theory of the interleaving distance on multidimensional persistence modules}.
\newblock {\small\it Foundations of Computational Mathematics} {\small\bf 15(3)} (2015) 613--650.

\bibitem{Letscher:2012:PHK:2090236.2090270}
D.~Letscher.
\newblock On persistent homotopy, knotted complexes and the Alexander module.
\newblock In {\small\it Proceedings of the 3rd Innovations in Theoretical Computer
  Science Conference}, ITCS '12, (New York, 2012) pp. 428--441, ACM.

\bibitem{lim2020vietorisrips}
S.~Lim, F.~M{\'e}moli, and O.~B. Okutan.
\newblock Vietoris-{R}ips persistent homology, injective metric spaces, and the
  filling radius.
\newblock {\small\it Algebraic \& Geometric Topology} (2022). to appear.

\bibitem{lim2021some}
S.~Lim, F.~M{\'e}moli, Z.~Wan, Q.~Wang, and L.~Zhou.
\newblock Some results about the tight span of spheres.
\newblock preprint (2021), arXiv:2112.12646.

\bibitem{lim2021gromov}
S.~Lim, F.~M{\'e}moli, and Z.~Smith.
\newblock {The Gromov-Hausdorff distance between spheres}.
\newblock {\small\it Geometry \& Topology} (2022). to appear.

\bibitem{lupton2019digital}
G.~Lupton and N.A.~Scoville.
\newblock Digital Fundamental Groups and Edge Groups of Clique Complexes.
\newblock {\small\it Journal of Applied and Computational Topology} (Springer, 2022) 1--30.

\bibitem{memoli2012some}
F.~M{\'e}moli.
\newblock Some properties of Gromov--Hausdorff distance.
\newblock {\small\it Discrete \& Computational Geometry} {\small\bf 48(2)} (Springer, 2012) 416--440.

\bibitem{memoli2019persistent}
F.~M{\'e}moli and L.~Zhou.
\newblock Persistent homotopy groups of metric spaces.
\newblock preprint (2019), arXiv:1912.12399.

\bibitem{miller1992decision}
C.~F.~Miller.
\newblock Decision problems for groups—survey and reflections.
\newblock In {\small\it Algorithms and classification in combinatorial group theory} {\small\bf 24} (Springer, 1992) 1--59.

\bibitem{novikov1955algorithmic}
P.~S.~Novikov.
\newblock On the algorithmic unsolvability of the word problem in group theory.
\newblock {\small\it Trudy Matematicheskogo Instituta imeni VA Steklova} {\small\bf 44} (1955) 3--143.

\bibitem{oudot2015persistence}
S.~Y. Oudot.
\newblock {\small\it Persistence theory: from quiver representations to data
  analysis}, Vol. 209 (American Mathematical Society Providence, RI, 2015).

\bibitem{peter1990finiteness}
V.~P.~Petersen.
\newblock A finiteness theorem for metric spaces.
\newblock {\small\it Journal of Differential Geometry} {\small\bf 31(2)} (1990) 387--395.

\bibitem{Peters+1984+77+82}
S.~Peters.
\newblock Cheeger's finiteness theorem for diffeomorphism classes of Riemannian
  manifolds.
\newblock {\small\it Walter de Gruyter} {\small\bf 1984(349)} (1984) 77--82.

\bibitem{plaut2009equivalent}
C.~Plaut.
\newblock An equivalent condition for a uniform space to be coverable.
\newblock {\small\it Topology and its Applications} {\small\bf 156(3)} (2009) 594--600.

\bibitem{plaut2013discrete}
C.~Plaut and J.~Wilkins.
\newblock Discrete homotopies and the fundamental group.
\newblock {\small\it Advances in Mathematics} {\small\bf 232(1)} (2013) 271--294.

\bibitem{rieser2021vcech}
A.~Rieser.
\newblock {\v{C}}ech closure spaces: A unified framework for discrete and
  continuous homotopy.
\newblock {\small\it Topology and its Applications} {\small\bf 296} (2021) 107613.

\bibitem{robins1999towards}
V.~Robins.
\newblock Towards computing homology from finite approximations.
\newblock In {\small\it Topology proceedings} {\small\bf 24} (1999) 503--532.

\bibitem{Schmiedl17}
F.~Schmiedl.
\newblock Computational aspects of the Gromov--Hausdorff distance and its
  application in non-rigid shape matching.
\newblock {\small\it Discrete Comput. Geom.} {\small\bf 57(4)} (June 2017) 854--880.

\bibitem{serre1951homologie}
J.~P. Serre.
\newblock Homologie singuli{\`e}re des espaces fibr{\'e}s.
\newblock {\small\it Ann. of Math} {\small\bf 54(2)} (1951) 425--505.

\bibitem{singer1967}
I.~Singer and J.~A. Thorpe.
\newblock {\small\it Lecture notes on elementary topology and geometry}.
\newblock (Springer-Verlag, 1967).

\bibitem{smith2016hierarchical}
Z.~Smith, S.~Chowdhury, and F.~M{\'e}moli.
\newblock Hierarchical representations of network data with optimal distortion
  bounds.
\newblock In {\small\it 2016 50th Asilomar Conference on Signals, Systems and
  Computers} (2016) pp. 1834--1838, IEEE.

\bibitem{sormani2004covering}
C.~Sormani and G.~Wei.
\newblock The covering spectrum of a compact length space.
\newblock {\small\it Journal of Differential Geometry} {\small\bf 67(1)} (2004) 35--77.

\bibitem{vigolo2018fundamental}
F.~Vigolo.
\newblock Fundamental groups as limits of discrete fundamental groups.
\newblock {\small\it Bulletin of the London Mathematical Society} {\small\bf 50(5)} (2018) 801--810.

\bibitem{virk20201}
\v{Z}.~Virk.
\newblock 1-dimensional intrinsic persistence of geodesic spaces.
\newblock {\small\it Journal of Topology and Analysis} {\small\bf 12(1)} (2020) 169--207.

\bibitem{weinberger2011persistent}
S.~Weinberger.
\newblock What is... persistent homology?
\newblock {\small\it Notices of the AMS} {\small\bf 58(1)} (2011) 36--39.

\bibitem{weinstein1967homotopy}
A.~Weinstein.
\newblock On the homotopy type of positively-pinched manifolds.
\newblock {\small\it Archiv der Mathematik} {\small\bf 18(5)} (1967) 523--524.

\bibitem{whitehead1950generalization}
G.~W. Whitehead.
\newblock A generalization of the {H}opf invariant.
\newblock {\small\it Annals of Mathematics} {\small\bf 51(1)} (1950) 192--237.

\bibitem{wilkins2011discrete}
L.~D. Wilkins.
\newblock {\small\it Discrete Geometric Homotopy Theory and Critical Values of Metric
  Spaces}.
\newblock PhD thesis, University of Tennessee (2011).

\bibitem{yamaguchi1988homotopy}
T.~Yamaguchi.
\newblock Homotopy type finiteness theorems for certain precompact families of
  Riemannian manifolds.
\newblock {\small\it Proceedings of the American Mathematical Society} {\small\bf 102(3)} (1988) 660--666.
\end{thebibliography}
\end{document}